\newtheoremstyle{blockstyle} 
{} 
{0.5em} 
{} 
{\parindent} 
{\bfseries} 
{.} 
{.5em} 
{} %
\theoremstyle{blockstyle}
\newtheorem{theorem}{Theorem}
\newtheorem{example}{Example}
\newtheorem{remark}{Remark}
\newtheorem{lemma}{Lemma}
\newtheorem{proposition}{Proposition}
\setlist{
label*={\arabic*.},
}
\setlist[1]{
wide=0pt, 
leftmargin=5pt, 
itemindent=\parindent, 
labelsep=5pt
}
\titlespacing*{\section}{\parindent}{10pt}{0pt}
\titlespacing*{\subsection}{\parindent}{10pt}{5pt}
\titlespacing*{\subsubsection}{\parindent}{10pt}{0pt}
\titleformat{\section}{\Large\bfseries}{\thesection.}{5pt}{}
\titleformat{\subsection}{\large\bfseries}{\thesubsection.}{5pt}{}
\titleformat{\subsubsection}{\normalsize\bfseries}{\thesubsubsection.}{5pt}{}
\renewenvironment{proof}{{\textit{Proof.}}}{\hfill $\square$}
\newcommand{\E}{\mathfrak{E}}
\newcommand{\U}{\mathbf{1}}
\newcommand{\A}{\mathbf{A}}
\definecolor{arrow_blue}{RGB}{79, 140, 209}
\definecolor{arrow_green}{RGB}{73, 163, 57}
\definecolor{arrow_red}{RGB}{181, 51, 51}
\definecolor{arrow_orange}{RGB}{189, 117, 28}
\tikzset{ma_blue/.style={draw=arrow_blue, -latex}}
\tikzset{ma_green/.style={draw=arrow_green, -latex}}
\tikzset{ma_red/.style={draw=arrow_red, -latex}}
\tikzset{ma_orange/.style={draw=arrow_orange, -latex}}
\tikzset{ma_black/.style={draw=black, -latex}}
\tikzset{
	knot diagram/every strand/.append style={
		thick,
		black
  }
}
\tikzset{knot_diagram/.style={draw=black, thick}}
\tikzset{split_line/.style={draw=gray, thin, dashed}}
\begin{document}

\begin{center}
\textbf{{\large Invariants for links and 3-manifolds from the modular category with two simple objects}}

Korablev Ph. G.\footnote{The research was supported by RSF (project No. 23-21-10014)}

\emph{korablev@csu.ru}
\end{center}

\begin{abstract}
We describe the simplest non-trivial modular category $\E$ with two simple objects. Then we extract from this category the invariant for non-oriented links in 3-sphere and two invariants for 3-manifolds: the complex-valued Turaev -- Reshetikhin type invariant $tr_{\varepsilon}$ and the real-valued Turaev -- Viro type invariant $tv_{\varepsilon}$. These two invariants for 3-manifolds are related by the equality $|tr_{\varepsilon}|^2\cdot (\varepsilon + 2) = tv_{\varepsilon}$, where $\varepsilon$ is a root of the equation $\varepsilon^2 = \varepsilon + 1$. Finally, we show that $tv_{\varepsilon}$ coincides with the well-known $\varepsilon$ invariant for 3-manifolds.
\end{abstract}

\tableofcontents

\section{Introduction}

This article is devoted to the implementation of an approach to the construction of quantum invariants for links in a three-dimensional sphere and for closed three-dimensional manifolds. This approach was originally proposed in \cite{TC}, its more detailed presentation is presented in the books \cite{T, TurVer} (see also \cite{BK}). The essence of the approach is that from each modular category $\mathcal{V}$ one can extract, using a special technique, two invariants $\tau_{\mathcal{V}}(M)$ and $|M|_{\mathcal{V}}$ of a closed oriented 3-manifold $M$. The values of these two invariants are related by the relation $|M|_{\mathcal{V}} = \tau_{\mathcal{V}}(M)\cdot \tau_{\mathcal{V}}(-M)$, where $-M$ is a manifold with opposite orientation.

Originally, the construction of the invariant $\tau_{\mathcal{V}}$ was given in the work of N. Reshetikhin and V. Turaev \cite{TR} for the case of the category of irreducible representations of the quantum group $U_q(sl_2)$. Therefore it is natural to call all invariants obtained from modular categories as Reshetikhin -- Turaev type invariants. Similarly, the construction of the invariant $|M|_{\mathcal{V}}$ was first proposed in the work of V. Turaev and O. Viro \cite{TV} using $q$-$6j$ symbols for the quantum group $U_q(sl_2)$. Therefore, all invariants $|M|_{\mathcal{V}}$ arising from modular categories are naturally called Turaev -- Viro type invariants.

One of the tasks is to study the properties of Reshetikhin -- Turaev and Turaev -- Viro type invariants for simple modular categories. A rather convenient class of categories for these purposes are fusion categories (see \cite{ENO, EC, ENOQT}). The paper \cite{RSW} gives a complete classification of fusion categories which are modular and whose rank (i.e. the number of isomorphism classes of simple objects) does not exceed four. These categories can be used to construct the simplest invariants of the Reshetikhin -- Turaev and Turaev -- Viro type.

The category $\E$ plays a central role in this article. This category is well known. For example, it is the basis of the anionic Fibonacci model of quantum computing (see \cite{FS, W}). This category appears quite often as one of the simplest non-trivial example of a fusion category with two simple objects (see \cite{ORT}). One of the features of the category $\E$ is that it contains non-trivial associators. Despite the "coherence theorem" (see \cite{MC}), which states that every monoidal category is equivalent to a strictly monoidal category (i.e. a category in which all associators are trivial), the category $\E$ turns out to be convenient for performing certain calculations. The Reshetikhin -- Turaev type invariant arising from the category $\E$ is denoted by $tr_{\varepsilon}$, and the Turaev -- Viro type invariant is denoted by $tv_{\varepsilon}$. The values of both invariants depend on the parameter $\varepsilon$, which can be any square root of the equation $\varepsilon^2 = \varepsilon + 1$.

The constant $\varepsilon$, which is used in the construction of the invariants $tr_{\varepsilon}$ and $tv_{\varepsilon}$, appears in the definition of the $\varepsilon$-invariant for 3-manifolds (see \cite{MOS}). This invariant can be defined in several equivalent ways. One of them is to define it as the homologically trivial part of the classical Turaev -- Viro invariant of order 5. The $\varepsilon$-invariant plays an important role in the complexity theory of 3-manifolds (see \cite{MCT}). For example, this invariant has been used to compute explicit complexity values for several infinite series of 3-manifolds (see \cite{VF, VTF}).

In this paper it was proved that the value of the $\varepsilon$-invariant for closed 3-manifolds coincides with the $tv_{\varepsilon}$ invariant (theorem \ref{Theorem:TVEqualT}). Since the $tr_{\varepsilon}$ invariant is stronger than the $tv_{\varepsilon}$ invariant (example \ref{Example:TRStronger}), it can be considered as a stronger version of the $\varepsilon$-invariant. Since the $\varepsilon$-invariant is part of one of the classical Turaev -- Viro invariants, it is quite natural that the $tr_{\varepsilon}$ invariant is also part of one of the classical Reshetikhin -- Turaev invariants (see \cite{KM}). The $tr_{\varepsilon}$ invariant is not new, but the approach used allows to give the definition in a purely combinatorial way without using the theory of representations of quantum groups. When describing the category $\E$ and the invariants $tr_{\varepsilon}$ and $tv_{\varepsilon}$, we mainly use the terminology and notations from the book \cite{T}.

The structure of the article is as follows. Section 2 gives a detailed description of the category $\E$. A similar description of this category, from a different point of view, is given in \cite{W}. Although the construction of the category $\E$ is well known, its description is given for several reasons. First, to fix the notation and to develop a diagrammatic approach to the representation of morphisms of the category $\E$. Second, to fix one of the equivalent views on the category $\E$. Third, such a description makes the text more or less self-contained.

In section 3 the invariant $tr_{\varepsilon}$ for unoriented links and 3-manifolds is defined and several examples are computed. In particular, an explicit formula is derived for the values of the invariant $tr_{\varepsilon}$ for generalised Hopf links and for lens spaces.

In section 4 it is proved that the Turaev -- Viro type invariant $tv_{\varepsilon}$ arising from the category $\E$ coincides with the $\varepsilon$-invariant for closed 3-manifolds.

\section{Category $\E$}

In this section we are going to construct the modular category $\E$. The construction is straightforward and purely combinatorial.

\subsection{Objects and morphisms}

\subsubsection{Objects}

Let $I = \{\U, \A\}$ be a set of two elements $\U$ and $\A$. These two elements play the role of simple objects in the category. Every object $X$ in $\E$ is a finite ordered tuple $(x_1, x_2, \ldots, x_n)$, where every $x_i\in I$, $i = 1, \ldots, n$. In most cases it will be convenient to understand $X$ as a noncommutative sum $X = x_1 + x_2 + \ldots + x_n$. Graphically each object can be drawn as a column, consisting of several cells, each cell containing a symbol either $\U$ or $\A$ (figure \ref{Figure:ObjectExample}). Columns are read from top to bottom.

\begin{figure}[h]
\begin{center}
\begin{tikzpicture}[scale=0.5]
	\draw (0, 0) rectangle ++(1, 1) node[pos=0.5] {$\U$};
	\draw (0, -1) rectangle ++(1, 1) node[pos=0.5] {$\A$};
	\draw (0, -2) rectangle ++(1, 1) node[pos=0.5] {$\U$};
	\draw (0, -3) rectangle ++(1, 1) node[pos=0.5] {$\A$};
\end{tikzpicture}
\hspace{1.0cm}
\begin{tikzpicture}[scale=0.5]
	\draw (0, 0) rectangle ++(1, 1) node[pos=0.5] {$\A$};
	\draw (0, -1) rectangle ++(1, 1) node[pos=0.5] {$\A$};
	\draw (0, -2) rectangle ++(1, 1) node[pos=0.5] {$\U$};
	\draw (0, -3) rectangle ++(1, 1) node[pos=0.5] {$\U$};
\end{tikzpicture}
\end{center}
\caption{\label{Figure:ObjectExample}Diagrams of objects $\U + \A + \U + \A$ (on the left) and $\A + \A + \U + \U$ (on the right)}
\end{figure}

If $X = \sum\limits_{i = 1}^{n}x_i$, $x_i\in I$, $i = 1, \ldots, n$, then denote $|X|_{\U}$ the number of elements $\U$ in $X$ and $|X|_{\A}$ the number of elements $\A$. It's clear that $|X|_{\U} + |X|_{\A} = n$.

\subsubsection{Morphisms}

Let $X = \sum\limits_{i = 1}^{n} x_i$ and $Y = \sum\limits_{j = 1}^{m} y_j$, $x_i, y_j\in I$, are two objects. Every morphism $f\in Hom(X, Y)$ from $X$ to $Y$ is a map $(x_i, y_j) \mapsto a_{i}^{j}\in\mathbb{C}$ with the following properties: if $x_i \neq y_j$ then $a_{i}^{j} = 0$, if $x_i = y_j$ then the value $a_{i}^{j}$ can be any complex number.

Graphically, each morphism is drawn as arrows connecting each cell of the first object to each cell of the second. Each arrow is marked by a complex number. It's convenient not to draw arrows with zero values. Example of a morphism is shown in figure \ref{Figure:MorphismExample}.

\begin{figure}[h]
\begin{center}
\begin{tikzpicture}[scale=0.5]
	\draw (0, 0) rectangle ++(1, 1) node[pos=0.5] {$\U$};
	\draw (0, -1) rectangle ++(1, 1) node[pos=0.5] {$\A$};
	\draw (0, -2) rectangle ++(1, 1) node[pos=0.5] {$\A$};
	\draw (0, -3) rectangle ++(1, 1) node[pos=0.5] {$\U$};
	\draw (0, -4) rectangle ++(1, 1) node[pos=0.5] {$\A$};
	
	\draw[-latex] (1.0, 0.5) -- (5.0, -0.5) node[below, near end] {$s_2$};
	\draw[-latex] (1.0, -0.5) -- (5.0, 0.5) node[above, near end] {$s_1$};
	\draw[-latex] (1.0, -0.5) -- (5.0, -2.5) node[above, midway] {$s_3$};
	\draw[-latex] (1.0, -2.5) -- (5.0, -1.5) node[above, near start] {$s_4$};
	\draw[-latex] (1.0, -2.5) -- (5.0, -3.5) node[below, pos=0.8] {$s_5$};
	\draw[-latex] (1.0, -3.5) -- (5.0, -2.5) node[below, near start] {$s_6$};
	
	\draw (5, 0) rectangle ++(1, 1) node[pos=0.5] {$\A$};
	\draw (5, -1) rectangle ++(1, 1) node[pos=0.5] {$\U$};
	\draw (5, -2) rectangle ++(1, 1) node[pos=0.5] {$\U$};
	\draw (5, -3) rectangle ++(1, 1) node[pos=0.5] {$\A$};
	\draw (5, -4) rectangle ++(1, 1) node[pos=0.5] {$\U$};
\end{tikzpicture}
\end{center}
\caption{\label{Figure:MorphismExample}Example of a morphism from the object $\U + \A + \A + \U + \A$ to the object $\A + \U + \U + \A + \U$}
\end{figure}

Let $f\in Hom(X, Y)$ be a morphism from object $X = \sum\limits_{i = 1}^{n}x_i$ to object $Y = \sum\limits_{j = 1}^{m}y_j$. Then we can construct two matrices: $[f]_{\U}$ with the size $|Y|_{\U} \times |X|_{\U}$ and $[f]_{\A}$ with the size $|Y|_{\A} \times |X|_{\A}$. The columns of the matrix $[f]_{\U}$ are bijective to the elements of $X$, equal to $\U$. The rows of the matrix $[f]_{\U}$ are bijective to the elements of $Y$, equal to $\U$. The element $a_i^j$ of the matrix $[f]_{\U}$ is equal to the value associated with the arrow connecting the $i$-th symbol $\U$ in $X$ to the $j$-th symbol $\U$ in $Y$. The matrix $[f]_{\A}$ is constructed in a similar way, but it contains only the values associated with the arrows connecting the symbols $\A$.

For example, for the morphism $f$ shown in the figure \ref{Figure:MorphismExample} we have
\begin{center}
$[f]_{\U} = \begin{pmatrix}
s_2 & 0 \\
0 & s_4 \\
0 & s_5
\end{pmatrix}$ and $[f]_{\A} = \begin{pmatrix}
s_1 & 0 & 0 \\
s_3 & 0 & s_6
\end{pmatrix}$.
\end{center}

\subsubsection{Morphisms composition}

Let $X = \sum\limits_{i = 1}^{n} x_i$, $Y = \sum\limits_{j = 1}^{m} y_j$ and $Z = \sum\limits_{k = 1}^{l} z_k$ be three objects. Let $f\in Hom (X, Y)$ be a morphism from $X$ to $Y$, $g\in Hom (Y, Z)$ be a morphism from $Y$ to $Z$. Then the composition $f\circ g$ is a morphism from $X$ to $Z$ defined by the following matrices:
\begin{center}
$[f\circ g]_{\U} = [g]_{\U} \cdot [f]_{\U}$ and $[f\circ g]_{\A} = [g]_{\A}\cdot [f]_{\A}$.
\end{center}

Note that we write the composition $f\circ g$ of the morphisms $f$ and $g$ from left to right. First we apply the left morphism $f$ and then the right morphism $g$.

Equivalently, we can define the composition $f\circ g$ on diagrams. To define the value of the arrow connecting the cell $x_i$ of the object $X$ with the cell $z_k$ of the object Z, we should find all oriented two-step paths from $x_i$ to $z_k$ in diagrams of morphisms $f$ and $g$. Then, for each path, compute the product of the values associated with the arrows of the path. Finally, get the sum of the computed products (figure \ref{Figure:ExampleComposition}).

\begin{figure}[h]
\begin{center}
\begin{tikzpicture}[scale=0.5, baseline=(current bounding box.center)]
	\draw (0, 0) rectangle ++(1, 1) node[pos=0.5] {$\U$};
	\draw (0, -1) rectangle ++(1, 1) node[pos=0.5] {$\A$};
	\draw (0, -2) rectangle ++(1, 1) node[pos=0.5] {$\U$};
	\draw (0, -3) rectangle ++(1, 1) node[pos=0.5] {$\A$};
	
	\draw[-latex] (1.0, -0.5) -- (5.0, 0.5) node[above, midway] {$f_1$};
 	\draw[-latex] (1.0, -0.5) -- (5.0, -1.5) node[above, near end] {$f_2$};
 	\draw[-latex] (1.0, -1.5) -- (5.0, -2.5) node[below, midway] {$f_3$};
	
	\draw (5, 0) rectangle ++(1, 1) node[pos=0.5] {$\A$};
	\draw (5, -1) rectangle ++(1, 1) node[pos=0.5] {$\U$};
	\draw (5, -2) rectangle ++(1, 1) node[pos=0.5] {$\A$};
	\draw (5, -3) rectangle ++(1, 1) node[pos=0.5] {$\U$};
\end{tikzpicture}
\hspace{0.1cm}$\circ$\hspace{0.1cm}
\begin{tikzpicture}[scale=0.5, baseline=(current bounding box.center)]
	\draw (0, 0) rectangle ++(1, 1) node[pos=0.5] {$\A$};
	\draw (0, -1) rectangle ++(1, 1) node[pos=0.5] {$\U$};
	\draw (0, -2) rectangle ++(1, 1) node[pos=0.5] {$\A$};
	\draw (0, -3) rectangle ++(1, 1) node[pos=0.5] {$\U$};
	
	\draw[-latex] (1.0, 0.5) -- (5.0, 0.5) node[below, midway] {$g_1$};
 	\draw[-latex] (1.0, -1.5) -- (5.0, 0.5) node[above, very near start] {$g_2$};
 	\draw[-latex] (1.0, -2.5) -- (5.0, -0.5) node[below, midway] {$g_3$};
	
	\draw (5, 0) rectangle ++(1, 1) node[pos=0.5] {$\A$};
	\draw (5, -1) rectangle ++(1, 1) node[pos=0.5] {$\U$};
\end{tikzpicture}
\hspace{0.1cm}$=$\hspace{0.1cm}
\begin{tikzpicture}[scale=0.5, baseline=(current bounding box.center)]
	\draw (0, 0) rectangle ++(1, 1) node[pos=0.5] {$\U$};
	\draw (0, -1) rectangle ++(1, 1) node[pos=0.5] {$\A$};
	\draw (0, -2) rectangle ++(1, 1) node[pos=0.5] {$\U$};
	\draw (0, -3) rectangle ++(1, 1) node[pos=0.5] {$\A$};
	
	\draw[-latex] (1.0, -0.5) -- (6.0, 0.5) node[above, midway, yshift=2.0] {$f_1 g_1 + f_2 g_2$};
 	\draw[-latex] (1.0, -1.5) -- (6.0, -0.5) node[below, midway] {$f_3\cdot g_3$};
	
	\draw (6, 0) rectangle ++(1, 1) node[pos=0.5] {$\A$};
	\draw (6, -1) rectangle ++(1, 1) node[pos=0.5] {$\U$};
\end{tikzpicture}
\end{center}
\caption{\label{Figure:ExampleComposition}Composition of two morphisms}
\end{figure}
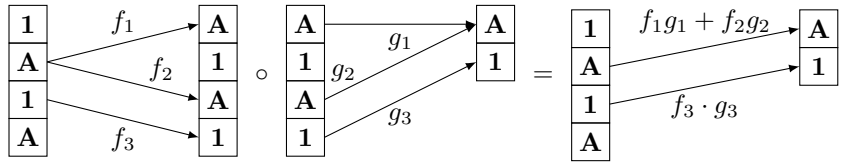

\subsubsection{Identity morphisms}

Let $X = \sum\limits_{i = 1}^{n} x_i$ be an object. Then the identity morphism $id_X$ is defined by two matrices: $[id_X]_{\U}$ is the identity matrix of size $|X|_{\U}$ and $[id_X]_{\A}$ is the identity matrix of size $|X|_{\A}$. The diagram of the identity morphism $id_X$ contains $n$ parallel arrows, with the number $1$ associated with each of them (figure \ref{Figure:ExampleIdentity}).

\begin{figure}[h]
\begin{center}
\begin{tikzpicture}[scale=0.5]
	\draw (0, 0) rectangle ++(1, 1) node[pos=0.5] {$\U$};
	\draw (0, -1) rectangle ++(1, 1) node[pos=0.5] {$\A$};
	\draw (0, -2) rectangle ++(1, 1) node[pos=0.5] {$\U$};
	\draw (0, -3) rectangle ++(1, 1) node[pos=0.5] {$\A$};
	
	\draw[-latex] (1.0, 0.5) -- (5.0, 0.5) node[above, midway] {$1$};
	\draw[-latex] (1.0, -0.5) -- (5.0, -0.5) node[above, midway] {$1$};
	\draw[-latex] (1.0, -1.5) -- (5.0, -1.5) node[above, midway] {$1$};
	\draw[-latex] (1.0, -2.5) -- (5.0, -2.5) node[above, midway] {$1$};
	
	\draw (5, 0) rectangle ++(1, 1) node[pos=0.5] {$\U$};
	\draw (5, -1) rectangle ++(1, 1) node[pos=0.5] {$\A$};
	\draw (5, -2) rectangle ++(1, 1) node[pos=0.5] {$\U$};
	\draw (5, -3) rectangle ++(1, 1) node[pos=0.5] {$\A$};
\end{tikzpicture}
\end{center}
\caption{\label{Figure:ExampleIdentity}Identity morphism $id_X$ for the object $X = \U + \A + \U + \A$}
\end{figure}

\begin{theorem}
\label{Threorm:Category}
$\E$ is the category.
\end{theorem}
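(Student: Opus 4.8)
The plan is to verify the three category axioms: that the composite of two composable morphisms is again a well-defined morphism lying in the correct $Hom$-set, that composition is associative, and that each $id_X$ is a two-sided unit for composition.

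The organising observation is that the whole structure is linear-algebraic. A morphism $f \in Hom(X, Y)$ is by definition a collection of numbers $a_i^j \in \mathbb{C}$ subject to the constraint $a_i^j = 0$ whenever $x_i \neq y_j$; hence every nonzero entry connects either two $\U$-cells or two $\A$-cells, and the data of $f$ is exactly the pair of matrices $([f]_{\U}, [f]_{\A})$. Conversely, any pair of complex matrices of sizes $|Y|_{\U} \times |X|_{\U}$ and $|Y|_{\A} \times |X|_{\A}$ determines a unique morphism. Thus $Hom(X, Y)$ is identified with a product of two matrix spaces, and, by the equivalence of the diagrammatic and matrix descriptions of composition noted above, composition corresponds to multiplying the $\U$-blocks and the $\A$-blocks separately.

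First I would check well-definedness. For $f \in Hom(X,Y)$ and $g \in Hom(Y,Z)$ the block $[f]_{\U}$ has $|Y|_{\U}$ rows while $[g]_{\U}$ has $|Y|_{\U}$ columns, so the product $[g]_{\U}\cdot[f]_{\U}$ is defined and has size $|Z|_{\U}\times|X|_{\U}$; likewise for the $\A$-block. By the identification above these two matrices specify a morphism in $Hom(X,Z)$, so $f\circ g$ is well-defined. Associativity then reduces to associativity of matrix multiplication: for composable $f,g,h$,
\[
[(f\circ g)\circ h]_{\U} = [h]_{\U}\bigl([g]_{\U}[f]_{\U}\bigr) = \bigl([h]_{\U}[g]_{\U}\bigr)[f]_{\U} = [f\circ(g\circ h)]_{\U},
\]
and identically for the $\A$-block, so the two composites agree. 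Finally, since $[id_X]_{\U}$ and $[id_X]_{\A}$ are identity matrices, $[id_X\circ f]_{\U} = [f]_{\U}[id_X]_{\U} = [f]_{\U}$ and $[f\circ id_Y]_{\U} = [id_Y]_{\U}[f]_{\U} = [f]_{\U}$, and similarly for $\A$; hence $id_X$ is a two-sided unit.

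There is no deep obstacle here: the content is entirely routine once the matrix encoding is in place. The only points requiring care are purely bookkeeping, namely keeping track of the matrix sizes so that all products are defined, and respecting the reversal of order (composition is written left to right, whereas the corresponding matrix product is taken in the opposite order). One should also confirm once and for all that the two-step-path rule on diagrams really does compute the matrix product, which is the single place where the combinatorial and the linear-algebraic pictures must be reconciled.
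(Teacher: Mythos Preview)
Your proof is correct and follows essentially the same approach as the paper: reduce both associativity and the identity axiom to the corresponding facts for matrix multiplication on the $\U$- and $\A$-blocks. You are in fact somewhat more careful than the paper, which simply asserts that both axioms ``are obvious and follow from the associativity of matrix multiplication and multiplication with the identity matrix''; your explicit check of well-definedness and size compatibility is a welcome addition but not a different idea.
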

\begin{proof}
First, we should check that the composition of morphisms is associative, i.e. for any four objects $X, Y, Z, W$ and three morphisms $f\in Hom(X, Y)$, $g\in Hom(Y, Z)$ and $h\in Hom(Z, W)$: $$(f\circ g)\circ h = f\circ (g\circ h).$$ Second, we should check that the composition with an identity morphism does not change the other morphism, i.e. for any two objects $X, Y$ and morphism $f\in Hom(X, Y)$: $$id_X\circ f = f\circ id_Y = f.$$

Both statements are obvious and follow from the associativity of matrix multiplication and multiplication with the identity matrix.
\end{proof}

\subsection{Moniodal structure}

Here we define a monoidal structure on the category $\E$. This makes this category monoidal. We will define tensor products of objects and morphisms, associativity isomorphisms, the unit object, and left and right unit isomorphisms. It turns out that the category $\E$ is not strict. It contains non-identity associativity isomorphisms.

\subsubsection{Tensor product of objects}

Let $X, Y\in I$ be simple objects. Define the tensor product $X\otimes Y$ as follows
\begin{center}
$\begin{array}{l}
\U\otimes \U = \U, \\
\U \otimes \A = \A, \\
\A\otimes \U = \A, \\
\A\otimes \A = \U + \A.
\end{array}$
\end{center}

Extends the defined tensor product of simple objects to all objects of the category $\E$ by linearity:
\begin{center}
$(X_1 + X_2)\otimes Y = X_1\otimes Y + X_2\otimes Y$ for $X_1, X_2, Y\in I$
\end{center}
and 
\begin{center}
$X\otimes (Y_1 + Y_2) = X\otimes Y_1 + X\otimes Y_2$ for $X, Y_1, Y_2\in I$.
\end{center}

Since objects of the category $\E$ are represented by non-commutative sums, we should define the order of the summands precisely. If the first and second elements in the product are not simple objects, we define this order as follows:
\begin{center}
$(X_1 + X_2)\otimes (Y_1 + Y_2) = X_1\otimes Y_1 + X_1\otimes Y_2 + X_2\otimes Y_1 + X_2\otimes Y_2$ for $X_1, X_2, Y_1, Y_2\in I.$
\end{center}

If objects $X$ or $Y$ are not simple, then the result $X\otimes Y$ is correctly defined by inductively applying the previous rules.

\begin{example}
\label{Example:ObjectsTensorProduct}
Compute $X\otimes (Y\otimes Z)$ for $X = \A + \U$, $Y = \A$ and $Z = \U + \A$:
\begin{multline*}
(\A + \U)\otimes (\A\otimes (\U + \A)) = (\A + \U)\otimes (\A\otimes \U + \A\otimes \A) = (\A + \U)\otimes (\A + \U + \A) = \\ = \A\otimes \A + \A\otimes \U + \A\otimes \A + \U\otimes \A + \U\otimes \U + \U\otimes \A = \U + \A + \A + \U + \A + \A + \U + \A.
\end{multline*}
\end{example}

\begin{remark}
In the general case $(X\otimes Y)\otimes Z \neq X\otimes (Y\otimes Z)$. But of course $|(X\otimes Y)\otimes Z|_{\U} = |X\otimes (Y\otimes Z)|_{\U}$ and $|(X\otimes Y)\otimes Z|_{\A} = |X\otimes (Y\otimes Z)|_{\A}$. In fact the objects $(X\otimes Y)\otimes Z$ and $X\otimes (Y\otimes Z)$ differ only in the order of the summands.
\end{remark}

\subsubsection{Tensor product of morphisms}

Let $X = \sum\limits_{i = 1}^{n}x_i$, $Y = \sum\limits_{j = 1}^{m}y_j$, $Z = \sum\limits_{r = 1}^{p}z_r$ and $T = \sum\limits_{s = 1}^{q} t_s$ be objects of the category $\E$, all $x_i, y_j, z_r, t_s\in I$ are simple objects. Let $f\in Hom(X, Y)$ be a morphism from $X$ to $Y$ and $g\in Hom(Z, T)$ be a morphism from $Z$ to $T$. Finally, let the morphism $f$ be defined by the values $f(x_i, y_j) = f_i^j\in \mathbb{C}$, $i\in\{1, \ldots, n\}, j\in\{1, \ldots, m\}$ and the morphism $g$ defined by the values $g(z_r, t_s) = g_r^s\in \mathbb{C}$, $r\in\{1, \ldots, p\}, s\in\{1, \ldots, q\}$.

Define the morphism $f\otimes g\in Hom (X\otimes Z, Y\otimes T)$ as follows. Consider the summands $x_i\otimes z_r$ in $X\otimes Z$ and $y_j\otimes t_s$ in $Y\otimes T$. Each of these is either one summand (if at least one of the objects is $\U$) or two summands (if both of the simple objects are $\A$). If $x_i\neq y_j$ or $z_r\neq t_s$, then define the value of the morphism $f\otimes g$, associated to all summands of $x_i\otimes z_r$ and $y_j\otimes t_s$, equal to zero. If $x_i = y_j$ and $z_r = t_s$, then $x_i\otimes z_r = y_j\otimes t_s$. If either $x_i$ or $z_r$ is $\U$, define the value of the morphism $f\otimes g$ associated with the pair $(x_i\otimes z_r, y_j\otimes t_s)$ to be $f_i^j\cdot g_r^s\in \mathbb{C}$. In the case where $x_i = z_r = \A$ (and $y_j = t_s = \A$), the result is $x_i\otimes z_r = \U + \A$ and $y_j\otimes t_s = \U + \A$. Define the values of the morphism $f\otimes g$ associated with the pairs $(\U, \U)$ and $(\A, \A)$ to be $f_i^j\cdot g_r^s\in\mathbb{C}$ and those associated with the other two pairs to be zero.

The tensor product of morphisms can be described on diagrams. We will draw the first morphism $f\in Hom(X, Y)$ at the top, and the second morphism $g\in Hom (Z, T)$ at the bottom (figure \ref{Figure:MorphismsProduct} on the left). Choose a non-zero arrow in the diagram for $f$ and a non-zero arrow in the digram for $g$. Let the values associated with these arrows be $f_i^j$ and $g_r^s$ respectively. Compute the result of the tensor product of these two arrows by the rule shown in the figure \ref{Figure:MorphismsProduct} in the middle and on the right. Do this for each pair of non-zero arrows in the diagrams for $f$ and $g$.

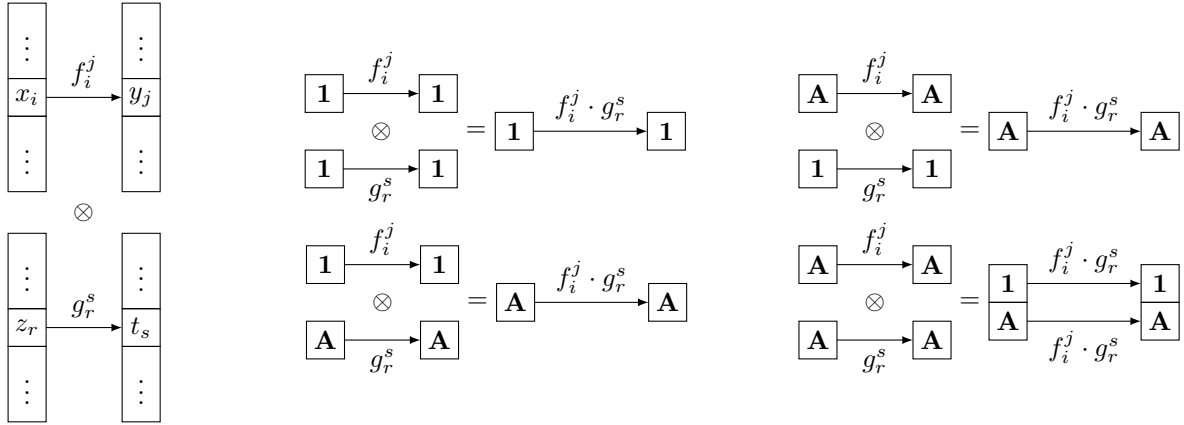
\begin{figure}[h]
\begin{center}
\ \hfill
\begin{minipage}{0.2\linewidth}
	\begin{tikzpicture}[scale=0.5]
		\draw (0, 0) rectangle ++(1, 2) node[pos=0.5] {$\vdots$};
		\draw (0, -1) rectangle ++(1, 1) node[pos=0.5] {$x_i$};
		\draw (0, -3) rectangle ++(1, 2) node[pos=0.5] {$\vdots$};
	
		\draw[-latex] (1.0, -0.5) -- (3.0, -0.5) node[above, midway] {$f_i^j$};
	
		\draw (3, 0) rectangle ++(1, 2) node[pos=0.5] {$\vdots$};
		\draw (3, -1) rectangle ++(1, 1) node[pos=0.5] {$y_j$};
		\draw (3, -3) rectangle ++(1, 2) node[pos=0.5] {$\vdots$};
	\end{tikzpicture}
	
	\begin{tikzpicture}[scale=0.5]
		\draw (0, 0) node {};
		\draw (3, 0) node {};
		\draw (1.8, 0) node {$\otimes$};
	\end{tikzpicture}
	
	\begin{tikzpicture}[scale=0.5]
		\draw (0, 0) rectangle ++(1, 2) node[pos=0.5] {$\vdots$};
		\draw (0, -1) rectangle ++(1, 1) node[pos=0.5] {$z_r$};
		\draw (0, -3) rectangle ++(1, 2) node[pos=0.5] {$\vdots$};
	
		\draw[-latex] (1.0, -0.5) -- (3.0, -0.5) node[above, midway] {$g_r^s$};
	
		\draw (3, 0) rectangle ++(1, 2) node[pos=0.5] {$\vdots$};
		\draw (3, -1) rectangle ++(1, 1) node[pos=0.5] {$t_s$};
		\draw (3, -3) rectangle ++(1, 2) node[pos=0.5] {$\vdots$};
	\end{tikzpicture}
\end{minipage}
\hfill
\begin{minipage}{0.35\linewidth}
	\begin{tikzpicture}[scale=0.5]
		\draw (0, 0) rectangle ++(1, 1) node[pos=0.5] {$\U$};
		\draw[-latex] (1.0, 0.5) -- (3.0, 0.5) node[above, midway] {$f_i^j$};
		\draw (3, 0) rectangle ++(1, 1) node[pos=0.5] {$\U$};
		
		\draw (2.0, -0.5) node {$\otimes$};
		
		\draw (0.0, -2.0) rectangle ++(1, 1) node[pos=0.5] {$\U$};
		\draw[-latex] (1.0, -1.5) -- (3.0, -1.5) node[below, midway] {$g_r^s$};
		\draw (3, -2.0) rectangle ++(1, 1) node[pos=0.5] {$\U$};
		
		\draw (4.5, -0.5) node {$=$};
		
		\draw (5, -1) rectangle ++(1, 1) node[pos=0.5] {$\U$};
		\draw[-latex] (6.0, -0.5) -- (9.0, -0.5) node[above, midway] {$f_i^j\cdot g_r^s$};
		\draw (9, -1) rectangle ++(1, 1) node[pos=0.5] {$\U$};
	\end{tikzpicture}
	
	\begin{tikzpicture}[scale=0.5]
		\draw (0, 0) rectangle ++(1, 1) node[pos=0.5] {$\U$};
		\draw[-latex] (1.0, 0.5) -- (3.0, 0.5) node[above, midway] {$f_i^j$};
		\draw (3, 0) rectangle ++(1, 1) node[pos=0.5] {$\U$};
		
		\draw (2.0, -0.5) node {$\otimes$};
		
		\draw (0.0, -2.0) rectangle ++(1, 1) node[pos=0.5] {$\A$};
		\draw[-latex] (1.0, -1.5) -- (3.0, -1.5) node[below, midway] {$g_r^s$};
		\draw (3, -2.0) rectangle ++(1, 1) node[pos=0.5] {$\A$};
		
		\draw (4.5, -0.5) node {$=$};
		
		\draw (5, -1) rectangle ++(1, 1) node[pos=0.5] {$\A$};
		\draw[-latex] (6.0, -0.5) -- (9.0, -0.5) node[above, midway] {$f_i^j\cdot g_r^s$};
		\draw (9, -1) rectangle ++(1, 1) node[pos=0.5] {$\A$};
	\end{tikzpicture}
\end{minipage}
\hfill
\begin{minipage}{0.35\linewidth}
	\begin{tikzpicture}[scale=0.5]
		\draw (0, 0) rectangle ++(1, 1) node[pos=0.5] {$\A$};
		\draw[-latex] (1.0, 0.5) -- (3.0, 0.5) node[above, midway] {$f_i^j$};
		\draw (3, 0) rectangle ++(1, 1) node[pos=0.5] {$\A$};
		
		\draw (2.0, -0.5) node {$\otimes$};
		
		\draw (0.0, -2.0) rectangle ++(1, 1) node[pos=0.5] {$\U$};
		\draw[-latex] (1.0, -1.5) -- (3.0, -1.5) node[below, midway] {$g_r^s$};
		\draw (3, -2.0) rectangle ++(1, 1) node[pos=0.5] {$\U$};
		
		\draw (4.5, -0.5) node {$=$};
		
		\draw (5, -1) rectangle ++(1, 1) node[pos=0.5] {$\A$};
		\draw[-latex] (6.0, -0.5) -- (9.0, -0.5) node[above, midway] {$f_i^j\cdot g_r^s$};
		\draw (9, -1) rectangle ++(1, 1) node[pos=0.5] {$\A$};
	\end{tikzpicture}
	
	\begin{tikzpicture}[scale=0.5]
		\draw (0, 0) rectangle ++(1, 1) node[pos=0.5] {$\A$};
		\draw[-latex] (1.0, 0.5) -- (3.0, 0.5) node[above, midway] {$f_i^j$};
		\draw (3, 0) rectangle ++(1, 1) node[pos=0.5] {$\A$};
		
		\draw (2.0, -0.5) node {$\otimes$};
		
		\draw (0.0, -2.0) rectangle ++(1, 1) node[pos=0.5] {$\A$};
		\draw[-latex] (1.0, -1.5) -- (3.0, -1.5) node[below, midway] {$g_r^s$};
		\draw (3, -2.0) rectangle ++(1, 1) node[pos=0.5] {$\A$};
		
		\draw (4.5, -0.5) node {$=$};
		
		\draw (5, -0.5) rectangle ++(1, 1) node[pos=0.5] {$\U$};
		\draw (5, -1.5) rectangle ++(1, 1) node[pos=0.5] {$\A$};
		\draw[-latex] (6.0, 0) -- (9.0, 0) node[above, midway] {$f_i^j\cdot g_r^s$};
		\draw[-latex] (6.0, -1) -- (9.0, -1) node[below, midway] {$f_i^j\cdot g_r^s$};
		\draw (9, -0.5) rectangle ++(1, 1) node[pos=0.5] {$\U$};
		\draw (9, -1.5) rectangle ++(1, 1) node[pos=0.5] {$\A$};
	\end{tikzpicture}
\end{minipage}
\hfill\ 
\end{center}
\caption{\label{Figure:MorphismsProduct}Tensor product of morphism diagrams (left), results of the tensor product of morphisms between simple objects (centre and right)}
\end{figure}

\begin{example}
\label{Example:MorphismsProduct}
Let the morphism $f\in Hom(\A + \U + \A, 1 + \A)$ be defined by matrices $$[f]_{\U} = \begin{pmatrix}
f_2
\end{pmatrix}, [f]_{\A} = \begin{pmatrix}
f_1 & 0
\end{pmatrix},$$
and let the morphism $g\in Hom(\U + \A, \U + \A)$ be defined by matrices $$[g]_{\U} = \begin{pmatrix}
g_1
\end{pmatrix}, [g]_{\A} = \begin{pmatrix}
g_2
\end{pmatrix}.
$$

Diagrams of these morphisms are shown in the figure \ref{Figure:ExampleMorphismProduct} on the left.

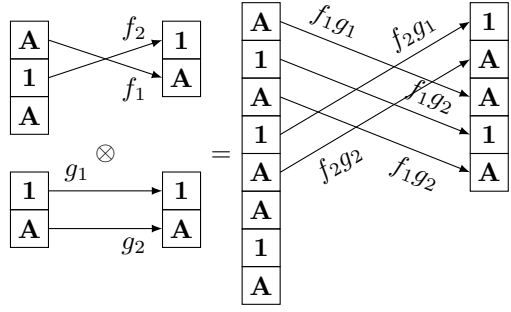
\begin{figure}[h]
\begin{center}
\begin{tikzpicture}[scale=0.5, baseline={([yshift=-2.2ex]current bounding box.center)}]
	\draw (0, -1) rectangle ++(1, 1) node[pos=0.5] {$\A$};
	\draw (0, -2) rectangle ++(1, 1) node[pos=0.5] {$\U$};
	\draw (0, -3) rectangle ++(1, 1) node[pos=0.5] {$\A$};
	
	\draw (4, -1) rectangle ++(1, 1) node[pos=0.5] {$\U$};
	\draw (4, -2) rectangle ++(1, 1) node[pos=0.5] {$\A$};
	
	\draw[-latex] (1.0, -0.5) -- (4, -1.5) node[below, near end] {$f_1$};
	\draw[-latex] (1.0, -1.5) -- (4, -0.5) node[above, near end] {$f_2$};
	\draw[-latex] (1.0, -4.5) -- (4, -4.5) node[above, near start] {$g_1$};
	\draw[-latex] (1.0, -5.5) -- (4, -5.5) node[below, near end] {$g_2$};
	
	\draw (2.5, -3.5) node {$\otimes$};
	
	\draw (0, -5) rectangle ++(1, 1) node[pos=0.5] {$\U$};
	\draw (0, -6) rectangle ++(1, 1) node[pos=0.5] {$\A$};
	
	\draw (4, -5) rectangle ++(1, 1) node[pos=0.5] {$\U$};
	\draw (4, -6) rectangle ++(1, 1) node[pos=0.5] {$\A$};
\end{tikzpicture}
$=$
\begin{tikzpicture}[scale=0.5, baseline={([yshift=-1.2ex]current bounding box.center)}]
	
	\draw (7, -1) rectangle ++(1, 1) node[pos=0.5] {$\A$};
	\draw (7, -2) rectangle ++(1, 1) node[pos=0.5] {$\U$};
	\draw (7, -3) rectangle ++(1, 1) node[pos=0.5] {$\A$};
	\draw (7, -4) rectangle ++(1, 1) node[pos=0.5] {$\U$};
	\draw (7, -5) rectangle ++(1, 1) node[pos=0.5] {$\A$};
	\draw (7, -6) rectangle ++(1, 1) node[pos=0.5] {$\A$};
	\draw (7, -7) rectangle ++(1, 1) node[pos=0.5] {$\U$};
	\draw (7, -8) rectangle ++(1, 1) node[pos=0.5] {$\A$};
	
	\draw (13, -1) rectangle ++(1, 1) node[pos=0.5] {$\U$};
	\draw (13, -2) rectangle ++(1, 1) node[pos=0.5] {$\A$};
	\draw (13, -3) rectangle ++(1, 1) node[pos=0.5] {$\A$};
	\draw (13, -4) rectangle ++(1, 1) node[pos=0.5] {$\U$};
	\draw (13, -5) rectangle ++(1, 1) node[pos=0.5] {$\A$};
	
	\draw[-latex] (8, -0.5) -- (13, -2.5) node[above, near start, sloped] {$f_1 g_1$};
	\draw[-latex] (8, -1.5) -- (13, -3.5) node[above, near end, sloped] {$f_1 g_2$};
	\draw[-latex] (8, -2.5) -- (13, -4.5) node[below, near end, sloped] {$f_1 g_2$};
	
	\draw[-latex] (8, -3.5) -- (13, -0.5) node[above, near end, sloped] {$f_2 g_1$};
	\draw[-latex] (8, -4.5) -- (13, -1.5) node[below, near start, sloped] {$f_2 g_2$};
\end{tikzpicture}
\end{center}
\caption{\label{Figure:ExampleMorphismProduct} Example of a tensor product of two morphisms}
\end{figure}

The result of the tensor product $f\otimes g$ is a morphism from $(\A + \U + \A)\otimes (\U + \A) = \A + \U + \A + \U + \A + \A + \U + \A$ to $(\U + \A)\otimes (\U + \A) = \U + \A + \A + \U + \A$, defined by matrices $$ [f\otimes g]_{\U} = \begin{pmatrix}
0 & f_2 g_1 & 0 \\
f_1 g_2 & 0 & 0
\end{pmatrix}, [f\otimes g]_{\A} = \begin{pmatrix}
0 & 0 & f_2 g_2 & 0 & 0 \\
f_1 g_1 & 0 & 0 & 0 & 0 \\
0 & f_1 g_2 & 0 & 0 & 0
\end{pmatrix}.
$$

Diagram of the morphism $f\otimes g$ shown in the figure \ref{Figure:ExampleMorphismProduct} on the right.
\end{example}

\begin{lemma}
\label{Lemma:MorphismTensorProduct}
Let $X, Y, Z, X', Y', Z'$ be objects of the category $\E$, and let $f\in Hom(X, Y)$, $g\in Hom(Y, Z)$, $f'\in Hom(X', Y')$, $g'\in Hom(Y', Z')$ be morphisms. Then $$(f\otimes f')\circ (g\otimes g') = (f\circ g)\otimes (f'\circ g').$$
\end{lemma}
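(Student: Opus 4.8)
The plan is to reduce the claimed identity to two matrix identities, one for the $\U$-matrices and one for the $\A$-matrices, and then to recognise each as an instance of the mixed-product rule $(A\cdot C)\otimes_{\mathrm{Kr}}(B\cdot D)=(A\otimes_{\mathrm{Kr}}B)\cdot(C\otimes_{\mathrm{Kr}}D)$, where $\otimes_{\mathrm{Kr}}$ denotes the Kronecker product of matrices. Both sides of the asserted equality are morphisms in $Hom(X\otimes X', Z\otimes Z')$ — they have the same source and the same target, since $X\otimes X'$ and $Z\otimes Z'$ are determined by the objects alone — so it suffices to check that they assign equal matrices, i.e. $[(f\otimes f')\circ(g\otimes g')]_{\U}=[(f\circ g)\otimes(f'\circ g')]_{\U}$ and likewise for $\A$.

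The key step is to record the block structure of a tensor product of morphisms. I would index each cell of a tensor product $X\otimes X'$ by a cell $i$ of $X$, a cell $r$ of $X'$, and, when $x_i=x'_r=\A$, an extra label selecting the $\U$- or $\A$-summand of $\U+\A$; the resulting five possibilities ($\U\otimes\U$, $\U\otimes\A$, $\A\otimes\U$, and the two summands of $\A\otimes\A$) I call the \emph{type} of the cell. The definition of $f\otimes f'$ (the arrow rule of figure \ref{Figure:MorphismsProduct}) says precisely that $f\otimes f'$ vanishes between cells of different type and acts within each type by $(i,r)\mapsto(j,s)$ with value $f_i^j\cdot {f'}^{s}_{r}$. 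Hence, after grouping cells by type, $[f\otimes f']_{\U}$ is block-diagonal with blocks $[f]_{\U}\otimes_{\mathrm{Kr}}[f']_{\U}$ and $[f]_{\A}\otimes_{\mathrm{Kr}}[f']_{\A}$, while $[f\otimes f']_{\A}$ is block-diagonal with blocks $[f]_{\U}\otimes_{\mathrm{Kr}}[f']_{\A}$, $[f]_{\A}\otimes_{\mathrm{Kr}}[f']_{\U}$ and $[f]_{\A}\otimes_{\mathrm{Kr}}[f']_{\A}$; the same holds verbatim for $g\otimes g'$ and for $(f\circ g)\otimes(f'\circ g')$, with $\U$-cells of every tensor product splitting into the same two types and $\A$-cells into the same three types.

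With this structure in hand the conclusion is immediate. Because the type-splitting is the same on $X\otimes X'$, $Y\otimes Y'$ and $Z\otimes Z'$, the product $[g\otimes g']_{\U}\cdot[f\otimes f']_{\U}$ (which, by the composition rule, equals $[(f\otimes f')\circ(g\otimes g')]_{\U}$) is itself block-diagonal and multiplies block by block; on the first block the mixed-product rule gives $([g]_{\U}\otimes_{\mathrm{Kr}}[g']_{\U})([f]_{\U}\otimes_{\mathrm{Kr}}[f']_{\U})=([g]_{\U}[f]_{\U})\otimes_{\mathrm{Kr}}([g']_{\U}[f']_{\U})=[f\circ g]_{\U}\otimes_{\mathrm{Kr}}[f'\circ g']_{\U}$, which is exactly the matching block of $[(f\circ g)\otimes(f'\circ g')]_{\U}$, and similarly on the second. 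The $\A$-matrices are handled identically block by block. Equivalently, one may bypass matrices and verify the equality entry-wise: summing a product $f_i^j{f'}^{s}_{r}\cdot g_j^k{g'}^{u}_{s}$ over all intermediate cells of a fixed type factors as $\big(\sum_j f_i^j g_j^k\big)\big(\sum_s {f'}^{s}_{r}{g'}^{u}_{s}\big)$, i.e. the product of the two composite entries.

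I expect the only real work to be the bookkeeping in the middle paragraph, and in particular the $\A\otimes\A=\U+\A$ case: here a single scalar $f_i^j\cdot{f'}^{s}_{r}$ is attached to two different output summands, so one must check that the $\U$-summand is routed into $[f\otimes f']_{\U}$ and the $\A$-summand into $[f\otimes f']_{\A}$ (both through the block $[f]_{\A}\otimes_{\mathrm{Kr}}[f']_{\A}$), and that the intermediate sum in the composition respects this splitting. Once the type-splitting is set up consistently on all three objects, no further subtlety arises, and the identity follows from the interchange of matrix multiplication with the Kronecker product.
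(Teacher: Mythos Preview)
Your argument is correct. The block-by-type decomposition you describe is exactly what makes the reduction to simple components work, and once the five types are set up consistently on $X\otimes X'$, $Y\otimes Y'$ and $Z\otimes Z'$, the mixed-product rule for Kronecker products finishes the job. One small presentational point: the cells of $X\otimes X'$ in the paper's ordering are interleaved across types, so the matrices are block-diagonal only after a permutation; since the same permutation is applied on both sides this is harmless, but it is worth saying explicitly.

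The paper's own proof is much shorter and takes a slightly different route. Rather than exhibit the Kronecker block structure, it simply asserts that it suffices to check the identity when $X,Y,Z,X',Y',Z'$ are all simple (implicitly using the additivity of $Hom$, composition and $\otimes$ under the direct-sum decomposition into simples), observes that every case with a $\U$ present is trivial, and then verifies the single remaining case $X=Y=Z=X'=Y'=Z'=\A$ by a one-line diagrammatic computation. Your approach has the virtue of making that ``reduction to simples'' step completely explicit, and it identifies the underlying algebraic mechanism (the Kronecker mixed-product rule) that drives it; the paper's approach is terser and more in the diagrammatic spirit of the rest of the section. Both amount to the same idea viewed at different levels of abstraction.
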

\begin{proof}
It's enough to prove the lemma only for simple objects $X, Y, Z, X', Y', Z'$. The only non-trivial case is when $X=Y=Z=\A$ and also $X'=Y'=Z'=\A$. The diagram for the left side of the lemma statement is shown in the figure \ref{Figure:MorphismsProveLeft}.

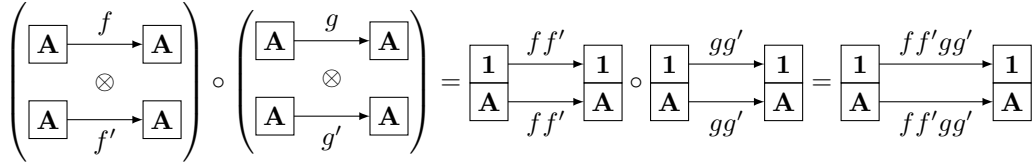
\begin{figure}[h]
$$\left(
\begin{tikzpicture}[scale=0.5, baseline={([yshift=-.5ex]current bounding box.center)}]
	\draw (0, -1) rectangle ++(1, 1) node[pos=0.5] {$\A$};
	\draw (3, -1) rectangle ++(1, 1) node[pos=0.5] {$\A$};
	\draw[-latex] (1, -0.5) -- (3, -0.5) node[above, midway] {$f$};
	
	\draw (2, -1.5) node {$\otimes$};
	
	\draw (0, -3) rectangle ++(1, 1) node[pos=0.5] {$\A$};
	\draw (3, -3) rectangle ++(1, 1) node[pos=0.5] {$\A$};
	\draw[-latex] (1, -2.5) -- (3, -2.5) node[below, midway] {$f'$};
\end{tikzpicture}
\right) 
\circ
\left(
\begin{tikzpicture}[scale=0.5, baseline={([yshift=-.5ex]current bounding box.center)}]
	\draw (0, -1) rectangle ++(1, 1) node[pos=0.5] {$\A$};
	\draw (3, -1) rectangle ++(1, 1) node[pos=0.5] {$\A$};
	\draw[-latex] (1, -0.5) -- (3, -0.5) node[above, midway] {$g$};
	
	\draw (2, -1.5) node {$\otimes$};
	
	\draw (0, -3) rectangle ++(1, 1) node[pos=0.5] {$\A$};
	\draw (3, -3) rectangle ++(1, 1) node[pos=0.5] {$\A$};
	\draw[-latex] (1, -2.5) -- (3, -2.5) node[below, midway] {$g'$};
\end{tikzpicture}
\right) 
= 
\begin{tikzpicture}[scale=0.5, baseline={([yshift=-.5ex]current bounding box.center)}]
	\draw (0, -1) rectangle ++(1, 1) node[pos=0.5] {$\U$};
	\draw (0, -2) rectangle ++(1, 1) node[pos=0.5] {$\A$};
	\draw (3, -1) rectangle ++(1, 1) node[pos=0.5] {$\U$};
	\draw (3, -2) rectangle ++(1, 1) node[pos=0.5] {$\A$};
	\draw[-latex] (1, -0.5) -- (3, -0.5) node[above, midway] {$f f'$};
	\draw[-latex] (1, -1.5) -- (3, -1.5) node[below, midway] {$f f'$};
\end{tikzpicture}
\circ
\begin{tikzpicture}[scale=0.5, baseline={([yshift=-.5ex]current bounding box.center)}]
	\draw (0, -1) rectangle ++(1, 1) node[pos=0.5] {$\U$};
	\draw (0, -2) rectangle ++(1, 1) node[pos=0.5] {$\A$};
	\draw (3, -1) rectangle ++(1, 1) node[pos=0.5] {$\U$};
	\draw (3, -2) rectangle ++(1, 1) node[pos=0.5] {$\A$};
	\draw[-latex] (1, -0.5) -- (3, -0.5) node[above, midway] {$g g'$};
	\draw[-latex] (1, -1.5) -- (3, -1.5) node[below, midway] {$g g'$};
\end{tikzpicture}
=
\begin{tikzpicture}[scale=0.5, baseline={([yshift=-.5ex]current bounding box.center)}]
	\draw (0, -1) rectangle ++(1, 1) node[pos=0.5] {$\U$};
	\draw (0, -2) rectangle ++(1, 1) node[pos=0.5] {$\A$};
	\draw (4, -1) rectangle ++(1, 1) node[pos=0.5] {$\U$};
	\draw (4, -2) rectangle ++(1, 1) node[pos=0.5] {$\A$};
	\draw[-latex] (1, -0.5) -- (4, -0.5) node[above, midway] {$f f' g g'$};
	\draw[-latex] (1, -1.5) -- (4, -1.5) node[below, midway] {$f f' g g'$};
\end{tikzpicture}
$$
\caption{\label{Figure:MorphismsProveLeft}Diagram of the left hand side of the statement of the lemma \ref{Lemma:MorphismTensorProduct}}
\end{figure}

The digram for the right hand side of the lemma statement is shown in the figure \ref{Figure:MorphismsProveRight}.

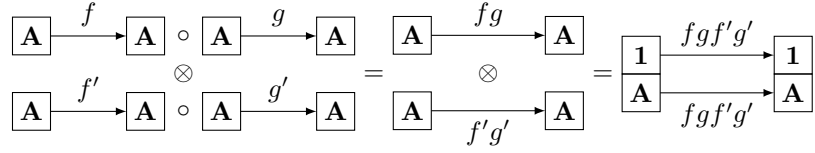
\begin{figure}[h]
$$
\begin{tikzpicture}[scale=0.5, baseline={([yshift=-1.5ex]current bounding box.center)}]
	\draw (0, -1) rectangle ++(1, 1) node[pos=0.5] {$\A$};
	\draw (3, -1) rectangle ++(1, 1) node[pos=0.5] {$\A$};
	\draw (5, -1) rectangle ++(1, 1) node[pos=0.5] {$\A$};
	\draw (8, -1) rectangle ++(1, 1) node[pos=0.5] {$\A$};
	
	\draw[-latex] (1, -0.5) -- (3, -0.5) node[above, midway] {$f$};
	\draw[-latex] (6, -0.5) -- (8, -0.5) node[above, midway] {$g$};
	
	\draw (4.5, -0.5) node {$\circ$};
	
	\draw (4.5, -1.5) node {$\otimes$};
	
	\draw (0, -3) rectangle ++(1, 1) node[pos=0.5] {$\A$};
	\draw (3, -3) rectangle ++(1, 1) node[pos=0.5] {$\A$};
	\draw (5, -3) rectangle ++(1, 1) node[pos=0.5] {$\A$};
	\draw (8, -3) rectangle ++(1, 1) node[pos=0.5] {$\A$};
	
	\draw[-latex] (1, -2.5) -- (3, -2.5) node[above, midway] {$f'$};
	\draw[-latex] (6, -2.5) -- (8, -2.5) node[above, midway] {$g'$};
	
	\draw (4.5, -2.5) node {$\circ$};
\end{tikzpicture}
=
\begin{tikzpicture}[scale=0.5, baseline={([yshift=-.5ex]current bounding box.center)}]
	\draw (0, -1) rectangle ++(1, 1) node[pos=0.5] {$\A$};
	\draw (4, -1) rectangle ++(1, 1) node[pos=0.5] {$\A$};
	
	\draw[-latex] (1, -0.5) -- (4, -0.5) node[above, midway] {$fg$};
	
	\draw (2.5, -1.5) node {$\otimes$};
	
	\draw (0, -3) rectangle ++(1, 1) node[pos=0.5] {$\A$};
	\draw (4, -3) rectangle ++(1, 1) node[pos=0.5] {$\A$};
	
	\draw[-latex] (1, -2.5) -- (4, -2.5) node[below, midway] {$f'g'$};
\end{tikzpicture}
=
\begin{tikzpicture}[scale=0.5, baseline={([yshift=-.5ex]current bounding box.center)}]
	\draw (0, -1) rectangle ++(1, 1) node[pos=0.5] {$\U$};
	\draw (0, -2) rectangle ++(1, 1) node[pos=0.5] {$\A$};
	\draw (4, -1) rectangle ++(1, 1) node[pos=0.5] {$\U$};
	\draw (4, -2) rectangle ++(1, 1) node[pos=0.5] {$\A$};
	\draw[-latex] (1, -0.5) -- (4, -0.5) node[above, midway] {$f g f' g'$};
	\draw[-latex] (1, -1.5) -- (4, -1.5) node[below, midway] {$f g f' g'$};
\end{tikzpicture}
$$
\caption{\label{Figure:MorphismsProveRight}Diagram of the right hand side of the statement of the lemma \ref{Lemma:MorphismTensorProduct}}
\end{figure}

Both morphisms are the same.
\end{proof}

\begin{lemma}
\label{Lemma:IdentityMorphismsProduct}
Let $X, Y$ be objects of the category $\E$. Then $id_{X\otimes Y} = id_X\otimes id_Y$.
\end{lemma}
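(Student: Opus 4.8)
The plan is to mirror the strategy of Lemma \ref{Lemma:MorphismTensorProduct}: first reduce to the case of simple objects, then check the finitely many simple cases directly, and finally reassemble the general statement.

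First I would record how identity morphisms behave under the decomposition of objects into sums. If $X = X_1 + X_2$, then by definition the matrices $[id_X]_{\U}$ and $[id_X]_{\A}$ are the identity matrices of sizes $|X|_{\U} = |X_1|_{\U} + |X_2|_{\U}$ and $|X|_{\A} = |X_1|_{\A} + |X_2|_{\A}$, and these split block-diagonally into the identity matrices defining $id_{X_1}$ and $id_{X_2}$. Since the tensor product of morphisms is defined summand by summand, it distributes over sums of objects exactly as $\otimes$ does on the objects themselves; hence both sides of the claimed equality decompose compatibly over sums in $X$ and in $Y$. Therefore it suffices to establish $id_{X\otimes Y} = id_X\otimes id_Y$ when $X, Y\in I$ are simple.

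Next I would check the four simple cases. In the three cases where at least one of $X, Y$ equals $\U$, the product $X\otimes Y$ is a single simple object, each of $id_X$ and $id_Y$ consists of one arrow labelled $1$, and the tensor-product rule (figure \ref{Figure:MorphismsProduct}, centre and right-top) produces the single arrow labelled $1\cdot 1 = 1$, which is exactly $id_{X\otimes Y}$. The only substantive case is $X = Y = \A$, where $\A\otimes\A = \U + \A$. Here the rightmost rule of figure \ref{Figure:MorphismsProduct} applies: the pair of arrows labelled $1$ yields the two diagonal arrows $(\U,\U)$ and $(\A,\A)$ labelled $1\cdot 1 = 1$, while the remaining two pairs are labelled $0$. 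Thus $[id_\A\otimes id_\A]_{\U} = (1)$ and $[id_\A\otimes id_\A]_{\A} = (1)$, which is precisely $id_{\U+\A} = id_{\A\otimes\A}$.

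Finally I would reassemble: combining the reduction of the first step with the verified simple cases gives the equality for arbitrary $X$ and $Y$. I expect the only real obstacle to be bookkeeping — one must confirm that the ordering of summands induced on $X\otimes Y$ agrees when it is read as the domain and as the codomain of $id_X\otimes id_Y$, and that this ordering is the same one used to write $id_{X\otimes Y}$. But because every nonzero arrow of $id_X\otimes id_Y$ sends a summand of $X\otimes Y$ to the identically placed summand with label $1$, the two diagonal matrices coincide entry for entry, and the bookkeeping resolves automatically.
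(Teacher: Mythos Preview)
Your proposal is correct and amounts to a careful unpacking of exactly the verification the paper has in mind; the paper's own proof simply declares the statement ``obvious'' without further comment. Your reduction to simple objects and the case check for $\A\otimes\A$ are the natural way to make that obviousness explicit.
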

\begin{proof}
The statement of the lemma is obvious.
\end{proof}

\subsubsection{Associativity isomorphisms}

We should define the family of isomorphisms $\alpha_{X, Y, Z}\in Hom((X\otimes Y)\otimes Z, X\otimes (Y\otimes Z))$ for each triple of objects $X, Y, Z$ in the category $\E$. Consider the case where all objects $X, Y, Z$ are simple objects. If one of them is equal to $\U$, then define $\alpha_{X, Y, Z} = id_{(X\otimes Y)\otimes Z}$. If $X = Y = Z = \A$, then $\alpha_{\A, \A, \A}\in Hom(\A + \U + \A, \A + \U + \A)$ and define $\alpha_{\A, \A, \A}$ by two matrices $$
[\alpha_{\A, \A, \A}]_{\U} = \begin{pmatrix}
1
\end{pmatrix},
[\alpha_{\A, \A, \A}]_{\A} = \begin{pmatrix}
\frac{1}{\varepsilon} & \frac{x}{\sqrt{\varepsilon}} \\
\frac{1}{x\sqrt{\varepsilon}} & -\frac{1}{\varepsilon}
\end{pmatrix},
$$
where $\varepsilon$ is any root of the equation $\varepsilon^2 = \varepsilon + 1$, and $x\in\mathbb{C}$ is any non-zero complex number.

\begin{remark}
\label{Remark:InverseAssociator}
The morphism $\alpha_{\A, \A, \A}$ is an isomorphism. The inverse morphism $\alpha_{\A, \A, \A}^{-1}\in Hom(\A\otimes (\A\otimes \A), (\A\otimes \A)\otimes \A)$ defined by the same matrices $$
[\alpha_{\A, \A, \A}^{-1}]_{\U} = \begin{pmatrix}
1
\end{pmatrix}, [\alpha_{\A, \A, \A}^{-1}]_{\A} = \begin{pmatrix}
\frac{1}{\varepsilon} & \frac{x}{\sqrt{\varepsilon}} \\
\frac{1}{x\sqrt{\varepsilon}} & -\frac{1}{\varepsilon}
\end{pmatrix}.
$$

It follows from the fact that $$
\begin{pmatrix}
\frac{1}{\varepsilon} & \frac{x}{\sqrt{\varepsilon}} \\
\frac{1}{x\sqrt{\varepsilon}} & -\frac{1}{\varepsilon}
\end{pmatrix} \cdot \begin{pmatrix}
\frac{1}{\varepsilon} & \frac{x}{\sqrt{\varepsilon}} \\
\frac{1}{x\sqrt{\varepsilon}} & -\frac{1}{\varepsilon}
\end{pmatrix} = \begin{pmatrix}
\frac{1}{\varepsilon^2} + \frac{1}{\varepsilon} & 0 \\
0 & \frac{1}{\varepsilon^2} + \frac{1}{\varepsilon}
\end{pmatrix} = \begin{pmatrix}
1 & 0 \\
0 & 1
\end{pmatrix}.
$$
\end{remark}

Extend defined associativity isomorphisms to all objects of the category $\E$ by linearity. This means the following. Let $X_1, X_2, Y, Z$ be simple objects. Let the object $((X_1 + X_2)\otimes Y)\otimes Z$ be represented as a non-commutative sum of the simple objects $\U, \A$. Some summands in this sum correspond to the result $(X_1\otimes Y)\otimes Z$, and other summands correspond to the result $(X_2\otimes Y)\otimes Z$. Define the isomorphism $\alpha_{X_1 + X_2, Y, Z}$ as follows: it is equal to the corresponding value of $\alpha_{X_1, Y, Z}$ for summands corresponding to $(X_1\otimes Y)\otimes Z$, and it is equal to the corresponding value of $\alpha_{X_2, Y, Z}$ for all other summands. Similarly, define the associativity isomorphisms $\alpha_{X, Y_1 + Y_2, Z}$ for $X, Y_1, Y_2, Z\in I$ and $\alpha_{X, Y, Z_1 + Z_2}$ for $X, Y, Z_1, Z_2\in I$. The associativity isomorphism $\alpha_{X, Y, Z}$ for any three objects $X, Y, Z$ defined by recursively applying these rules.

\begin{example}
\label{Example:Associator}
Isomorphism $\alpha_{\U + \A, \A, \A}\in Hom(\U + \A + \A + \U + \A, \U + \A + \A + \U + \A)$ defined by matrices $$
[\alpha_{\U + \A, \A, \A}]_{\U} = \begin{pmatrix}
1 & 0 \\
0 & 1
\end{pmatrix}, [\alpha_{\U + \A, \A, \A}]_{\A} = \begin{pmatrix}
1 & 0 & 0 \\
0 & \frac{1}{\varepsilon} & \frac{x}{\sqrt{\varepsilon}} \\
0 & \frac{1}{x\sqrt{\varepsilon}} & -\frac{1}{\varepsilon}
\end{pmatrix}.
$$

Isomorphism $\alpha_{\A, \U + \A, \A}\in Hom(\U + \A + \A + \U + \A, \U + \A + \A + \U + \A)$ defined by the same matrices $$
[\alpha_{\A, \U + \A, \A}]_{\U} = \begin{pmatrix}
1 & 0 \\
0 & 1
\end{pmatrix}, [\alpha_{\A, \U + \A, \A}]_{\A} = \begin{pmatrix}
1 & 0 & 0 \\
0 & \frac{1}{\varepsilon} & \frac{x}{\sqrt{\varepsilon}} \\
0 & \frac{1}{x\sqrt{\varepsilon}} & -\frac{1}{\varepsilon}
\end{pmatrix}.
$$

Isomorphism $\alpha_{\A, \A, \U + \A}\in Hom(\U + \A + \A + \U + \A, \U + \A + \A + \U + \A)$ defined by matrices $$
[\alpha_{\A, \A, \U + \A}]_{\U} = \begin{pmatrix}
1 & 0 \\
0 & 1
\end{pmatrix}, [\alpha_{\A, \A, \U + \A}]_{\A} = \begin{pmatrix}
0 & 1 & 0 \\
\frac{1}{\varepsilon} & 0 & \frac{x}{\sqrt{\varepsilon}} \\
\frac{1}{x\sqrt{\varepsilon}} & 0 & -\frac{1}{\varepsilon}
\end{pmatrix}.
$$

Let's explain the matrix $[\alpha_{\A, \A, \U + \A}]_{\A}$ in more detail. The isomorphism $\alpha_{\A, \A, \U + \A}$ is a morphism from the object $(\A\otimes \A)\otimes (\U + \A)$ to the object $\A\otimes (\A\otimes (\U + \A))$. Present the first object as a sum of simple objects, and use single underscores to mark summands, corresponding to $\U$, and double underscores for summands, corresponding to $\A$: $$
(\A\otimes \A)\otimes (\underline{\U} + \underline{\underline{\A}}) = (\U + \A)\otimes (\underline{\U} + \underline{\underline{\A}}) = \underline{\U} + \underline{\underline{\A}} + \underline{\A} + \underline{\underline{\U}} + \underline{\underline{\A}}.
$$

Do the same for the second object $\A\otimes (\A\otimes (\U + \A))$:
$$
\A\otimes (\A\otimes (\underline{\U} + \underline{\underline{\A}})) = \A\otimes (\underline{\A} + \underline{\underline{\U}} + \underline{\underline{\A}}) = \underline{\U} + \underline{\A} + \underline{\underline{\A}} + \underline{\underline{\U}} + \underline{\underline{\A}}.
$$

So the matrix $[\alpha_{\A, \A\, \U + \A}]_{\A}$ contains elements from two matrices. From the matrix $[\alpha_{\A, \A, \U}]_{\A}$, which is an identical matrix, the number $1$ corresponds to the second symbol $\A$ of the first object and to the first symbol $\A$ of the second object. Therefore the second column of the matrix $[\alpha_{\A, \A\, \U + \A}]_{\A}$ contains the number $1$ in the first position and the other values are zeros. From the matrix $[\alpha_{\A, \A, \A}]_{\A}$ we take all four elements and place them in the first and third columns (because in the first object the first and third symbols $\A$ are double underscored) and in the second and third rows (because in the second object the second and third symbols $\A$ are double underscored).
\end{example}

It follows from the definition that if any of the objects $X, Y, Z$ is equal to $\U$, then $\alpha_{X, Y, Z} = id_{(X\otimes Y)\otimes Z}$.

To draw diagrams of associativity isomorphisms, it's convenient to use arrows of different colours. These arrows correspond to different values. The arrow with the value $\frac{1}{\varepsilon}$ is drawn in blue, the arrow with the value $\frac{x}{\sqrt{\varepsilon}}$ in red, the arrow with the value $\frac{1}{x\sqrt{\varepsilon}}$ in green, the arrow with the value $-\frac{1}{\varepsilon}$ in orange and the arrow with the value $1$ in black (figure \ref{Figure:AssociatorArrows}).

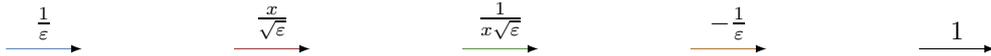
\begin{figure}[h]
\begin{center}
\ \hfill
\begin{tikzpicture}
	\draw[ma_blue] (0, 0) -- (1, 0) node[above, midway] {$\frac{1}{\varepsilon}$};
\end{tikzpicture}
\hfill
\begin{tikzpicture}
	\draw[ma_red] (0, 0) -- (1, 0) node[above, midway] {$\frac{x}{\sqrt{\varepsilon}}$};
\end{tikzpicture}
\hfill
\begin{tikzpicture}
	\draw[ma_green] (0, 0) -- (1, 0) node[above, midway] {$\frac{1}{x\sqrt{\varepsilon}}$};
\end{tikzpicture}
\hfill
\begin{tikzpicture}
	\draw[ma_orange] (0, 0) -- (1, 0) node[above, midway] {$-\frac{1}{\varepsilon}$};
\end{tikzpicture}
\hfill
\begin{tikzpicture}
	\draw[-latex] (0, 0) -- (1, 0) node[above, midway] {$1$};
\end{tikzpicture}
\hfill \ \ 
\end{center}
\caption{\label{Figure:AssociatorArrows}Different colours for arrows with different values}
\end{figure}

The figure \ref{Figure:AssociatorDiagrams} shows diagrams for associative isomorphisms $\alpha_{\A, \A, \A}$, $\alpha_{\U + \A, \A, \A}$, $\alpha_{\A, \U + \A, \A}$, $\alpha_{\A, \A, \U + \A}$, $\alpha_{\A, \A, \A}\otimes id_{\A}$ and $id_{\A}\otimes \alpha_{\A, \A, \A}$.

\begin{figure}[h]
\begin{center}
\ \hfill
\begin{tikzpicture}[scale=0.5, baseline={(current bounding box.center)}]
	\draw (0, -1) rectangle ++(1, 1) node[pos=0.5] {$\A$};
	\draw (0, -2) rectangle ++(1, 1) node[pos=0.5] {$\U$};
	\draw (0, -3) rectangle ++(1, 1) node[pos=0.5] {$\A$};
	
	\draw (3, -1) rectangle ++(1, 1) node[pos=0.5] {$\A$};
	\draw (3, -2) rectangle ++(1, 1) node[pos=0.5] {$\U$};
	\draw (3, -3) rectangle ++(1, 1) node[pos=0.5] {$\A$};
	
	\draw[ma_blue] (1, -0.5) -- (3, -0.5);
	\draw[ma_green] (1, -0.5) -- (3, -2.5);
	\draw[ma_red] (1, -2.5) -- (3, -0.5);
	\draw[ma_orange] (1, -2.5) -- (3, -2.5);
	
	\draw[ma_black] (1, -1.5) -- (3, -1.5);
\end{tikzpicture}
\hfill
\begin{tikzpicture}[scale=0.5, baseline={(current bounding box.center)}]
	\draw (0, -1) rectangle ++(1, 1) node[pos=0.5] {$\U$};
	\draw (0, -2) rectangle ++(1, 1) node[pos=0.5] {$\A$};
	\draw (0, -3) rectangle ++(1, 1) node[pos=0.5] {$\A$};
	\draw (0, -4) rectangle ++(1, 1) node[pos=0.5] {$\U$};
	\draw (0, -5) rectangle ++(1, 1) node[pos=0.5] {$\A$};
	
	\draw (3, -1) rectangle ++(1, 1) node[pos=0.5] {$\U$};
	\draw (3, -2) rectangle ++(1, 1) node[pos=0.5] {$\A$};
	\draw (3, -3) rectangle ++(1, 1) node[pos=0.5] {$\A$};
	\draw (3, -4) rectangle ++(1, 1) node[pos=0.5] {$\U$};
	\draw (3, -5) rectangle ++(1, 1) node[pos=0.5] {$\A$};
	
	\draw[ma_black] (1, -0.5) -- (3, -0.5);
	\draw[ma_black] (1, -1.5) -- (3, -1.5);
	\draw[ma_blue] (1, -2.5) -- (3, -2.5);
	\draw[ma_green] (1, -2.5) -- (3, -4.5);
	\draw[ma_red] (1, -4.5) -- (3, -2.5);
	\draw[ma_orange] (1, -4.5) -- (3, -4.5);
	
	\draw[ma_black] (1, -3.5) -- (3, -3.5);
\end{tikzpicture}
\hfill
\begin{tikzpicture}[scale=0.5, baseline={(current bounding box.center)}]
	\draw (0, -1) rectangle ++(1, 1) node[pos=0.5] {$\U$};
	\draw (0, -2) rectangle ++(1, 1) node[pos=0.5] {$\A$};
	\draw (0, -3) rectangle ++(1, 1) node[pos=0.5] {$\A$};
	\draw (0, -4) rectangle ++(1, 1) node[pos=0.5] {$\U$};
	\draw (0, -5) rectangle ++(1, 1) node[pos=0.5] {$\A$};
	
	\draw (3, -1) rectangle ++(1, 1) node[pos=0.5] {$\U$};
	\draw (3, -2) rectangle ++(1, 1) node[pos=0.5] {$\A$};
	\draw (3, -3) rectangle ++(1, 1) node[pos=0.5] {$\A$};
	\draw (3, -4) rectangle ++(1, 1) node[pos=0.5] {$\U$};
	\draw (3, -5) rectangle ++(1, 1) node[pos=0.5] {$\A$};
	
	\draw[ma_black] (1, -0.5) -- (3, -0.5);
	\draw[ma_black] (1, -2.5) -- (3, -1.5);
	\draw[ma_blue] (1, -1.5) -- (3, -2.5);
	\draw[ma_green] (1, -1.5) -- (3, -4.5);
	\draw[ma_red] (1, -4.5) -- (3, -2.5);
	\draw[ma_orange] (1, -4.5) -- (3, -4.5);
	
	\draw[ma_black] (1, -3.5) -- (3, -3.5);
\end{tikzpicture}
\hfill
\begin{tikzpicture}[scale=0.5, baseline={(current bounding box.center)}]
	\draw (0, -1) rectangle ++(1, 1) node[pos=0.5] {$\U$};
	\draw (0, -2) rectangle ++(1, 1) node[pos=0.5] {$\A$};
	\draw (0, -3) rectangle ++(1, 1) node[pos=0.5] {$\A$};
	\draw (0, -4) rectangle ++(1, 1) node[pos=0.5] {$\U$};
	\draw (0, -5) rectangle ++(1, 1) node[pos=0.5] {$\A$};
	
	\draw (3, -1) rectangle ++(1, 1) node[pos=0.5] {$\U$};
	\draw (3, -2) rectangle ++(1, 1) node[pos=0.5] {$\A$};
	\draw (3, -3) rectangle ++(1, 1) node[pos=0.5] {$\A$};
	\draw (3, -4) rectangle ++(1, 1) node[pos=0.5] {$\U$};
	\draw (3, -5) rectangle ++(1, 1) node[pos=0.5] {$\A$};
	
	\draw[ma_blue] (1, -0.5) -- (3, -0.5);
	\draw[ma_blue] (1, -1.5) -- (3, -1.5);
	\draw[ma_black] (1, -2.5) -- (3, -2.5);
	\draw[ma_orange] (1, -3.5) -- (3, -3.5);
	\draw[ma_orange] (1, -4.5) -- (3, -4.5);
	\draw[ma_green] (1, -0.5) -- (3, -3.5);
	\draw[ma_green] (1, -1.5) -- (3, -4.5);
	\draw[ma_red] (1, -3.5) -- (3, -0.5);
	\draw[ma_red] (1, -4.5) -- (3, -1.5);
\end{tikzpicture}
\hfill \ \ 
\end{center}
\caption{\label{Figure:AssociatorDiagrams}From left to right: diagrams of the isomorphisms $\alpha_{\A, \A, \A}$ (first), $\alpha_{\U + \A, \A, \A}$ and $\alpha_{\A, \U + \A, \A}$ (second), $\alpha_{\A, \A, \U + \A}$ (third), $\alpha_{\A, \A, \A}\otimes id_{\A}$ and $id_{\A}\otimes \alpha_{\A, \A, \A}$ (forth)}
\end{figure}

\begin{lemma}
\label{Lemma:Associators}
The family of isomorphisms $\alpha_{X, Y, Z}\in Hom((X\otimes Y)\otimes Z), X\otimes (Y\otimes Z))$ for all objects $X, Y, Z$ of the category $\E$ satisfies to the pentagon relation $$(\alpha_{X, Y, Z}\otimes id_W) \circ \alpha_{X, Y\otimes Z, W} \circ (id_X\otimes \alpha_{Y, Z, W}) = \alpha_{X\otimes Y, Z, W}\circ \alpha_{X, Y, Z\otimes W}
$$ for any objects $X, Y, Z, W$.
\end{lemma}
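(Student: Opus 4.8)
The plan is to reduce the pentagon identity to a finite matrix computation. Since the tensor product of objects, the tensor product of morphisms, and the associators are all defined first on the simple objects $\U,\A$ and then extended by linearity (that is, block-diagonally with respect to the decomposition of every object into its simple summands), each of the five morphisms occurring in the identity is a direct sum of blocks indexed by a choice of one simple summand from each of $X,Y,Z,W$. Using Lemma~\ref{Lemma:MorphismTensorProduct} to commute $\otimes$ past $\circ$ and Lemma~\ref{Lemma:IdentityMorphismsProduct} to split the identity morphisms, both composites in the statement decompose into the same family of blocks, and on the block indexed by a quadruple of simple objects the identity is precisely the pentagon relation for that quadruple. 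Hence it suffices to verify the relation when $X,Y,Z,W\in\{\U,\A\}$.

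First I would dispose of every case in which at least one of $X,Y,Z,W$ equals $\U$. By definition $\alpha_{X,Y,Z}=id$ as soon as one of its three arguments is $\U$, and $\U$ is a strict unit in this model, in the sense that $P\otimes\U=P=\U\otimes P$ as ordered objects and correspondingly $f\otimes id_\U=f=id_\U\otimes f$. A short inspection of each such case then shows that the identity reduces to a trivial equality of morphisms: several of the five associators become identities, and the surviving ones on the two sides coincide. This leaves the single essential case $X=Y=Z=W=\A$.

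In that case both the source $((\A\otimes\A)\otimes\A)\otimes\A$ and the target $\A\otimes(\A\otimes(\A\otimes\A))$ equal the ordered object $\U+\A+\A+\U+\A$, so every morphism in the identity is encoded by a $2\times2$ matrix on the $\U$-part and a $3\times3$ matrix on the $\A$-part. I would read these off explicitly: the three mixed associators $\alpha_{\A,\A\otimes\A,\A}=\alpha_{\A,\U+\A,\A}$, $\alpha_{\A\otimes\A,\A,\A}=\alpha_{\U+\A,\A,\A}$ and $\alpha_{\A,\A,\A\otimes\A}=\alpha_{\A,\A,\U+\A}$ are given in Example~\ref{Example:Associator}, while the $\A$-parts of $\alpha_{\A,\A,\A}\otimes id_\A$ and $id_\A\otimes\alpha_{\A,\A,\A}$ are obtained from the tensor-product rule (Figure~\ref{Figure:MorphismsProduct}); one finds they both equal $\begin{pmatrix} 1/\varepsilon & 0 & x/\sqrt\varepsilon \\ 0 & 1 & 0 \\ 1/(x\sqrt\varepsilon) & 0 & -1/\varepsilon\end{pmatrix}$, with $\U$-part equal to the $2\times2$ matrix $[\alpha_{\A,\A,\A}]_\A$. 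Writing $a=1/\varepsilon$, $b=x/\sqrt\varepsilon$, $c=1/(x\sqrt\varepsilon)$ (so that $bc=a$), and recalling that composition reverses matrix order ($[f\circ g]=[g][f]$), I would then verify the $\A$-part identity $P_3P_2P_1=P_5P_4$ and the corresponding $\U$-part identity by direct multiplication of the transcribed matrices.

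The only substantive content is where the hypothesis on $\varepsilon$ enters. On the $\A$-part the two products agree precisely when $a+a^2=1$, i.e.\ when $1/\varepsilon+1/\varepsilon^2=1$, which is exactly $\varepsilon^2=\varepsilon+1$; on the $\U$-part the left side reduces to $[\alpha_{\A,\A,\A}]_\A^2$ and the right side to the identity, and these agree by the same relation (Remark~\ref{Remark:InverseAssociator}). It is worth noting that the gauge parameter $x$ drops out entirely, entering only through the combination $bc=1/\varepsilon$. I expect the main obstacle to be purely organizational: keeping straight, across the several reorderings of the summands of $\U+\A+\A+\U+\A$, which simple summand each matrix row and column refers to, so that the five matrices are transcribed in mutually consistent bases before they are multiplied.
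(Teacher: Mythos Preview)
Your proposal is correct and follows essentially the same route as the paper's proof: reduce by linearity to simple objects, dismiss all cases with a $\U$ as trivial, and in the case $X=Y=Z=W=\A$ verify the identity by multiplying out the explicit $2\times2$ and $3\times3$ matrices (using exactly the matrices from Example~\ref{Example:Associator} and the tensor-product rule), with the relation $\varepsilon^2=\varepsilon+1$ doing the work. Your observations that the $\U$-part collapses to $[\alpha_{\A,\A,\A}]_{\A}^2=E_2$ via Remark~\ref{Remark:InverseAssociator} and that the gauge parameter $x$ enters only through $bc=1/\varepsilon$ are accurate and match the paper's computation.
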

\begin{remark}
As we have already mentioned, the composition of the morphisms in the statement of the lemma should be read from the left to the right.

The alternative way of formulating the pentagon relation is to say that the diagram shown in the figure \ref{Figure:PentagonRelation} is commutative.
\end{remark}

\begin{figure}[h]
\begin{center}
\begin{tikzcd}[column sep=-0.75cm]
 & (X\otimes (Y\otimes Z))\otimes W \arrow[rr, "\alpha_{X, Y\otimes Z, W}"] & & X\otimes((Y\otimes Z)\otimes W) \arrow[rd, "id_X\otimes \alpha_{Y, Z, W}"]& \\
((X\otimes Y)\otimes Z)\otimes W  \arrow[ru, "\alpha_{X, Y, Z}\otimes id_W"] \arrow[rrd, "\alpha_{X\otimes Y, Z, W}"]& & & & X\otimes(Y\otimes(Z\otimes W)) \\
 & & (X\otimes Y)\otimes (Z\otimes W) \arrow[rru, "\alpha_{X, Y, Z\otimes W}"] & & 
\end{tikzcd}
\end{center}
\caption{\label{Figure:PentagonRelation}Pentagon relation for associativity isomorphisms}
\end{figure}
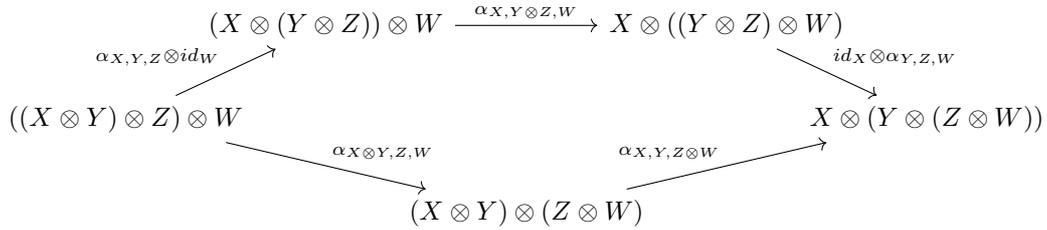
\begin{proof}
It's enough to prove the lemma only for simple objects $X, Y, Z, W\in I$. If one of them is equal to $\U$, then the statement is trivial. Consider the case where $X = Y = Z = W = \A$.

The diagram for the morphism $\alpha_{\A, \A\otimes \A, \A}$ is shown on figure \ref{Figure:AssociatorDiagrams} (second from the left). The diagram for the morphisms $\alpha_{\A, \A, \A}\otimes id_{\A}$ and $id_{\A}\otimes \alpha_{\A, \A, \A}$ is shown on the figure \ref{Figure:AssociatorDiagrams} (the most right).

Denote the morphism on the left hand side of the lemma statement as $L\in Hom (((\A\otimes \A)\otimes \A)\otimes \A, \A\otimes (\A\otimes (\A\otimes \A)))$. This morphism is defined by the following two matrices: 
\begin{multline*}
[L]_{\U} = [id_{\A}\otimes \alpha_{\A, \A, \A}]_{\U}\cdot [\alpha_{\A, \U + \A, \A}]_{\U}\cdot [\alpha_{\A, \A, \A}\otimes id_{\A}]_{\U} = \\ = \begin{pmatrix}
\frac{1}{\varepsilon} & \frac{x}{\sqrt{\varepsilon}} \\
\frac{1}{x\sqrt{\varepsilon}} & -\frac{1}{\varepsilon}
\end{pmatrix} \cdot \begin{pmatrix}
1 & 0 \\
0 & 1
\end{pmatrix} \cdot \begin{pmatrix}
\frac{1}{\varepsilon} & \frac{x}{\sqrt{\varepsilon}} \\
\frac{1}{x\sqrt{\varepsilon}} & -\frac{1}{\varepsilon}
\end{pmatrix} = \begin{pmatrix}
\frac{1}{\varepsilon^2} + \frac{1}{\varepsilon} & 0 \\
0 & \frac{1}{\varepsilon^2} + \frac{1}{\varepsilon}
\end{pmatrix} = \begin{pmatrix}
1 & 0 \\
0 & 1
\end{pmatrix},
\end{multline*}
\begin{multline*}
[L]_{\A} = [id_{\A}\otimes \alpha_{\A, \A, \A}]_{\A}\cdot [\alpha_{\A, \U + \A, \A}]_{\A}\cdot [\alpha_{\A, \A, \A}\otimes id_{\A}]_{\A} = \\ =
\begin{pmatrix}
\frac{1}{\varepsilon} & 0 & \frac{x}{\sqrt{\varepsilon}} \\
0 & 1 & 0 \\
\frac{1}{x\sqrt{\varepsilon}} & 0 & -\frac{1}{\varepsilon}
\end{pmatrix}
\cdot
\begin{pmatrix}
1 & 0 & 0 \\
0 & \frac{1}{\varepsilon} & \frac{x}{\varepsilon} \\
0 & \frac{1}{x\sqrt{\varepsilon}} & -\frac{1}{\sqrt{\varepsilon}}
\end{pmatrix} \cdot 
\begin{pmatrix}
\frac{1}{\varepsilon} & 0 & \frac{x}{\sqrt{\varepsilon}} \\
0 & 1 & 0 \\
\frac{1}{x\sqrt{\varepsilon}} & 0 & -\frac{1}{\varepsilon}
\end{pmatrix} = \\ 
\begin{pmatrix}
0 & \frac{1}{\varepsilon} & \frac{x}{\varepsilon\sqrt{\varepsilon}}\left(1 + \frac{1}{\varepsilon}\right) \\
\frac{1}{\varepsilon} & \frac{1}{\varepsilon} & -\frac{x}{\varepsilon\sqrt{\varepsilon}} \\
\frac{1}{x\varepsilon\sqrt{\varepsilon}}\left(1 + \frac{1}{\varepsilon}\right) & -\frac{1}{x\varepsilon\sqrt{\varepsilon}} & \frac{1}{\varepsilon} - \frac{1}{\varepsilon^3}
\end{pmatrix} = 
\begin{pmatrix}
0 & \frac{1}{\varepsilon} & \frac{x}{\sqrt{\varepsilon}} \\
\frac{1}{\varepsilon} & \frac{1}{\varepsilon} & -\frac{x}{\varepsilon\sqrt{\varepsilon}} \\
\frac{1}{x\sqrt{\varepsilon}} & -\frac{1}{x\varepsilon\sqrt{\varepsilon}} & \frac{1}{\varepsilon^2}
\end{pmatrix}.
\end{multline*}

We use the fact that $1 + \frac{1}{\varepsilon} = \varepsilon$ and $\frac{1}{\varepsilon} - \frac{1}{\varepsilon^3} = \frac{1}{\varepsilon^2}$.

Next, denote the morphism in the right part of the lemma statement as $R\in Hom (((\A\otimes \A)\otimes \A)\otimes \A, \A\otimes (\A\otimes (\A\otimes \A)))$. Matrices of the morphisms $\alpha_{\U + \A, \A, \A}$ and $\alpha_{\A, \A, \U + \A}$ computed in the example \ref{Example:Associator}. The morphism $R$ defined by the following two matrices:
$$
[R]_{\U} = [\alpha_{\A, \A, \U + \A}]_{\U}\cdot [\alpha_{\U + \A, \A, \A}]_{\U} = 
\begin{pmatrix}
1 & 0 \\
0 & 1
\end{pmatrix} \cdot \begin{pmatrix}
1 & 0 \\
0 & 1
\end{pmatrix} = \begin{pmatrix}
1 & 0 \\
0 & 1
\end{pmatrix},
$$

$$
[R]_{\A} = [\alpha_{\A, \A, \U + \A}]_{\A}\cdot [\alpha_{\U + \A, \A, \A}]_{\A} = 
\begin{pmatrix}
0 & 1 & 0 \\
\frac{1}{\varepsilon} & 0 & \frac{x}{\sqrt{\varepsilon}} \\
\frac{1}{x\sqrt{\varepsilon}} & 0 & -\frac{1}{\varepsilon}
\end{pmatrix} \cdot 
\begin{pmatrix}
1 & 0 & 0 \\
0 & \frac{1}{\varepsilon} & \frac{x}{\sqrt{\varepsilon}} \\
0 & \frac{1}{x\sqrt{\varepsilon}} & -\frac{1}{\varepsilon}
\end{pmatrix} = 
\begin{pmatrix}
0 & \frac{1}{\varepsilon} & \frac{x}{\sqrt{\varepsilon}} \\
\frac{1}{\varepsilon} & \frac{1}{\varepsilon} & -\frac{x}{\varepsilon\sqrt{\varepsilon}} \\
\frac{1}{x\sqrt{\varepsilon}} & -\frac{1}{x\varepsilon\sqrt{\varepsilon}} & \frac{1}{\varepsilon^2}
\end{pmatrix}.
$$

So, $[L]_{\U} = [R]_{\U}$ and $[L]_{\A} = [R]_{\A}$.
\end{proof}

\begin{remark}
\label{Remark:AssociatorValues}
We have defined the associativity isomorphisms by postulating matrices $[\alpha_{\A, \A, \A}]_{\U}$ and $[\alpha_{\A \A, \A}]_{\A}$.  It's easy to prove that these associativity isomorphisms satisfy the pentagon relation. But there's still the question of how to find these initial matrices.

We used the following approach. For simplicity, postulate that $\alpha_{X, Y, Z}$ are identical morphisms if at least one of the objects $X, Y, Z$ is equal to $\U$. Then consider matrices that define the morphism $\alpha_{\A, \A, \A}$, with variable values
$$
[\alpha_{\A, \A, \A}]_{\U} = \begin{pmatrix}
t
\end{pmatrix}, [\alpha_{\A, \A, \A}]_{\A} = \begin{pmatrix}
a & b \\
c & d
\end{pmatrix},
$$
where $t, a, b, c, d\in \mathbb{C}$. Next we need to write conditions for these variables. These conditions come from the pentagon relation.

$
[\alpha_{\A, \A, \A}\otimes id_{\A}]_{\U} = [id_{\A}\otimes \alpha_{\A, \A, \A}]_{\U} = \begin{pmatrix}
a & b \\
c & d
\end{pmatrix}, [\alpha_{\A, \A\otimes \A, \A}]_{\U} = [\alpha_{\A\otimes \A, \A, \A}]_{\U} = [\alpha_{\A, \A, \A\otimes \A}]_{\U} = \begin{pmatrix}
1 & 0 \\
0 & t
\end{pmatrix}.
$

$
[\alpha_{\A, \A, \A}\otimes id_{\A}]_{\A} = [id_{\A}\otimes \alpha_{\A, \A, \A}]_{\A} = \begin{pmatrix}
a & 0 & b \\
0 & 1 & 0 \\
c & 0 & d
\end{pmatrix}$, 

$[\alpha_{\A, \A\otimes \A, \A}]_{\A} = [\alpha_{\A\otimes \A, \A, \A}]_{\A} = \begin{pmatrix}
1 & 0 & 0 \\
0 & a & b \\
0 & c & d
\end{pmatrix}, [\alpha_{\A, \A, \A\otimes \A}]_{\A} = \begin{pmatrix}
0 & 1 & 0 \\
a & 0 & b \\
c & 0 & d
\end{pmatrix}.
$

The pentagon relation gives the next two matrix equations:
$$
\begin{pmatrix}
a & b\\
c & d
\end{pmatrix}\cdot \begin{pmatrix}
1 & 0\\
0 & t
\end{pmatrix}\cdot \begin{pmatrix}
a & b\\
c & d
\end{pmatrix} = \begin{pmatrix}
1 & 0\\
0 & t
\end{pmatrix}\cdot \begin{pmatrix}
1 & 0\\
0 & t
\end{pmatrix},
$$

$$
\begin{pmatrix}
a & 0 & b \\
0 & 1 & 0 \\
c & 0 & d
\end{pmatrix} \cdot \begin{pmatrix}
1 & 0 & 0 \\
0 & a & b \\
0 & c & d
\end{pmatrix} \cdot \begin{pmatrix}
a & 0 & b \\
0 & 1 & 0 \\
c & 0 & d
\end{pmatrix} = \begin{pmatrix}
a & 0 & b \\
0 & 1 & 0 \\
c & 0 & d
\end{pmatrix} \cdot \begin{pmatrix}
1 & 0 & 0 \\
0 & a & b \\
0 & c & d
\end{pmatrix}.
$$

This leads to the following system of twelve equations:
\begin{center}
\begin{tabular}{lll}
$a^2 + b\cdot c\cdot t = 1$, & $a^2 + b\cdot c\cdot d = 0$, & $b\cdot d\cdot t = b\cdot d$, \\
$a\cdot b + b\cdot d\cdot t = 0$, & $b\cdot c\cdot t = a$, & $a\cdot c + c\cdot d^2 = c$, \\
$a\cdot c + c\cdot d\cdot t = 0$, & $a\cdot b + b\cdot d^2 = b$, & $c\cdot d\cdot t = c\cdot d$, \\
$b\cdot c + d^2\cdot t = t^2$, & $a\cdot t^2 = b\cdot c$, & $b\cdot c + d^3 = d^2$.
\end{tabular}
\end{center}

This system has an infinite family of non-trivial solutions, parametrised by complex parameter $x\neq 0$: $$t = 1, a = \frac{1}{\varepsilon}, b = \frac{x}{\sqrt{\varepsilon}}, c = \frac{1}{x\sqrt{\varepsilon}}, d = -\frac{1}{\varepsilon},$$ where $\varepsilon^2 = \varepsilon + 1$.
\end{remark}

\begin{lemma}
\label{Lemma:MorphismsAssociativity}
Let $f\in Hom(X_1, Y_1)$, $g\in Hom (X_2, Y_2)$ and $h\in Hom(X_3, Y_3)$ are three morphisms in the category $\E$. Then $$(f\otimes g)\otimes h \circ \alpha_{Y_1, Y_2, Y_3} = \alpha_{X_1, X_2, X_3}\circ f\otimes (g\otimes h).$$
\end{lemma}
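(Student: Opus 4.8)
The plan is to follow the same reduction-to-simple-objects strategy used in Lemmas \ref{Lemma:MorphismTensorProduct} and \ref{Lemma:Associators}. The key preliminary observation is that both sides of the claimed identity are multilinear in the three morphisms $f, g, h$: composition is bilinear because $[\varphi\circ\psi]_{\U} = [\psi]_{\U}\cdot[\varphi]_{\U}$ (and likewise for $\A$), while the tensor product of morphisms is bilinear because each of its entries is a product $f_i^j\cdot g_r^s$ of entries of the two factors; the associativity isomorphisms, on the other hand, do not depend on $f, g, h$ at all. Hence it suffices to verify the identity when $f, g, h$ each run over a basis of the corresponding $Hom$-space, namely the elementary morphisms carrying the value $1$ on a single arrow between equal simple summands $x_i = y_j$ and $0$ elsewhere. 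Because the extended associators $\alpha_{X_1, X_2, X_3}$ and $\alpha_{Y_1, Y_2, Y_3}$ are block-diagonal with respect to the decomposition of $(X_1\otimes X_2)\otimes X_3$ and $(Y_1\otimes Y_2)\otimes Y_3$ into the contributions of the individual simple summands of $X_1, X_2, X_3$ (resp. $Y_1, Y_2, Y_3$), such an elementary choice confines the whole identity to one source block and one target block, on each of which the associator restricts to the associator of the corresponding simple objects. This reduces the lemma to the case $X_1, X_2, X_3, Y_1, Y_2, Y_3 \in I$.

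Once the objects are simple I would dispose of the degenerate cases first. If $X_i \neq Y_i$ for some $i$, then $Hom(X_i, Y_i) = 0$, so the corresponding morphism is zero; by bilinearity of $\otimes$ this forces $(f\otimes g)\otimes h = 0$ as well as $f\otimes(g\otimes h) = 0$, and both sides of the identity vanish. So I may assume $X_i = Y_i$ for $i = 1, 2, 3$, in which case each of $f, g, h$ is a single scalar attached to an arrow $\U\to\U$ or $\A\to\A$. If at least one $X_i$ equals $\U$, then by definition $\alpha_{X_1, X_2, X_3} = \alpha_{Y_1, Y_2, Y_3} = id$, and moreover $(X_1\otimes X_2)\otimes X_3$ and $X_1\otimes(X_2\otimes X_3)$ are literally the same object; the identity then collapses to $(f\otimes g)\otimes h = f\otimes(g\otimes h)$, which is checked directly by applying the product rules of Figure \ref{Figure:MorphismsProduct} in the two possible orders and observing that in each such subcase both morphisms carry the single scalar $fgh$ on every arrow.

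The only substantial case is $X_1 = X_2 = X_3 = Y_1 = Y_2 = Y_3 = \A$, where $f, g, h$ are scalars and $(\A\otimes\A)\otimes\A = \A\otimes(\A\otimes\A) = \A + \U + \A$. Here I would simply compute the two triple tensor products: using the rule for $\A\otimes\A$ one finds that $f\otimes g$ and $g\otimes h$ carry the scalar $fg$ (resp. $gh$) on both arrows of $\U + \A$, and tensoring once more yields $(f\otimes g)\otimes h = fgh\cdot id_{(\A\otimes\A)\otimes\A}$ and $f\otimes(g\otimes h) = fgh\cdot id_{\A\otimes(\A\otimes\A)}$. Since these are equal scalar multiples of the identity of one and the same object $\A + \U + \A$, and since $\alpha_{\A, \A, \A}$ is composed on either side, both compositions equal $fgh\cdot\alpha_{\A, \A, \A}$, so the identity holds. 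The main obstacle is therefore not the matrix arithmetic, which is trivial in this case, but setting up the reduction of the first paragraph carefully — in particular, convincing oneself that the linearity extension of the associators is block-diagonal in exactly the way that lets an elementary morphism localize the naturality square to a single pair of simple blocks.
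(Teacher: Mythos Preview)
Your proof is correct and follows essentially the same approach as the paper: reduce to simple objects, then in the only nontrivial case $X_1=X_2=X_3=Y_1=Y_2=Y_3=\A$ observe that both $(f\otimes g)\otimes h$ and $f\otimes(g\otimes h)$ equal $fgh\cdot id_{\A+\U+\A}$, so composing $\alpha_{\A,\A,\A}$ on either side yields the same morphism. You are more explicit than the paper about why the reduction works (multilinearity and the block-diagonal form of the extended associators) and about the degenerate cases $X_i\neq Y_i$ and $X_i=\U$, but the heart of the argument is identical.
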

\begin{proof}
As in the previous lemmas, it's sufficient to prove the statement only for the case where $X_1 = Y_1 = X_2 = Y_2 = X_3 = Y_3 = \A$. In this case each morphism $f, g, h\in Hom (\A, \A)$ is defined by a number, which we will denote $\widehat{f}, \widehat{g}, \widehat{h}\in \mathbb{C}$ respectively.

For any morphism $\gamma$ and any number $c\in\mathbb{C}$ denote the $c\cdot \gamma$ morphism which acts like a $\gamma$, but all values multiplied by $c$. Then
\begin{multline*}
(f\otimes g)\otimes h\circ \alpha_{\A, \A, \A} = (\widehat{f}\cdot \widehat{g}\cdot \widehat{h})\cdot id_{\A + \U + \A}\circ \alpha_{\A, \A, \A} = (\widehat{f}\cdot \widehat{g}\cdot \widehat{h})\cdot \alpha_{\A, \A, \A} = \\ = (\widehat{f}\cdot \widehat{g}\cdot \widehat{h})\cdot \alpha_{\A, \A, \A} \circ id_{\A + \U + \A} = \alpha_{\A, \A, \A}\circ (\widehat{f}\cdot \widehat{g}\cdot \widehat{h})\cdot id_{\A + \U + \A} = \alpha_{\A, \A, \A}\circ f\otimes (g\otimes h).
\end{multline*}
\end{proof}

\subsubsection{Unit object}

Define the unit object $\mathbbm{1} = \U$.

\begin{lemma}
\label{Lemma:Unit}
For any object $X$ in the category $\E$: $X\otimes \mathbbm{1} = \mathbbm{1}\otimes X = X$.

For any morphism $f\in Hom (X, Y)$: $f\otimes id_{\mathbbm{1}} = id_{\mathbbm{1}}\otimes f = f$.
\end{lemma}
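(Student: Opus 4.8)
The plan is to reduce both claims to the case of simple objects, exactly as in the proofs of Lemmas \ref{Lemma:MorphismTensorProduct} and \ref{Lemma:MorphismsAssociativity}, and then to read off the conclusion from the multiplication table for $\otimes$ and from the diagrammatic rule for the tensor product of morphisms.

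For the object identity I would first verify it on the generators. The table defining $\otimes$ gives $\U\otimes\U=\U$, $\U\otimes\A=\A$ and $\A\otimes\U=\A$, so $\U\otimes x = x\otimes\U = x$ for every $x\in I$. The key observation is that in each of these products the right-hand side is a single simple summand: no splitting of the type $\A\otimes\A=\U+\A$ occurs. Hence for $X=\sum_{i=1}^{n} x_i$ the definition of $\otimes$ by linearity yields $\U\otimes X=\sum_i(\U\otimes x_i)=\sum_i x_i=X$, and symmetrically $X\otimes\U=X$, with the order of the summands reproduced term by term. This is an equality of ordered tuples, not merely an isomorphism.

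For the morphism identity I would again restrict to $f\in Hom(x,y)$ with $x,y\in I$, the general case following by linearity. Recall that $id_{\mathbbm{1}}=id_{\U}$ is the single arrow carrying the value $1$. Tensoring an arrow of $f$ (value $f_i^j$, joining $x_i$ to $y_j$) with this arrow falls, by the rule of figure \ref{Figure:MorphismsProduct}, into the case where one of the two factors is $\U$; therefore no summand is split and the resulting single arrow carries the value $f_i^j\cdot 1=f_i^j$. Thus every arrow of $f\otimes id_{\mathbbm{1}}$ coincides with the corresponding arrow of $f$, and symmetrically for $id_{\mathbbm{1}}\otimes f$. Equivalently, on matrices the operation leaves $[f]_{\U}$ and $[f]_{\A}$ unchanged.

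I do not expect a genuine obstacle here; the content is routine, which is why the conclusion holds on the nose rather than up to coherence isomorphisms. The one point deserving care is the one already flagged in the remark following Example \ref{Example:ObjectsTensorProduct}: because objects are noncommutative ordered sums, one must check that tensoring with $\U$ reproduces the summands in exactly the same order, so that $f\otimes id_{\mathbbm{1}}$ and $f$ are morphisms between literally the same objects and may be compared directly. Since $\U$ never triggers a split, this ordering is automatically preserved, and both the object and the morphism equalities follow.
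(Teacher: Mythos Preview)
Your proof is correct and in essence the same as the paper's, which simply records ``Both statements are obvious.'' You have unpacked that obviousness by checking the table $\U\otimes x = x\otimes\U = x$ and the morphism tensor rule in the case where one factor is $\U$, and by noting that tensoring with $\U$ never splits a summand and hence preserves the ordered-tuple structure; there is nothing to add.
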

\begin{proof}
Both statements are obvious.
\end{proof}

\subsubsection{Left and right unit isomorphisms}

Define the family of isomorphisms $l_{X}\in Hom (\mathbbm{1}\otimes X, X)$ and $r_X\in Hom(X\otimes \mathbbm{1}, X)$ for each object $X$ of the category $\E$ as identical morphisms, i.e. $$l_X = id_X, r_X = id_X.$$

\begin{lemma}
\label{Lemma:UnitIsomorphisms}
The family of isomorphisms $l_X\in Hom(\mathbbm{1}\otimes X, X)$ and $r_X\in Hom(X\otimes \mathbbm{1}, X)$ for all objects $X$ in the category $\E$ satisfies to the triangle relation $$
(r_X\otimes id_Y)\circ (id_X\otimes l_Y) = \alpha_{X, \mathbbm{1}, Y}
$$
for any objects $X, Y$.

For any morphism $f\in Hom(X, Y)$
\begin{center}
$(id_{\mathbbm{1}}\otimes f)\circ l_Y = l_X\circ f$ and $(f\otimes id_{\mathbbm{1}})\circ r_Y = r_X\circ f$.
\end{center}
\end{lemma}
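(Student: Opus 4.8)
The plan is to exploit the fact that, in $\E$, both unit isomorphisms have been declared to be identity morphisms ($l_X = r_X = id_X$) and that the unit object is the simple object $\U$. Consequently every morphism appearing in the statement either is an identity outright or becomes one after the canonical identifications $\mathbbm{1}\otimes X = X\otimes \mathbbm{1} = X$ supplied by Lemma \ref{Lemma:Unit}. The proof therefore reduces to chasing these identifications together with Lemma \ref{Lemma:IdentityMorphismsProduct} (which gives $id_{X\otimes Y} = id_X\otimes id_Y$) and the observation recorded right after Example \ref{Example:Associator}, namely that $\alpha_{X, Y, Z} = id_{(X\otimes Y)\otimes Z}$ whenever one of $X, Y, Z$ equals $\U$.

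First I would treat the triangle relation. The right-hand side is $\alpha_{X, \mathbbm{1}, Y} = \alpha_{X, \U, Y}$, whose middle argument is $\U$, so it equals $id_{(X\otimes\U)\otimes Y} = id_{X\otimes Y}$. For the left-hand side, $l_Y = id_Y$ yields $id_X\otimes l_Y = id_X\otimes id_Y = id_{X\otimes Y}$ by Lemma \ref{Lemma:IdentityMorphismsProduct}, and likewise $r_X = id_X$ yields $r_X\otimes id_Y = id_{X\otimes Y}$; composing these two identity morphisms gives $id_{X\otimes Y}$ again. Thus both sides equal $id_{X\otimes Y}$ and the triangle relation holds.

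Next I would verify the two naturality squares. For $f\in Hom(X, Y)$, Lemma \ref{Lemma:Unit} gives $id_{\mathbbm{1}}\otimes f = f$ and $f\otimes id_{\mathbbm{1}} = f$. Since $l$ and $r$ are identities, the first square collapses to $(id_{\mathbbm{1}}\otimes f)\circ l_Y = f\circ id_Y = f = id_X\circ f = l_X\circ f$, and the second collapses in exactly the same way to $(f\otimes id_{\mathbbm{1}})\circ r_Y = f\circ id_Y = f = id_X\circ f = r_X\circ f$. No computation with the matrices $[\,\cdot\,]_{\U}$ and $[\,\cdot\,]_{\A}$ is required.

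The only point requiring care — and hence the ``hard part'', though it is purely bookkeeping — is to check at each step that the source and target objects genuinely coincide under the identifications $\mathbbm{1}\otimes X = X = X\otimes\mathbbm{1}$, so that the displayed composites and tensor products are actually defined and the asserted equalities are literal equalities of morphisms rather than equalities holding only up to coherence. Because $\E$ has been set up so that these unit identifications hold on the nose rather than merely up to isomorphism, there is no residual coherence obstruction, and nothing beyond this tracking of objects is needed.
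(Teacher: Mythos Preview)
Your proposal is correct and follows exactly the same approach as the paper, which simply declares both statements obvious because $\alpha_{X,\mathbbm{1},Y}=id_{X\otimes Y}$ and $l_X=r_X=id_X$. Your version is a more detailed unpacking of precisely this reasoning, invoking the same supporting facts (Lemma~\ref{Lemma:Unit}, Lemma~\ref{Lemma:IdentityMorphismsProduct}, and the observation after Example~\ref{Example:Associator}).
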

\begin{proof}
Both statements of the lemma are obvious, because $\alpha_{X, \mathbbm{1}, Y} = id_{X\otimes Y}$ and $l_X = r_X = id_X$.
\end{proof}

\begin{theorem}
\label{Theorem:MonoidalCategory}
The category $\E$ is monoidal.
\end{theorem}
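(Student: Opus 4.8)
The plan is to verify directly that the data assembled in this section — the tensor product on objects and morphisms, the unit object $\mathbbm{1} = \U$, the associativity isomorphisms $\alpha_{X, Y, Z}$, and the unit isomorphisms $l_X, r_X$ — satisfy all the clauses in the definition of a monoidal category (as in \cite{MC, T}). Since $\E$ is already a category by Theorem \ref{Threorm:Category}, what remains is to check (i) that $\otimes$ is a bifunctor $\E \times \E \to \E$, (ii) that $\alpha$, $l$, $r$ are natural isomorphisms, and (iii) the coherence conditions (pentagon and triangle). Each of these has already been isolated as a lemma, so the theorem is essentially a bookkeeping assembly of the preceding results.

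First I would establish that $\otimes$ is a functor on the product category $\E \times \E$. Composition in $\E \times \E$ is componentwise, so functoriality of $\otimes$ reduces to two identities: preservation of identity morphisms, which is exactly Lemma \ref{Lemma:IdentityMorphismsProduct} (that $id_X \otimes id_Y = id_{X \otimes Y}$), and compatibility with composition, i.e. the interchange law, which is exactly Lemma \ref{Lemma:MorphismTensorProduct} (that $(f \otimes f') \circ (g \otimes g') = (f \circ g) \otimes (f' \circ g')$). Together these show that $\otimes$ is a well-defined bifunctor.

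Next I would collect the properties of the structural isomorphisms. That each $\alpha_{X, Y, Z}$ is invertible follows from Remark \ref{Remark:InverseAssociator} on simple objects and extends by linearity; naturality of $\alpha$ in all three arguments is Lemma \ref{Lemma:MorphismsAssociativity}. The unit isomorphisms $l_X, r_X$ are identities by construction, hence trivially invertible, and their naturality is the second part of Lemma \ref{Lemma:UnitIsomorphisms}, while the unit object behaves correctly by Lemma \ref{Lemma:Unit}. Finally, the two coherence axioms are precisely the pentagon relation (Lemma \ref{Lemma:Associators}) and the triangle relation (the first part of Lemma \ref{Lemma:UnitIsomorphisms}). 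Assembling these facts verifies every clause of the definition.

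There is no genuine obstacle remaining at the level of the theorem itself: all the computational content — in particular the pentagon identity, the only place where the non-trivial associator $\alpha_{\A, \A, \A}$ could cause trouble — was already discharged in the lemmas. The one point requiring a little care is to match the definition of monoidal category being used clause-by-clause with the available lemmas, and to note that naturality and coherence, having been checked on simple objects, extend to all objects because every object, morphism, and structural isomorphism in $\E$ was defined from the simple case by linearity (equivalently, because the matrices $[\,\cdot\,]_{\U}$ and $[\,\cdot\,]_{\A}$ decompose blockwise along the summands).
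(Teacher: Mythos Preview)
Your proposal is correct and matches the paper's own proof essentially line for line: the paper likewise reduces the theorem to the functoriality of $\otimes$ (Lemmas \ref{Lemma:MorphismTensorProduct} and \ref{Lemma:IdentityMorphismsProduct}), the pentagon relation (Lemma \ref{Lemma:Associators}), the triangle relation (first part of Lemma \ref{Lemma:UnitIsomorphisms}), and the naturality of $\alpha$, $l$, $r$ (Lemma \ref{Lemma:MorphismsAssociativity} and the second part of Lemma \ref{Lemma:UnitIsomorphisms}). Your only additions are the explicit mention of invertibility via Remark \ref{Remark:InverseAssociator} and of Lemma \ref{Lemma:Unit}, which is harmless extra care.
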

\begin{proof}
The theorem follows from the following:
\begin{enumerate}
\item The tensor product is a functor from $\E\times \E$ to $\E$ by the lemmas \ref{Lemma:MorphismTensorProduct} and \ref{Lemma:IdentityMorphismsProduct};
\item The family of associativity isomorphisms $\alpha_{X, Y, Z}$ satisfies the pentagon relation by lemma \ref{Lemma:Associators};
\item The family of unit isomorphisms $l_X$ and $r_X$ satisfies the triangle relation by lemma \ref{Lemma:UnitIsomorphisms} (first statement);
\item Naturality of the associativity isomorphisms $\alpha_{X, Y, Z}$ follows from the lemma \ref{Lemma:MorphismsAssociativity};
\item Naturality of the unit isomorphisms $l_X$ and $r_X$ follows from the lemma \ref{Lemma:UnitIsomorphisms} (second statement).
\end{enumerate}
\end{proof}

\subsection{Braiding}

In this subsection we will define the family of isomorphisms $c_{X, Y}\in Hom (X\otimes Y, Y\otimes X)$ defined for each pair of objects $X, Y$ in the category $\E$.

In the previous subsection we defined associativity isomorphisms using the constant $\varepsilon$, which satisfies the equation $\varepsilon^2 = \varepsilon + 1$. There are two real numbers with this property: one is positive, the other negative. Let $\xi = e^{\frac{\pi}{5}}$ --- one of the primitive roots of $1$ of degree 10. If the constant $\varepsilon$ is positive, then $\varepsilon = \xi + \xi^{-1}$, and if it is negative, then $\varepsilon = \xi^3 + \xi^{-3}$. Introduce two constants $\beta_{\varepsilon}^{+}$ and $\beta_{\varepsilon}^{-}$, defined with respect to the value of the constant $\varepsilon$, in the following way:
\begin{center}
$\beta_{\varepsilon}^{+} = \begin{cases}
\xi^3, \ \text{if}\  \varepsilon = \xi + \xi^{-1}\\
\xi, \ \text{if}\  \varepsilon = \xi^3 + \xi^{-3}
\end{cases}$ and $\beta_{\varepsilon}^{-} = \begin{cases}
\xi^{-3}, \ \text{if}\  \varepsilon = \xi + \xi^{-1}\\
\xi^{-1}, \ \text{if}\  \varepsilon = \xi^3 + \xi^{-3}
\end{cases}$.
\end{center}

By $\beta_{\varepsilon}$ we mean any of $\beta_{\varepsilon}^{+}$ or $\beta_{\varepsilon}^{-}$.

\begin{remark}
\label{Remark:BraidingConstant}
It's easy to check that $$\beta_{\varepsilon}^2\cdot \varepsilon + \beta_{\varepsilon} + \varepsilon = 0.$$

Then $$\beta_{\varepsilon}^2 = -1 - \frac{\beta_{\varepsilon}}{\varepsilon}, \beta_{\varepsilon}^3 = \frac{1 - \beta_{\varepsilon}}{\varepsilon}, \beta_{\varepsilon}^4 = \beta_{\varepsilon} + \varepsilon - 1, \beta_{\varepsilon}^5 = -1.$$
\end{remark}

Let $X, Y\in I$ be simple objects of the category $\E$. If at least one of these objects is equal to $\U$, then define $c_{X, Y} = id_{X\otimes Y}$. If $X = Y = \A$, then $X\otimes Y = Y\otimes X = \U + \A$. In this case, define the isomorphism $c_{\A, \A}$ by the following two matrices: $$
[c_{\A, \A}]_{\U} = \begin{pmatrix}
\beta_{\varepsilon}^2
\end{pmatrix}, [c_{\A, \A}]_{\A} = \begin{pmatrix}
\beta_{\varepsilon}
\end{pmatrix}.
$$

Extends the defined isomorphisms $c_{X, Y}$ for simple objects to all objects of the category $\E$ by linearity (i.e. the same way as we did for associativity isomorphisms).

\begin{example}
\label{Example:Braidings}
Morphisms $c_{\A, \A}\otimes id_{\A}, id_{\A}\otimes c_{\A, \A}\in Hom(\A + \U + \A, \A + \U + \A)$ defined by the same matrices $$
[c_{\A, \A}\otimes id_{\A}]_{\U} = [id_{\A}\otimes c_{\A, \A}]_{\U} = \begin{pmatrix}
\beta_{\varepsilon}
\end{pmatrix}, 
[c_{\A, \A}\otimes id_{\A}]_{\A} = [id_{\A}\otimes c_{\A, \A}]_{\A} = \begin{pmatrix}
\beta_{\varepsilon}^2 & 0 \\
0 & \beta_{\varepsilon}
\end{pmatrix}.
$$

Morphisms $c_{\U + \A, \A}, c_{\A, \U + \A}\in Hom(\A + \U + \A, \A + \U + \A)$ are also defined by the same matrices $$
[c_{\U + \A, \A}]_{\U} = [c_{\A, \U + \A}]_{\U} = \begin{pmatrix}
\beta_{\varepsilon}^2
\end{pmatrix}, 
[c_{\U + \A, \A}]_{\A} = [c_{\A, \U + \A}]_{\A} = \begin{pmatrix}
1 & 0 \\
0 & \beta_{\varepsilon}
\end{pmatrix}.
$$

Diagrams of the morphisms $c_{\A, \A}, c_{\A, \A}\otimes id_{\A}, id_{\A}\otimes c_{\A, \A}, c_{\U + \A, \A}$ and $c_{\A, \U + \A}$ are shown in the figure \ref{Figure:BraidingExamples}.

\begin{figure}[h]
\begin{center}
\ \hfill $c_{\A, \A} = $
\begin{tikzpicture}[scale=0.5, baseline={([yshift=-0.5ex]current bounding box.center)}]
	\draw (0, -1) rectangle ++(1, 1) node[pos=0.5] {$\U$};
	\draw (0, -2) rectangle ++(1, 1) node[pos=0.5] {$\A$};
	
	\draw (3, -1) rectangle ++(1, 1) node[pos=0.5] {$\U$};
	\draw (3, -2) rectangle ++(1, 1) node[pos=0.5] {$\A$};
	
	\draw[-latex] (1, -0.5) -- (3, -0.5) node[above, midway] {$\beta_{\varepsilon}^2$};
	\draw[-latex] (1, -1.5) -- (3, -1.5) node[below, midway] {$\beta_{\varepsilon}$};
\end{tikzpicture}
\hfill $c_{\A, \A}\otimes id_{\A} =$
\begin{tikzpicture}[scale=0.5, baseline={([yshift=-0.5ex]current bounding box.center)}]
	\draw (0, -1) rectangle ++(1, 1) node[pos=0.5] {$\A$};	
	\draw (0, -2) rectangle ++(1, 1) node[pos=0.5] {$\U$};
	\draw (0, -3) rectangle ++(1, 1) node[pos=0.5] {$\A$};
	
	\draw (3, -1) rectangle ++(1, 1) node[pos=0.5] {$\A$};
	\draw (3, -2) rectangle ++(1, 1) node[pos=0.5] {$\U$};
	\draw (3, -3) rectangle ++(1, 1) node[pos=0.5] {$\A$};
	
	\draw[-latex] (1, -0.5) -- (3, -0.5) node[above, midway] {$\beta_{\varepsilon}^2$};
	\draw[-latex] (1, -1.5) -- (3, -1.5) node[above=-2pt, midway] {$\beta_{\varepsilon}$};
	\draw[-latex] (1, -2.5) -- (3, -2.5) node[below, midway] {$\beta_{\varepsilon}$};
\end{tikzpicture}
\hfill $c_{\U + \A, \A} = $
\begin{tikzpicture}[scale=0.5, baseline={([yshift=-0.3ex]current bounding box.center)}]
	\draw (0, -1) rectangle ++(1, 1) node[pos=0.5] {$\A$};	
	\draw (0, -2) rectangle ++(1, 1) node[pos=0.5] {$\U$};
	\draw (0, -3) rectangle ++(1, 1) node[pos=0.5] {$\A$};
	
	\draw (3, -1) rectangle ++(1, 1) node[pos=0.5] {$\A$};
	\draw (3, -2) rectangle ++(1, 1) node[pos=0.5] {$\U$};
	\draw (3, -3) rectangle ++(1, 1) node[pos=0.5] {$\A$};
	
	\draw[-latex] (1, -0.5) -- (3, -0.5) node[above, midway] {$1$};
	\draw[-latex] (1, -1.5) -- (3, -1.5) node[above=-2.5pt, midway] {$\beta_{\varepsilon}^2$};
	\draw[-latex] (1, -2.5) -- (3, -2.5) node[below, midway] {$\beta_{\varepsilon}$};
\end{tikzpicture}
\hfill \ \ 
\end{center}
\caption{\label{Figure:BraidingExamples}Diagrams of the morphism $c_{\A, \A}$ (on the left), $c_{\A, \A}\otimes id_{\A}, id_{\A}\otimes c_{\A, \A}$ (in the center) and $c_{\U + \A, \A}, c_{\A, \U + \A}$ (on the right)}
\end{figure}
\end{example}

\begin{lemma}
\label{Lemma:BraidingNaturality}
Let $X_1, X_2, Y_1, Y_2$ be objects of the category $\E$. Let $f\in Hom(X_1, Y_1)$ and $g\in Hom(X_2, Y_2)$ be two morphisms. Then $$(f\otimes g)\circ c_{Y_1, Y_2} = c_{X_1, X_2}\circ (g\otimes f).$$
\end{lemma}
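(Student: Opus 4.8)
The plan is to follow the same reduction strategy used in Lemmas \ref{Lemma:MorphismTensorProduct} and \ref{Lemma:MorphismsAssociativity}. Since the braiding and both tensor products are defined on arbitrary objects by extending the simple-object data by linearity, it suffices to verify the identity for simple objects $X_1, X_2, Y_1, Y_2 \in I$. Moreover, a morphism in $\E$ between two simple objects is zero unless those objects coincide, so if $X_1 \neq Y_1$ then $f = 0$ and both sides reduce to the zero morphism (and likewise if $X_2 \neq Y_2$). Hence I may assume $X_1 = Y_1$ and $X_2 = Y_2$, and it remains to treat the genuinely distinct cases according to which of these simple objects equal $\A$.

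First I would dispose of the case where at least one of $X_1 = Y_1$, $X_2 = Y_2$ equals the unit $\U$. Here both braidings $c_{Y_1, Y_2}$ and $c_{X_1, X_2}$ are identity morphisms by definition of $c$ on simple objects, so the claim reduces to $f \otimes g = g \otimes f$ as morphisms between the same object. Tensoring a morphism with a $\U$ factor on either side is, up to reindexing the summands, just scalar multiplication by the scalar attached to that $\U$ factor, so both tensor products agree; this follows directly from the definition of the tensor product of morphisms involving a $\U$ factor.

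The remaining case is $X_1 = X_2 = Y_1 = Y_2 = \A$. Then each of $f, g \in Hom(\A, \A)$ is a single scalar $\widehat{f}, \widehat{g} \in \mathbb{C}$. By the rule for the tensor product of morphisms when both factors are $\A$, both $f \otimes g$ and $g \otimes f$ act on $\A \otimes \A = \U + \A$ as multiplication by $\widehat{f}\,\widehat{g}$ on each summand; that is, $[f \otimes g]_{\U} = [g \otimes f]_{\U} = (\widehat{f}\widehat{g})$ and $[f \otimes g]_{\A} = [g \otimes f]_{\A} = (\widehat{f}\widehat{g})$. The braiding $c_{\A, \A}$ is given by the diagonal scalar data $[c_{\A,\A}]_{\U} = (\beta_{\varepsilon}^2)$ and $[c_{\A,\A}]_{\A} = (\beta_{\varepsilon})$. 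Computing both sides via the composition rule (recalling that composition is read left to right, so the matrices multiply in the reverse order), each $\U$-block becomes $\beta_{\varepsilon}^2 \widehat{f}\widehat{g}$ and each $\A$-block becomes $\beta_{\varepsilon} \widehat{f}\widehat{g}$, independently of the order. The two sides therefore coincide.

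The entire computation involves only $1 \times 1$ matrices, so I do not expect a real obstacle: the whole content is that the relevant morphisms are scalars, and scalars commute. The only point requiring care is bookkeeping — correctly reducing to simple objects by linearity, and respecting the left-to-right composition convention when translating a composite of morphisms into the opposite order of matrix products. Once these conventions are fixed, the equality is immediate.
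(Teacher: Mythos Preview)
Your proposal is correct and follows essentially the same approach as the paper's proof: reduce to simple objects by linearity, observe that the case where some $X_i \neq Y_i$ gives $f=0$ or $g=0$, dispose of the case involving a $\U$ factor as trivial (since the braidings become identities and the tensor products become the same scalar multiple), and in the all-$\A$ case compute both sides as the scalar $\widehat{f}\widehat{g}$ times $c_{\A,\A}$. The paper presents the last step via diagrams while you write out the $1\times 1$ matrices, but the content is identical.
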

\begin{proof}
It's enough to prove the lemma for simple objects only. If at least one of the objects $X_1, X_2, Y_1, Y_2$ is equal to $\U$, then the statement is trivial. Consider the case where $X_1 = X_2 = Y_1 = Y_2 = \A$. Let $\widehat{f}$ be a number defining the morphism $f$, and let $\widehat{g}$ be a number defining the morphism $g$. The diagram of the left side of the lemma statement is shown in the figure \ref{Figure:BraidingNaturlityLeft}.

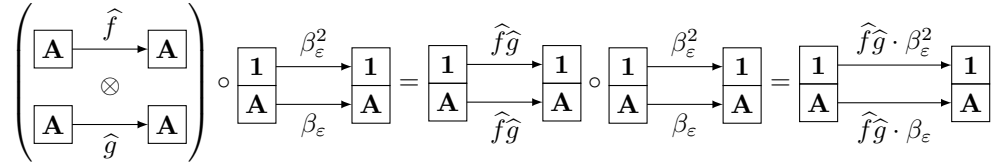
\begin{figure}[h]
$$\left(
\begin{tikzpicture}[scale=0.5, baseline={([yshift=-.5ex]current bounding box.center)}]
	\draw (0, -1) rectangle ++(1, 1) node[pos=0.5] {$\A$};
	\draw (3, -1) rectangle ++(1, 1) node[pos=0.5] {$\A$};
	\draw[-latex] (1, -0.5) -- (3, -0.5) node[above, midway] {$\widehat{f}$};
	
	\draw (2, -1.5) node {$\otimes$};
	
	\draw (0, -3) rectangle ++(1, 1) node[pos=0.5] {$\A$};
	\draw (3, -3) rectangle ++(1, 1) node[pos=0.5] {$\A$};
	\draw[-latex] (1, -2.5) -- (3, -2.5) node[below, midway] {$\widehat{g}$};
\end{tikzpicture}
\right) 
\circ
\begin{tikzpicture}[scale=0.5, baseline={([yshift=-.5ex]current bounding box.center)}]
	\draw (0, -1) rectangle ++(1, 1) node[pos=0.5] {$\U$};
	\draw (0, -2) rectangle ++(1, 1) node[pos=0.5] {$\A$};
	
	\draw (3, -1) rectangle ++(1, 1) node[pos=0.5] {$\U$};
	\draw (3, -2) rectangle ++(1, 1) node[pos=0.5] {$\A$};
	\draw[-latex] (1, -0.5) -- (3, -0.5) node[above, midway] {$\beta_{\varepsilon}^2$};
	\draw[-latex] (1, -1.5) -- (3, -1.5) node[below, midway] {$\beta_{\varepsilon}$};
\end{tikzpicture}
= 
\begin{tikzpicture}[scale=0.5, baseline={([yshift=-.5ex]current bounding box.center)}]
	\draw (0, -1) rectangle ++(1, 1) node[pos=0.5] {$\U$};
	\draw (0, -2) rectangle ++(1, 1) node[pos=0.5] {$\A$};
	\draw (3, -1) rectangle ++(1, 1) node[pos=0.5] {$\U$};
	\draw (3, -2) rectangle ++(1, 1) node[pos=0.5] {$\A$};
	\draw[-latex] (1, -0.5) -- (3, -0.5) node[above, midway] {$\widehat{f}\widehat{g}$};
	\draw[-latex] (1, -1.5) -- (3, -1.5) node[below, midway] {$\widehat{f}\widehat{g}$};
\end{tikzpicture}
\circ
\begin{tikzpicture}[scale=0.5, baseline={([yshift=-.5ex]current bounding box.center)}]
	\draw (0, -1) rectangle ++(1, 1) node[pos=0.5] {$\U$};
	\draw (0, -2) rectangle ++(1, 1) node[pos=0.5] {$\A$};
	
	\draw (3, -1) rectangle ++(1, 1) node[pos=0.5] {$\U$};
	\draw (3, -2) rectangle ++(1, 1) node[pos=0.5] {$\A$};
	\draw[-latex] (1, -0.5) -- (3, -0.5) node[above, midway] {$\beta_{\varepsilon}^2$};
	\draw[-latex] (1, -1.5) -- (3, -1.5) node[below, midway] {$\beta_{\varepsilon}$};
\end{tikzpicture}
=
\begin{tikzpicture}[scale=0.5, baseline={([yshift=-.5ex]current bounding box.center)}]
	\draw (0, -1) rectangle ++(1, 1) node[pos=0.5] {$\U$};
	\draw (0, -2) rectangle ++(1, 1) node[pos=0.5] {$\A$};
	
	\draw (4, -1) rectangle ++(1, 1) node[pos=0.5] {$\U$};
	\draw (4, -2) rectangle ++(1, 1) node[pos=0.5] {$\A$};
	\draw[-latex] (1, -0.5) -- (4, -0.5) node[above, midway] {$\widehat{f}\widehat{g}\cdot \beta_{\varepsilon}^2$};
	\draw[-latex] (1, -1.5) -- (4, -1.5) node[below, midway] {$\widehat{f}\widehat{g}\cdot \beta_{\varepsilon}$};
\end{tikzpicture}
$$
\caption{\label{Figure:BraidingNaturlityLeft}Diagram of the left hand side of the lemma \ref{Lemma:BraidingNaturality} statement}
\end{figure}

The similar diagram of the right hand side of the lemma statement is shown in the figure \ref{Figure:BraidingNaturlityRight}. These morphisms are the same.

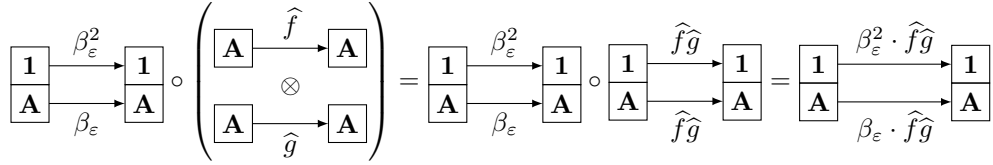
\begin{figure}[h]
$$
\begin{tikzpicture}[scale=0.5, baseline={([yshift=-.5ex]current bounding box.center)}]
	\draw (0, -1) rectangle ++(1, 1) node[pos=0.5] {$\U$};
	\draw (0, -2) rectangle ++(1, 1) node[pos=0.5] {$\A$};
	
	\draw (3, -1) rectangle ++(1, 1) node[pos=0.5] {$\U$};
	\draw (3, -2) rectangle ++(1, 1) node[pos=0.5] {$\A$};
	\draw[-latex] (1, -0.5) -- (3, -0.5) node[above, midway] {$\beta_{\varepsilon}^2$};
	\draw[-latex] (1, -1.5) -- (3, -1.5) node[below, midway] {$\beta_{\varepsilon}$};
\end{tikzpicture}
\circ
\left(
\begin{tikzpicture}[scale=0.5, baseline={([yshift=-.5ex]current bounding box.center)}]
	\draw (0, -1) rectangle ++(1, 1) node[pos=0.5] {$\A$};
	\draw (3, -1) rectangle ++(1, 1) node[pos=0.5] {$\A$};
	\draw[-latex] (1, -0.5) -- (3, -0.5) node[above, midway] {$\widehat{f}$};
	
	\draw (2, -1.5) node {$\otimes$};
	
	\draw (0, -3) rectangle ++(1, 1) node[pos=0.5] {$\A$};
	\draw (3, -3) rectangle ++(1, 1) node[pos=0.5] {$\A$};
	\draw[-latex] (1, -2.5) -- (3, -2.5) node[below, midway] {$\widehat{g}$};
\end{tikzpicture}
\right)
= 
\begin{tikzpicture}[scale=0.5, baseline={([yshift=-.5ex]current bounding box.center)}]
	\draw (0, -1) rectangle ++(1, 1) node[pos=0.5] {$\U$};
	\draw (0, -2) rectangle ++(1, 1) node[pos=0.5] {$\A$};
	
	\draw (3, -1) rectangle ++(1, 1) node[pos=0.5] {$\U$};
	\draw (3, -2) rectangle ++(1, 1) node[pos=0.5] {$\A$};
	\draw[-latex] (1, -0.5) -- (3, -0.5) node[above, midway] {$\beta_{\varepsilon}^2$};
	\draw[-latex] (1, -1.5) -- (3, -1.5) node[below, midway] {$\beta_{\varepsilon}$};
\end{tikzpicture}
\circ
\begin{tikzpicture}[scale=0.5, baseline={([yshift=-.5ex]current bounding box.center)}]
	\draw (0, -1) rectangle ++(1, 1) node[pos=0.5] {$\U$};
	\draw (0, -2) rectangle ++(1, 1) node[pos=0.5] {$\A$};
	\draw (3, -1) rectangle ++(1, 1) node[pos=0.5] {$\U$};
	\draw (3, -2) rectangle ++(1, 1) node[pos=0.5] {$\A$};
	\draw[-latex] (1, -0.5) -- (3, -0.5) node[above, midway] {$\widehat{f}\widehat{g}$};
	\draw[-latex] (1, -1.5) -- (3, -1.5) node[below, midway] {$\widehat{f}\widehat{g}$};
\end{tikzpicture}
=
\begin{tikzpicture}[scale=0.5, baseline={([yshift=-.5ex]current bounding box.center)}]
	\draw (0, -1) rectangle ++(1, 1) node[pos=0.5] {$\U$};
	\draw (0, -2) rectangle ++(1, 1) node[pos=0.5] {$\A$};
	
	\draw (4, -1) rectangle ++(1, 1) node[pos=0.5] {$\U$};
	\draw (4, -2) rectangle ++(1, 1) node[pos=0.5] {$\A$};
	\draw[-latex] (1, -0.5) -- (4, -0.5) node[above, midway] {$\beta_{\varepsilon}^2\cdot \widehat{f}\widehat{g}$};
	\draw[-latex] (1, -1.5) -- (4, -1.5) node[below, midway] {$\beta_{\varepsilon}\cdot \widehat{f}\widehat{g}$};
\end{tikzpicture}
$$
\caption{\label{Figure:BraidingNaturlityRight}Diagram of the right hand side of the lemma \ref{Lemma:BraidingNaturality} statement}
\end{figure}
\end{proof}

\begin{lemma}
\label{Lemma:Braiding}
For any three objects $X, Y, Z$ of the category $\E$: $$
\alpha_{X, Y, Z}\circ c_{X, Y\otimes Z}\circ \alpha_{Y, Z, X} =  (c_{X, Y}\otimes id_{Z})\circ \alpha_{Y, X, Z}\circ(id_{Y}\otimes c_{X, Z}),
$$
$$
\alpha_{X, Y, Z}^{-1}\circ c_{X\otimes Y, Z}\circ \alpha_{Z, X, Y}^{-1} = (id_{X}\otimes c_{Y, Z})\circ \alpha_{X, Z, Y}^{-1}\circ (c_{X, Z}\otimes id_{Y}).
$$
\end{lemma}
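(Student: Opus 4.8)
The plan is to mirror the proofs of Lemma \ref{Lemma:Associators} and Lemma \ref{Lemma:BraidingNaturality}: reduce to simple objects and then carry out an explicit matrix computation in the single nontrivial case. Since the braiding, the associativity isomorphisms, and the tensor product are all extended from simple objects to arbitrary objects by the same linearity rule, it suffices to verify both identities for $X, Y, Z \in I$. The reduction is legitimate because each side of each equation is a morphism between one bracketing of $X \otimes Y \otimes Z$ and another, and linearity distributes the defining matrices block-wise over the summands, exactly as illustrated in Example \ref{Example:Associator}.

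If at least one of $X, Y, Z$ equals $\U$, the statement is immediate: every associator that appears is an identity morphism (by the definition of $\alpha$) and every braiding with a $\U$ factor equals an identity (by the definition of $c$). A short inspection shows both sides then collapse to the same composite; for instance, in the first identity, $Y = \U$ makes both sides equal to $c_{X,Z}$, while $X = \U$ or $Z = \U$ makes both sides an identity.

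The only case requiring computation is $X = Y = Z = \A$. Here $\A \otimes \A = \U + \A$, and both the source $(\A \otimes \A) \otimes \A$ and the target $\A \otimes (\A \otimes \A)$ equal the noncommutative sum $\A + \U + \A$, so each side is captured by a $1\times 1$ matrix $[\,\cdot\,]_{\U}$ and a $2 \times 2$ matrix $[\,\cdot\,]_{\A}$. For the first identity every associator reduces to $\alpha_{\A,\A,\A}$, and the only genuinely mixed factor is the braiding $c_{\A, \U + \A}$; all the remaining braiding blocks, namely $[c_{\A,\A} \otimes id_{\A}]$, $[id_{\A} \otimes c_{\A,\A}]$, and $[c_{\A, \U + \A}]$, are recorded in Example \ref{Example:Braidings}. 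Recalling that composition is read left to right, so that $[f \circ g] = [g]\cdot[f]$, the left side has matrix $[\alpha_{\A,\A,\A}] \cdot [c_{\A,\U+\A}] \cdot [\alpha_{\A,\A,\A}]$ and the right side $[id_{\A}\otimes c_{\A,\A}] \cdot [\alpha_{\A,\A,\A}] \cdot [c_{\A,\A}\otimes id_{\A}]$, in each of the $\U$- and $\A$-blocks. The $\U$-blocks match at once: both evaluate to $(\beta_{\varepsilon}^2)$. The $\A$-blocks are where the work lies; I would multiply out the three $2\times 2$ matrices on each side and compare entry by entry.

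The comparison reduces to polynomial identities in $\varepsilon$ and $\beta_{\varepsilon}$, which I would close using $\varepsilon^2 = \varepsilon + 1$ together with the relations of Remark \ref{Remark:BraidingConstant}, in particular $\beta_{\varepsilon}^2 \varepsilon + \beta_{\varepsilon} + \varepsilon = 0$ and the resulting reductions of $\beta_{\varepsilon}^3, \beta_{\varepsilon}^4, \beta_{\varepsilon}^5$ to lower powers. The second identity is handled identically, using that $\alpha_{\A,\A,\A}^{-1}$ is given by the same matrices as $\alpha_{\A,\A,\A}$ (Remark \ref{Remark:InverseAssociator}), with the mixed braiding now $c_{\U + \A, \A}$ from Example \ref{Example:Braidings}. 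I expect the main obstacle to be purely bookkeeping: correctly matching which row and column of each $2\times 2$ block corresponds to which $\U$- or $\A$-summand of $\A + \U + \A$, since an inconsistent ordering silently produces the wrong matrix while leaving its shape unchanged. This is precisely the subtlety flagged in Example \ref{Example:Associator}; once the orderings are fixed coherently across all factors, both identities become finite matrix computations that collapse under the $\beta_{\varepsilon}$-relations.
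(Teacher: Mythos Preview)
Your proposal is correct and follows essentially the same approach as the paper: reduce to simple objects, note that any $\U$ makes the identity trivial, and for $X=Y=Z=\A$ compute both sides block-wise as $1\times1$ and $2\times2$ matrices via Example~\ref{Example:Braidings} and close with the $\beta_{\varepsilon}$-relations of Remark~\ref{Remark:BraidingConstant}, treating the second identity the same way using Remark~\ref{Remark:InverseAssociator}. One inconsequential slip: when $Z=\U$ both sides reduce to $c_{X,Y}$ rather than the identity, but they still agree, so the argument is unaffected.
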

\begin{proof}
We will prove the first statement of the lemma, the second is similar. As in the previous lemmas, it's sufficient to prove only for simple objects. If one of the objects $X, Y, Z$ is equal to $\U$, then the statement is trivial. Consider the case $X = Y = Z = \A$.

Denote the left hand side of the lemma statement as $L\in Hom((\A\otimes \A)\otimes \A, (\A\otimes \A)\otimes \A)$. Then 
$$
[L]_{\U} = [\alpha_{\A, \A, \A}]_{\U}\cdot [c_{\A, \U + \A}]_{\U} \cdot [\alpha_{\A, \A, \A}]_{\U} = \begin{pmatrix}
1
\end{pmatrix}
\cdot \begin{pmatrix}
\beta_{\varepsilon}^2
\end{pmatrix} \cdot \begin{pmatrix}
1
\end{pmatrix} = \begin{pmatrix}
\beta_{\varepsilon}^2
\end{pmatrix},
$$
\begin{multline*}
[L]_{\A} = [\alpha_{\A, \A, \A}]_{\A}\cdot [c_{\A, \U + \A}]_{\A} \cdot [\alpha_{\A, \A, \A}]_{\A} = \begin{pmatrix}
\frac{1}{\varepsilon} & \frac{x}{\sqrt{\varepsilon}} \\
\frac{1}{x\sqrt{\varepsilon}} & -\frac{1}{\varepsilon}
\end{pmatrix} \cdot \begin{pmatrix}
1 & 0 \\
0 & \beta_{\varepsilon}
\end{pmatrix} \cdot \begin{pmatrix}
\frac{1}{\varepsilon} & \frac{x}{\sqrt{\varepsilon}} \\
\frac{1}{x\sqrt{\varepsilon}} & -\frac{1}{\varepsilon}
\end{pmatrix} = \\ = \begin{pmatrix}
\frac{\beta_{\varepsilon}}{\varepsilon} + 1 - \frac{1}{\varepsilon} & \frac{x}{\varepsilon\sqrt{\varepsilon}}(1 - \beta_{\varepsilon}) \\
\frac{1}{x\varepsilon\sqrt{\varepsilon}}(1 - \beta_{\varepsilon}) & \frac{1}{\varepsilon} + \frac{\beta_{\varepsilon}}{\varepsilon^2}
\end{pmatrix}.
\end{multline*}

Denote the right hand side of the lemma statement as $R\in Hom ((\A\otimes \A)\otimes \A, (\A\otimes \A)\otimes \A)$. Then $$
[R]_{\U} = [id_{\A}\otimes c_{\A, \A}]_{\U} \cdot [\alpha_{\A, \A, \A}]_{\U}\cdot [c_{\A, \A}\otimes id_{\A}]_{\U} = \begin{pmatrix}
\beta_{\varepsilon}
\end{pmatrix} \cdot \begin{pmatrix}
1
\end{pmatrix} \cdot \begin{pmatrix}
\beta_{\varepsilon}
\end{pmatrix} = \begin{pmatrix}
\beta_{\varepsilon}^2
\end{pmatrix},
$$
\begin{multline*}
[R]_{\A} = [id_{\A}\otimes c_{\A, \A}]_{\A} \cdot [\alpha_{\A, \A, \A}]_{\A}\cdot [c_{\A, \A}\otimes id_{\A}]_{\A} = \begin{pmatrix}
\beta_{\varepsilon}^2 & 0 \\
0 & \beta_{\varepsilon}
\end{pmatrix} \cdot \begin{pmatrix}
\frac{1}{\varepsilon} & \frac{x}{\sqrt{\varepsilon}} \\
\frac{1}{x\sqrt{\varepsilon}} & -\frac{1}{\varepsilon}
\end{pmatrix} \cdot \begin{pmatrix}
\beta_{\varepsilon}^2 & 0 \\
0 & \beta_{\varepsilon}
\end{pmatrix} = \\ = \begin{pmatrix}
\frac{\beta_{\varepsilon}^4}{\varepsilon} & \frac{x\beta_{\varepsilon}^3}{\sqrt{\varepsilon}} \\
\frac{\beta_{\varepsilon}^3}{x\sqrt{\varepsilon}} & -\frac{\beta_{\varepsilon}^2}{\varepsilon}
\end{pmatrix} = \begin{pmatrix}
\frac{\beta_{\varepsilon}}{\varepsilon} + 1 - \frac{1}{\varepsilon} & \frac{x(1 - \beta_{\varepsilon})}{\varepsilon\sqrt{\varepsilon}} \\
\frac{1 - \beta_{\varepsilon}}{x\varepsilon\sqrt{\varepsilon}} & \frac{1}{\varepsilon} + \frac{\beta_{\varepsilon}}{\varepsilon^2}
\end{pmatrix}.
\end{multline*}

In the last equality we used the remark \ref{Remark:BraidingConstant}.

So, $[L]_{\U} = [R]_{\U}$ and $[L]_{\A} = [R]_{\A}$. Hence $L = R$.

The second statement of the lemma is proved in a similar way. For the morphism $\alpha_{\A, \A, \A}^{-1}$ we can use the matrices $[\alpha_{\A, \A, \A}]_{\U}$ and $[\alpha_{\A, \A, \A}]_{\A}$, because each of them has order 2.
\end{proof}

\begin{remark}
As in the remark \ref{Remark:AssociatorValues}, describe how it's possible to find the isomorphism $c_{\A, \A}$. All other isomorphisms $c_{X, Y}$ follow from this.

Let the isomorphism $c_{\A, \A}$ be defined by two matrices $$
[c_{\A, \A}]_{\U} = \begin{pmatrix}
p
\end{pmatrix}, [c_{\A, \A}]_{\A} = \begin{pmatrix}
q
\end{pmatrix}.
$$

Then $$[c_{\A, \U + \A}]_{\U} = [c_{\U + \A, \A}]_{\U} = \begin{pmatrix}
p
\end{pmatrix}, [c_{\A, \U + \A}]_{\A} = [c_{\U + \A, \A}]_{\A} = \begin{pmatrix}
1 & 0 \\
0 & q
\end{pmatrix},
$$
$$
[c_{\A, \A}\otimes id_{\A}]_{\U} = [id_{\A}\otimes c_{\A, \A}]_{\U} = \begin{pmatrix}
q
\end{pmatrix}, [c_{\A, \A}\otimes id_{\A}]_{\A} = [id_{\A}\otimes c_{\A, \A}]_{\A} = \begin{pmatrix}
p & 0 \\
0 & q
\end{pmatrix}.
$$

Both conditions of the lemma \ref{Lemma:Braiding} lead to the following matrix equations
$$
\begin{pmatrix}
1
\end{pmatrix} \cdot \begin{pmatrix}
p
\end{pmatrix} \cdot \begin{pmatrix}
1
\end{pmatrix} = \begin{pmatrix}
q
\end{pmatrix} \cdot \begin{pmatrix}
1
\end{pmatrix} \cdot \begin{pmatrix}
q
\end{pmatrix},
$$
$$
\begin{pmatrix}
\frac{1}{\varepsilon} & \frac{x}{\sqrt{\varepsilon}} \\
\frac{1}{x\sqrt{\varepsilon}} & -\frac{1}{\varepsilon}
\end{pmatrix} \cdot \begin{pmatrix}
1 & 0 \\
0 & q
\end{pmatrix} \cdot \begin{pmatrix}
\frac{1}{\varepsilon} & \frac{x}{\sqrt{\varepsilon}} \\
\frac{1}{x\sqrt{\varepsilon}} & -\frac{1}{\varepsilon}
\end{pmatrix} = \begin{pmatrix}
p & 0 \\
0 & q
\end{pmatrix} \cdot \begin{pmatrix}
\frac{1}{\varepsilon} & \frac{x}{\sqrt{\varepsilon}} \\
\frac{1}{x\sqrt{\varepsilon}} & -\frac{1}{\varepsilon}
\end{pmatrix} \cdot \begin{pmatrix}
p & 0 \\
0 & q
\end{pmatrix}.
$$

This leads to the following system with two four equations and two variables ($p$ and $q$):
$$
\begin{cases}
p = q^2 \\
\frac{1}{\varepsilon^2} + \frac{q}{\varepsilon} = \frac{p^2}{\varepsilon} \\
\frac{1}{\varepsilon} - \frac{q}{\varepsilon} = pq \\
\frac{1}{\varepsilon} + \frac{q}{\varepsilon^2} = -\frac{q^2}{\varepsilon}
\end{cases}.
$$

The values $p = \beta_{\varepsilon}^2$ and $q = \beta_{\varepsilon}$ are the solution of this system.
\end{remark}

\begin{theorem}
\label{Theorem:BraidedCategory}
The category $\E$ is a braided monoidal category.
\end{theorem}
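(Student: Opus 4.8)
The plan is to check the defining axioms of a braided monoidal category directly, assembling the lemmas already proved in this subsection exactly as was done for Theorem~\ref{Theorem:MonoidalCategory}. A braided monoidal category is a monoidal category equipped with a natural family of isomorphisms $c_{X, Y}\in Hom(X\otimes Y, Y\otimes X)$ satisfying the two hexagon identities. Accordingly, four things must be verified: that $\E$ is monoidal, that each $c_{X, Y}$ is an isomorphism, that the family $\{c_{X, Y}\}$ is natural in $X$ and $Y$, and that both hexagon relations hold.

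The monoidal structure is given by Theorem~\ref{Theorem:MonoidalCategory}, so that item is immediate. For the isomorphism property, I would reduce to simple objects; the only non-trivial case is $c_{\A, \A}$, defined by the $1\times 1$ matrices $[c_{\A, \A}]_{\U} = (\beta_{\varepsilon}^2)$ and $[c_{\A, \A}]_{\A} = (\beta_{\varepsilon})$. Since $\beta_{\varepsilon}^5 = -1$ by Remark~\ref{Remark:BraidingConstant}, we have $\beta_{\varepsilon}\neq 0$, so both matrices are invertible and $c_{\A, \A}^{-1}$ is given by $[c_{\A, \A}^{-1}]_{\U} = (\beta_{\varepsilon}^{-2})$ and $[c_{\A, \A}^{-1}]_{\A} = (\beta_{\varepsilon}^{-1})$. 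Invertibility for an arbitrary object then follows because the braiding on general objects is built block-diagonally from the simple cases by linearity, so its matrices are invertible whenever the simple blocks are.

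Naturality of $\{c_{X, Y}\}$ is precisely Lemma~\ref{Lemma:BraidingNaturality}. The two hexagon identities are exactly the two equations of Lemma~\ref{Lemma:Braiding}: the first controls $c_{X, Y\otimes Z}$ in terms of $c_{X, Y}$ and $c_{X, Z}$, and the second controls $c_{X\otimes Y, Z}$ in terms of $c_{X, Z}$ and $c_{Y, Z}$. Putting these four items together yields the theorem, following the enumerated format of the proof of Theorem~\ref{Theorem:MonoidalCategory}.

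I do not anticipate a genuine obstacle, since all the computational content has already been discharged in Lemmas~\ref{Lemma:BraidingNaturality} and~\ref{Lemma:Braiding}; the remaining work is organisational. The single point deserving care is to confirm that the two equations of Lemma~\ref{Lemma:Braiding} really are the standard hexagon axioms under the paper's left-to-right composition convention --- in particular that the second hexagon, written there with the inverse associators $\alpha^{-1}$, matches the usual second hexagon axiom. Tracing the sources and targets of each factor (in the first identity both sides send $(X\otimes Y)\otimes Z$ to $Y\otimes(Z\otimes X)$, with the analogous check for the second) confirms the match, so no real difficulty arises.
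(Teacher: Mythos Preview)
Your proposal is correct and takes essentially the same approach as the paper: the paper's proof simply asserts that the $c_{X,Y}$ form a braiding, citing Lemma~\ref{Lemma:BraidingNaturality} for naturality and Lemma~\ref{Lemma:Braiding} for the hexagon relations. Your version is slightly more thorough in that you explicitly verify invertibility of $c_{\A,\A}$ and cross-check that the two equations of Lemma~\ref{Lemma:Braiding} are indeed the standard hexagon axioms, whereas the paper leaves these points implicit.
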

\begin{proof}
The family of isomorphisms $c_{X, Y}$ for all objects $X, Y$ in the category $\E$ form a braiding on $\E$. The naturality of these morphisms follows from the lemma \ref{Lemma:BraidingNaturality}. Hexagon braiding relations are proved in the lemma \ref{Lemma:Braiding}.
\end{proof}

\subsection{Twist and duality}

\subsubsection{Twist}

For each object $X$ of the category $\E$, define the isomorphism $\theta_{X}\in Hom (X, X)$ in the following way: $\theta_{\U} = id_{\U}$, $\theta_{\A}$ defined by the number $\frac{1}{\beta_{\varepsilon}^2}$, and extend these isomorphisms by linearity to all other objects of the category $\E$.

\begin{remark}
\label{Remark:Twists}
The diagram of an arbitrary morphism $\theta_{X}\in Hom(X, X)$ is very simple. If $X = \sum\limits_{i = 1}^{n}x_i$, $x_i\in I$, then the diagrams consist of $n$ parallel arrows. If the arrow starts and ends at the object $x_i = \U$, then the associated value is $1$, otherwise the associated value is $\frac{1}{\beta_{\varepsilon}^2}$.
\end{remark}

\begin{lemma}
\label{Lemma:TwistNaturality}
Let $f\in Hom(X, Y)$ be a morphism in the category $\E$. Then $$f\circ \theta_{Y} = \theta_{X}\circ f.$$.
\end{lemma}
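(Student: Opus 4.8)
The plan is to reduce the statement to a one-line matrix identity, exactly as the composition and twist are defined. Recall from Remark~\ref{Remark:Twists} that for an object $X$ the twist $\theta_X$ is given by the two matrices $[\theta_X]_{\U} = I_{|X|_{\U}}$ (the identity matrix) and $[\theta_X]_{\A} = \frac{1}{\beta_{\varepsilon}^2}\, I_{|X|_{\A}}$ (a scalar multiple of the identity matrix). Thus $\theta$ is represented at the level of matrices by scalar matrices: the scalar $1$ on the $\U$-part and the scalar $\frac{1}{\beta_{\varepsilon}^2}$ on the $\A$-part. Since scalar matrices are central, the whole lemma should collapse to the fact that scalars commute with matrix multiplication.

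Concretely, I would simply compute the two matrices defining each side of the claimed equality, using the composition rule $[f\circ g]_{\U} = [g]_{\U}\cdot [f]_{\U}$ and $[f\circ g]_{\A} = [g]_{\A}\cdot [f]_{\A}$ (recalling that composition is read left to right, which is why the matrix order is reversed). For a morphism $f\in Hom(X,Y)$ the $\U$-part gives $[f\circ\theta_Y]_{\U} = [\theta_Y]_{\U}\cdot[f]_{\U} = I_{|Y|_{\U}}\cdot[f]_{\U} = [f]_{\U}$, while $[\theta_X\circ f]_{\U} = [f]_{\U}\cdot[\theta_X]_{\U} = [f]_{\U}\cdot I_{|X|_{\U}} = [f]_{\U}$, so the two $\U$-matrices agree. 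For the $\A$-part, $[f\circ\theta_Y]_{\A} = \frac{1}{\beta_{\varepsilon}^2}\,[f]_{\A}$ and $[\theta_X\circ f]_{\A} = [f]_{\A}\cdot\frac{1}{\beta_{\varepsilon}^2}I_{|X|_{\A}} = \frac{1}{\beta_{\varepsilon}^2}\,[f]_{\A}$, which again coincide. Since a morphism in $\E$ is determined by its two matrices, this proves $f\circ\theta_Y = \theta_X\circ f$.

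Unlike the earlier naturality lemmas (e.g. Lemma~\ref{Lemma:BraidingNaturality}), there is no need here to first reduce to simple objects and split into cases: the argument works verbatim for arbitrary objects at once, because the only structural input is that each twist matrix is a scalar matrix of the correct size. The only thing requiring care is bookkeeping: one must check that the matrix products are actually defined, i.e. that $[\theta_Y]_{\U}$ has size $|Y|_{\U}\times|Y|_{\U}$ so that $[\theta_Y]_{\U}\cdot[f]_{\U}$ makes sense (and symmetrically on the other side), and that the left-to-right composition convention is applied consistently. These are immediate from the definitions, so I do not expect any genuine obstacle; the content of the lemma is entirely that scalar matrices commute through, which is precisely what makes $\theta$ natural.
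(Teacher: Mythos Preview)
Your proposal is correct and is exactly the argument the paper has in mind: the paper's own proof is the single sentence ``It follows easily from the definition of isomorphisms $\theta_{X}$ for all objects $X$ of the category $\E$,'' and you have simply spelled out that easy step by noting that $[\theta_X]_{\U}$ and $[\theta_X]_{\A}$ are scalar matrices, hence central.
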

\begin{proof}
It follows easily from the definition of isomorphisms $\theta_{X}$ for all objects $X$ of the category $\E$.
\end{proof}

\begin{lemma}
\label{Lemma:TwistAndBraiding}
Let $X, Y$ be two objects of the category $\E$. Then $$\theta_{X\otimes Y} = (\theta_{X}\otimes \theta_{Y})\circ c_{X, Y}\circ c_{Y, X}.$$
\end{lemma}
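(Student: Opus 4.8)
The plan is to follow the same reduction used throughout this subsection: since $\theta$, the braiding $c$, and the tensor product are all defined on general objects by linearity from their values on simple objects, it suffices to verify the identity when $X$ and $Y$ are both simple. If either $X$ or $Y$ equals $\U$ the statement is immediate: then $X\otimes Y$ is a single simple object (equal to the non-unit factor, or to $\U$), both braidings $c_{X,Y}$ and $c_{Y,X}$ are identity morphisms by definition, $\theta_{\U} = id_{\U}$, and Lemma~\ref{Lemma:Unit} collapses $\theta_{X}\otimes\theta_{Y}$ to the twist on the remaining factor. So the only case requiring work is $X = Y = \A$, where $X\otimes Y = \A\otimes\A = \U + \A$ and every morphism involved lives in $Hom(\U + \A, \U + \A)$; in particular each of the two matrices attached to such a morphism is $1\times 1$, so the whole computation reduces to multiplying scalars.

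Next I would write down the three relevant scalar pairs. The left-hand side $\theta_{\A\otimes\A} = \theta_{\U+\A}$ has, by Remark~\ref{Remark:Twists}, matrices $[\theta_{\U+\A}]_{\U} = (1)$ and $[\theta_{\U+\A}]_{\A} = (\frac{1}{\beta_{\varepsilon}^2})$. For the right-hand side the braiding is given by $[c_{\A,\A}]_{\U} = (\beta_{\varepsilon}^2)$ and $[c_{\A,\A}]_{\A} = (\beta_{\varepsilon})$. The one step that deserves care is $\theta_{\A}\otimes\theta_{\A}$: since $\theta_{\A}$ is the number $\frac{1}{\beta_{\varepsilon}^2}$ and $\A\otimes\A = \U + \A$, the tensor-product rule for morphisms between two copies of $\A$ (the last case in Figure~\ref{Figure:MorphismsProduct}) assigns the product $\frac{1}{\beta_{\varepsilon}^2}\cdot\frac{1}{\beta_{\varepsilon}^2} = \frac{1}{\beta_{\varepsilon}^4}$ to \emph{both} the $\U$- and the $\A$-components, so that $[\theta_{\A}\otimes\theta_{\A}]_{\U} = [\theta_{\A}\otimes\theta_{\A}]_{\A} = (\frac{1}{\beta_{\varepsilon}^4})$.

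Finally I would compose. Reading $(\theta_{\A}\otimes\theta_{\A})\circ c_{\A,\A}\circ c_{\A,\A}$ from left to right and using the composition rule $[f\circ g]_{\bullet} = [g]_{\bullet}\cdot[f]_{\bullet}$, the matrices of the right-hand side $R$ are obtained by multiplying the scalars in reverse order:
$$[R]_{\U} = \beta_{\varepsilon}^2\cdot\beta_{\varepsilon}^2\cdot\frac{1}{\beta_{\varepsilon}^4} = 1, \qquad [R]_{\A} = \beta_{\varepsilon}\cdot\beta_{\varepsilon}\cdot\frac{1}{\beta_{\varepsilon}^4} = \frac{1}{\beta_{\varepsilon}^2}.$$
These agree with $[\theta_{\U+\A}]_{\U}$ and $[\theta_{\U+\A}]_{\A}$, giving $L = R$. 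The computation is even simpler than in Lemma~\ref{Lemma:Braiding} because all matrices are $1\times 1$; unlike there, I do not even need the algebraic relations of Remark~\ref{Remark:BraidingConstant}, only powers of $\beta_{\varepsilon}$ cancelling. The sole genuine pitfall — and thus the ``hard part,'' such as it is — is getting the composition order right and remembering that $\theta_{\A}\otimes\theta_{\A}$ carries $\frac{1}{\beta_{\varepsilon}^4}$ on \emph{both} summands of $\U + \A$; no further identity among $\varepsilon$ and $\beta_{\varepsilon}$ is required.
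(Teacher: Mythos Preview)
Your proposal is correct and essentially identical to the paper's proof: both reduce by linearity to simple objects, dismiss the case where one factor is $\U$ as trivial, and then verify the $X=Y=\A$ case by computing that both sides have $\U$-component $1$ and $\A$-component $\frac{1}{\beta_{\varepsilon}^2}$, using exactly the same scalar values ($\frac{1}{\beta_{\varepsilon}^4}$ for $\theta_{\A}\otimes\theta_{\A}$ on both summands, $\beta_{\varepsilon}^2$ and $\beta_{\varepsilon}$ for each copy of $c_{\A,\A}$). The only cosmetic difference is that the paper presents the computation via arrow diagrams (Figure~\ref{Figure:TwistTensorObjects}) while you write out the $1\times 1$ matrices explicitly.
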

\begin{proof}
It's enough to prove the lemma for simple objects only. If at least one of the objects $X, Y$ is equal to $\U$, then the statement is trivial. Consider the case where $X = Y = \A$. The diagram of the left side of the lemma statement is shown in the figure \ref{Figure:TwistTensorObjects} at the top, and the diagram of the right side of the lemma statement is shown in the same figure \ref{Figure:TwistTensorObjects} at the bottom. The diagrams of these morphisms are the same.
\begin{figure}[h]
\begin{center}
$\theta_{\A\otimes\A} = $
\begin{tikzpicture}[scale=0.5, baseline={([yshift=.2ex]current bounding box.center)}]
	\draw (0, -1) rectangle ++(1, 1) node[pos=0.5] {$\U$};
	\draw (0, -2) rectangle ++(1, 1) node[pos=0.5] {$\A$};
	\draw (3, -1) rectangle ++(1, 1) node[pos=0.5] {$\U$};
	\draw (3, -2) rectangle ++(1, 1) node[pos=0.5] {$\A$};
	\draw[-latex] (1, -0.5) -- (3, -0.5) node[above, midway] {$1$};
	\draw[-latex] (1, -1.5) -- (3, -1.5) node[below, midway] {$\frac{1}{\beta_{\varepsilon}^2}$};
\end{tikzpicture}

$(\theta_{\A}\otimes \theta_{\A})\circ c_{\A, \A}\circ c_{\A, \A} = $
\begin{tikzpicture}[scale=0.5, baseline={([yshift=-0.5ex]current bounding box.center)}]
	\draw (0, -1) rectangle ++(1, 1) node[pos=0.5] {$\U$};
	\draw (0, -2) rectangle ++(1, 1) node[pos=0.5] {$\A$};
	\draw (3, -1) rectangle ++(1, 1) node[pos=0.5] {$\U$};
	\draw (3, -2) rectangle ++(1, 1) node[pos=0.5] {$\A$};
	\draw[-latex] (1, -0.5) -- (3, -0.5) node[above, midway] {$\frac{1}{\beta_{\varepsilon}^4}$};
	\draw[-latex] (1, -1.5) -- (3, -1.5) node[below, midway] {$\frac{1}{\beta_{\varepsilon}^4}$};
\end{tikzpicture}
$\circ$
\begin{tikzpicture}[scale=0.5, baseline={([yshift=-.7ex]current bounding box.center)}]
	\draw (0, -1) rectangle ++(1, 1) node[pos=0.5] {$\U$};
	\draw (0, -2) rectangle ++(1, 1) node[pos=0.5] {$\A$};
	\draw (3, -1) rectangle ++(1, 1) node[pos=0.5] {$\U$};
	\draw (3, -2) rectangle ++(1, 1) node[pos=0.5] {$\A$};
	\draw[-latex] (1, -0.5) -- (3, -0.5) node[above, midway] {$\beta_{\varepsilon}^2$};
	\draw[-latex] (1, -1.5) -- (3, -1.5) node[below, midway] {$\beta_{\varepsilon}$};
\end{tikzpicture}
$\circ$
\begin{tikzpicture}[scale=0.5, baseline={([yshift=-.7ex]current bounding box.center)}]
	\draw (0, -1) rectangle ++(1, 1) node[pos=0.5] {$\U$};
	\draw (0, -2) rectangle ++(1, 1) node[pos=0.5] {$\A$};
	\draw (3, -1) rectangle ++(1, 1) node[pos=0.5] {$\U$};
	\draw (3, -2) rectangle ++(1, 1) node[pos=0.5] {$\A$};
	\draw[-latex] (1, -0.5) -- (3, -0.5) node[above, midway] {$\beta_{\varepsilon}^2$};
	\draw[-latex] (1, -1.5) -- (3, -1.5) node[below, midway] {$\beta_{\varepsilon}$};
\end{tikzpicture}
$=$
\begin{tikzpicture}[scale=0.5, baseline={([yshift=0.2ex]current bounding box.center)}]
	\draw (0, -1) rectangle ++(1, 1) node[pos=0.5] {$\U$};
	\draw (0, -2) rectangle ++(1, 1) node[pos=0.5] {$\A$};
	\draw (3, -1) rectangle ++(1, 1) node[pos=0.5] {$\U$};
	\draw (3, -2) rectangle ++(1, 1) node[pos=0.5] {$\A$};
	\draw[-latex] (1, -0.5) -- (3, -0.5) node[above, midway] {$1$};
	\draw[-latex] (1, -1.5) -- (3, -1.5) node[below, midway] {$\frac{1}{\beta_{\varepsilon}^2}$};
\end{tikzpicture}
\end{center}
\caption{\label{Figure:TwistTensorObjects}Diagrams for morphisms $\theta_{\A,\otimes \A}$ (at the top) and $(\theta_{\A}\otimes \theta_{\A})\circ c_{\A, \A}\circ c_{\A, \A}$ (at the bottom)}
\end{figure}
\end{proof}

\begin{remark}
\label{Remark:TwistCondition}
Denote the value that defines the isomorphism $\theta_{\A}$ by $s\in\mathbb{C}$. The diagram of the statement of the lemma \ref{Lemma:TwistAndBraiding} for $X = \A$ is shown in the figure \ref{Figure:TwistCondition}.
\begin{figure}[h]
\begin{center}
\begin{tikzpicture}[scale=0.5, baseline={([yshift=-0.7ex]current bounding box.center)}]
	\draw (0, -1) rectangle ++(1, 1) node[pos=0.5] {$\U$};
	\draw (0, -2) rectangle ++(1, 1) node[pos=0.5] {$\A$};
	\draw (3, -1) rectangle ++(1, 1) node[pos=0.5] {$\U$};
	\draw (3, -2) rectangle ++(1, 1) node[pos=0.5] {$\A$};
	\draw[-latex] (1, -0.5) -- (3, -0.5) node[above, midway] {$1$};
	\draw[-latex] (1, -1.5) -- (3, -1.5) node[below, midway] {$s$};
\end{tikzpicture}
$=$
\begin{tikzpicture}[scale=0.5, baseline={([yshift=-0.5ex]current bounding box.center)}]
	\draw (0, -1) rectangle ++(1, 1) node[pos=0.5] {$\U$};
	\draw (0, -2) rectangle ++(1, 1) node[pos=0.5] {$\A$};
	\draw (3, -1) rectangle ++(1, 1) node[pos=0.5] {$\U$};
	\draw (3, -2) rectangle ++(1, 1) node[pos=0.5] {$\A$};
	\draw[-latex] (1, -0.5) -- (3, -0.5) node[above, midway] {$s^2$};
	\draw[-latex] (1, -1.5) -- (3, -1.5) node[below, midway] {$s^2$};
\end{tikzpicture}
$\circ$
\begin{tikzpicture}[scale=0.5, baseline={([yshift=-.7ex]current bounding box.center)}]
	\draw (0, -1) rectangle ++(1, 1) node[pos=0.5] {$\U$};
	\draw (0, -2) rectangle ++(1, 1) node[pos=0.5] {$\A$};
	\draw (3, -1) rectangle ++(1, 1) node[pos=0.5] {$\U$};
	\draw (3, -2) rectangle ++(1, 1) node[pos=0.5] {$\A$};
	\draw[-latex] (1, -0.5) -- (3, -0.5) node[above, midway] {$\beta_{\varepsilon}^2$};
	\draw[-latex] (1, -1.5) -- (3, -1.5) node[below, midway] {$\beta_{\varepsilon}$};
\end{tikzpicture}
$\circ$
\begin{tikzpicture}[scale=0.5, baseline={([yshift=-.7ex]current bounding box.center)}]
	\draw (0, -1) rectangle ++(1, 1) node[pos=0.5] {$\U$};
	\draw (0, -2) rectangle ++(1, 1) node[pos=0.5] {$\A$};
	\draw (3, -1) rectangle ++(1, 1) node[pos=0.5] {$\U$};
	\draw (3, -2) rectangle ++(1, 1) node[pos=0.5] {$\A$};
	\draw[-latex] (1, -0.5) -- (3, -0.5) node[above, midway] {$\beta_{\varepsilon}^2$};
	\draw[-latex] (1, -1.5) -- (3, -1.5) node[below, midway] {$\beta_{\varepsilon}$};
\end{tikzpicture}
$=$
\begin{tikzpicture}[scale=0.5, baseline={([yshift=-.7ex]current bounding box.center)}]
	\draw (0, -1) rectangle ++(1, 1) node[pos=0.5] {$\U$};
	\draw (0, -2) rectangle ++(1, 1) node[pos=0.5] {$\A$};
	\draw (4, -1) rectangle ++(1, 1) node[pos=0.5] {$\U$};
	\draw (4, -2) rectangle ++(1, 1) node[pos=0.5] {$\A$};
	\draw[-latex] (1, -0.5) -- (4, -0.5) node[above, midway] {$s^2\cdot \beta_{\varepsilon}^4$};
	\draw[-latex] (1, -1.5) -- (4, -1.5) node[below, midway] {$s^2\cdot \beta_{\varepsilon}^2$};
\end{tikzpicture}
\end{center}
\caption{\label{Figure:TwistCondition}Condition $\theta_{\A\otimes \A} = (\theta_{\A}\otimes \theta_{\A})\circ c_{\A, \A}\circ c_{\A, \A}$}
\end{figure}

This leads to the system $$
\begin{cases}
s^2\cdot \beta_{\varepsilon}^4 = 1 \\
s^2\cdot \beta_{\varepsilon}^2 = s
\end{cases}
$$

This system has a unique solution $s = \frac{1}{\beta_{\varepsilon}^2}$.
\end{remark}

Denote by $diag(v_1, \ldots, v_n)$ the diagonal matrix of size $n\times n$, with diagonal elements $v_1, \ldots, v_n$. Denote $E_n = diag(1, \ldots, 1)$.

\begin{lemma}
\label{Lemma:TwistUnitMatrix}
Let $X = \sum\limits_{i = 1}^{n}x_i$, $x_i\in I,$ be an object of the category $\E$, $|X|_{\U} = n_{\U}$, $|X|_{\A} = n_{\A}$. Then the following holds:
\begin{enumerate}
\item $[\theta_{X}\otimes id_{X}]_{\U} = [id_{X}\otimes \theta_{X}]_{\U}$;
\item $[(\theta_{X}\otimes id_{X})\circ c_{X, X}]_{\U} = E_{n_{\U}^2 + n_{\A}^2}$.
\end{enumerate}
\end{lemma}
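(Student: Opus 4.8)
The plan is to reduce everything to an explicit description of the $\U$-blocks of the three morphisms involved, exploiting that $\theta_X$ and $id_X$ are diagonal and that the braiding and the twist on a single copy of $\A$ contribute reciprocal scalars. First I would fix a convenient basis for the $\U$-summands of $X\otimes X$: since $\U\otimes\U=\U$, $\A\otimes\A=\U+\A$, and the mixed products $\U\otimes\A$, $\A\otimes\U$ contain no $\U$, the symbols $\U$ occurring in $X\otimes X$ are in bijection with ordered pairs $(i,j)$ satisfying $x_i=x_j$. There are exactly $n_{\U}^2$ of them coming from pairs of type $(\U,\U)$ and $n_{\A}^2$ from pairs of type $(\A,\A)$, which already accounts for the size $n_{\U}^2+n_{\A}^2$ appearing in the statement.

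For part 1, I would observe that by the rule for the tensor product of morphisms, combined with the fact that the matrices of $\theta_X$ and $id_X$ are diagonal, both $[\theta_X\otimes id_X]_{\U}$ and $[id_X\otimes \theta_X]_{\U}$ are themselves diagonal in this basis, with no off-diagonal contributions. The diagonal entry attached to the summand $(i,j)$ equals $\theta(x_i)\cdot 1$ for the first morphism and $1\cdot \theta(x_j)$ for the second, where $\theta(\U)=1$ and $\theta(\A)=\tfrac{1}{\beta_{\varepsilon}^2}$. Because a $\U$-summand occurs only when $x_i=x_j$, these two values agree term by term (both equal $1$ on $(\U,\U)$-summands and both equal $\tfrac{1}{\beta_{\varepsilon}^2}$ on $(\A,\A)$-summands), which is precisely the equality claimed in part 1.

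For part 2, I would use the composition rule $[(\theta_X\otimes id_X)\circ c_{X,X}]_{\U}=[c_{X,X}]_{\U}\cdot [\theta_X\otimes id_X]_{\U}$. The factor $[\theta_X\otimes id_X]_{\U}$ was just identified as the diagonal matrix with entry $1$ on $(\U,\U)$-summands and $\tfrac{1}{\beta_{\varepsilon}^2}$ on $(\A,\A)$-summands. For $[c_{X,X}]_{\U}$ I would invoke that on each $\U$-summand the braiding is governed solely by the simple-object braiding $c_{x_i,x_j}$: it is the identity whenever $x_i$ or $x_j$ equals $\U$, and on an $(\A,\A)$-summand it contributes the scalar $[c_{\A,\A}]_{\U}=\beta_{\varepsilon}^2$. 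Hence the corresponding coefficient is $1$ on $(\U,\U)$-summands and $\beta_{\varepsilon}^2$ on $(\A,\A)$-summands, which is exactly the reciprocal of the twist contribution. Multiplying the two then yields $1\cdot 1=1$ and $\beta_{\varepsilon}^2\cdot \tfrac{1}{\beta_{\varepsilon}^2}=1$ in every position, i.e. $E_{n_{\U}^2+n_{\A}^2}$.

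The step that needs the most care is the explicit determination of $[c_{X,X}]_{\U}$: one must track how each $\U$-symbol of $X\otimes X$ is matched by the linearly-extended braiding and verify that, on the $\U$-block, the only surviving coefficient for each summand is the scalar read off from $c_{x_i,x_j}$, with no extraneous off-diagonal weights. Once this bookkeeping is settled the conclusion is immediate from $\beta_{\varepsilon}^2\cdot \beta_{\varepsilon}^{-2}=1$. I would likely present this diagrammatically, following each $\U$-labelled two-step path through $c_{X,X}$ and then through $\theta_X\otimes id_X$ and checking that its total weight is $1$, so that the composite carries the identity matrix on the $\U$-part.
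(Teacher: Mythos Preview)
Your argument is essentially the paper's own: index the $\U$-summands of $X\otimes X$ by pairs $(i,j)$ with $x_i=x_j$, describe $[\theta_X\otimes id_X]_{\U}$ and $[c_{X,X}]_{\U}$ as diagonal matrices whose entries depend only on the common type of $x_i=x_j$, and observe that the two diagonals are reciprocal.

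The step you rightly singled out as delicate is in fact not correct as written, and the paper glosses over the same point. The linearly extended braiding $c_{X,X}$ sends the source summand at position $(i,j)$ to the target summand at position $(j,i)$, because $c_{x_i,x_j}\colon x_i\otimes x_j\to x_j\otimes x_i$. Hence for $i\neq j$ with $x_i=x_j$ the nonzero entry of $[c_{X,X}]_{\U}$ sits off the diagonal: $[c_{X,X}]_{\U}$ is the product of the permutation $(i,j)\leftrightarrow(j,i)$ with your diagonal scalars, and $[(\theta_X\otimes id_X)\circ c_{X,X}]_{\U}$ equals that permutation matrix rather than $E_{n_{\U}^2+n_{\A}^2}$. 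Already $X=\U+\U$ furnishes a counterexample to part~2 as stated. The downstream use of the lemma (showing $b_X\circ(\theta_X\otimes id_X)\circ c_{X,X}=b_X$ and the analogous identity with $d_X$) is nonetheless unaffected, because $[b_X]_{\U}$ and $[d_X]_{\U}$ are supported only on the diagonal positions $(i,i)$, which are fixed by the permutation; on those positions your scalar cancellation $\beta_{\varepsilon}^2\cdot\beta_{\varepsilon}^{-2}=1$ does yield~$1$. So what your computation actually establishes is the weaker, and sufficient, statement that the composite acts as the identity on the span of the diagonal $\U$-summands.
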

\begin{proof}
It follows from the definition that $[\theta_X]_{\U} = E_{n_{\U}}$ and $[\theta_{X}]_{\A} = \frac{1}{\beta_{\varepsilon}^2}\cdot E_{n_{\A}}$. It's clear that $[id_{X}]_{\U} = E_{n_{\U}}$ and $[id_{X}]_{\A} = E_{n_{\A}}$. Then $$[\theta_{X}\otimes id_X]_{\U} = diag(\delta_1, \delta_2, \ldots, \delta_{n_{\U}^2 + n_{\A}^2}),$$ where the value $\delta_i$ is equal to $1$ when the corresponding summand is obtained by multiplying $\U\otimes \U$, and the value $\delta_i$ is equal to $\frac{1}{\beta_{\varepsilon}^2}$ if the corresponding summand is obtained by multiplying $\A\otimes \A$ (this multiplication gives two summands: $\U$ and $\A$). Similarly, $$[id_X\otimes \theta_{X}]_{\U} = diag (\delta_1, \delta_2, \ldots, \delta_{n_{\U}^2 + n_{\A}^2}).$$

So, the first statement of the lemma it proved.

To prove the second statement, note that $$[c_{X, X}]_{\U} = diag(\sigma_1, \sigma_2, \ldots, \sigma_{n_{\U}^2 + n_{\A}^2}),$$ where $\sigma_i$ is equal to $1$ if the corresponding summand is obtained by multiplying $x_p\otimes x_q$, where at least one of $x_p$ or $x_q$ is equal to $\U$, and $\sigma_i$ is equal to $\beta_{\varepsilon}^2$ if the corresponding summand obtained by multiplying $\A\otimes \A$. Therefore the product $[c_{X, X}]_{\U}\cdot [\theta_{X}\otimes id_X]_{\U}$ is a matrix $E_{n_{\U}^2 + n_{\A}^2}$.
\end{proof}

\subsubsection{Duality}

In the category $\E$ every object is self-dual, that is $X^{*} = X$ for every object $X$ of the category $\E$.

Let $X = \sum\limits_{i = 1}^{n}x_i$, $x_i\in I$, be an object of the category $\E$, $|X|_{\U} = n_{\U}$, $|X|_{\A} = n_{\A}$. Define the morphisms $b_X\in Hom(\U, X\otimes X)$ and $d_X\in Hom(X\otimes X, \U)$ by the following.  Note that $x_i\otimes x_i$ is either $\U$ (if $x_i = \U$) or $\U + \A$ (if $x_i = \A$). Let $(x_i\otimes x_i)_1$ be the first summand (always equal to $\U$) in this product. The morphisms $b_X$ and $d_X$ completely defined by the matrices $$[b_X]_{\U} = 
\begin{pmatrix}
\delta_1 \\
\delta_2 \\
\vdots \\
\delta_{n_{\U}^2 + n_{\A}^2}
\end{pmatrix}, [d_{X}]_{\U} = \begin{pmatrix}
\sigma_1 & \sigma_2 & \ldots & \sigma_{n_{\U}^2 + n_{\A}^2}
\end{pmatrix}.
$$

Define $\delta_s = \sigma_s = 0$ if the $s$-th summand $\U$ in $X\otimes X$ is obtained as $x_i\otimes x_j$, $i\neq j$, and define $\delta_s = \varphi_i$, $\sigma_s = \psi_i$ if the $s$-th summand is $(x_i\otimes x_i)_1$. Then define $\varphi_i = y$, $\psi_i = \frac{1}{y}$ if $x_i = \U$ and $\varphi_i = y\sqrt{\varepsilon}$, $\psi_i = \frac{\sqrt{\varepsilon}}{y}$ if $x_i = \A$, $i \in \{1, \ldots, n\}$. Here $y\in \mathbb{C}$ is any fixed nonzero complex number.

\begin{example}
\label{Example:Dualities}
It's clear that $b_{\U} = d_{\U} = id_{\U}$.

Diagrams for $b_{\A}$ and $d_{\A}$ are shown in the figure \ref{Figure:DualityAExample}. In this case $\A\otimes \A = \U + \A$ and the first summand $\U$ is exactly $(\A\otimes \A)_1$. So the values associated with the unique arrow in diagrams of $b_{\A}$ and $d_{\A}$ are $y\sqrt{\varepsilon}$ and $\frac{\sqrt{\varepsilon}}{y}$ respectively.

\begin{figure}[h]
\begin{center}
\hfill $b_{\A} = $
\begin{tikzpicture}[scale=0.5, baseline={([yshift=-1.7ex]current bounding box.center)}]
	\draw (0, -1) rectangle ++(1, 1) node[pos=0.5] {$\U$};
	
	\draw (3, -1) rectangle ++(1, 1) node[pos=0.5] {$\U$};
	\draw (3, -2) rectangle ++(1, 1) node[pos=0.5] {$\A$};
	\draw[-latex] (1, -0.5) -- (3, -0.5) node[above, midway] {$y\sqrt{\varepsilon}$};
\end{tikzpicture}
\hfill
$d_{\A} = $
\begin{tikzpicture}[scale=0.5, baseline={([yshift=-2.15ex]current bounding box.center)}]
	\draw (3, -1) rectangle ++(1, 1) node[pos=0.5] {$\U$};
	
	\draw (0, -1) rectangle ++(1, 1) node[pos=0.5] {$\U$};
	\draw (0, -2) rectangle ++(1, 1) node[pos=0.5] {$\A$};
	\draw[-latex] (1, -0.5) -- (3, -0.5) node[above, midway] {$\frac{\sqrt{\varepsilon}}{y}$};
\end{tikzpicture}
\hfill $b_{\U + \A} = $
\begin{tikzpicture}[scale=0.5, baseline={([yshift=-1.5ex]current bounding box.center)}]
	\draw (0, -1) rectangle ++(1, 1) node[pos=0.5] {$\U$};
	
	\draw (3, -1) rectangle ++(1, 1) node[pos=0.5] {$\U$};
	\draw (3, -2) rectangle ++(1, 1) node[pos=0.5] {$\A$};
	\draw (3, -3) rectangle ++(1, 1) node[pos=0.5] {$\A$};
	\draw (3, -4) rectangle ++(1, 1) node[pos=0.5] {$\U$};
	\draw (3, -5) rectangle ++(1, 1) node[pos=0.5] {$\A$};
	\draw[-latex] (1, -0.5) -- (3, -0.5) node[above, midway] {$y$};
	\draw[-latex] (1, -0.5) -- (3, -3.5) node[below left, midway] {$y\sqrt{\varepsilon}$};
\end{tikzpicture}
\hfill $d_{\U + \A} = $
\begin{tikzpicture}[scale=0.5, baseline={([yshift=-2.25ex]current bounding box.center)}]
	\draw (3, -1) rectangle ++(1, 1) node[pos=0.5] {$\U$};
	
	\draw (0, -1) rectangle ++(1, 1) node[pos=0.5] {$\U$};
	\draw (0, -2) rectangle ++(1, 1) node[pos=0.5] {$\A$};
	\draw (0, -3) rectangle ++(1, 1) node[pos=0.5] {$\A$};
	\draw (0, -4) rectangle ++(1, 1) node[pos=0.5] {$\U$};
	\draw (0, -5) rectangle ++(1, 1) node[pos=0.5] {$\A$};
	\draw[-latex] (1, -0.5) -- (3, -0.5) node[above, midway] {$\frac{1}{y}$};
	\draw[-latex] (1, -3.5) -- (3, -0.5) node[below right, midway] {$\frac{\sqrt{\varepsilon}}{y}$};
\end{tikzpicture}
\hfill \ \ 
\end{center}
\caption{\label{Figure:DualityAExample}Morphisms $b_{\A}$, $d_{\A}$, $b_{\U + \A}$ and $d_{\U + \A}$}
\end{figure}

Diagrams for the morphisms $b_{\U + \A}$ and $d_{\U + \A}$ shown in the figure \ref{Figure:DualityAExample}. In this case $(\U + \A)\otimes (\U + \A) = \U + \A + \A + \U + \A$, where the first symbol $\U$ is $(\U\otimes \U)_1$ and the second symbol $\U$ is $(\A\otimes \A)_1$. So in the diagram for $b_{\U + \A}$ the value $y$ is associated with the first arrow and the value $y\sqrt{\varepsilon}$ is associated with the second arrow. Similarly, in the diagram for $d_{\U + \A}$, the value $\frac{1}{y}$ is associated with the first arrow, and the value $\frac{\sqrt{\varepsilon}}{y}$ is associated with the second arrow.
\end{example}

\begin{lemma}
\label{Lemma:Duality}
For any object $X$ in the category $\E$:
\begin{enumerate}
\item $(b_X\otimes id_X)\circ \alpha_{X, X, X}\circ (id_X\otimes d_X) = id_X$;
\item $(id_X\otimes b_X)\circ \alpha_{X, X, X}^{-1}\circ (d_X\otimes id_X) = id_X$.
\end{enumerate}
\end{lemma}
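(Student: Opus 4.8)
The plan is to compute each side as an explicit morphism in $Hom(X,X)$ and to verify it equals $id_X$ by comparing the two defining matrices $[\,\cdot\,]_{\U}$ and $[\,\cdot\,]_{\A}$. Unlike the earlier structural lemmas I would not reduce to simple objects, since $b_X$ and $d_X$ are defined directly for an arbitrary $X=\sum_{i=1}^n x_i$; instead I would track the image of a single cell $x_k$ of $X$ as it passes through the three factors of statement~1 (read left to right). Labelling three copies of $X$, the morphism $b_X\otimes id_X$ first replaces the tracked cell $x_k$ (copy $3$) by $\sum_i \varphi_i\cdot\bigl((x_i\otimes x_i)_1\otimes x_k\bigr)$, i.e.\ it glues on the diagonal pair $(x_i,x_i)$ in copies $1,2$ weighted by $\varphi_i$; then $\alpha_{X,X,X}$ reassociates $(12)3\mapsto 1(23)$; and finally $id_X\otimes d_X$ contracts copies $2,3$, keeping only the diagonal summands $(x_j\otimes x_j)_1$ with weight $\psi_j$ and sending every other summand, in particular every $\A$-part of a product, to zero.

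The key structural observation that makes the calculation collapse is that $d_X$ pairs the $j$-th cell of its left argument only with the $j$-th cell of its right argument, and is supported on the first ($\U$-type) summand of $x_j\otimes x_j$. Hence in the contraction the index $i$ produced by $b_X$ is forced to equal the index $k$ of the tracked cell, so the only surviving term is the genuinely diagonal one $i=k$; all cross-index terms and all $\A$-part contributions vanish. Consequently the composite sends cell $x_k$ to cell $x_k$ (same index, hence same simple type) with a single scalar, both $[\,\cdot\,]_{\U}$ and $[\,\cdot\,]_{\A}$ are diagonal, and it remains only to check that each diagonal entry is $1$.

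That surviving scalar is the product of $\varphi_k$, the relevant diagonal entry of the associator, and $\psi_k$. When $x_k=\U$ every associator in play is the identity by definition and $\varphi_k\psi_k=y\cdot\frac1y=1$. When $x_k=\A$ the active block is $(\A,\A,\A)$: the incoming summand $(x_i\otimes x_i)_1\otimes x_k=\U\otimes\A$ is the \emph{first} $\A$-summand of $(\A\otimes\A)\otimes\A=\A+\U+\A$, and $[\alpha_{\A,\A,\A}]_{\A}$ sends it to $\tfrac1\varepsilon$ times the first $\A$-summand plus $\tfrac{1}{x\sqrt\varepsilon}$ times the third; the second branch lands in the $\A$-part of $x_i\otimes x_k$ and is annihilated by $d_X$, leaving the factor $\tfrac1\varepsilon$. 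Since $\varphi_k\psi_k=y\sqrt\varepsilon\cdot\frac{\sqrt\varepsilon}{y}=\varepsilon$ (Example~\ref{Example:Dualities}), the surviving scalar is $\varepsilon\cdot\tfrac1\varepsilon=1$, so both matrices equal the identity and statement~1 follows.

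For statement~2 the argument is the mirror image: $d_X\otimes id_X$ and $id_X\otimes b_X$ now contract and glue copies $1,2$ and $2,3$ respectively, $\alpha_{X,X,X}^{-1}$ reassociates in the opposite direction, and by Remark~\ref{Remark:InverseAssociator} its $\A$-matrix is again $\begin{pmatrix}\frac1\varepsilon & \frac{x}{\sqrt\varepsilon}\\ \frac{1}{x\sqrt\varepsilon} & -\frac1\varepsilon\end{pmatrix}$, so the identical cancellation $\varepsilon\cdot\frac1\varepsilon=1$ occurs. I expect the only real difficulty to be bookkeeping: correctly identifying, inside the non-strict reassociation, which of the two $\A$-summands of $(\A\otimes\A)\otimes\A$ the diagonal term of $b_X$ feeds into, and confirming that the off-diagonal branch of the $2\times2$ associator block is precisely the one killed by $d_X$. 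Everything else is forced by the diagonal, index-matching nature of $b_X$ and $d_X$ and reduces to the single identity $\varphi_k\psi_k\cdot(\text{associator entry})=1$.
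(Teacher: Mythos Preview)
Your proposal is correct and follows essentially the same approach as the paper's own proof. Both arguments track a single cell $x_k$ through the three factors, use the diagonal (index-matching) nature of $b_X$ and $d_X$ together with the fact that $\alpha_{X,X,X}$ acts blockwise on index triples to force the single surviving term $i=k$, and then reduce to the scalar identity $\varphi_k\cdot(\text{associator entry})\cdot\psi_k=1$ (trivial for $x_k=\U$, and $y\sqrt\varepsilon\cdot\tfrac1\varepsilon\cdot\tfrac{\sqrt\varepsilon}{y}=1$ for $x_k=\A$); the paper phrases the index-collapse as a direct statement about which arrows of $\alpha_{X,X,X}$ connect $(x_j\otimes x_j)_1\otimes x_i$ to $x_p\otimes(x_q\otimes x_q)_1$, while you derive it from $d_X$, but this is the same observation.
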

\begin{proof}
Let $X = \sum\limits_{i = 1}^{n}x_i$, $x_i\in I$. We will prove the first statement of the lemma by using diagram language.

It follows from the definition of the morphism $b_{X}\in Hom(\U, X\otimes X)$ that the diagram of this morphism $b_X$ contains exactly $n$ non-zero arrows. These arrows connect the cell with the symbol $\U$ with the cells $(x_1\otimes x_1)_1, \ldots, (x_n\otimes x_n)_1$. Let $\varphi_i\in \{y, y\sqrt{\varepsilon}\}$ be the value associated with the $i$-th arrow (figure \ref{Figure:DualityProveB} on the left). Then the diagram of the morphism $b_X\otimes id_X \in Hom(X, (X\otimes X)\otimes X)$ contains exactly $n^2$ arrows. These arrows connect the cell $x_i$ with the cells $(x_1\otimes x_1)_1\otimes x_i, \ldots, (x_n\otimes x_n)_1\otimes x_i$, and the values associated with these arrows are $\varphi_1, \ldots, \varphi_n$ respectively (figure \ref{Figure:DualityProveB} on the right).

\begin{figure}[h]
\begin{center}
\ \hfill
\begin{tikzpicture}[scale=0.5, baseline={([yshift=0.0ex]current bounding box.center)}]
	\draw (0, -1) rectangle ++(1, 1) node[pos=0.5] {$\U$};
	
	\draw (3, -2) rectangle ++(1, 2) node[pos=0.5] {$\vdots$};
	\draw (3, -3) rectangle ++(1, 1);
	\draw (4, -2.5) node[right] {$(x_i\otimes x_i)_1$};
	\draw (3, -5) rectangle ++(1, 2) node[pos=0.5] {$\vdots$};
	\draw[-latex] (1, -0.5) -- (3, -2.5) node[below, midway] {$\varphi_i$};
\end{tikzpicture}
\hfill
\begin{tikzpicture}[scale=0.5, baseline={([yshift=0.0ex]current bounding box.center)}]
	\draw (0, -2) rectangle ++(1, 2) node[pos=0.5] {$\vdots$};
	\draw (0, -3) rectangle ++(1, 1) node[pos=0.5] {$x_i$};
	\draw (0, -5) rectangle ++(1, 2) node[pos=0.5] {$\vdots$};
	
	\draw (3, -2) rectangle ++(1, 2) node[pos=0.5] {$\vdots$};
	\draw (3, -3) rectangle ++(1, 1);
	\draw (4, -2.5) node[right] {$(x_1\otimes x_1)_1\otimes x_i$};
	\draw (3, -5) rectangle ++(1, 2) node[pos=0.5] {$\vdots$};
	\draw (3, -6) rectangle ++(1, 1);
	\draw (4, -5.5) node[right] {$(x_n\otimes x_n)_1\otimes x_i$};
	\draw (3, -8) rectangle ++(1, 2) node[pos=0.5] {$\vdots$};
	\draw[-latex] (1, -2.5) -- (3, -2.5) node[above, midway] {$\varphi_1$};
	\draw[-latex] (1, -2.5) -- (3, -5.5) node[below, midway] {$\varphi_n$};
\end{tikzpicture}
\hfill \ \ 
\end{center}
\caption{\label{Figure:DualityProveB}Diagrams of the morphism $b_X$ (on the left) and $b_X\otimes id_X$ (on the right)}
\end{figure}
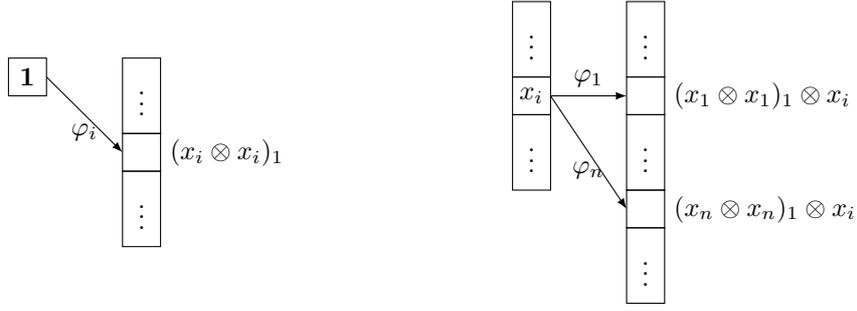

Similarly, the diagram of the morphism $d_X$ contains exactly $n$ non-zero arrows. The $i$-th arrow connects $(x_i\otimes x_i)_1$ with $\U$, and the associated value is $\psi_i\in \{\frac{1}{y}, \frac{\sqrt{\varepsilon}}{y}\}$ (figure \ref{Figure:DualityProveD} on the left). Then the digram of the morphism $id_X\otimes d_X \in Hom(X\otimes (X\otimes X), X)$ contains exactly $n^2$ arrows. The arrow with the associated value $\psi_j$ connects the cell $x_i\otimes (x_j\otimes x_j)_1$ with the cell $x_i$ for each $i\in\{1, \ldots, n\}$ (figure \ref{Figure:DualityProveD} on the right).

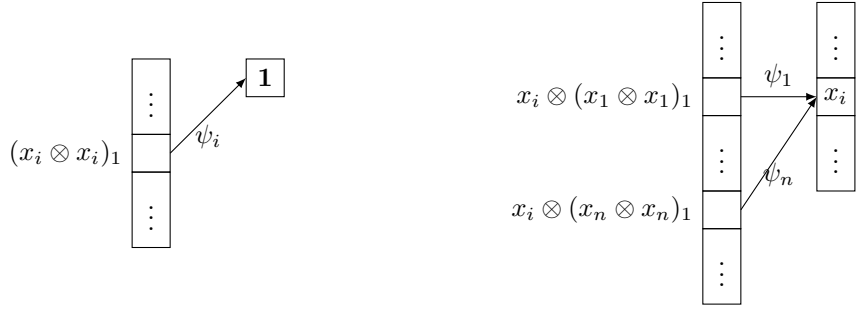
\begin{figure}[h]
\begin{center}
\ \hfill
\begin{tikzpicture}[scale=0.5, baseline={([yshift=0.0ex]current bounding box.center)}]
	\draw (0, -2) rectangle ++(1, 2) node[pos=0.5] {$\vdots$};
	\draw (0, -3) rectangle ++(1, 1);
	\draw (0, -2.5) node[left] {$(x_i\otimes x_i)_1$};
	\draw (0, -5) rectangle ++(1, 2) node[pos=0.5] {$\vdots$};
	
	\draw (3, -1) rectangle ++(1, 1) node[pos=0.5] {$\U$};

	\draw[-latex] (1, -2.5) -- (3, -0.5) node[below, midway] {$\psi_i$};
\end{tikzpicture}
\hfill
\begin{tikzpicture}[scale=0.5, baseline={([yshift=0.0ex]current bounding box.center)}]	
	\draw (0, -2) rectangle ++(1, 2) node[pos=0.5] {$\vdots$};
	\draw (0, -3) rectangle ++(1, 1);
	\draw (0, -2.5) node[left] {$x_i\otimes (x_1\otimes x_1)_1$};
	\draw (0, -5) rectangle ++(1, 2) node[pos=0.5] {$\vdots$};
	\draw (0, -6) rectangle ++(1, 1);
	\draw (0, -5.5) node[left] {$x_i\otimes (x_n\otimes x_n)_1$};
	\draw (0, -8) rectangle ++(1, 2) node[pos=0.5] {$\vdots$};
	
	\draw (3, -2) rectangle ++(1, 2) node[pos=0.5] {$\vdots$};
	\draw (3, -3) rectangle ++(1, 1) node[pos=0.5] {$x_i$};
	\draw (3, -5) rectangle ++(1, 2) node[pos=0.5] {$\vdots$};
	
	\draw[-latex] (1, -2.5) -- (3, -2.5) node[above, midway] {$\psi_1$};
	\draw[-latex] (1, -5.5) -- (3, -2.5) node[below, midway] {$\psi_n$};
\end{tikzpicture}
\hfill \ \ 
\end{center}
\caption{\label{Figure:DualityProveD}Diagrams of the morphism $d_X$ (on the left) and $id_X\otimes d_X$ (on the right)}
\end{figure}

Finally, note that the associativity isomorphism $\alpha_{X, X, X}$ contains non-zero arrows connected by $(x_j\otimes x_j)_1\otimes x_i$ and $x_p\otimes (x_q\otimes x_q)_1$ if and only if $i = j = p = q$. If $x_i = x_j = x_p = x_q = \U$, then the value associated with this arrow is $1$, and if $x_i = x_j = x_p = x_q = \A$, then this value is $\frac{1}{\varepsilon}$. Thus, in the diagram of the composition $(b_X\otimes id_X)\circ \alpha_{X, X, X}\circ (id_X\otimes d_X)$, the left cell $x_i$ is connected to only the right cell $x_i$, and the value associated with this arrow is equal to one. This morphism coincides with $id_X$.

The proof of the second statement of the lemma is similar.
\end{proof}

\begin{lemma}
\label{Lemma:DualityIdentities}
Let $X$ be an object of the category $\E$. Then
\begin{enumerate}
\item $b_X\circ (\theta_{X}\otimes id_X)\circ c_{X, X} = b_X$;
\item $(\theta_X\otimes id_X)\circ c_{X, X}\circ d_X = d_X$;
\item $b_X\circ (\theta_X\otimes id_X) = b_X\circ (id_X\otimes \theta_X)$.
\end{enumerate}
\end{lemma}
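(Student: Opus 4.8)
The plan is to reduce all three identities to elementary computations carried out entirely in the $\U$-sector. The crucial observation is that both $b_X\in Hom(\U, X\otimes X)$ and $d_X\in Hom(X\otimes X, \U)$ have the unit object $\U$ as their source or target, and $|\U|_{\A} = 0$; consequently the matrices $[b_X]_{\A}$ and $[d_X]_{\A}$ are empty, and each of these morphisms is completely determined by its $\U$-component, a column vector $[b_X]_{\U}$ and a row vector $[d_X]_{\U}$ respectively. Therefore it suffices to verify every claimed equality at the level of $\U$-matrices, since every $\A$-matrix identity holds automatically (both sides are empty). The entire proof then rests on the two assertions of lemma \ref{Lemma:TwistUnitMatrix}, recalling that composition in $\E$ corresponds to reverse-order matrix multiplication, $[f\circ g]_{\U} = [g]_{\U}\cdot [f]_{\U}$.

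For the first two statements I would invoke the second assertion of lemma \ref{Lemma:TwistUnitMatrix}, namely $[(\theta_X\otimes id_X)\circ c_{X,X}]_{\U} = E_{n_{\U}^2 + n_{\A}^2}$. Writing $P = (\theta_X\otimes id_X)\circ c_{X,X}$, statement 1 reads $[b_X\circ P]_{\U} = [P]_{\U}\cdot [b_X]_{\U} = E_{n_{\U}^2 + n_{\A}^2}\cdot [b_X]_{\U} = [b_X]_{\U}$, which proves the identity. For statement 2 the same morphism $P$ appears, but now composed on the other side with $d_X$, giving $[P\circ d_X]_{\U} = [d_X]_{\U}\cdot [P]_{\U} = [d_X]_{\U}\cdot E_{n_{\U}^2 + n_{\A}^2} = [d_X]_{\U}$, which is exactly $d_X$.

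For the third statement I would use the first assertion of lemma \ref{Lemma:TwistUnitMatrix}, $[\theta_X\otimes id_X]_{\U} = [id_X\otimes \theta_X]_{\U}$. Computing both sides in the $\U$-sector gives $[b_X\circ(\theta_X\otimes id_X)]_{\U} = [\theta_X\otimes id_X]_{\U}\cdot [b_X]_{\U}$ and $[b_X\circ(id_X\otimes \theta_X)]_{\U} = [id_X\otimes \theta_X]_{\U}\cdot [b_X]_{\U}$, and these coincide by the cited equality of $\U$-matrices.

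There is essentially no genuine obstacle once lemma \ref{Lemma:TwistUnitMatrix} is available; the lemma absorbs all of the arithmetic involving $\beta_{\varepsilon}$ and $\varepsilon$. The only point requiring care is the bookkeeping of composition order, so that in statement 1 the identity matrix $E_{n_{\U}^2 + n_{\A}^2}$ multiplies the column vector $[b_X]_{\U}$ on the left while in statement 2 it multiplies the row vector $[d_X]_{\U}$ on the right, in both cases leaving the vector unchanged. I would also state explicitly that the $\A$-sector identities are vacuous, so that no computation is needed beyond the three $\U$-sector equalities above.
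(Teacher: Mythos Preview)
Your proposal is correct and follows essentially the same approach as the paper: reduce everything to the $\U$-sector using the fact that $[b_X]_{\A}$ and $[d_X]_{\A}$ are vacuous, then invoke the second part of lemma~\ref{Lemma:TwistUnitMatrix} for statements~1 and~2 and the first part for statement~3. Your write-up is in fact more explicit than the paper's, which simply points to lemma~\ref{Lemma:TwistUnitMatrix} without spelling out the matrix products.
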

\begin{proof}
The morphism $b_{X}$ defined only by the matrix $[b_X]_{\U}$, because the matrix $[b_{X}]_{\A}$ has zero columns. Hence the composition $b_X\circ f$ with any suitable morphism $f$ defined only by $[b_X]_{\U}$ and $[f]_{\U}$. The first two statements of the lemma follow from the second statement of the lemma \ref{Lemma:TwistUnitMatrix}. The third statement of the lemma follows from the first statement of the same lemma \ref{Lemma:TwistUnitMatrix}.
\end{proof}

\begin{theorem}
\label{Theorem:RibbonCategory}
The category $\E$ is ribbon.
\end{theorem}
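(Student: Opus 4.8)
The plan is to unwind the definition of a ribbon category from \cite{T} and to discharge each of its axioms by citing the lemma in which it was already verified; no genuinely new computation should be needed. Recall that a ribbon category is a braided monoidal category equipped with a twist $\{\theta_X\}$ and a compatible duality $(*,\{b_X\},\{d_X\})$, subject to: the twist being a natural family of isomorphisms; the balancing relation $\theta_{X\otimes Y}=(\theta_X\otimes\theta_Y)\circ c_{X,Y}\circ c_{Y,X}$; the two rigidity (zig-zag) identities for the duality; and the compatibility of the twist with the duality, $(\theta_X)^{*}=\theta_{X^{*}}$.

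First I would note that $\E$ is already a braided monoidal category by Theorem \ref{Theorem:MonoidalCategory} together with Theorem \ref{Theorem:BraidedCategory}, so all associativity, unit, and hexagon axioms are in place. Next I would record that the twist defined in this subsection is natural, which is exactly Lemma \ref{Lemma:TwistNaturality}, and that it satisfies the balancing relation with the braiding, which is exactly Lemma \ref{Lemma:TwistAndBraiding}. This settles the braiding-and-twist part of the structure.

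Then I would turn to the duality. Since every object is self-dual, $X^{*}=X$, and the coevaluation $b_X\in Hom(\U,X\otimes X)$ and evaluation $d_X\in Hom(X\otimes X,\U)$ were defined precisely so that Lemma \ref{Lemma:Duality} yields the two zig-zag identities (with the associator $\alpha_{X,X,X}$ inserted, as is required in our non-strict setting). It remains to check that this duality is compatible with the twist and the braiding; these compatibilities are the content of Lemma \ref{Lemma:DualityIdentities}, where statement $3$ supplies $(\theta_X)^{*}=\theta_{X^{*}}$ in its self-dual form $b_X\circ(\theta_X\otimes id_X)=b_X\circ(id_X\otimes\theta_X)$, while statements $1$ and $2$ record the remaining compatibility of $b_X$ and $d_X$ with $c_{X,X}$ and $\theta_X$. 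Assembling all of these establishes every defining condition of a ribbon category, which proves the theorem.

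Because the substantive work has already been done in the cited lemmas, I do not expect a real mathematical obstacle here; the only delicate point is bookkeeping. Concretely, the hard part will be making sure that the self-duality identification $X^{*}=X$ and the left-to-right composition convention used throughout this paper are matched correctly against the axioms as stated in \cite{T}, and that the list of compatibility conditions is complete, so that each axiom is accounted for by exactly one of Lemmas \ref{Lemma:TwistNaturality}, \ref{Lemma:TwistAndBraiding}, \ref{Lemma:Duality}, and \ref{Lemma:DualityIdentities} with none left over.
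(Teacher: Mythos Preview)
Your proposal is correct and follows essentially the same approach as the paper: both arguments discharge the ribbon axioms by citing Lemmas \ref{Lemma:TwistNaturality} and \ref{Lemma:TwistAndBraiding} for the twist, and Lemmas \ref{Lemma:Duality} and \ref{Lemma:DualityIdentities} for the duality and its compatibility with the twist. The paper's proof is slightly more terse, invoking only the third statement of Lemma \ref{Lemma:DualityIdentities} for the ribbon compatibility $(\theta_X)^{*}=\theta_{X^{*}}$, whereas you also mention statements 1 and 2; these latter are not strictly needed for the ribbon axioms (they are used later in the modularity proof), but including them does no harm.
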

\begin{proof}
The family of isomorphisms $\theta_{X}$ form a twist on the braided category $\E$. The naturality of $\theta_{X}$ follows from the lemma \ref{Lemma:TwistNaturality}, compatibility with braiding have been proved in the lemma \ref{Lemma:TwistAndBraiding}.

Family of morphisms $b_X, d_X$ define the duality on the braided category $\E$. Their compatibility follows from lemmas \ref{Lemma:Duality} and \ref{Lemma:DualityIdentities} (third statement).
\end{proof}

\begin{theorem}
\label{Theorem:Modular}
The category $\E$ is modular.
\end{theorem}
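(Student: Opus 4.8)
The plan is to invoke the definition of modularity used in \cite{T}: a modular category is a ribbon Ab-category with a finite family of simple objects (one of which is the unit) such that every object decomposes into these simple objects and the associated \emph{S-matrix} is invertible. The ribbon structure is already furnished by Theorem \ref{Theorem:RibbonCategory}. The finiteness and domination conditions are immediate from the construction: the simple objects are exactly $\U$ and $\A$, the unit $\mathbbm{1} = \U$ is among them with $\mathrm{End}(\U) = \mathbb{C}$, and by definition every object is a noncommutative sum of copies of $\U$ and $\A$. Hence the only substantive point to verify is the non-degeneracy of the S-matrix.

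So the decisive step is to compute the $2\times 2$ matrix $S = (S_{X,Y})$, $X,Y\in\{\U,\A\}$, whose entries are the invariants of the Hopf link with components coloured by $X$ and $Y$, equivalently the quantum traces of the double braidings $c_{Y,X}\circ c_{X,Y}$ on $X\otimes Y$. First I would record the quantum dimensions: $d_{\U} = 1$, while $d_{\A} = b_{\A}\circ d_{\A} = y\sqrt{\varepsilon}\cdot\frac{\sqrt{\varepsilon}}{y} = \varepsilon$ from the duality data. The entries involving $\U$ are then immediate, since a component coloured by the unit may be erased: $S_{\U,\U} = d_{\U} = 1$ and $S_{\U,\A} = S_{\A,\U} = d_{\A} = \varepsilon$.

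The remaining entry $S_{\A,\A}$ uses the double braiding on $\A\otimes\A = \U + \A$. By the matrices $[c_{\A,\A}]_{\U} = (\beta_{\varepsilon}^2)$ and $[c_{\A,\A}]_{\A} = (\beta_{\varepsilon})$, the composite $c_{\A,\A}\circ c_{\A,\A}$ acts by $\beta_{\varepsilon}^4$ on the $\U$-summand and by $\beta_{\varepsilon}^2$ on the $\A$-summand, so the quantum trace, weighting each summand by its quantum dimension, gives
$$S_{\A,\A} = d_{\U}\cdot\beta_{\varepsilon}^4 + d_{\A}\cdot\beta_{\varepsilon}^2 = \beta_{\varepsilon}^4 + \varepsilon\beta_{\varepsilon}^2.$$
Substituting $\beta_{\varepsilon}^4 = \beta_{\varepsilon} + \varepsilon - 1$ and $\beta_{\varepsilon}^2 = -1 - \beta_{\varepsilon}/\varepsilon$ from Remark \ref{Remark:BraidingConstant}, this collapses to $S_{\A,\A} = -1$. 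Therefore
$$S = \begin{pmatrix} 1 & \varepsilon \\ \varepsilon & -1 \end{pmatrix}, \qquad \det S = -1 - \varepsilon^2 = -(\varepsilon + 2),$$
the last equality because $\varepsilon^2 = \varepsilon + 1$. As $\varepsilon\in\{(1\pm\sqrt{5})/2\}$ is real and $\varepsilon + 2\neq 0$ for both roots, we get $\det S\neq 0$, so $S$ is invertible and $\E$ is modular.

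The main obstacle I anticipate is bookkeeping rather than conceptual: one must match the paper's diagrammatic and matrix conventions for $b_X, d_X, \theta_X$ to Turaev's definitions of the quantum trace and of the entry $S_{X,Y}$, so that the weighting of each summand of $\A\otimes\A$ by its quantum dimension is \emph{derived} from the duality and twist morphisms rather than merely quoted, and so that the framing normalization implicit in $S_{X,Y}$ agrees with the twist $\theta$ fixed in Theorem \ref{Theorem:RibbonCategory}. Once that normalization is pinned down, the computation above is forced. It is a useful consistency check that the value $\varepsilon + 2 = d_{\U}^2 + d_{\A}^2$ obtained for $-\det S$ is exactly the total dimension of $\E$ appearing in the relation $|tr_{\varepsilon}|^2\cdot(\varepsilon + 2) = tv_{\varepsilon}$ stated in the abstract.
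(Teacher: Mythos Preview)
Your proposal is correct and follows essentially the same route as the paper: verify the Ab-category and domination conditions as immediate, then compute the $2\times 2$ $S$-matrix and check its determinant. The paper carries out the $S_{\A,\A}$ computation by writing out the composition $b_{\U+\A}\circ ((c_{\A,\A}\circ c_{\A,\A})\otimes id_{\U+\A})\circ d_{\U+\A}$ diagrammatically rather than invoking the trace-as-sum-over-simples formula, but this yields exactly your expression $\beta_{\varepsilon}^4 + \varepsilon\beta_{\varepsilon}^2 = -1$, and the resulting determinant $-(\varepsilon+2)$ agrees with the paper's $-2-\varepsilon$.
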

\begin{proof}
It's clear that the category $\E$ is an Ab-category. Indeed, for any two objects $X, Y$ the set $Hom(X, Y)$ admits a natural structure of an abelian group such that the composition of morphisms is bilinear.

Every object in $\E$ is dominated by simple objects $\U, \A$. It's also obvious, because every object is a sum of these simple ones. So for the modularity of $\E$ we should only check that the matrix $$S = \begin{pmatrix}
S_{\U, \U} & S_{\U, \A} \\
S_{\A, \U} & S_{\A, \A}
\end{pmatrix}
$$
is invertible over $\mathbb{C}$. Here the value $S_{X, Y}$ for any two objects $X, Y$ of the category $\E$ defined by the composition $$S_{X, Y} = b_{X\otimes Y}\circ ((c_{X, Y}\circ c_{Y, X})\otimes id_{X\otimes Y})\circ d_{X\otimes Y}.$$

Note that by using the lemma \ref{Lemma:DualityIdentities} (the first and second statements) this formula is simplified from the general ones.

It's clear that $S_{\U, \U} = b_{\U}\circ ((c_{\U, \U}\circ c_{\U, \U})\otimes id_\U)\circ d_{\U} = 1$.

If $X, Y\in I$ and $X \neq Y$, then $b_{X\otimes Y} = b_{\A}$, $c_{X, Y} = c_{Y, X} = id_{\A}$, $id_{X\otimes Y} = id_{\A}$ and $d_{X\otimes Y} = d_{\A}$. So, $S_{\U, \A} = S_{\A, \U} = b_{\A}\circ d_{\A} = \varepsilon$.

Finally, consider the case $X = Y = \A$. The diagram of the composition $b_{\U + \A}\circ ((c_{\A, \A}\circ c_{\A, \A})\otimes id_{\U + \A})\circ d_{\U + \A}$ is shown in the figure \ref{Figure:SAADiagram}. So, $$S_{\A, \A} = \beta_{\varepsilon}^4 + \varepsilon\cdot \beta_{\varepsilon}^2 = \beta_{\varepsilon} + \varepsilon - 1 + \varepsilon\cdot (-1 - \frac{\beta_{\varepsilon}}{\varepsilon}) = -1.$$

\begin{figure}[h]
\begin{center}
\begin{tikzpicture}[scale=0.5, baseline={([yshift=-1.5ex]current bounding box.center)}]
	\draw (0, -1) rectangle ++(1, 1) node[pos=0.5] {$\U$};
	
	\draw (3, -1) rectangle ++(1, 1) node[pos=0.5] {$\U$};
	\draw (3, -2) rectangle ++(1, 1) node[pos=0.5] {$\A$};
	\draw (3, -3) rectangle ++(1, 1) node[pos=0.5] {$\A$};
	\draw (3, -4) rectangle ++(1, 1) node[pos=0.5] {$\U$};
	\draw (3, -5) rectangle ++(1, 1) node[pos=0.5] {$\A$};
	\draw[-latex] (1, -0.5) -- (3, -0.5) node[above, midway] {$y$};
	\draw[-latex] (1, -0.5) -- (3, -3.5) node[below left, midway] {$y\sqrt{\varepsilon}$};
\end{tikzpicture}
$\circ$
\begin{tikzpicture}[scale=0.5, baseline={([yshift=-1.7ex]current bounding box.center)}]
	\draw (0, -1) rectangle ++(1, 1) node[pos=0.5] {$\U$};
	\draw (0, -2) rectangle ++(1, 1) node[pos=0.5] {$\A$};
	\draw (0, -3) rectangle ++(1, 1) node[pos=0.5] {$\A$};
	\draw (0, -4) rectangle ++(1, 1) node[pos=0.5] {$\U$};
	\draw (0, -5) rectangle ++(1, 1) node[pos=0.5] {$\A$};
	
	\draw (3, -1) rectangle ++(1, 1) node[pos=0.5] {$\U$};
	\draw (3, -2) rectangle ++(1, 1) node[pos=0.5] {$\A$};
	\draw (3, -3) rectangle ++(1, 1) node[pos=0.5] {$\A$};
	\draw (3, -4) rectangle ++(1, 1) node[pos=0.5] {$\U$};
	\draw (3, -5) rectangle ++(1, 1) node[pos=0.5] {$\A$};
	\draw[-latex] (1, -0.5) -- (3, -0.5) node[above=-2.0pt, midway] {$\beta_{\varepsilon}^4$};
	\draw[-latex] (1, -1.5) -- (3, -1.5) node[above=-2.0pt, midway] {$\beta_{\varepsilon}^4$};
	\draw[-latex] (1, -2.5) -- (3, -2.5) node[above=-2.0pt, midway] {$\beta_{\varepsilon}^2$};
	\draw[-latex] (1, -3.5) -- (3, -3.5) node[above=-2.0pt, midway] {$\beta_{\varepsilon}^2$};
	\draw[-latex] (1, -4.5) -- (3, -4.5) node[above=-2.0pt, midway] {$\beta_{\varepsilon}^2$};
\end{tikzpicture}
$\circ$
\begin{tikzpicture}[scale=0.5, baseline={([yshift=-2.25ex]current bounding box.center)}]
	\draw (3, -1) rectangle ++(1, 1) node[pos=0.5] {$\U$};
	
	\draw (0, -1) rectangle ++(1, 1) node[pos=0.5] {$\U$};
	\draw (0, -2) rectangle ++(1, 1) node[pos=0.5] {$\A$};
	\draw (0, -3) rectangle ++(1, 1) node[pos=0.5] {$\A$};
	\draw (0, -4) rectangle ++(1, 1) node[pos=0.5] {$\U$};
	\draw (0, -5) rectangle ++(1, 1) node[pos=0.5] {$\A$};
	\draw[-latex] (1, -0.5) -- (3, -0.5) node[above, midway] {$\frac{1}{y}$};
	\draw[-latex] (1, -3.5) -- (3, -0.5) node[below right, midway] {$\frac{\sqrt{\varepsilon}}{y}$};
\end{tikzpicture}
\end{center}
\caption{\label{Figure:SAADiagram}The diagram of the composition $b_{\U + \A}\circ ((c_{\A, \A}\circ c_{\A, \A})\otimes id_{\U + \A})\circ d_{\U + \A}$}
\end{figure}

As a result $$S = \begin{pmatrix}
1 & \varepsilon \\
\varepsilon & -1
\end{pmatrix}, \det S = -2 - \varepsilon \neq 0.
$$
\end{proof}

\section{Invariant $tr_{\varepsilon}$}

Let $L = l_1\cup\ldots\cup l_k$ be a digram of the $k$-component link. The colouring of $L$ is a map $\xi\colon \{l_1, \ldots, l_k\}\to \{\U, \A\}$. Define the morphism $\{L\}_{\xi}\in Hom(\U, \U)$ as follows. First, remove all diagram components with the colour $\U$. We will get a sub-link $L'$ where all components are coloured by $\A$. Split the diagram $L'$ by vertical lines into layers, so that each layer contains any number of parallel horizontal arcs of the diagram and one of the segments shown in the figure \ref{Figure:DiagramParts}. Let $\lambda$ be one of the layers of our split. Match the positive crossing within the layer $\lambda$ with the morphism $c_{\A, \A}$, the negative crossing with the morphism $c_{\A, \A}^{-1}$, the left half-circle with the morphism $b_{\A}$, right half-circle with the morphism $d_{\A}$, positive loop with the morphism $\theta_{\A}$, negative loop with the morphism $\theta_{\A}^{-1}$ and trivial horizontal arc with the identity morphism $id_{\A}$ (figure \ref{Figure:DiagramParts}). Stacking the digram segments from top to bottom is a tensor product of the corresponding morphisms. Fix an arbitrary order of these multiplications. If the left side of the layer $\lambda$ intersects the diagram $L'$ at $n$ points, and the right side of the layer intersects the diagram at $m$ points, then this layer defines the morphism from $\A^{\otimes n}$ to $\A^{\otimes m}$ (with an arbitrary order of multiplications) (see figure \ref{Figure:LayerExample} for two examples). Denote $\mu_{\lambda}$ the morphism in category $\E$ obtained by this procedure for the layer $\lambda$.

\begin{figure}[h]
\begin{center}
\ \hfill
\begin{tikzpicture}[scale=0.5]
\begin{knot}[
	consider self intersections=true,
	clip width=7,
]
\strand (-1, 1) -- (1, -1);
\strand (-1, -1) -- (1, 1);
\end{knot}
\filldraw (-1, -1) circle (0.05);
\filldraw (-1, 1) circle (0.05);
\filldraw (1, -1) circle (0.05);
\filldraw (1, 1) circle (0.05);
\end{tikzpicture}
\hfill
\begin{tikzpicture}[scale=0.5]
\begin{knot}[
	consider self intersections=true,
	clip width=7,
]
\strand (-1, -1) -- (1, 1);
\strand (-1, 1) -- (1, -1);
\end{knot}
\filldraw (-1, -1) circle (0.05);
\filldraw (-1, 1) circle (0.05);
\filldraw (1, -1) circle (0.05);
\filldraw (1, 1) circle (0.05);
\end{tikzpicture}
\hfill
\begin{tikzpicture}[scale=0.5]
\draw[knot_diagram] (0, 1) arc (90:270:1);
\filldraw (0, -1) circle (0.05);
\filldraw (0, 1) circle (0.05);
\end{tikzpicture}
\hfill
\begin{tikzpicture}[scale=0.5]
\draw[knot_diagram] (0, -1) arc (-90:90:1);
\filldraw (0, -1) circle (0.05);
\filldraw (0, 1) circle (0.05);
\end{tikzpicture}
\hfill
\begin{tikzpicture}[scale=0.75, use Hobby shortcut, baseline={([yshift=-3.5ex]current bounding box.center)}]
\filldraw (-1, 0) circle (0.03);
\filldraw (1, 0) circle (0.03);
\begin{knot}[
  consider self intersections=true,
  ignore endpoint intersections=false,
]
\strand (-1, 0) .. (-0.5, 0.1) .. (0.25, -0.25) .. (0, -0.5) .. (-0.25, -0.25) .. (0.5, -0.1) .. (1,0);
\end{knot}
\end{tikzpicture}
\hfill
\begin{tikzpicture}[scale=0.75, use Hobby shortcut, baseline={([yshift=-3.5ex]current bounding box.center)}]
\filldraw (-1, 0) circle (0.03);
\filldraw (1, 0) circle (0.03);
\begin{knot}[
  consider self intersections=true,
  ignore endpoint intersections=false,
  flip crossing=1
]
\strand (-1, 0) .. (-0.5, 0.1) .. (0.25, -0.25) .. (0, -0.5) .. (-0.25, -0.25) .. (0.5, -0.1) .. (1,0);
\end{knot}
\end{tikzpicture}
\hfill
\begin{tikzpicture}[scale=0.5, baseline={([yshift=-3.5ex]current bounding box.center)}]
\draw[knot_diagram] (-1, 0) -- (1, 0);
\filldraw (-1, 0) circle (0.05);
\filldraw (1, 0) circle (0.05);
\end{tikzpicture}
\hfill \ \ 
\end{center}
\caption{\label{Figure:DiagramParts}From left to right: positive crossing, negative crossing, left half-circle, right half-circle, positive loop, negative loop, trivial arc}
\end{figure}
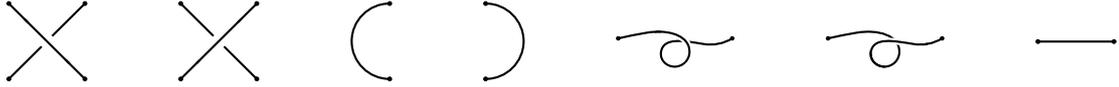

If the diagram $L'$ splits into layers $\lambda_1, \ldots, \lambda_n$ (reading from left to right), then the morphism $\{L\}_{\xi}$ is a composition $$\mu_{\lambda_1}\circ \tau_{1} \circ \mu_{\lambda_2}\circ \tau_2\circ \ldots\circ \tau_{n - 1}\circ \mu_{\lambda_n},$$ where $\tau_i$ is a combination of associativity isomorphisms from the right side of the layer $\lambda_i$ to the left side of the layer $\lambda_{i + 1}$ (these sides differ by the order of multiplication).

\begin{figure}[h]
\begin{center}
\ \hfill
\begin{tikzpicture}[scale=0.5]
	\begin{knot}[
		consider self intersections=true,
		clip width=7,
		flip crossing=1
		]
		\strand (0, 1) -- (4, 2);
		\strand (0, 2) -- (4, 1);
	\end{knot}
	\draw[knot_diagram] (0, 0) -- (4, 0);
	\draw[knot_diagram] (0, 3) -- (4, 3);
	\draw[split_line] (0, -1) -- (0, 4);
	\draw[split_line] (4, -1) -- (4, 4);
	
	\filldraw (0, 0) circle (0.05);
	\filldraw (0, 1) circle (0.05);
	\filldraw (0, 2) circle (0.05);
	\filldraw (0, 3) circle (0.05);
	
	\filldraw (4, 0) circle (0.05);
	\filldraw (4, 1) circle (0.05);
	\filldraw (4, 2) circle (0.05);
	\filldraw (4, 3) circle (0.05);
	
	\draw (0, 4.5) node {$(\A\otimes (\A\otimes \A))\otimes \A$};
	\draw (4, -1.5) node {$(\A\otimes (\A\otimes \A))\otimes \A$};
\end{tikzpicture}
\hfill
\begin{tikzpicture}[scale=0.5]
	\draw[knot_diagram] (0, 0) -- (4, 0);
	\draw[knot_diagram] (0, 3) -- (4, 3);
	\draw[knot_diagram] (0, 1) arc (-90:90:0.5);
	\draw[split_line] (0, -1) -- (0, 4);
	\draw[split_line] (4, -1) -- (4, 4);
	
	\filldraw (0, 0) circle (0.05);
	\filldraw (0, 1) circle (0.05);
	\filldraw (0, 2) circle (0.05);
	\filldraw (0, 3) circle (0.05);
	
	\filldraw (4, 0) circle (0.05);
	\filldraw (4, 3) circle (0.05);
	
	\draw (0, 4.5) node {$\A\otimes ((\A\otimes \A)\otimes \A)$};
	\draw (4, -1.5) node {$\A\otimes \A$};
\end{tikzpicture}
\hfill \ \ 
\end{center}
\caption{\label{Figure:LayerExample}Morphism $(id_{\A}\otimes c_{\A, \A})\otimes id_{\A}$ (on the left) and $id_{\A}\otimes (d_{\A}\otimes id_{\A})$ (on the right)}
\end{figure}

The diagram $L$ with the colour $\xi$ can be understood as a diagram of the morphism $\{L\}_{\xi}$. To distinguish it from other already used diagrams of morphisms (with cells and arrows), we will call the diagram $L$ a knotted diagram of the morphism $\{L\}_{\xi}$.

The morphism $\{L\}_{\xi}$ is a morphism from $\U$ to $\U$, so it is defined by a complex number. We can identify this morphism with this number and assume that $\{L\}_{\xi}\in \mathbb{C}$.

\begin{remark}
\label{Remark:LMorphismComposition}
If $\{L\}_{\xi} = \mu_1\circ \mu_2\circ \ldots\circ \mu_{n - 1}\circ \mu_n$, then $$\{L\}_{\xi} = [\mu_n]_{\U}\cdot [\mu_{n - 1}]_{\U}\cdot \ldots\cdot [\mu_2]_{\U}\cdot [\mu_1]_{\U}.$$
\end{remark}

\begin{example}
\label{Example:Trefoil}
Let $L$ be a knot diagram as shown in the figure \ref{Figure:Trefoil}, and let $\xi$ be a colouring that maps the unique component of $L$ to $\A$. The diagram $L$ is divided into eleven levels $\lambda_1, \ldots, \lambda_{11}$. The morphisms corresponding to these levels are the following:
\begin{center}
\begin{longtable}{l}
$\mu_{\lambda_1} = b_{\A}\in Hom (\U, \A\otimes \A)$, \\
$\mu_{\lambda_2} = (id_{\A}\otimes id_{\A})\otimes b_{\A} \in Hom(\A\otimes \A, (\A\otimes \A)\otimes (\A\otimes \A))$ \\
$\mu_{\lambda_3} = \alpha_{\A, \A, \A\otimes \A}\in Hom ((\A\otimes \A)\otimes (\A\otimes \A), \A\otimes (\A\otimes (\A\otimes \A)))$ \\
$\mu_{\lambda_4} = id_{\A}\otimes \alpha_{\A, \A, \A}^{-1} \in Hom (\A\otimes (\A\otimes (\A\otimes \A)), \A\otimes ((\A\otimes \A)\otimes \A))$ \\
$\mu_{\lambda_5} = \mu_{\lambda_6} = \mu_{\lambda_7} = id_{\A}\otimes (c_{\A, \A}\otimes id_{\A})\in Hom(\A\otimes ((\A\otimes \A)\otimes \A), \A\otimes ((\A\otimes \A)\otimes \A))$ \\
$\mu_{\lambda_8} = id_{\A}\otimes \alpha_{\A, \A, \A}\in Hom(\A\otimes ((\A\otimes \A)\otimes \A), \A\otimes (\A\otimes (\A\otimes \A)))$ \\
$\mu_{\lambda_9} = \alpha_{\A, \A, \A\otimes \A}^{-1} \in Hom(\A\otimes (\A\otimes (\A\otimes \A)), (\A\otimes \A)\otimes (\A\otimes \A))$ \\
$\mu_{\lambda_{10}} = d_{\A}\otimes (id_{\A}\otimes id_{\A})\in Hom((\A\otimes \A)\otimes (\A\otimes \A), \A\otimes \A)$ \\
$\mu_{\lambda_{11}} = d_{\A}\in Hom (\A\otimes \A, \U)$.
\end{longtable}
\end{center}

\begin{figure}[h]
\begin{center}
\begin{tikzpicture}[scale=0.75, use Hobby shortcut]
\begin{knot}[
	clip width=5,
	consider self intersections=true,
	ignore endpoint intersections=false,
	flip crossing/.list={1, 3}
]
\strand ([closed]1, 2) .. (2, 2) .. (3, 2) .. (4, 2) .. (5, 1) .. (6, 2) .. (7, 1) .. (8, 1) .. (9, 1) .. (10, 1) .. (10.5, 0.5) ..(10, 0) .. (9, 0) .. (8, 0) .. (7, 0) .. (6, 0) .. (5, 0) .. (4, 0) .. (3, 0) .. (2, 0) .. (1.5, 0.5) .. (2, 1) .. (3, 1) .. (4, 1) .. (5, 2) .. (6, 1) .. (7, 2) .. (8, 2) .. (9, 2) .. (9.5, 2.5) .. (9, 3) .. (8, 3) .. (7, 3) .. (6, 3) .. (5, 3) .. (4, 3) .. (3, 3) .. (2, 3) .. (1, 3) ..(0.5, 2.5);
\end{knot}

\foreach \i in {0, 1, 2, 3, 4, 5, 6, 7, 8, 9, 10, 11}{
    \draw[split_line] (\i, -1) -- (\i, 4);
}

\foreach \i in {2, 3, 4, 5, 6, 7, 8, 9}{
	\filldraw (\i, 0) circle (0.03);
	\filldraw (\i, 1) circle (0.03);
	\filldraw (\i, 2) circle (0.03);
	\filldraw (\i, 3) circle (0.03);
}
\filldraw (1, 2) circle (0.03);
\filldraw (1, 3) circle (0.03);
\filldraw (10, 0) circle (0.03);
\filldraw (10, 1) circle (0.03);

\draw (0.5, -0.75) node {$\lambda_1$};
\draw (1.5, -0.75) node {$\lambda_2$};
\draw (2.5, -0.75) node {$\lambda_3$};
\draw (3.5, -0.75) node {$\lambda_4$};
\draw (4.5, -0.75) node {$\lambda_5$};
\draw (5.5, -0.75) node {$\lambda_6$};
\draw (6.5, -0.75) node {$\lambda_7$};
\draw (7.5, -0.75) node {$\lambda_8$};
\draw (8.5, -0.75) node {$\lambda_9$};
\draw (9.5, -0.75) node {$\lambda_{10}$};
\draw (10.5, -0.75) node {$\lambda_{11}$};
\end{tikzpicture}
\end{center}
\caption{\label{Figure:Trefoil}Trefoil knot and splits into eleven layers $\lambda_1, \ldots, \lambda_{11}$}
\end{figure}
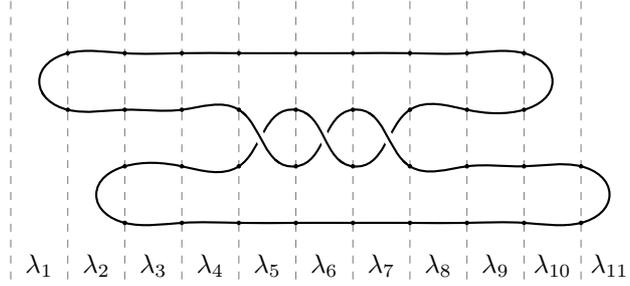

In the figure \ref{Figure:TrafoilMorphisms} the diagram of the composition $\mu_{\lambda_1}\circ\ldots\circ \mu_{\lambda_{11}}$ is shown.

\begin{figure}[h]
\begin{center}
\begin{tikzpicture}[scale=0.5]
	\draw (0, -1) rectangle ++(1, 1) node[pos=0.5] {$\U$};
	
	\draw (3, -1) rectangle ++(1, 1) node[pos=0.5] {$\U$};
	\draw (3, -2) rectangle ++(1, 1) node[pos=0.5] {$\A$};
	\draw[-latex] (1, -0.5) -- (3, -0.5) node[above, midway] {$y\sqrt{\varepsilon}$};
	
	\draw (6, -1) rectangle ++(1, 1) node[pos=0.5] {$\U$};
	\draw (6, -2) rectangle ++(1, 1) node[pos=0.5] {$\A$};
	\draw (6, -3) rectangle ++(1, 1) node[pos=0.5] {$\A$};
	\draw (6, -4) rectangle ++(1, 1) node[pos=0.5] {$\U$};
	\draw (6, -5) rectangle ++(1, 1) node[pos=0.5] {$\A$};
	\draw[-latex] (4, -0.5) -- (6, -0.5) node[above, midway] {$y\sqrt{\varepsilon}$};	
	\draw[-latex] (4, -1.5) -- (6, -2.5) node[below, midway] {$y\sqrt{\varepsilon}$};
	
	\draw (9, -1) rectangle ++(1, 1) node[pos=0.5] {$\U$};
	\draw (9, -2) rectangle ++(1, 1) node[pos=0.5] {$\A$};
	\draw (9, -3) rectangle ++(1, 1) node[pos=0.5] {$\A$};
	\draw (9, -4) rectangle ++(1, 1) node[pos=0.5] {$\U$};
	\draw (9, -5) rectangle ++(1, 1) node[pos=0.5] {$\A$};
	\draw[ma_black] (7, -0.5) -- (9, -0.5);
	\draw[ma_black] (7, -2.5) -- (9, -1.5);
	\draw[ma_black] (7, -3.5) -- (9, -3.5);
	\draw[ma_blue] (7, -1.5) -- (9, -2.5);
	\draw[ma_green] (7, -1.5) -- (9, -4.5);
	\draw[ma_red] (7, -4.5) -- (9, -2.5);
	\draw[ma_orange] (7, -4.5) -- (9, -4.5);
	
	\draw (12, -1) rectangle ++(1, 1) node[pos=0.5] {$\U$};
	\draw (12, -2) rectangle ++(1, 1) node[pos=0.5] {$\A$};
	\draw (12, -3) rectangle ++(1, 1) node[pos=0.5] {$\A$};
	\draw (12, -4) rectangle ++(1, 1) node[pos=0.5] {$\U$};
	\draw (12, -5) rectangle ++(1, 1) node[pos=0.5] {$\A$};
	\draw[ma_blue] (10, -0.5) -- (12, -0.5);
	\draw[ma_blue] (10, -1.5) -- (12, -1.5);
	\draw[ma_green] (10, -0.5) -- (12, -3.5);
	\draw[ma_green] (10, -1.5) -- (12, -4.5);
	\draw[ma_black] (10, -2.5) -- (12, -2.5);
	\draw[ma_red] (10, -3.5) -- (12, -0.5);
	\draw[ma_red] (10, -4.5) -- (12, -1.5);
	\draw[ma_orange] (10, -3.5) -- (12, -3.5);
	\draw[ma_orange] (10, -4.5) -- (12, -4.5);
	
	\draw (15, -1) rectangle ++(1, 1) node[pos=0.5] {$\U$};
	\draw (15, -2) rectangle ++(1, 1) node[pos=0.5] {$\A$};
	\draw (15, -3) rectangle ++(1, 1) node[pos=0.5] {$\A$};
	\draw (15, -4) rectangle ++(1, 1) node[pos=0.5] {$\U$};
	\draw (15, -5) rectangle ++(1, 1) node[pos=0.5] {$\A$};
	\draw[-latex] (13, -0.5) -- (15, -0.5) node[above=-2.0pt, midway] {$\beta_{\varepsilon}^2$};
	\draw[-latex] (13, -1.5) -- (15, -1.5) node[above=-2.0pt, midway] {$\beta_{\varepsilon}^2$};
	\draw[-latex] (13, -2.5) -- (15, -2.5) node[above=-2.0pt, midway] {$\beta_{\varepsilon}$};
	\draw[-latex] (13, -3.5) -- (15, -3.5) node[above=-2.0pt, midway] {$\beta_{\varepsilon}$};
	\draw[-latex] (13, -4.5) -- (15, -4.5) node[above=-2.0pt, midway] {$\beta_{\varepsilon}$};
	
	\draw (18, -1) rectangle ++(1, 1) node[pos=0.5] {$\U$};
	\draw (18, -2) rectangle ++(1, 1) node[pos=0.5] {$\A$};
	\draw (18, -3) rectangle ++(1, 1) node[pos=0.5] {$\A$};
	\draw (18, -4) rectangle ++(1, 1) node[pos=0.5] {$\U$};
	\draw (18, -5) rectangle ++(1, 1) node[pos=0.5] {$\A$};
	\draw[-latex] (16, -0.5) -- (18, -0.5) node[above=-2.0pt, midway] {$\beta_{\varepsilon}^2$};
	\draw[-latex] (16, -1.5) -- (18, -1.5) node[above=-2.0pt, midway] {$\beta_{\varepsilon}^2$};
	\draw[-latex] (16, -2.5) -- (18, -2.5) node[above=-2.0pt, midway] {$\beta_{\varepsilon}$};
	\draw[-latex] (16, -3.5) -- (18, -3.5) node[above=-2.0pt, midway] {$\beta_{\varepsilon}$};
	\draw[-latex] (16, -4.5) -- (18, -4.5) node[above=-2.0pt, midway] {$\beta_{\varepsilon}$};
	
	\draw (21, -1) rectangle ++(1, 1) node[pos=0.5] {$\U$};
	\draw (21, -2) rectangle ++(1, 1) node[pos=0.5] {$\A$};
	\draw (21, -3) rectangle ++(1, 1) node[pos=0.5] {$\A$};
	\draw (21, -4) rectangle ++(1, 1) node[pos=0.5] {$\U$};
	\draw (21, -5) rectangle ++(1, 1) node[pos=0.5] {$\A$};
	\draw[-latex] (19, -0.5) -- (21, -0.5) node[above=-2.0pt, midway] {$\beta_{\varepsilon}^2$};
	\draw[-latex] (19, -1.5) -- (21, -1.5) node[above=-2.0pt, midway] {$\beta_{\varepsilon}^2$};
	\draw[-latex] (19, -2.5) -- (21, -2.5) node[above=-2.0pt, midway] {$\beta_{\varepsilon}$};
	\draw[-latex] (19, -3.5) -- (21, -3.5) node[above=-2.0pt, midway] {$\beta_{\varepsilon}$};
	\draw[-latex] (19, -4.5) -- (21, -4.5) node[above=-2.0pt, midway] {$\beta_{\varepsilon}$};
	
	\draw (24, -1) rectangle ++(1, 1) node[pos=0.5] {$\U$};
	\draw (24, -2) rectangle ++(1, 1) node[pos=0.5] {$\A$};
	\draw (24, -3) rectangle ++(1, 1) node[pos=0.5] {$\A$};
	\draw (24, -4) rectangle ++(1, 1) node[pos=0.5] {$\U$};
	\draw (24, -5) rectangle ++(1, 1) node[pos=0.5] {$\A$};
	\draw[ma_blue] (22, -0.5) -- (24, -0.5);
	\draw[ma_blue] (22, -1.5) -- (24, -1.5);
	\draw[ma_green] (22, -0.5) -- (24, -3.5);
	\draw[ma_green] (22, -1.5) -- (24, -4.5);
	\draw[ma_black] (22, -2.5) -- (24, -2.5);
	\draw[ma_red] (22, -3.5) -- (24, -0.5);
	\draw[ma_red] (22, -4.5) -- (24, -1.5);
	\draw[ma_orange] (22, -3.5) -- (24, -3.5);
	\draw[ma_orange] (22, -4.5) -- (24, -4.5);
	
	\draw (27, -1) rectangle ++(1, 1) node[pos=0.5] {$\U$};
	\draw (27, -2) rectangle ++(1, 1) node[pos=0.5] {$\A$};
	\draw (27, -3) rectangle ++(1, 1) node[pos=0.5] {$\A$};
	\draw (27, -4) rectangle ++(1, 1) node[pos=0.5] {$\U$};
	\draw (27, -5) rectangle ++(1, 1) node[pos=0.5] {$\A$};
	\draw[ma_black] (25, -0.5) -- (27, -0.5);
	\draw[ma_black] (25, -1.5) -- (27, -2.5);
	\draw[ma_black] (25, -3.5) -- (27, -3.5);
	\draw[ma_blue] (25, -2.5) -- (27, -1.5);
	\draw[ma_red] (25, -4.5) -- (27, -1.5);
	\draw[ma_green] (25, -2.5) -- (27, -4.5);
	\draw[ma_orange] (25, -4.5) -- (27, -4.5);
	
	\draw (30, -1) rectangle ++(1, 1) node[pos=0.5] {$\U$};
	\draw (30, -2) rectangle ++(1, 1) node[pos=0.5] {$\A$};
	\draw[-latex] (28, -0.5) -- (30, -0.5) node[above, midway] {$\frac{\sqrt{\varepsilon}}{y}$};	
	\draw[-latex] (28, -1.5) -- (30, -1.5) node[below, midway] {$\frac{\sqrt{\varepsilon}}{y}$};
	
	\draw (33, -1) rectangle ++(1, 1) node[pos=0.5] {$\U$};
	\draw[-latex] (31, -0.5) -- (33, -0.5) node[above, midway] {$\frac{\sqrt{\varepsilon}}{y}$};
\end{tikzpicture}
\end{center}
\caption{\label{Figure:TrafoilMorphisms}The diagram of the morphism $\mu_{\lambda_1}\circ\ldots\circ \mu_{\lambda_1}$}
\end{figure}
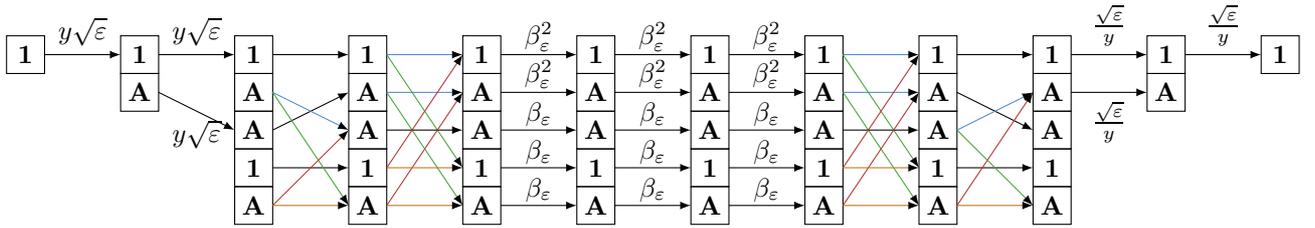

As a result $$\{L\}_{\xi} = y\sqrt{\varepsilon}\cdot y\sqrt{\varepsilon}\cdot 1\cdot \left(\frac{1}{\varepsilon}\cdot \beta_{\varepsilon}^6\cdot \frac{1}{\varepsilon} + \frac{1}{x\sqrt{\varepsilon}}\cdot \beta_{\varepsilon}^3\cdot \frac{x}{\sqrt{\varepsilon}}\right) \cdot 1\cdot \frac{\sqrt{\varepsilon}}{y}\cdot \frac{\sqrt{\varepsilon}}{y} = \beta_{\varepsilon}^6 + \varepsilon\cdot \beta_{\varepsilon}^3 = -\beta_{\varepsilon} + 1 - \beta_{\varepsilon} = 1 - 2\beta_{\varepsilon}.$$
\end{example}

\begin{proposition}
\label{Proposition:NoY}
Let $L$ be a link diagram and $\xi$ a colouring of this diagram. Then the value $\{L\}_{\xi}$ does not depend on the choice of $y$ in the morphisms $b_{\A}$ and $d_{\A}$.
\end{proposition}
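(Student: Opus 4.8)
The plan is to track how the parameter $y$ enters the computation of $\{L\}_{\xi}$ and to show that its net contribution is always the trivial factor $y^0 = 1$. Recall that after deleting the $\U$-coloured components we obtain a reduced diagram $L'$, all of whose components are coloured by $\A$, and that its layers are built from the morphisms $c_{\A,\A}^{\pm 1}$, $\theta_{\A}^{\pm 1}$, $id_{\A}$, $b_{\A}$ and $d_{\A}$, composed with associativity isomorphisms $\tau_i$. Inspecting the definitions, the only building blocks whose matrices involve $y$ are $b_{\A}$, whose single value is $y\sqrt{\varepsilon}$, and $d_{\A}$, whose single value is $\frac{\sqrt{\varepsilon}}{y}$; every other block (the crossings $c_{\A,\A}^{\pm 1}$ with values $\beta_\varepsilon,\beta_\varepsilon^2$; the twists $\theta_\A^{\pm 1}$; the identities; and the associativity isomorphisms, whose entries involve only $\varepsilon$ and $x$) is independent of $y$.

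First I would show that each layer matrix $[\mu_{\lambda_i}]_{\U}$ factors as $y^{\epsilon_i}\, M_i$, where $M_i$ is independent of $y$ and $\epsilon_i = +1$ if $\lambda_i$ is a left half-circle ($b_\A$), $\epsilon_i = -1$ if $\lambda_i$ is a right half-circle ($d_\A$), and $\epsilon_i = 0$ otherwise. This is immediate from the rule for tensoring morphisms: since each layer contains exactly one distinguished segment while the remaining parallel strands carry the identity (value $1$), every nonzero entry of $[\mu_{\lambda_i}]_\U$ is the distinguished value multiplied by $y$-free factors, so the single power of $y$ coming from a $b_\A$ or $d_\A$ factors out as a scalar. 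Using Remark \ref{Remark:LMorphismComposition}, the value $\{L\}_\xi$ is the product $[\mu_{\lambda_n}]_\U\cdots [\mu_{\lambda_1}]_\U$, interspersed with the $y$-free matrices $[\tau_i]_\U$; pulling the scalars $y^{\epsilon_i}$ out of this matrix product yields
$$\{L\}_{\xi} = y^{\,n_b - n_d}\cdot C,$$
where $n_b$ and $n_d$ are the total numbers of left and right half-circles in the layer decomposition of $L'$, and $C$ is independent of $y$.

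It then remains to prove that $n_b = n_d$, and this topological counting is the crux of the argument. The reduced diagram $L'$ is a generic planar projection of a closed $1$-manifold (a disjoint union of circles), so it has no free endpoints. Regarding the horizontal coordinate used to cut $L'$ into layers as a Morse function on this $1$-manifold, the left half-circles are exactly its local minima and the right half-circles are exactly its local maxima. On each circle component the numbers of local minima and local maxima of a Morse function coincide, and summing over the components gives $n_b = n_d$. Consequently $y^{\,n_b - n_d} = 1$, and $\{L\}_\xi = C$ does not depend on $y$.

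The step I expect to require the most care is the factorisation claim in the second paragraph: one must check that tensoring and composing never hide an extra power of $y$, i.e.\ that the $y$-weight of every surviving nonzero term in the matrix product is precisely $n_b - n_d$ rather than something path-dependent. Because each diagram segment contributes its own tensor factor and $y$ occurs only in $b_\A$ and $d_\A$, the weight is additive over the layers regardless of which internal summands ($\U$ or $\A$) a given path passes through, which is what makes the clean monomial form $y^{\,n_b - n_d}\,C$ valid.
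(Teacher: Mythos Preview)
Your proof is correct and follows essentially the same approach as the paper's: observe that only $b_{\A}$ and $d_{\A}$ carry a factor of $y$ (respectively $y$ and $y^{-1}$), and that the diagram $L'$ has equally many left and right half-circles, so these factors cancel. Your write-up is more detailed than the paper's (which is only a few lines), in particular your explicit scalar factorisation of each layer and the Morse-theoretic justification of $n_b=n_d$, but the underlying argument is the same.
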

\begin{proof}
Without loss of generality, we can assume that the colouring $\xi$ maps each component of $L$ to $\A$. Note that the number of left half-circles in the diagram $L$ is equal to the number of right half-circles in the diagram $L$. So in the composition of the morphism $\{L\}_{\xi}$ the number of morphisms $b_{\A}$ is equal to the number of morphisms $d_{\A}$. So all values $y$ from $b_{\A}$ are reduced with values $\frac{1}{y}$ from $d_{\A}$.
\end{proof}

\subsection{Invariant for links}

Let $L = l_1\cup \ldots\cup l_k$ be a diagram of the unoriented $k$-component link in the 3-sphere $S^3$. Let $\xi_{\A}\colon \{l_1, \ldots, l_k\}\to \{\U, \A\}$ be a constant colouring which maps each component $l_i$ to $\A$ for all $i\in\{1, \ldots, k\}$.

For each component $l_i$ of the diagram $L$ with any fixed orientation, define the value $w(l_i)$ which is equal to the difference $\#p - \#n$, where $\#p$ is a number of positive crossings of the diagram $l_i$ and $\#n$ is a number of negative crossings of the diagram $l_i$. Note that both values $\#p$ and $\#n$ are correctly defined and independent of the orientation of $l_i$. Let $w(L) = \sum\limits_{i = 1}^{k}w(l_i)$.

Define $$tr_{\varepsilon}(L) = \frac{\beta_{\varepsilon}^{2w(L)}}{\varepsilon}\cdot \{L\}_{\xi_{\A}}.$$

\begin{theorem}
\label{Theorem:TRKnotInvariant}
Let $L_1$ and $L_2$ be two diagrams of the same link in $S^3$. Then $tr_{\varepsilon}(L_1) = tr_{\varepsilon}(L_2)$.
\end{theorem}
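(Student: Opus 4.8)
The plan is to invoke the theorem of Reidemeister: two diagrams $L_1$, $L_2$ represent the same unoriented link in $S^3$ if and only if they are related by a finite sequence of planar isotopies and the three Reidemeister moves $\Omega_1$, $\Omega_2$, $\Omega_3$. It therefore suffices to show that $tr_{\varepsilon}$ is unchanged under each of these moves. Since the normalising factor $\beta_{\varepsilon}^{2w(L)}/\varepsilon$ is manifestly unchanged under $\Omega_2$, $\Omega_3$ and planar isotopy (the writhe $w(L)$ does not change under these), for those moves it is enough to prove that the bracket $\{L\}_{\xi_{\A}}$ itself is unchanged, i.e.\ that $\{L\}_{\xi_{\A}}$ is a regular isotopy invariant.

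First I would establish that $\{L\}_{\xi_{\A}}$ is well defined, that is, independent of the way the diagram is cut into layers and of the arbitrary orders of tensor multiplication and of the connecting isomorphisms $\tau_i$. This is exactly the coherence of the monoidal structure: any two admissible choices differ by a composition of associativity isomorphisms, and by the pentagon relation (Lemma \ref{Lemma:Associators}) together with naturality (Lemma \ref{Lemma:MorphismsAssociativity}) all such compositions agree. Independence of the constant $y$ in $b_{\A}$, $d_{\A}$ is already Proposition \ref{Proposition:NoY}. Then regular isotopy invariance follows from the ribbon structure of $\E$ (Theorem \ref{Theorem:RibbonCategory}): the move $\Omega_2$ reduces to $c_{\A,\A}\circ c_{\A,\A}^{-1}=id_{\A\otimes\A}$ (the braiding is an isomorphism), the move $\Omega_3$ to the hexagon identity of Lemma \ref{Lemma:Braiding} (the Yang--Baxter equation for $c_{\A,\A}$), and the planar moves that slide caps, cups and crossings past one another to the naturality of the braiding (Lemma \ref{Lemma:BraidingNaturality}) and to the duality (``zig-zag'') identities of Lemma \ref{Lemma:Duality} together with Lemma \ref{Lemma:DualityIdentities}. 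In each such case the writhe is preserved, so $tr_{\varepsilon}$ is unchanged.

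It remains to treat $\Omega_1$, where both factors change. Introducing a positive curl multiplies the bracket $\{L\}_{\xi_{\A}}$ by the value of the twist $\theta_{\A}=\beta_{\varepsilon}^{-2}$, and a negative curl by $\theta_{\A}^{-1}=\beta_{\varepsilon}^{2}$; this is the content of the matching of loops with $\theta_{\A}^{\pm1}$ in Figure \ref{Figure:DiagramParts}, whose compatibility with the cap/cup presentation is guaranteed by Lemma \ref{Lemma:DualityIdentities}. Simultaneously the writhe $w(L)$ increases (respectively decreases) by $1$, so the prefactor $\beta_{\varepsilon}^{2w(L)}$ is multiplied by $\beta_{\varepsilon}^{\pm2}$. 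The two contributions cancel, $\beta_{\varepsilon}^{-2}\cdot\beta_{\varepsilon}^{2}=1$, so $tr_{\varepsilon}(L)$ is unaffected by $\Omega_1$. Combining the four cases yields $tr_{\varepsilon}(L_1)=tr_{\varepsilon}(L_2)$.

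The main obstacle I expect is not any individual Reidemeister move but the bookkeeping required to make the diagrammatic calculus rigorous: because $\E$ is genuinely non-strict, each local move is performed inside an ambient tensor word whose bracketing is defined only up to associators, so one must check that the relevant local identity (invertibility of $c$, the hexagon, a zig-zag, the twist on a curl) can always be conjugated into the required position by associativity isomorphisms without changing the value. This is precisely where the naturality statements (Lemmas \ref{Lemma:MorphismsAssociativity}, \ref{Lemma:BraidingNaturality} and \ref{Lemma:TwistNaturality}) are invoked repeatedly, and where the bulk of the careful verification lies.
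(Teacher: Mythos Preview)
Your proposal is correct and follows essentially the same overall strategy as the paper: reduce to Reidemeister moves, observe that $w(L)$ is unchanged under $\Omega_2$, $\Omega_3$, and handle $\Omega_1$ by the explicit cancellation $\theta_{\A}=\beta_{\varepsilon}^{-2}$ against the change in the prefactor $\beta_{\varepsilon}^{2w(L)}$.

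The one point of departure is in the justification of regular isotopy invariance. You propose to verify $\Omega_2$, $\Omega_3$ and the planar cap/cup moves directly from the ribbon axioms (invertibility of $c_{\A,\A}$, the Yang--Baxter relation derived from the hexagon identities plus naturality, and the zig-zag identities of Lemma~\ref{Lemma:Duality}), together with a careful coherence argument to show that these local identities can be inserted at any position in the composition. The paper instead shortcuts this entire step by invoking \cite[Theorem~I.2.5]{T}, which is stated for strict ribbon categories, and then appealing to Mac Lane's strictification theorem to pass from $\E$ to an equivalent strict category $\E'$ in which that theorem applies verbatim. Your route is more self-contained and makes explicit where the pentagon and naturality lemmas are used; the paper's route is shorter but outsources the bookkeeping you flag as the ``main obstacle'' to the strictification machinery. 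Both are valid, and your identification of the coherence bookkeeping as the real work is exactly what the paper's strictification appeal is designed to absorb.
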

\begin{proof}
It's enough to prove the theorem only for the case where $L_2$ was obtained from $L_1$ by a single Reidemeister move.

If $L_2$ is obtained from $L_1$ by a second or third Reidemeister move, then $w(L_1) = w(L_2)$. From \cite[Theorem I.2.5]{T} and the theorem \ref{Theorem:RibbonCategory} it follows that $\{L_1\}_{\xi_{\A}} = \{L_2\}_{\xi_{\A}}$. Strictly speaking, theorem I.2.5 from \cite{T} applies only to strict ribbon categories. Our category $\E$ is not strict. But it's well known that every non-strict monoidal category is equivalent to a strict one. Let $\E'$ be a strict monoidal category equivalent to $\E$. Then, first, the invariants $tr_{\varepsilon}$ for $\E$ and $\E'$ coincide, and second, in $\E'$ we can apply theorem I.2.5 from \cite{T}. So $tr_{\varepsilon}(L_1) = tr_{\varepsilon}(L_2)$.

Let $L_2$ be obtained from $L_1$ by a positive first Reidemeister move (add a positive loop at any string of the diagram). Then $w(L_2) = w(L_1) + 1$.

Let the morphism $\{L_1\}_{\xi_{\A}}$ be represented as a composition $\mu_1\circ \mu_2$, and let the morphism $\{L_2\}_{\xi_{\A}}$ be represented as a composition $\mu_1\circ (id_{\A^{\otimes n_1}}\otimes \theta_{\A}\otimes id_{\A^{\otimes n_2}})\circ \mu_2$. Then 
\begin{center}
$\{L_1\}_{\xi_{\A}} = [\mu_2]_{\U}\cdot [\mu_1]_{\U}$ and $\{L_2\}_{\xi_{\A}} = [\mu_2]_{\U}\cdot [id_{\A^{\otimes n_1}}\otimes \theta_{\A}\otimes id_{\A^{\otimes n_2}}]_{\U}\cdot [\mu_1]_{\U}$.
\end{center}

Notice that $[id_{\A^{\otimes n_1}}\otimes \theta_{\A}\otimes id_{\A^{\otimes n_2}}]_{\U} = \frac{1}{\beta_{\varepsilon}^2}\cdot E_{f_{n_1 + n_2 + 1}}$, where $f_n$ is $n$-th Fibonacci number. Hence $\{L_2\}_{\xi_{\A}} = \frac{1}{\beta_{\varepsilon}^2} \{L_{1}\}_{\xi_{\A}}$. Finally $$tr_{\varepsilon}(L_2) = \frac{\beta_{\varepsilon}^{2w(L_2)}}{\varepsilon}\cdot \{L_2\}_{\xi_{\A}} = \frac{\beta_{\varepsilon}^{2w(L_1)}\cdot \beta_{\varepsilon}^2}{\varepsilon}\cdot \frac{1}{\beta_{\varepsilon}^2}\cdot \{L_1\}_{\xi_{\A}} = \frac{\beta_{\varepsilon}^{2w(L_1)}}{\varepsilon}\cdot \{L_1\}_{\xi_{\A}} = tr_{\varepsilon}(L_1).$$

For the negative first Reidemeister move the proof is similar.
\end{proof}

Let $\mathbb{L}$ be a link in $S^3$, and let $L$ be a digram of that link. Then we can define $tr_{\varepsilon}(\mathbb{L}) = tr_{\varepsilon}(L)$. Theorem \ref{Theorem:TRKnotInvariant} implies that this definition is correct.

\begin{example}
Let $\mathbb{U}$ be a trivial knot (unknot) with trivial diagram $U$. Then $\{U\}_{\xi_{\A}} = b_{\A}\circ d_{\A} = \varepsilon$. So $tr_{\varepsilon}(\mathbb{U}) = 1$.

Let $3_1$ be a right trefoil knot. Using the example \ref{Example:Trefoil} we can calculate $tr_{\varepsilon}(3_1) = \frac{\beta_{\varepsilon}^6}{\varepsilon}\cdot (1 - 2\beta_{\varepsilon})= \frac{\beta_{\varepsilon}}{\varepsilon}\cdot (2\beta_{\varepsilon} - 1)$. If we fix certain values for $\varepsilon$ and $\beta_{\varepsilon}$, then $tr_{\varepsilon}(3_1)$ will be a concrete complex number.

Let $\mathbb{H}_2$ be a Hopf link. Then $tr_{\varepsilon}(\mathbb{H}_2) = \frac{S_{\A, \A}}{\varepsilon}$, where $S_{\A, \A}$ is a value of the $S$-matrix from the proof of the theorem \ref{Theorem:Modular}. Hence $tr_{\varepsilon}(H_2) = -\frac{1}{\varepsilon}$.
\end{example}

Let $\mathbb{H}_k$, $k\geqslant 1$, be a generalised $k$-component Hopf link. The diagram $H_k$ of this link consists of an ordered set of $k$ trivial circles $U_1, \ldots, U_k$ such that for each $i\in\{1, k - 1\}$ the pair of components $U_i, U_{i + 1}$ is linked as a minimal diagram of classical Hopf link (see figure \ref{Figure:GeneralHopfLink}). Note that $\mathbb{H}_1 = \mathbb{U}$ is a trivial knot, $\mathbb{H}_2$ is a classical Hopf link.

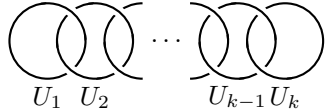
\begin{figure}[h]
\begin{center}
\begin{tikzpicture}[scale=0.5]
	\begin{knot}[
		clip width=5,
		ignore endpoint intersections=false,
		flip crossing/.list={2, 4, 5, 8, 10, 12, 14}
		]
		\strand (0, 0) circle (1.0);
		\strand (1.25, 0) circle (1.0);
		\strand (2.5, 1) arc (90:270:1);
		\strand (3.75, -1) arc (-90:90:1);
		\strand (5.0, 0) circle (1.0);	
		\strand (6.25, 0) circle (1.0);
	\end{knot}
	\draw (3.125, 0) node {$\ldots$};
	\draw (0.0, -1.5) node {$U_1$};
	\draw (1.25, -1.5) node {$U_2$};
	\draw (5.0, -1.5) node {$U_{k - 1}$};
	\draw (6.25, -1.5) node {$U_k$};
\end{tikzpicture}
\end{center}
\caption{\label{Figure:GeneralHopfLink}Generalised $k$-component Hopf link}
\end{figure}

\begin{theorem}
\label{Theorem:HopfLink}
$tr_{\varepsilon}(\mathbb{H}_k) = (-1)^{k - 1}\cdot \varepsilon^{1 - k}$ for $k\geqslant 1$.
\end{theorem}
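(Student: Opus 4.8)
The plan is to prove the formula by induction on $k$, stripping off one end circle of the chain at each step. The geometric engine is an \emph{encircling} computation: since $\A$ is a simple object of $\E$, the space $Hom(\A, \A)$ is one-dimensional, so any endomorphism of $\A$ that is assembled from a knotted diagram is automatically a scalar multiple of $id_{\A}$, and that scalar can be recovered by closing up the strand. This simplicity, together with the value $S_{\A, \A} = -1$ computed in the proof of Theorem \ref{Theorem:Modular} and the normalization $b_{\A}\circ d_{\A} = \varepsilon$, is all that the argument needs.

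First I would dispose of the framing factor. Each component $U_i$ of the standard diagram $H_k$ is an embedded round circle and therefore has no self-crossings, so $w(U_i) = 0$ for every $i$ and hence $w(H_k) = \sum_{i = 1}^{k} w(U_i) = 0$. Consequently $\beta_{\varepsilon}^{2w(H_k)} = 1$ and $tr_{\varepsilon}(\mathbb{H}_k) = \frac{1}{\varepsilon}\{H_k\}_{\xi_{\A}}$, so it suffices to evaluate the bracket $\{H_k\}_{\xi_{\A}}$.

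Next I would isolate the encircling scalar. Consider the tangle $E_{\A}\in Hom(\A, \A)$ consisting of a single $\A$-coloured strand threaded once through a closed $\A$-coloured loop, which is exactly the local configuration formed inside $H_k$ by the end circle $U_1$ around the strand of $U_2$. Because $\A$ is simple, $E_{\A} = \lambda\cdot id_{\A}$ for some $\lambda\in\mathbb{C}$. To determine $\lambda$ I close the strand using $b_{\A}$ and $d_{\A}$: on one side the closure is the all-$\A$ Hopf link, whose bracket equals $S_{\A, \A} = -1$; on the other side it equals $\lambda\cdot (b_{\A}\circ d_{\A}) = \lambda\cdot\varepsilon$. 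Hence $\lambda\varepsilon = -1$, so $\lambda = -\frac{1}{\varepsilon}$. The induction is now immediate: in $\{H_k\}_{\xi_{\A}}$ the end circle $U_1$ together with the part of $U_2$ passing through it realizes the morphism $E_{\A}$ sitting on the $U_2$ strand, and by the compositionality of the bracket (Remark \ref{Remark:LMorphismComposition}) I may replace this local morphism by the scalar $\lambda$, which erases $U_1$ and straightens $U_2$, leaving precisely the chain $U_2\cup\cdots\cup U_k = H_{k - 1}$. This gives the recursion $\{H_k\}_{\xi_{\A}} = -\frac{1}{\varepsilon}\{H_{k - 1}\}_{\xi_{\A}}$ for $k\geqslant 2$, with base case $\{H_1\}_{\xi_{\A}} = \{U\}_{\xi_{\A}} = b_{\A}\circ d_{\A} = \varepsilon$. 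Solving yields $\{H_k\}_{\xi_{\A}} = \left(-\frac{1}{\varepsilon}\right)^{k - 1}\varepsilon = (-1)^{k - 1}\varepsilon^{2 - k}$, whence $tr_{\varepsilon}(\mathbb{H}_k) = \frac{1}{\varepsilon}\cdot (-1)^{k - 1}\varepsilon^{2 - k} = (-1)^{k - 1}\varepsilon^{1 - k}$, as claimed.

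The main obstacle is the rigorous justification of the local replacement in the inductive step. One must verify that inside $H_k$ the portion of $U_2$ lying in the disc bounded by $U_1$ really is a single unbraided $\A$-strand, so that $E_{\A}$ is genuinely an endomorphism of the object $\A$, and that substituting $\lambda\cdot id_{\A}$ into the larger diagram respects the layered composition defining $\{\cdot\}_{\xi_{\A}}$. This is precisely where the simplicity of $\A$ (forcing $Hom(\A, \A) = \mathbb{C}$) and the compositional nature of the bracket do the real work; the surrounding manipulations are routine bookkeeping with associativity isomorphisms. Should a fully diagrammatic treatment be preferred, the same recursion can instead be obtained from the general encircling lemma for modular categories as in \cite{T}, applied in a strict category equivalent to $\E$ exactly as in the proof of Theorem \ref{Theorem:TRKnotInvariant}.
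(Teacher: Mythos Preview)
Your proof is correct and takes a genuinely different route from the paper's. The paper constructs an explicit ``clasp'' morphism $h\in Hom(\A\otimes\A,\A\otimes\A)$ whose knotted diagram closes off one circle of the chain and opens the next, computes $[h]_{\U}=(-\tfrac{1}{\varepsilon})$ by a direct eight-layer matrix product, and then writes $\{H_k\}_{\xi_{\A}}=b_{\A}\circ h^{\,k-1}\circ d_{\A}$ to obtain the closed formula in one stroke. Your argument instead works one strand at a time: it exploits the simplicity of $\A$ to force the single-strand encircling endomorphism $E_{\A}\in Hom(\A,\A)$ to be a scalar, and identifies that scalar as $S_{\A,\A}/\varepsilon=-1/\varepsilon$ by closing up to the Hopf link. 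The paper's computation is entirely self-contained in its own matrix calculus and never appeals to simplicity or to functoriality of the graphical invariant; your approach is shorter, explains conceptually why the recurring factor is $S_{\A,\A}/\dim(\A)$, and transfers unchanged to any modular category, at the cost of invoking the functorial machinery already cited in Theorem~\ref{Theorem:TRKnotInvariant}. Both arrive at the same recursion, just packaged on $\A\otimes\A$ versus on $\A$.
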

\begin{proof}
Let $h\in Hom (\A\otimes \A, \A\otimes \A)$ be a morphism with a knotted diagram as shown in the figure \ref{Figure:MorphismH}. This diagram is splitted into eight layers $\lambda_1, \ldots, \lambda_8$. Corresponding morphisms are the following:

\begin{center}
\begin{tabular}{l}
$\mu_{\lambda_1} = (id_{\A}\otimes b_{\A})\otimes id_{\A}\in Hom (\A\otimes \A, (\A\otimes (\A\otimes \A))\otimes \A$), \\
$\mu_{\lambda_2} =\alpha_{\A, \A, \A}^{-1}\otimes id_{\A}  \in Hom((\A\otimes (\A\otimes \A))\otimes \A, ((\A\otimes \A)\otimes \A)\otimes \A)$, \\
$\mu_{\lambda_3} = (c_{\A, \A}\otimes id_{\A})\otimes id_{\A} \in Hom(((\A\otimes \A)\otimes \A)\otimes \A, ((\A\otimes \A)\otimes \A)\otimes \A)$, \\
$\mu_{\lambda_4} = \alpha_{\A\otimes\A, \A, \A} \in Hom(((\A\otimes \A)\otimes \A)\otimes \A, (\A\otimes \A)\otimes (\A\otimes \A))$, \\
$\mu_{\lambda_5} = (id_{\A}\otimes id_{\A})\otimes c_{\A, \A} \in Hom((\A\otimes \A)\otimes (\A\otimes \A), (\A\otimes \A)\otimes (\A\otimes \A))$, \\
$\mu_{\lambda_6} = \alpha_{\A, \A, \A\otimes \A} \in Hom((\A\otimes \A)\otimes (\A\otimes \A), \A\otimes (\A\otimes (\A\otimes \A)))$, \\
$\mu_{\lambda_7} = id_{\A}\otimes \alpha_{\A, \A, \A}^{-1} \in Hom(\A\otimes (\A\otimes (\A\otimes \A)), \A\otimes ((\A\otimes \A)\otimes \A))$, \\
$\mu_{\lambda_8} = id_{\A}\otimes (d_{\A}\otimes id_{\A}) \in Hom(\A\otimes ((\A\otimes \A)\otimes \A), \A\otimes \A)$.
\end{tabular}
\end{center}

\begin{figure}[h]
\begin{center}
\begin{tikzpicture}[scale=0.75, use Hobby shortcut]
\begin{knot}[
	clip width=5,
	consider self intersections=true,
	ignore endpoint intersections=false,
	flip crossing/.list={1}
]
\strand (0, 0) .. (1, 0) .. (2, 0) .. (3, 0) .. (4, 0) .. (5, 1) .. (6, 1) .. (7, 1) .. (7.5, 1.5) .. (7, 2) .. (6, 2) .. (5, 2) .. (4, 2) .. (3, 2) .. (2, 3) .. (1, 3) .. (0, 3);

\strand (8, 0) .. (7, 0) .. (6, 0) .. (5, 0) .. (4, 1) .. (3, 1) .. (2, 1) .. (1, 1) .. (0.5, 1.5) .. (1, 2) .. (2, 2) .. (3, 3) .. (4, 3) .. (5, 3) .. (6, 3) .. (7, 3) .. (8, 3);
\end{knot}

\foreach \i in {0, 1, 2, 3, 4, 5, 6, 7, 8}{
    \draw[split_line] (\i, -1) -- (\i, 4);
}

\foreach \i in {1, 2, 3, 4, 5, 6, 7}{
	\filldraw (\i, 0) circle (0.03);
	\filldraw (\i, 1) circle (0.03);
	\filldraw (\i, 2) circle (0.03);
	\filldraw (\i, 3) circle (0.03);
}
\filldraw (0, 0) circle (0.03);
\filldraw (0, 3) circle (0.03);
\filldraw (8, 0) circle (0.03);
\filldraw (8, 3) circle (0.03);

\draw (0.5, -0.75) node {$\lambda_1$};
\draw (1.5, -0.75) node {$\lambda_2$};
\draw (2.5, -0.75) node {$\lambda_3$};
\draw (3.5, -0.75) node {$\lambda_4$};
\draw (4.5, -0.75) node {$\lambda_5$};
\draw (5.5, -0.75) node {$\lambda_6$};
\draw (6.5, -0.75) node {$\lambda_7$};
\draw (7.5, -0.75) node {$\lambda_8$};
\end{tikzpicture}
\end{center}
\caption{\label{Figure:MorphismH}Knotted diagram of the morphism $h$}
\end{figure}
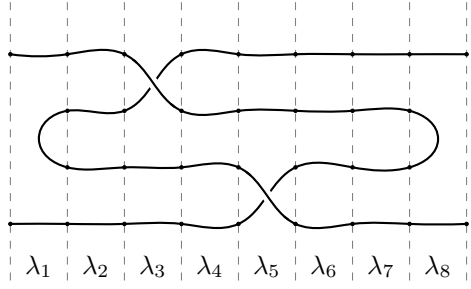

The diagram of the morphism $h$ is shown in the figure \ref{Figure:DiagramH}.

\begin{figure}[h]
\begin{center}
\begin{tikzpicture}[scale=0.5]
	\draw (0, -1) rectangle ++(1, 1) node[pos=0.5] {$\U$};
	\draw (0, -2) rectangle ++(1, 1) node[pos=0.5] {$\A$};
	\draw[-latex] (1, -0.5) -- (3, -0.5) node[above=0.0pt, midway] {$y\sqrt{\varepsilon}$};
	\draw[-latex] (1, -1.5) -- (3, -1.5) node[below=0.0pt, midway] {$y\sqrt{\varepsilon}$};
	
	\draw (3, -1) rectangle ++(1, 1) node[pos=0.5] {$\U$};
	\draw (3, -2) rectangle ++(1, 1) node[pos=0.5] {$\A$};
	\draw (3, -3) rectangle ++(1, 1) node[pos=0.5] {$\A$};
	\draw (3, -4) rectangle ++(1, 1) node[pos=0.5] {$\U$};
	\draw (3, -5) rectangle ++(1, 1) node[pos=0.5] {$\A$};
	\draw[ma_black] (4, -2.5) -- (6, -2.5);
	\draw[ma_blue] (4, -0.5) -- (6, -0.5);
	\draw[ma_blue] (4, -1.5) -- (6, -1.5);
	\draw[ma_green] (4, -0.5) -- (6, -3.5);
	\draw[ma_green] (4, -1.5) -- (6, -4.5);
	\draw[ma_red] (4, -3.5) -- (6, -0.5);
	\draw[ma_red] (4, -4.5) -- (6, -1.5);
	\draw[ma_orange] (4, -3.5) -- (6, -3.5);
	\draw[ma_orange] (4, -4.5) -- (6, -4.5);
	
	\draw (6, -1) rectangle ++(1, 1) node[pos=0.5] {$\U$};
	\draw (6, -2) rectangle ++(1, 1) node[pos=0.5] {$\A$};
	\draw (6, -3) rectangle ++(1, 1) node[pos=0.5] {$\A$};
	\draw (6, -4) rectangle ++(1, 1) node[pos=0.5] {$\U$};
	\draw (6, -5) rectangle ++(1, 1) node[pos=0.5] {$\A$};
	\draw[-latex] (7, -0.5) -- (9, -0.5) node[above=-2.0pt, midway] {$\beta_{\varepsilon}^2$};
	\draw[-latex] (7, -1.5) -- (9, -1.5) node[above=-2.0pt, midway] {$\beta_{\varepsilon}^2$};
	\draw[-latex] (7, -2.5) -- (9, -2.5) node[above=-2.0pt, midway] {$\beta_{\varepsilon}$};
	\draw[-latex] (7, -3.5) -- (9, -3.5) node[above=-2.0pt, midway] {$\beta_{\varepsilon}$};
	\draw[-latex] (7, -4.5) -- (9, -4.5) node[above=-2.0pt, midway] {$\beta_{\varepsilon}$};
	
	\draw (9, -1) rectangle ++(1, 1) node[pos=0.5] {$\U$};
	\draw (9, -2) rectangle ++(1, 1) node[pos=0.5] {$\A$};
	\draw (9, -3) rectangle ++(1, 1) node[pos=0.5] {$\A$};
	\draw (9, -4) rectangle ++(1, 1) node[pos=0.5] {$\U$};
	\draw (9, -5) rectangle ++(1, 1) node[pos=0.5] {$\A$};
	\draw[ma_black] (10, -0.5) -- (12, -0.5);
	\draw[ma_black] (10, -1.5) -- (12, -1.5);
	\draw[ma_black] (10, -3.5) -- (12, -3.5);
	\draw[ma_blue] (10, -2.5) -- (12, -2.5);
	\draw[ma_green] (10, -2.5) -- (12, -4.5);
	\draw[ma_red] (10, -4.5) -- (12, -2.5);
	\draw[ma_orange] (10, -4.5) -- (12, -4.5);
	
	\draw (12, -1) rectangle ++(1, 1) node[pos=0.5] {$\U$};
	\draw (12, -2) rectangle ++(1, 1) node[pos=0.5] {$\A$};
	\draw (12, -3) rectangle ++(1, 1) node[pos=0.5] {$\A$};
	\draw (12, -4) rectangle ++(1, 1) node[pos=0.5] {$\U$};
	\draw (12, -5) rectangle ++(1, 1) node[pos=0.5] {$\A$};
	\draw[-latex] (13, -0.5) -- (15, -0.5) node[above=-2.0pt, midway] {$\beta_{\varepsilon}^2$};
	\draw[-latex] (13, -1.5) -- (15, -1.5) node[above=-2.0pt, midway] {$\beta_{\varepsilon}$};
	\draw[-latex] (13, -2.5) -- (15, -2.5) node[above=-2.0pt, midway] {$\beta_{\varepsilon}^2$};
	\draw[-latex] (13, -3.5) -- (15, -3.5) node[above=-2.0pt, midway] {$\beta_{\varepsilon}$};
	\draw[-latex] (13, -4.5) -- (15, -4.5) node[above=-2.0pt, midway] {$\beta_{\varepsilon}$};
	
	\draw (15, -1) rectangle ++(1, 1) node[pos=0.5] {$\U$};
	\draw (15, -2) rectangle ++(1, 1) node[pos=0.5] {$\A$};
	\draw (15, -3) rectangle ++(1, 1) node[pos=0.5] {$\A$};
	\draw (15, -4) rectangle ++(1, 1) node[pos=0.5] {$\U$};
	\draw (15, -5) rectangle ++(1, 1) node[pos=0.5] {$\A$};
	\draw[ma_black] (16, -0.5) -- (18, -0.5);
	\draw[ma_black] (16, -2.5) -- (18, -1.5);
	\draw[ma_black] (16, -3.5) -- (18, -3.5);
	\draw[ma_blue] (16, -1.5) -- (18, -2.5);
	\draw[ma_green] (16, -1.5) -- (18, -4.5);
	\draw[ma_red] (16, -4.5) -- (18, -2.5);
	\draw[ma_orange] (16, -4.5) -- (18, -4.5);
	
	\draw (18, -1) rectangle ++(1, 1) node[pos=0.5] {$\U$};
	\draw (18, -2) rectangle ++(1, 1) node[pos=0.5] {$\A$};
	\draw (18, -3) rectangle ++(1, 1) node[pos=0.5] {$\A$};
	\draw (18, -4) rectangle ++(1, 1) node[pos=0.5] {$\U$};
	\draw (18, -5) rectangle ++(1, 1) node[pos=0.5] {$\A$};
	\draw[ma_black] (19, -2.5) -- (21, -2.5);
	\draw[ma_blue] (19, -0.5) -- (21, -0.5);
	\draw[ma_blue] (19, -1.5) -- (21, -1.5);
	\draw[ma_green] (19, -0.5) -- (21, -3.5);
	\draw[ma_green] (19, -1.5) -- (21, -4.5);
	\draw[ma_red] (19, -3.5) -- (21, -0.5);
	\draw[ma_red] (19, -4.5) -- (21, -1.5);
	\draw[ma_orange] (19, -3.5) -- (21, -3.5);
	\draw[ma_orange] (19, -4.5) -- (21, -4.5);
	
	\draw (21, -1) rectangle ++(1, 1) node[pos=0.5] {$\U$};
	\draw (21, -2) rectangle ++(1, 1) node[pos=0.5] {$\A$};
	\draw (21, -3) rectangle ++(1, 1) node[pos=0.5] {$\A$};
	\draw (21, -4) rectangle ++(1, 1) node[pos=0.5] {$\U$};
	\draw (21, -5) rectangle ++(1, 1) node[pos=0.5] {$\A$};
	\draw[-latex] (22, -0.5) -- (24, -0.5) node[above=-0.0pt, midway] {$\frac{\sqrt{\varepsilon}}{y}$};
	\draw[-latex] (22, -1.5) -- (24, -1.5) node[below=-0.0pt, midway] {$\frac{\sqrt{\varepsilon}}{y}$};
	
	\draw (24, -1) rectangle ++(1, 1) node[pos=0.5] {$\U$};
	\draw (24, -2) rectangle ++(1, 1) node[pos=0.5] {$\A$};
\end{tikzpicture}
\end{center}
\caption{\label{Figure:DiagramH}Diagram of the morphism $h$}
\end{figure}
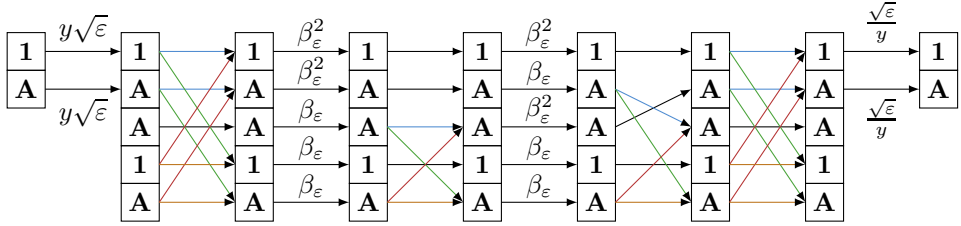

Next calculate
\begin{multline*}
[h]_{\U} = \begin{pmatrix}
\frac{\sqrt{\varepsilon}}{y} & 0
\end{pmatrix} \cdot \begin{pmatrix}
\frac{1}{\varepsilon} & \frac{x}{\sqrt{\varepsilon}} \\
\frac{1}{x\sqrt{\varepsilon}} & -\frac{1}{\varepsilon}
\end{pmatrix} \cdot \begin{pmatrix}
1 & 0 \\
0 & 1 
\end{pmatrix} \cdot \begin{pmatrix}
\beta_{\varepsilon}^2 & 0 \\
0 & \beta_{\varepsilon}
\end{pmatrix} \cdot \begin{pmatrix}
1 & 0 \\
0 & 1
\end{pmatrix} \cdot \\ \cdot \begin{pmatrix}
\beta_{\varepsilon}^2 & 0 \\
0 & \beta_{\varepsilon}
\end{pmatrix} \cdot \begin{pmatrix}
\frac{1}{\varepsilon} & \frac{x}{\sqrt{\varepsilon}} \\ 
\frac{1}{x\sqrt{\varepsilon}} & -\frac{1}{\varepsilon}
\end{pmatrix} \cdot \begin{pmatrix}
y\sqrt{\varepsilon} \\
0
\end{pmatrix} = \begin{pmatrix}
\frac{\beta_{\varepsilon}^4}{\varepsilon} + \beta_{\varepsilon}^2
\end{pmatrix} = \begin{pmatrix}
-\frac{1}{\varepsilon}
\end{pmatrix}.
\end{multline*}

Finally, notice that $$\{H_k\}_{\xi_{\A}} = b_{\A}\circ \underbrace{h\circ\ldots\circ h}_{k - 1 \text{\ times}}\circ d_{\A} = [d_{\A}]_{\U}\cdot [h]_{\U}^{k - 1} \cdot [b_{\A}]_{\U} = \frac{\sqrt{\varepsilon}}{y}\cdot \left(-\frac{1}{\varepsilon}\right)^{k - 1}\cdot y\sqrt{\varepsilon} = (-1)^{k - 1}\cdot \varepsilon^{2 - k}.$$

Hence $$tr_{\varepsilon}(\mathbb{H}_k) = \frac{1}{\varepsilon}\cdot \{H_k\}_{\xi_{\A}} = (-1)^{k - 1}\cdot \varepsilon^{1 - k}.$$
\end{proof}

\subsection{Invariant for 3-manifolds}

By framed link we mean the pair $(\mathbb{L}, \mathbb{F})$, where $\mathbb{L}$ is an unoriented $k$-component link with fixed order on the set of components, and $\mathbb{F} = (f_1, \ldots, f_k)$ is a tuple of integers. Each value $f_i$, $i\in \{1, \ldots, k\}$, is a framing of the $i$-th component. The diagram of the framed link is a diagram $L = l_1\cup \ldots l_k$ of the link $\mathbb{L}$ such that $w(l_i) = f_i$ for each $i\in\{1, \ldots, k\}$.

By the signature of the framed link $(\mathbb{L}, \mathbb{F})$ we mean a signature of the matrix $$[(\mathbb{L}, \mathbb{F})] = \begin{pmatrix}
f_1 & lk(l_1, l_2) & \ldots & lk(l_1, l_k) \\
lk(l_2, l_1) & f_2 & \ldots & lk(l_2, l_k) \\
\vdots & \vdots & & \vdots \\
lk(l_k, l_1) & lk(l_k, l_2) & \ldots & f_k
\end{pmatrix},$$
where $lk(l_i, l_j)$ is a linking number of the components $l_i$ and $l_j$. The signature can be computed in the following way. First of all we should consider the quadratic form with the matrix $[(\mathbb{L}, \mathbb{F})]$. Then the signature is a difference between the number of positive and negative summands in the canonical form of this quadratic form.

It's well known that any closed 3-manifold can be obtained from $S^3$ by surgery along a framed link. Denote the manifold obtained from the framed link $(\mathbb{L}, \mathbb{F})$ by $M_{(\mathbb{L}, \mathbb{F})}$.

Define $$tr_{\varepsilon}(M_{(\mathbb{L}, \mathbb{F})}) = \Delta^{\sigma}\cdot D^{-\sigma - k - 1}\cdot \sum\limits_{\xi}\varepsilon^{|\xi|_{\A}}\cdot \{L\}_{\xi},$$ where $\sigma$ is a signature of the framed link $(\mathbb{L}, \mathbb{F})$, $\Delta = 1 + \varepsilon^2\cdot \beta_{\varepsilon}^2$, $D = \sqrt{2 + \varepsilon}$, the sum takes over all colourings $\xi$ of the diagram $L$, and $|\xi|_{\A}$ is a number of values $\A$ in the colouring $\xi$. It follows from \cite[Theorem II.2.2.2]{T} that $tr_{\varepsilon}$ is an invariant for closed 3-manifolds, i.e. the value $tr_{\varepsilon}(M_{(\mathbb{L}, \mathbb{F})})$ does not depend on the framed link $(\mathbb{L}, \mathbb{F})$ which defines the manifold.

\begin{example}
\label{Example:TRForPoincareSphere}
Let $M$ be a Poincare sphere. It's well known that $M$ can be obtained from $S^3$ by surgery along the right trefoil $3_1$ with framing $f_1 = +1$. The diagram $T$ of this framed knot is shown in the figure \ref{Figure:TrefoilWithFraming}.

\begin{figure}[h]
\begin{center}
\begin{tikzpicture}[scale=0.75, use Hobby shortcut]
\begin{knot}[
	clip width=5,
	consider self intersections=true,
	ignore endpoint intersections=false,
	flip crossing/.list={5, 7}
]
\strand ([closed]1, 0) .. (2, 0) .. (2.5, -0.25) .. (3.25, 0.25) .. (3, 0.5) .. (2.75, 0.25) .. (3.5, 0.25) .. (4, 0) .. (4.5, 0.5) .. (4, 1) .. (3, 2) .. (2, 1) .. (1, 2) .. (0.5, 2.5) .. (1, 3) .. (1.5, 3.1) .. (2.25, 2.75) .. (2, 2.5) .. (1.75, 2.75) .. (2.5, 2.9) .. (3, 3) .. (4, 3) .. (4.5, 2.5) .. (4, 2) .. (3, 1) .. (2, 2) .. (1, 1) .. (0.5, 0.5);
\end{knot}
\end{tikzpicture}
\end{center}
\caption{\label{Figure:TrefoilWithFraming}Diagram $T$ of the right trefoil with framing $+1$}
\end{figure}
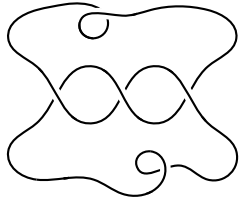

In this case $\sigma = 1$, and there are only two colourings: $\xi_{\U}$ colours $T$ by the object $\U$, $\xi_{\A}$ colours $T$ by the object $\A$. Then $\{T\}_{\xi_{\U}} = 1$ and $\{T\}_{\xi_{\A}} = (1 - 2\beta_{\varepsilon})\cdot \beta_{\varepsilon}^4$ (here we use the result of the example \ref{Example:Trefoil} and the fact that $[\theta_{\A}^{-1}]_{\A} = \beta_{\varepsilon}^2$). Finally $$
tr_{\varepsilon}(M) = \frac{(1 + \varepsilon^2\cdot \beta_{\varepsilon}^2)\cdot (1 + \varepsilon\cdot (\beta_{\varepsilon}^4 + 2))}{(\varepsilon + 2)^{\frac{3}{2}}}.
$$
\end{example}

Let $(\mathbb{H}_{k}, \mathbb{F})$ be a generalised Hopf link with the framing $\mathbb{F} = (f_1, \ldots, f_k)$, and let $H_k^{f_1, \ldots, f_k}$ be a digram of this framed link (figure \ref{Figure:FramedGeneralisedHopf}).

\begin{figure}[h]
\begin{center}
\begin{tikzpicture}[scale=0.75, use Hobby shortcut]
\begin{knot}[
	clip width=5,
	consider self intersections=true,
	ignore endpoint intersections=false,
	flip crossing/.list={3, 4, 5, 7, 11, 12}
]
	\strand ([closed]1, 0) .. (1.5, -0.1) .. (2.25, 0.25) .. (2, 0.5) .. (1.75, 0.25) .. (2.5, 0.1) .. (3, 0) .. (3.5, -0.1) .. (4.25, 0.25) .. (4, 0.5) .. (3.75, 0.25) .. (4.5, 0.1) .. (5, 0) .. (6, 1) .. (5, 2) .. (1, 2) .. (0, 1);
	\strand (6, 2) .. (5, 1) .. (6, 0);
	\strand (7, 0) .. (8, 1) .. (7, 2);
	\strand ([closed]8, 0) .. (8.5, -0.1) .. (9.25, 0.25) .. (9, 0.5) .. (8.75, 0.25) .. (9.5, 0.1) .. (10, 0) .. (10.5, -0.1) .. (11.25, 0.25) .. (11, 0.5) .. (10.75, 0.25) .. (11.5, 0.1) .. (12, 0) .. (13, 1) .. (12, 2) .. (8, 2) .. (7, 1);
\end{knot}
\draw (3, 0.3) node[] {$\ldots$};
\draw (3, 0.3) node[above] {$f_1$};

\draw (10, 0.3) node[] {$\ldots$};
\draw (10, 0.3) node[above] {$f_k$};

\draw (6.5, 1) node[] {$\ldots$};
\end{tikzpicture}
\end{center}
\caption{\label{Figure:FramedGeneralisedHopf}Diagram $H_{k}^{f_1, \ldots, f_k}$ of the framed generalised Hopf link}
\end{figure}
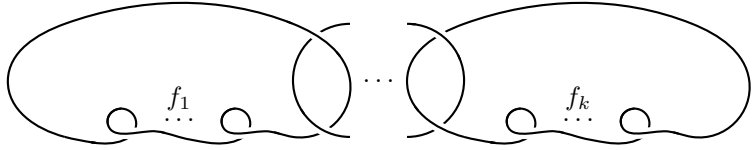

It's well known (see \cite{PS}) that $M_{(\mathbb{H}_{k}, \mathbb{F})}$ is a lens space $L_{p, q}$, where $$\frac{p}{q} = f_1 - \frac{1}{f_2 - \frac{1}{\ldots - \frac{1}{f_k}}}.$$

\begin{lemma}
\label{Lemma:MorphismGHL}
Let $\xi_{\A}$ be a constant colouring of the diagram $H_k^{f_1, \ldots, f_k}$, mapping each component to the object $\A$. Then
$$\{H_k^{f_1, \ldots, f_k}\}_{\xi_{\A}} = \frac{(-1)^{k - 1}}{\varepsilon^{k - 2}\cdot\beta_{\varepsilon}^{2f_1 + \ldots + 2f_k}}.$$
\end{lemma}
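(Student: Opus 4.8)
The plan is to reduce everything to the unframed computation already carried out in Theorem \ref{Theorem:HopfLink} and to treat the framings separately through the twist. The diagram $H_k^{f_1,\ldots,f_k}$ is exactly the standard chain diagram $H_k$ of the generalised Hopf link (the one analysed in the proof of Theorem \ref{Theorem:HopfLink}) with $|f_i|$ curls inserted on the $i$-th circle $U_i$: positive curls if $f_i>0$, negative curls if $f_i<0$. This is forced by the definition of a framed-link diagram, since the self-writhe $w(l_i)$ counts only self-crossings of $U_i$, while the crossings sitting inside the clasps join two distinct components and so contribute nothing to $w(l_i)$; hence the prescribed value $w(l_i)=f_i$ must be produced by exactly $f_i$ signed curls on $U_i$.

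First I would recall from the proof of Theorem \ref{Theorem:HopfLink} the clasp morphism $h\in Hom(\A\otimes\A,\A\otimes\A)$ with $[h]_{\U}=-\tfrac1\varepsilon$, giving the unframed value $\{H_k\}_{\xi_{\A}}=[d_{\A}]_{\U}\cdot[h]_{\U}^{\,k-1}\cdot[b_{\A}]_{\U}=(-1)^{k-1}\varepsilon^{2-k}$. Next I would account for the curls. The key point is that $\theta_{\A}=\tfrac1{\beta_{\varepsilon}^2}\,id_{\A}$ is a scalar multiple of the identity, so each positive curl on an $\A$-coloured component inserts one factor $\theta_{\A}$ into the composition defining $\{H_k^{f_1,\ldots,f_k}\}_{\xi_{\A}}$ and, precisely as in the first-Reidemeister step of the proof of Theorem \ref{Theorem:TRKnotInvariant}, multiplies the bracket by $\tfrac1{\beta_{\varepsilon}^2}$; each negative curl inserts $\theta_{\A}^{-1}$ and multiplies by $\beta_{\varepsilon}^2$. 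Because $\theta_{\A}$ is a scalar, it commutes with the clasp morphisms and its exact location along $U_i$ is irrelevant, so the $f_i$ curls on $U_i$ together contribute the single factor $\beta_{\varepsilon}^{-2f_i}$.

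Multiplying these contributions over all components yields
\[
\{H_k^{f_1,\ldots,f_k}\}_{\xi_{\A}}=\Big(\prod_{i=1}^{k}\beta_{\varepsilon}^{-2f_i}\Big)\cdot\{H_k\}_{\xi_{\A}}=\beta_{\varepsilon}^{-2(f_1+\cdots+f_k)}\cdot(-1)^{k-1}\varepsilon^{2-k}=\frac{(-1)^{k-1}}{\varepsilon^{k-2}\cdot\beta_{\varepsilon}^{2f_1+\cdots+2f_k}},
\]
which is the claimed identity. The step I expect to require the most care is the claim that the curls factor out as pure scalars independently of their placement and of how they interleave with the clasps; this rests entirely on $\theta_{\A}$ being a scalar morphism, together with the invariance of $\{\cdot\}_{\xi_{\A}}$ under the second and third Reidemeister moves (a consequence of the ribbon structure of Theorem \ref{Theorem:RibbonCategory}, as used in the proof of Theorem \ref{Theorem:TRKnotInvariant}), which guarantees that only the signed count $f_i$ of curls on each component enters. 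Should a completely explicit check be preferred, one can instead slot the twist factors directly into a layer decomposition of $H_k^{f_1,\ldots,f_k}$ as in Example \ref{Example:Trefoil}, but the scalar argument renders this unnecessary.
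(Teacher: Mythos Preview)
Your proof is correct and follows essentially the same approach as the paper: both reduce to the clasp morphism $h$ of Theorem \ref{Theorem:HopfLink} with $[h]_{\U}=-\tfrac1\varepsilon$ and absorb each framing curl as a scalar factor $\beta_{\varepsilon}^{-2}$ coming from $\theta_{\A}$. The only presentational difference is that the paper packages the curls into modified clasp morphisms $h_{f_i}$ with $[h_{f_i}]_{\U}=-\tfrac{1}{\varepsilon\beta_{\varepsilon}^{2f_i}}$ and then composes those, whereas you first cite the unframed value $\{H_k\}_{\xi_{\A}}=(-1)^{k-1}\varepsilon^{2-k}$ and then multiply by the global twist factor; the underlying computation is identical.
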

\begin{proof}
The proof is similar to the proof of the theorem \ref{Theorem:HopfLink}. Let $h_{f_{i}}\in Hom (\A\otimes \A, \A\otimes \A)$ be a morphism with a knotted diagram as shown in the figure \ref{Figure:MorphismsFramedH}.

\begin{figure}[h]
\begin{center}
\begin{tikzpicture}[scale=0.75, use Hobby shortcut]
\begin{knot}[
	clip width=5,
	consider self intersections=true,
	ignore endpoint intersections=false,
	flip crossing/.list={1, 3, 4}
]
	\strand (0, 0) .. (1, 1) .. (0, 2);
	\strand (5.5, 2) .. (1.5, 2) .. (0.5, 1) .. (1.5, 0) .. (2.0, -0.1) .. (2.75, 0.25) .. (2.5, 0.5) .. (2.25, 0.25) .. (3.0, 0.1) .. (3.5, 0) .. (4.0, -0.1) .. (4.75, 0.25) .. (4.5, 0.5) .. (4.25, 0.25) .. (5.0, 0.1) .. (5.5, 0);
\end{knot}
\filldraw (0, 0) circle (0.03);
\filldraw (0, 2) circle (0.03);
\filldraw (5.5, 0) circle (0.03);
\filldraw (5.5, 2) circle (0.03);

\draw (3.5, 0.25) node {$\ldots$};
\draw (3.5, 0.25) node[above] {$f_i$};
\end{tikzpicture}
\end{center}
\caption{\label{Figure:MorphismsFramedH}Knotted diagram of the morphism $h_{f_{i}}$}
\end{figure}

Note that $\underbrace{\theta_{\A}\circ \ldots\circ \theta_{\A}}_{f_i}$ is a morphism from $\A$ to $\A$ defined by the value $\frac{1}{\beta_{\varepsilon}^{2f_i}}$. Then 
\begin{center}
$[h_{f_i}]_{\U} = \begin{pmatrix}
-\frac{1}{\varepsilon\cdot \beta_{\varepsilon}^{2f_i}}
\end{pmatrix},
$ and $
[b_{\A}\circ \underbrace{(id_{\A}\otimes \theta_{\A})\circ \ldots\circ (id_{\A}\otimes \theta_{\A})}_{f_1\text{\ times}}]_{\U} = \begin{pmatrix}
\frac{y\sqrt{\varepsilon}}{\beta_{\varepsilon}^{2f_1}}
\end{pmatrix}.
$
\end{center}

Finally, $$\{H_{k}^{f_1, \ldots, f_k}\}_{\xi_{\A}} = (b_{\A}\circ \underbrace{(id_{\A}\otimes\theta_{\A})\circ \ldots\circ (id_{\A}\otimes\theta_{\A})}_{f_1\text{\ times}})\circ h_{f_2}\circ\ldots\circ h_{f_k}\circ d_{\A} = \frac{(-1)^{k - 1}\cdot \varepsilon}{\varepsilon^{k - 1}\cdot\beta_{\varepsilon}^{2f_1}\cdot \ldots \cdot \beta_{\varepsilon}^{2f_k}}.$$
\end{proof}

Let $\{i_1, i_2, \ldots, i_s\}$ be a sequence of natural numbers sorted in ascending order. Define the function $$c\colon \{i_1, \ldots, i_s\}\mapsto c(i_1, \ldots, i_s)$$ as follows: split the sequence $\{i_1, \ldots, i_s\}$ into maximal sub-sequences containing consecutive numbers (i.e. the number $i$ contained in a sub-sequence iff either $i - 1$ or $i + 1$ is contained in that sub-sequence). Let $r$ be a number of these maximal sub-sequences, and let their lengths be $l_1, \ldots, l_r$. Then $$c(i_1, \ldots, i_s) = \prod_{j = 1}^{r}\frac{(-1)^{l_j - 1}}{\varepsilon^{l_j - 2}}.$$

\begin{example}
\label{Example:CFunction}
$c(1, 3) = \varepsilon^2$, because there are only two sub-sequences in $\{1, 3\}$: $\{1\}$ and $\{3\}$. For both, $l_1 = l_2 = 1$, and so both multipliers are $\frac{(-1)^0}{\varepsilon^{-1}} = \varepsilon$.

$c(1, 2, 3, 4) = -\frac{1}{\varepsilon^2}$, because for the sequence $\{1, 2, 3, 4\}$: $r = 1$, $l_1 = 4$ and therefore the unique multiplier is $\frac{(-1)^3}{\varepsilon^2}$.

$c(2, 3, 4, 6, 7, 9, 10, 11) = -\frac{1}{\varepsilon^2}$, because the sequence $\{2, 3, 4, 6, 7, 9, 10, 11\}$ splits into three sub-sequences: $\{2, 3, 4\}$ with length $l_1 = 3$, $\{6, 7\}$ with length $l_2 = 2$ and $\{9, 10, 11\}$ with length $l_3 = 3$. So $$c(2, 3, 4, 6, 7, 9, 10, 11) = \frac{1}{\varepsilon}\cdot (-1)\cdot \frac{1}{\varepsilon} = -\frac{1}{\varepsilon^2}.$$
\end{example}

\begin{theorem}
\label{Theorem:TRLenseSpaces}
Let $\mathbb{F} = (f_1, \ldots, f_k)$, and let $L_{p, q}$ be a lens space, obtained by surgery $S^3$ along the framed link $(\mathbb{H}_k, \mathbb{F})$. Then $$tr_{\varepsilon}(L_{p, q}) = \frac{(1 + \varepsilon^2\cdot \beta_{\varepsilon}^2)^{\sigma}}{(\varepsilon + 2)^{\frac{\sigma + k + 1}{2}}}\cdot \left(1 + \sum\limits_{s = 1}^{k}\left(\varepsilon^{s}\cdot \sum\limits_{\{i_1, \ldots, i_s\}}\frac{c(i_1, \ldots, i_s)}{\beta_{\varepsilon}^{2f_{i_1} + \ldots + 2f_{i_s}}}\right)\right),$$ where the second sum takes over all subsets $\{i_1, \ldots, i_s\}\subseteq \{1, \ldots, k\}$ of order $s$.
\end{theorem}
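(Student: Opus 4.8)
The plan is to reduce the statement to the definition of $tr_{\varepsilon}$ for $3$-manifolds together with the already-established value of the fully-$\A$-coloured generalised Hopf link (Lemma \ref{Lemma:MorphismGHL}). Substituting $M_{(\mathbb{H}_k,\mathbb{F})} = L_{p,q}$ into the definition, the scalar prefactor $\Delta^{\sigma}\cdot D^{-\sigma-k-1}$ is already literally the fraction $\frac{(1+\varepsilon^2\beta_{\varepsilon}^2)^{\sigma}}{(\varepsilon+2)^{(\sigma+k+1)/2}}$, since $\Delta = 1+\varepsilon^2\beta_{\varepsilon}^2$ and $D = \sqrt{2+\varepsilon}$. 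Hence everything comes down to the identity
$$\sum_{\xi}\varepsilon^{|\xi|_{\A}}\{H_k^{f_1,\ldots,f_k}\}_{\xi} = 1 + \sum_{s=1}^{k}\varepsilon^{s}\sum_{\{i_1,\ldots,i_s\}}\frac{c(i_1,\ldots,i_s)}{\beta_{\varepsilon}^{2f_{i_1}+\ldots+2f_{i_s}}},$$
where $\xi$ runs over all $2^k$ colourings.

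A colouring $\xi$ is the same datum as the subset $S = \{i : \xi(l_i)=\A\}\subseteq\{1,\ldots,k\}$, and $|\xi|_{\A} = |S|$. By definition of $\{L\}_{\xi}$ the components coloured $\U$ are deleted, so $\{H_k^{\ldots}\}_{\xi}$ is computed from the sub-diagram on the components indexed by $S$. The next step is the geometric observation that in the Hopf chain only consecutive components are linked; deleting any component splits the chain. Thus the surviving sub-link is a split link whose pieces are exactly the maximal runs of consecutive indices in $S$, each piece being a framed generalised Hopf link whose framings are the restriction of $\mathbb{F}$ (the writhes $w(l_i)=f_i$ are unaffected by deleting other components).

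The third step is multiplicativity: for a split diagram $L' = L_1'\sqcup L_2'$ arranged with $L_1'$ above $L_2'$, the layered morphism factors at each level as a tensor product, so by functoriality of $\otimes$ (Lemma \ref{Lemma:MorphismTensorProduct}) and the unit identities (Lemma \ref{Lemma:Unit}) one gets $\{L'\}_{\xi} = \{L_1'\}_{\xi}\otimes\{L_2'\}_{\xi}$; since both factors lie in $Hom(\U,\U)\cong\mathbb{C}$ and $\U\otimes\U=\U$, this tensor product is just the product of the two scalars. Here I would invoke that $\{L\}_{\xi}$ is invariant under the second and third Reidemeister moves (as in the proof of Theorem \ref{Theorem:TRKnotInvariant}, via the ribbon structure) to put each run into its standard framed generalised Hopf diagram without changing any writhe. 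Applying Lemma \ref{Lemma:MorphismGHL} to a run of length $l_j$ with framing sum $F_j$ gives $(-1)^{l_j-1}\varepsilon^{2-l_j}\beta_{\varepsilon}^{-2F_j}$; multiplying over the runs yields precisely $\beta_{\varepsilon}^{-2\sum_{i\in S}f_i}\prod_j (-1)^{l_j-1}\varepsilon^{-(l_j-2)}$, and the second factor is by definition $c(i_1,\ldots,i_s)$. Summing $\varepsilon^{|S|}$ times this over all non-empty $S$, and noting that $S=\emptyset$ (the all-$\U$ colouring) contributes the empty-diagram value $1$, reproduces the right-hand side.

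The one genuinely delicate point is the factorisation step: I must make sure the splitting of the chain is realised by an honest isotopy of diagrams using only $R2$ and $R3$ (so that $\{L\}_{\xi}$, which is only $R1$-sensitive through the framings, is preserved), and that the tensor-then-evaluate bookkeeping with the associativity isomorphisms $\tau_i$ collapses correctly for a disjoint union into $Hom(\U,\U)$. Everything else — the matching of the prefactor, the identification of the product over runs with $c(i_1,\ldots,i_s)$, and the summation over subsets — is a direct, essentially combinatorial verification once the factorisation is in hand.
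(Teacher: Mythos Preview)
Your proposal is correct and follows the same route as the paper: identify a colouring with a subset $S\subseteq\{1,\ldots,k\}$, delete the $\U$-components, and recognise the result as a split union of shorter framed generalised Hopf chains to which Lemma~\ref{Lemma:MorphismGHL} applies run-by-run, with the product over runs giving exactly $c(i_1,\ldots,i_s)\,\beta_{\varepsilon}^{-2\sum f_{i_j}}$. The paper's own argument compresses all of this into the single sentence ``By the lemma~\ref{Lemma:MorphismGHL} $\{H_{k}^{f_1,\ldots,f_k}\}_{\xi} = c(i_1,\ldots,i_s)/\beta_{\varepsilon}^{2f_{i_1}+\ldots+2f_{i_s}}$'', leaving the split-link factorisation and the matching with the definition of $c$ implicit; your write-up simply spells out what the paper takes for granted.
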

\begin{proof}
Let $\xi$ be a colouring of the diagram $H_{k}^{f_1, \ldots, f_k}$. This colouring is defined by selecting a subset $\{i_1, \ldots, i_s\}\subseteq \{1, \ldots, k\}$. Components of the diagram $H_{k}^{f_1, \ldots, f_k}$ with indices in this subset are coloured by $\A$, all other components are coloured by $\U$. For this colouring $|\xi|_{\A} = s$. By the lemma \ref{Lemma:MorphismGHL} $$\{H_{k}^{f_1, \ldots, f_k}\}_{\xi} = \frac{c(i_1, \ldots, i_s)}{\beta_{\varepsilon}^{2f_{i_1} + \ldots + 2f_{i_s}}}.$$

The theorem statement obtained by substituting these values and values for $\Delta$ and $D$ into the formula of the invariant $tr_{\varepsilon}$.
\end{proof}

\begin{proposition}
\label{Proposition:EqualLenses}
Let $p\geqslant 1$ then $tr_{\varepsilon}(L_{p, q}) = tr_{\varepsilon}(L_{p + 5q, q})$.
\end{proposition}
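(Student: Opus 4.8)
The plan is to evaluate both sides by the closed formula of Theorem \ref{Theorem:TRLenseSpaces}, choosing for each lens space its Hirzebruch--Jung continued fraction presentation. Adopting the standard normalisation $0 < q < p$, $\gcd(p,q)=1$, write $p/q = [a_1,\dots,a_k]^- = a_1 - 1/(a_2 - 1/(\cdots - 1/a_k))$ with all $a_i \geq 2$; by the result of \cite{PS} quoted above, $L_{p,q} = M_{(\mathbb{H}_k,\mathbb{F})}$ with $\mathbb{F} = (a_1,\dots,a_k)$. Since $(p+5q)/q = 5 + p/q = (a_1+5) - 1/(a_2 - \cdots)$ and $a_1 + 5 \geq 7 \geq 2$, the tuple $\mathbb{F}' = (a_1+5, a_2,\dots,a_k)$ is again a valid Hirzebruch--Jung expansion (all entries $\geq 2$), so by uniqueness it presents $L_{p+5q,q}$ with the \emph{same} number of components $k$. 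Thus both invariants are computed from the formula of Theorem \ref{Theorem:TRLenseSpaces} with framings differing only in the first coordinate, by $+5$.

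First I would show that the parenthesised sum is unchanged. Each summand of $\sum_{s}\varepsilon^{s}\sum_{\{i_1,\dots,i_s\}} c(i_1,\dots,i_s)\,\beta_\varepsilon^{-2f_{i_1}-\cdots-2f_{i_s}}$ depends on the framings only through the factors $\beta_\varepsilon^{-2f_{i_j}}$, while $c(i_1,\dots,i_s)$ depends only on the index set. By Remark \ref{Remark:BraidingConstant}, $\beta_\varepsilon^5 = -1$, hence $\beta_\varepsilon^{10} = 1$, so $\beta_\varepsilon^{-2(a_1+5)} = \beta_\varepsilon^{-2a_1}\beta_\varepsilon^{-10} = \beta_\varepsilon^{-2a_1}$. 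Replacing $a_1$ by $a_1+5$ therefore multiplies every term containing the index $1$ by $\beta_\varepsilon^{-10} = 1$ and fixes the rest, so the whole sum for $\mathbb{F}'$ coincides with that for $\mathbb{F}$. This is the routine part.

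It remains to show the prefactor $\Delta^{\sigma}/D^{\sigma+k+1}$, with $\Delta = 1 + \varepsilon^2\beta_\varepsilon^2$ and $D = \sqrt{\varepsilon+2}$, is unchanged, and this is the main obstacle: $\Delta/D$ is a nontrivial root of unity (one checks $|\Delta| = D$, while $\Delta$ is not real so $\Delta \neq \pm D$), so the equality can survive only if the signature $\sigma$ is \emph{literally} the same for $\mathbb{F}$ and $\mathbb{F}'$. Here the Hirzebruch--Jung choice is essential. The linking matrix $A$ of $(\mathbb{H}_k,\mathbb{F})$ is tridiagonal with diagonal entries $a_i \geq 2$ and off-diagonal entries $\pm 1$; conjugating by a diagonal $\pm 1$ matrix (a congruence, preserving signature) makes all off-diagonals $+1$, and the resulting matrix is irreducible and weakly diagonally dominant with strictly dominant first and last rows, hence positive definite, so $\sigma = k$. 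The matrix for $\mathbb{F}'$ is $A + 5\,e_1 e_1^{\top}$, a sum of a positive definite and a positive semidefinite matrix, hence still positive definite, so its signature is again $k$. With $\sigma = \sigma' = k$ and $k$ fixed, the prefactors agree, and combining this with the invariance of the sum yields $tr_\varepsilon(L_{p,q}) = tr_\varepsilon(L_{p+5q,q})$; the degenerate boundary case $p=1$ (where $L_{1,q} = S^3$) is handled by the same computation applied to a positive definite presentation of the relevant manifold.
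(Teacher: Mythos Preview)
Your argument is correct and follows essentially the same route as the paper: present $L_{p,q}$ by a Hopf chain, replace $f_1$ by $f_1+5$, invoke $\beta_\varepsilon^{10}=1$ from Remark~\ref{Remark:BraidingConstant} to see the sum in Theorem~\ref{Theorem:TRLenseSpaces} is unchanged, and check the signature is unchanged so the prefactor survives. The paper's proof simply asserts ``It's clear that the signatures of the framed links $(\mathbb{H}_k,\mathbb{F})$ and $(\mathbb{H}_k,\mathbb{F}')$ are the same'' without further comment; your choice to fix the Hirzebruch--Jung normalisation $a_i\ge 2$ and argue positive definiteness supplies exactly the justification that the paper omits (and which is genuinely needed, since for an arbitrary framing of $L_{p,q}$ the signature can jump under $f_1\mapsto f_1+5$).
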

\begin{proof}
Note that if $L_{p, q} = M_{(\mathbb{H}_k, \mathbb{F})}$, where $\mathbb{F} = (f_1, f_2, \ldots, f_k)$, then $L_{5q + p, q} = M_{(\mathbb{H}_k, \mathbb{F}')}$, where $\mathbb{F}' = (f_1 + 5, f_2, \ldots, f_k)$. It's clear that the signatures of the framed links $(\mathbb{H}_k, \mathbb{F})$ and $(\mathbb{H}_k, \mathbb{F}')$ are the same. Next, the value $\beta_{\varepsilon}$ is a root of unity of degree 10. So changing $f_1$ to $f_1 + 5$ has no effect on the value $tr_{\varepsilon}$ from the theorem \ref{Theorem:TRLenseSpaces}.
\end{proof}

\section{Connection between $tv_{\varepsilon}$ and $\varepsilon$-invariant}

\subsection{Invariant $tv_{\varepsilon}$}

The book \cite{T} contains the algorithm which allows to extract the Turaev -- Viro type invariant for 3-manifolds from any modular category. In this section we apply this algorithm to the category $\E$ and extract the invariant $tv_{\varepsilon}$.

\subsubsection{Multiplicity modules}

Define $H^{XYZ} = Hom(\U, (X\otimes Y)\otimes Z)$ for all simple objects $X, Y, Z\in I = \{\U, \A\}$. Every $H^{XYZ}$ is a module over $\mathbb{C}$. It's clear that the module $H^{XYZ}$ is isomorphic to $\mathbb{C}$ if among the objects $X, Y, Z$ the number of $\A$ is not equal to one (i.e. either $X = Y = Z = \U$, or $X = Y = Z = \A$, or two of them are $\A$ and the other is $\U$). All other modules are trivial. We will say that the triplet $(X, Y, Z)$ is admissible if the module $H^{XYZ}$ is not trivial.

For any $s\in \mathbb{C}$ and any object $X$ from the category $\E$, let $u^X_s\in Hom (X, X)$ denote the morphism from $X$ to $X$ defined by the following matrices:
\begin{center}
$[u^X_s]_{\U} = diag(s, \ldots, s)$ and $[u^X_s]_{\A} = diag (s, \ldots, s)$.
\end{center}

Denote $v_{\U}' = 1$ and $v_{\A}' = \frac{1}{\beta_{\varepsilon}}$.

For each triplet of simple objects $X, Y, Z\in I$, consider two isomorphisms $\mathcal{I}^{XYZ}_{12}\colon H^{XYZ}\to H^{YXZ}$ and $\mathcal{I}^{XYZ}_{23}\colon H^{XYZ}\to H^{XZY}$, defined by the following compositions: $$\mathcal{I}^{XYZ}_{12}(a) = a\circ (c_{X, Y}\otimes id_{Z})\circ u^{(Y\otimes X)\otimes Z}_{v_{X}' v_{Y}' (v_{Z}')^{-1}},$$ $$\mathcal{I}^{XYZ}_{23}(a) = a\circ \alpha_{X, Y, Z}\circ (id_{X}\otimes c_{Y, Z})\circ \alpha^{-1}_{X, Z, Y}\circ u^{(X\otimes Z)\otimes Y}_{(v_{X}')^{-1}v_{Y}' v_{Z}'}$$ for each morphism $a\in H^{XYZ}$.

\begin{remark}
\label{Remark:ModulesIsomorphisms}
The knotted diagram of the isomorphism $\mathcal{I}^{XYZ}_{12}$ is shown in the figure \ref{Figure:ModulesIsomorphisms} on the left, the diagram of the isomorphism $\mathcal{I}^{XYZ}_{23}$ is shown in the figure \ref{Figure:ModulesIsomorphisms} on the right.
\end{remark}

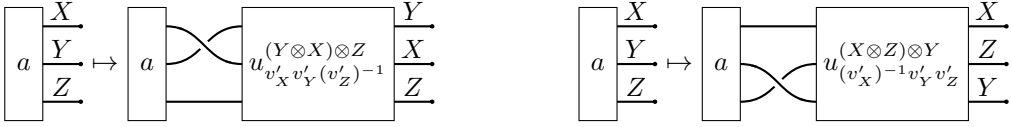
\begin{figure}[h]
\begin{center}
\ \hfill
\begin{tikzpicture}[scale=0.5, baseline={([yshift=-1.1ex]current bounding box.center)}]
	\draw (0, -3) rectangle ++(1, 3) node[pos=0.5] {$a$};
	\filldraw[knot_diagram] (1, -0.5) -- (2, -0.5) node[above=-1.5pt, midway] {$X$} circle (0.03);
	\filldraw[knot_diagram] (1, -1.5) -- (2, -1.5) node[above=-1.5pt, midway] {$Y$} circle (0.03);
	\filldraw[knot_diagram] (1, -2.5) -- (2, -2.5) node[above=-1.5pt, midway] {$Z$} circle (0.03);
\end{tikzpicture} $\mapsto$
\begin{tikzpicture}[scale=0.5, baseline={([yshift=-1.1ex]current bounding box.center)}]
	\draw (0, -3) rectangle ++(1, 3) node[pos=0.5] {$a$};
	
	\begin{knot}[
		clip width=5,
		ignore endpoint intersections=false,
		]
		\strand (1, -0.5) .. controls (2.0, -0.5) and (2.0 , -1.5) .. (3, -1.5);
		\strand (1, -1.5) .. controls (2.0, -1.5) and (2.0 , -0.5) .. (3, -0.5);
		\strand (1, -2.5) -- (3, -2.5);
	\end{knot}

	\draw (3, -3) rectangle ++(4, 3) node[pos=0.5] {$u^{(Y\otimes X)\otimes Z}_{v_{X}' v_{Y}' (v_{Z}')^{-1}}$};
	\filldraw[knot_diagram] (7, -0.5) -- (8, -0.5) node[above=-1.5pt, midway] {$Y$} circle (0.03);
	\filldraw[knot_diagram] (7, -1.5) -- (8, -1.5) node[above=-1.5pt, midway] {$X$} circle (0.03);
	\filldraw[knot_diagram] (7, -2.5) -- (8, -2.5) node[above=-1.5pt, midway] {$Z$} circle (0.03);
\end{tikzpicture}
\hfill
\begin{tikzpicture}[scale=0.5, baseline={([yshift=-1.1ex]current bounding box.center)}]
	\draw (0, -3) rectangle ++(1, 3) node[pos=0.5] {$a$};
	\filldraw[knot_diagram] (1, -0.5) -- (2, -0.5) node[above=-1.5pt, midway] {$X$} circle (0.03);
	\filldraw[knot_diagram] (1, -1.5) -- (2, -1.5) node[above=-1.5pt, midway] {$Y$} circle (0.03);
	\filldraw[knot_diagram] (1, -2.5) -- (2, -2.5) node[above=-1.5pt, midway] {$Z$} circle (0.03);
\end{tikzpicture} $\mapsto$
\begin{tikzpicture}[scale=0.5, baseline={([yshift=-1.1ex]current bounding box.center)}]
	\draw (0, -3) rectangle ++(1, 3) node[pos=0.5] {$a$};
	
	\begin{knot}[
		clip width=5,
		ignore endpoint intersections=false,
		]
		\strand (1, -1.5) .. controls (2.0, -1.5) and (2.0 , -2.5) .. (3, -2.5);
		\strand (1, -2.5) .. controls (2.0, -2.5) and (2.0 , -1.5) .. (3, -1.5);
		\strand (1, -0.5) -- (3, -0.5);
	\end{knot}

	\draw (3, -3) rectangle ++(4, 3) node[pos=0.5] {$u^{(X\otimes Z)\otimes Y}_{(v_{X}')^{-1} v_{Y}' v_{Z}'}$};
	\filldraw[knot_diagram] (7, -0.5) -- (8, -0.5) node[above=-1.5pt, midway] {$X$} circle (0.03);
	\filldraw[knot_diagram] (7, -1.5) -- (8, -1.5) node[above=-1.5pt, midway] {$Z$} circle (0.03);
	\filldraw[knot_diagram] (7, -2.5) -- (8, -2.5) node[above=-1.5pt, midway] {$Y$} circle (0.03);
\end{tikzpicture}
\hfill \ \ 
\end{center}
\caption{\label{Figure:ModulesIsomorphisms}Isomorphism $\mathcal{I}^{XYZ}_{12}$ (on the left) and $\mathcal{I}^{XYZ}_{23}$ (on the right)}
\end{figure}

\begin{proposition}
\label{Proposition:ModulesIsomorphisms}
For any admissible triplet $(X,Y,Z)$ the two isomorphisms $\mathcal{I}^{XYZ}_{12}$ and $\mathcal{I}^{XYZ}_{23}$ are identities.
\end{proposition}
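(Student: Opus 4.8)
The plan is to exploit the fact that for an admissible triplet $(X,Y,Z)$ the object $(X\otimes Y)\otimes Z$ contains exactly one summand $\U$, so that $H^{XYZ}$ is the one-dimensional space spanned by the morphism $e_{XYZ}\in Hom(\U,(X\otimes Y)\otimes Z)$ embedding $\U$ into that summand. Running through the five admissible triplets, one checks that the target object $(Y\otimes X)\otimes Z$ of $\mathcal{I}^{XYZ}_{12}$ and the target object $(X\otimes Z)\otimes Y$ of $\mathcal{I}^{XYZ}_{23}$ are in each case literally the same non-commutative sum as $(X\otimes Y)\otimes Z$; hence both maps are endomorphisms of a single line, and to say they are identities is exactly to say each acts as the scalar $1$. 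Since a morphism of $\E$ never joins a $\U$-cell to an $\A$-cell, every factor in the definitions of $\mathcal{I}^{XYZ}_{12}$ and $\mathcal{I}^{XYZ}_{23}$ is block diagonal for the $\U/\A$ splitting, so the scalar I must compute is simply the product of the $[\,\cdot\,]_{\U}$-entries of the factors.

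First I would observe that $u^{W}_{s}$ multiplies every value by $s$, so it contributes precisely the factor $v'_X v'_Y (v'_Z)^{-1}$ to $\mathcal{I}^{XYZ}_{12}$ and $(v'_X)^{-1}v'_Y v'_Z$ to $\mathcal{I}^{XYZ}_{23}$. The associativity isomorphisms that occur are harmless: if one of $X,Y,Z$ equals $\U$ they are identities, while in the remaining triplet $X=Y=Z=\A$ one has $[\alpha_{\A,\A,\A}]_{\U}=(1)$ and hence also $[\alpha^{-1}_{\A,\A,\A}]_{\U}=(1)$, so they never affect the $\U$-scalar. Thus the scalar equals the $\U$-value of the single braiding appearing, times the $u$-factor. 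When the braided pair contains a $\U$ the braiding is the identity and the remaining $v'_{\A}$-factors occur once with each sign, cancelling to $1$; this disposes of $(\U,\U,\U)$, of $(\A,\U,\A)$ and $(\U,\A,\A)$ for $\mathcal{I}_{12}$, and of $(\A,\A,\U)$ and $(\A,\U,\A)$ for $\mathcal{I}_{23}$.

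The braided pair is $(\A,\A)$ only for $(\A,\A,\U)$ and $(\A,\A,\A)$ under $\mathcal{I}_{12}$, and for $(\U,\A,\A)$ and $(\A,\A,\A)$ under $\mathcal{I}_{23}$. Reading off Example~\ref{Example:Braidings}, the $\U$-entry of the braiding factor equals $\beta_{\varepsilon}^{2}$ in the two-$\A$ triplets (where the module's $\U$-summand arises from the $\U$-part of $\A\otimes\A$) and equals $\beta_{\varepsilon}$ in the all-$\A$ triplet (where it arises from the $\A$-part). In the two-$\A$ triplets the matching $u$-factor is $v'_{\A}v'_{\A}=\beta_{\varepsilon}^{-2}$, and in the all-$\A$ triplet it is $v'_{\A}=\beta_{\varepsilon}^{-1}$; in every case the product is $1$. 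This exhausts all admissible triplets and proves the claim.

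The computation is entirely routine, and the only thing that must come out right is that the factor supplied by $u$ exactly cancels the braiding contribution. This is by design rather than luck: $v'_{\A}=\beta_{\varepsilon}^{-1}$ is a square root of the twist value $\theta_{\A}=\beta_{\varepsilon}^{-2}$, so $u$ is precisely the framing correction that absorbs the ribbon twist produced by the crossing. The only bookkeeping to keep straight is that the unique $\U$-summand occupies the same position in the source and in the target, so that the generator $e$ is carried to $e$ rather than to a nonzero multiple.
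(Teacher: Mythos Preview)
Your proof is correct and follows the same overall approach as the paper's own proof: both are case-by-case verifications over the five admissible triplets. The paper computes the full morphism (both $\U$- and $\A$-blocks) in each case via explicit diagrams, whereas you streamline this by observing that $H^{XYZ}$ is spanned by the embedding into the unique $\U$-summand, so only the $1\times 1$ matrix $[\,\cdot\,]_{\U}$ of each factor is relevant, and you batch the cases according to whether the braided pair contains a $\U$. Your closing remark that $v'_{\A}=\beta_{\varepsilon}^{-1}$ is a square root of the twist---so that $u$ is exactly the framing correction absorbing the ribbon twist from the crossing---is a nice conceptual gloss absent from the paper, but the underlying computation is the same.
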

\begin{proof}
Consider five different cases with respect to different admissible triplets $(X, Y, Z)$.

$X = Y = Z = \U$. In this case $\mathcal{I}^{\U\U\U}_{12}(a) = a\circ (c_{\U, \U}\otimes id_{\U})\circ u^{\U}_{1} = a$ and $\mathcal{I}^{\U\U\U}_{23}(a) = a\circ \alpha_{\U, \U, \U}\circ (id_{\U}\otimes c_{\U, \U})\circ \alpha_{\U, \U, \U}^{-1}\circ u^{\U}_{1} = a$, because $c_{\U, \U} = id_{\U}$ and $\alpha_{\U, \U, \U} = id_{\U}$.

$X = \U$, $Y = Z = \A$. In this case $\mathcal{I}^{\U\A\A}_{12}(a) = a\circ (c_{\U, \A}\otimes id_{\A})\circ u^{\U + \A}_{1} = a$ and $\mathcal{I}^{\U\A\A}_{23}(a) = a\circ \alpha_{\U, \A, \A}\circ (id_{\U}\otimes c_{\A, \A})\circ \alpha_{\U, \A, \A}^{-1}\circ u^{\U + \A}_{\frac{1}{\beta_{\varepsilon}^2}}$. The diagram of the second composition is shown in the figure \ref{Figure:ModulesIsomorpismsUAA}.

\begin{figure}[h]
\begin{center}
\begin{tikzpicture}[scale=0.5, baseline={([yshift=-0.7ex]current bounding box.center)}]
	\draw (0, -1) rectangle ++(1, 1) node[pos=0.5] {$\U$};
	
	\draw (3, -1) rectangle ++(1, 1) node[pos=0.5] {$\U$};
	\draw (3, -2) rectangle ++(1, 1) node[pos=0.5] {$\A$};
	\draw[-latex] (1, -0.5) -- (3, -0.5) node[above, midway] {$a$};
	
	\draw (6, -1) rectangle ++(1, 1) node[pos=0.5] {$\U$};
	\draw (6, -2) rectangle ++(1, 1) node[pos=0.5] {$\A$};
	\draw[-latex] (4, -0.5) -- (6, -0.5) node[above, midway] {$\beta_{\varepsilon}^2$};
	\draw[-latex] (4, -1.5) -- (6, -1.5) node[below, midway] {$\beta_{\varepsilon}$};
	
	\draw (9, -1) rectangle ++(1, 1) node[pos=0.5] {$\U$};
	\draw (9, -2) rectangle ++(1, 1) node[pos=0.5] {$\A$};
	\draw[-latex] (7, -0.5) -- (9, -0.5) node[above, midway] {$\frac{1}{\beta_{\varepsilon}^2}$};
	\draw[-latex] (7, -1.5) -- (9, -1.5) node[below, midway] {$\frac{1}{\beta_{\varepsilon}^2}$};
\end{tikzpicture} $=$
\begin{tikzpicture}[scale=0.5, baseline={([yshift=-1.2ex]current bounding box.center)}]
	\draw (0, -1) rectangle ++(1, 1) node[pos=0.5] {$\U$};
	
	\draw (3, -1) rectangle ++(1, 1) node[pos=0.5] {$\U$};
	\draw (3, -2) rectangle ++(1, 1) node[pos=0.5] {$\A$};
	\draw[-latex] (1, -0.5) -- (3, -0.5) node[above, midway] {$a$};
\end{tikzpicture}
\end{center}
\caption{\label{Figure:ModulesIsomorpismsUAA}Diagram of the compositions $a\circ \alpha_{\U, \A, \A}\circ (id_{\U}\otimes c_{\A, \A})\circ \alpha_{\U, \A, \A}^{-1}\circ u^{\U + \A}_{\frac{1}{\beta_{\varepsilon}^2}}$ and $a\circ (c_{\A, \A}\otimes id_{\U})\circ u^{\U + \A}_{\frac{1}{\beta_{\varepsilon}^2}}$}
\end{figure}

$X = \A$, $Y = \U$, $Z = \A$. In this case $\mathcal{I}^{\A\U\A}_{12}(a) = a\circ (c_{\A, \U}\otimes id_{\A})\circ u^{\U + \A}_{1} = a$ and $\mathcal{I}^{\A\U\A}_{23}(a) = a\circ \alpha_{\A, \U, \A}\circ (id_{\A}\otimes c_{\U, \A})\circ \alpha_{\A, \A, \U}^{-1}\circ u^{\U + \A}_{1} = a$, because $c_{\A, \U} = c_{\U, \A} = id_{\A}$ and $\alpha_{\A, \U, \A} = \alpha_{\A, \A, \U} = id_{\U + \A}$.

$X = Y = \A$, $Z = \U$. In this case $\mathcal{I}^{\A\A\U}_{12}(a) = a\circ (c_{\A, \A}\otimes id_{\U})\circ u^{\U + \A}_{\frac{1}{\beta_{\varepsilon}^2}}$ and $\mathcal{I}^{\A\A\U}_{23}(a) = a\circ \alpha_{\A, \A, \U}\circ (id_{\A}\otimes c_{\A, \U})\circ \alpha_{\A, \U, \A}^{-1}\circ u^{\U + \A}_{1} = a$. The digram of the first composition is shown in the figure \ref{Figure:ModulesIsomorpismsUAA}.

$X = Y = Z = \A$. In this case $\mathcal{I}^{\A\A\A}_{12}(a) = a\circ (c_{\A, \A}\otimes id_{\A})\circ u^{\A + \U + \A}_{\frac{1}{\beta_{\varepsilon}}}$ and $\mathcal{I}^{\A\A\A}_{23}(a) = a\circ \alpha_{\A, \A, \A}\circ (id_{\A}\otimes c_{\A, \A})\circ \alpha_{\A, \A, \A}^{-1}\circ u^{\A + \U + \A}_{\frac{1}{\beta_{\varepsilon}}}$. Diagram of the first composition is shown in the figure \ref{Figure:ModulesIsomorpismsAAA12}. Diagram of the second composition is shown in the figure \ref{Figure:ModulesIsomorpismsAAA23}.

\begin{figure}[h]
\begin{center}
\begin{tikzpicture}[scale=0.5, baseline={([yshift=-0.4ex]current bounding box.center)}]
	\draw (0, -1) rectangle ++(1, 1) node[pos=0.5] {$\U$};
	
	\draw (3, -1) rectangle ++(1, 1) node[pos=0.5] {$\A$};
	\draw (3, -2) rectangle ++(1, 1) node[pos=0.5] {$\U$};
	\draw (3, -3) rectangle ++(1, 1) node[pos=0.5] {$\A$};
	\draw[-latex] (1, -0.5) -- (3, -1.5) node[above, midway] {$a$};
	
	\draw (6, -1) rectangle ++(1, 1) node[pos=0.5] {$\A$};
	\draw (6, -2) rectangle ++(1, 1) node[pos=0.5] {$\U$};
	\draw (6, -3) rectangle ++(1, 1) node[pos=0.5] {$\A$};
	\draw[-latex] (4, -0.5) -- (6, -0.5) node[above=-2.0pt, midway] {$\beta_{\varepsilon}^2$};
	\draw[-latex] (4, -1.5) -- (6, -1.5) node[above=-2.0pt, midway] {$\beta_{\varepsilon}$};
	\draw[-latex] (4, -2.5) -- (6, -2.5) node[below, midway] {$\beta_{\varepsilon}$};
	
	\draw (9, -1) rectangle ++(1, 1) node[pos=0.5] {$\A$};
	\draw (9, -2) rectangle ++(1, 1) node[pos=0.5] {$\U$};
	\draw (9, -3) rectangle ++(1, 1) node[pos=0.5] {$\A$};
	\draw[-latex] (7, -0.5) -- (9, -0.5) node[above=-3.0pt, midway] {$\frac{1}{\beta_{\varepsilon}}$};
	\draw[-latex] (7, -1.5) -- (9, -1.5) node[above=-3.0pt, midway] {$\frac{1}{\beta_{\varepsilon}}$};
	\draw[-latex] (7, -2.5) -- (9, -2.5) node[below, midway] {$\frac{1}{\beta_{\varepsilon}}$};
\end{tikzpicture} $=$
\begin{tikzpicture}[scale=0.5, baseline={([yshift=-0.75ex]current bounding box.center)}]
	\draw (0, -1) rectangle ++(1, 1) node[pos=0.5] {$\U$};
	
	\draw (3, -1) rectangle ++(1, 1) node[pos=0.5] {$\A$};
	\draw (3, -2) rectangle ++(1, 1) node[pos=0.5] {$\U$};
	\draw (3, -3) rectangle ++(1, 1) node[pos=0.5] {$\A$};
	\draw[-latex] (1, -0.5) -- (3, -1.5) node[above, midway] {$a$};
\end{tikzpicture}
\end{center}
\caption{\label{Figure:ModulesIsomorpismsAAA12}Diagram of the composition $a\circ (c_{\A, \A}\otimes id_{\A})\circ u^{\A + \U + \A}_{\frac{1}{\beta_{\varepsilon}}}$}
\end{figure}

\begin{figure}[h]
\begin{center}
\begin{tikzpicture}[scale=0.5, baseline={([yshift=-0.4ex]current bounding box.center)}]
	\draw (0, -1) rectangle ++(1, 1) node[pos=0.5] {$\U$};
	
	\draw (3, -1) rectangle ++(1, 1) node[pos=0.5] {$\A$};
	\draw (3, -2) rectangle ++(1, 1) node[pos=0.5] {$\U$};
	\draw (3, -3) rectangle ++(1, 1) node[pos=0.5] {$\A$};
	\draw[-latex] (1, -0.5) -- (3, -1.5) node[above, midway] {$a$};
	
	\draw (6, -1) rectangle ++(1, 1) node[pos=0.5] {$\A$};
	\draw (6, -2) rectangle ++(1, 1) node[pos=0.5] {$\U$};
	\draw (6, -3) rectangle ++(1, 1) node[pos=0.5] {$\A$};
	\draw[ma_blue] (4, -0.5) -- (6, -0.5);
	\draw[ma_green] (4, -0.5) -- (6, -2.5);
	\draw[ma_red] (4, -2.5) -- (6, -0.5);
	\draw[ma_orange] (4, -2.5) -- (6, -2.5);
	\draw[ma_black] (4, -1.5) -- (6, -1.5);
	
	\draw (9, -1) rectangle ++(1, 1) node[pos=0.5] {$\A$};
	\draw (9, -2) rectangle ++(1, 1) node[pos=0.5] {$\U$};
	\draw (9, -3) rectangle ++(1, 1) node[pos=0.5] {$\A$};
	\draw[-latex] (7, -0.5) -- (9, -0.5) node[above=-2.0pt, midway] {$\beta_{\varepsilon}^2$};
	\draw[-latex] (7, -1.5) -- (9, -1.5) node[above=-2.0pt, midway] {$\beta_{\varepsilon}$};
	\draw[-latex] (7, -2.5) -- (9, -2.5) node[below, midway] {$\beta_{\varepsilon}$};
	
	\draw (12, -1) rectangle ++(1, 1) node[pos=0.5] {$\A$};
	\draw (12, -2) rectangle ++(1, 1) node[pos=0.5] {$\U$};
	\draw (12, -3) rectangle ++(1, 1) node[pos=0.5] {$\A$};
	\draw[ma_blue] (10, -0.5) -- (12, -0.5);
	\draw[ma_green] (10, -0.5) -- (12, -2.5);
	\draw[ma_red] (10, -2.5) -- (12, -0.5);
	\draw[ma_orange] (10, -2.5) -- (12, -2.5);
	\draw[ma_black] (10, -1.5) -- (12, -1.5);
	
	\draw (15, -1) rectangle ++(1, 1) node[pos=0.5] {$\A$};
	\draw (15, -2) rectangle ++(1, 1) node[pos=0.5] {$\U$};
	\draw (15, -3) rectangle ++(1, 1) node[pos=0.5] {$\A$};
	\draw[-latex] (13, -0.5) -- (15, -0.5) node[above=-3.0pt, midway] {$\frac{1}{\beta_{\varepsilon}}$};
	\draw[-latex] (13, -1.5) -- (15, -1.5) node[above=-3.0pt, midway] {$\frac{1}{\beta_{\varepsilon}}$};
	\draw[-latex] (13, -2.5) -- (15, -2.5) node[below, midway] {$\frac{1}{\beta_{\varepsilon}}$};
\end{tikzpicture} $=$
\begin{tikzpicture}[scale=0.5, baseline={([yshift=-0.75ex]current bounding box.center)}]
	\draw (0, -1) rectangle ++(1, 1) node[pos=0.5] {$\U$};
	
	\draw (3, -1) rectangle ++(1, 1) node[pos=0.5] {$\A$};
	\draw (3, -2) rectangle ++(1, 1) node[pos=0.5] {$\U$};
	\draw (3, -3) rectangle ++(1, 1) node[pos=0.5] {$\A$};
	\draw[-latex] (1, -0.5) -- (3, -1.5) node[above, midway] {$a$};
\end{tikzpicture}
\end{center}
\caption{\label{Figure:ModulesIsomorpismsAAA23}Diagram of the composition $a\circ \alpha_{\A, \A, \A}\circ (id_{\A}\otimes c_{\A, \A})\circ \alpha_{\A, \A, \A}^{-1}\circ u^{\A + \U + \A}_{\frac{1}{\beta_{\varepsilon}}}$}
\end{figure}
\end{proof}

For any admissible triplet $(X, Y, Z)$, define the module $H(X, Y, Z)$ obtained by identifying $H^{XYZ}$, $H^{XZY}$, $H^{YXZ}$, $H^{YZX}$, $H^{ZXY}$ and $H^{ZYX}$ by isomorphisms generated by $\mathcal{I}^{XYZ}_{12}$ and $\mathcal{I}^{XYZ}_{23}$. In fact, to get an arbitrary element $a\in H(X, Y, Z)$ we should choose the same element $a$ from all these six modules.

\subsubsection{Pairing}

Let $w_{\U}\in Hom (\U, \U)$ be a morphism of the category $\E$, defined by the value $1$ (i.e. $w_{\U} = id_{\U}$), and let $w_{\A}\in Hom (\A, \A)$ be a morphism of the category $\E$, defined by the non-zero value $z\in \mathbb{C}$.

\begin{lemma}
\label{lemma:WMOrphims}
For each simple object $X\in I$: $(w_{X}\otimes id_{X})\circ d_X = (id_{X}\otimes w_{X})\circ d_X$.
\end{lemma}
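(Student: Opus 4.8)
The plan is to verify the identity separately for the two simple objects $X = \U$ and $X = \A$ comprising $I$, since the lemma is stated only for simple objects and no reduction is needed.

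For $X = \U$ everything collapses: $\U \otimes \U = \U$, while $w_\U = id_\U$ by definition and $d_\U = id_\U$ by Example \ref{Example:Dualities}. Hence $w_\U \otimes id_\U = id_\U \otimes w_\U = id_\U$, and both sides of the claimed equality reduce to $id_\U$.

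For $X = \A$ I would use the fact that $w_\A$ is, by definition, the scalar $z$ times the identity of $\A$. Since $\A \otimes \A = \U + \A$, the tensor-product rule for a pair of $\A$'s attaches to the $(\U,\U)$- and $(\A,\A)$-components of $w_\A \otimes id_\A$ the common value $z \cdot 1 = z$, and to those of $id_\A \otimes w_\A$ the common value $1 \cdot z = z$. Therefore
\[
[w_\A \otimes id_\A]_\U = [id_\A \otimes w_\A]_\U = (z), \qquad [w_\A \otimes id_\A]_\A = [id_\A \otimes w_\A]_\A = (z),
\]
so that $w_\A \otimes id_\A = z \cdot id_{\A \otimes \A} = id_\A \otimes w_\A$ as morphisms of $\U + \A$. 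Composing either one with $d_\A$ on the right then gives the same morphism, which settles the case.

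If one prefers to keep $d_\A$ explicit (as in Lemma \ref{Lemma:DualityIdentities}, where $d_X$ is determined by $[d_X]_\U$ alone because $[d_X]_\A$ has no rows), the composition rule $[f \circ g]_\U = [g]_\U \cdot [f]_\U$ together with $[d_\A]_\U = \left( \frac{\sqrt{\varepsilon}}{y} \right)$ yields
\[
[(w_\A \otimes id_\A) \circ d_\A]_\U = \left( \tfrac{\sqrt{\varepsilon}}{y} \right) \cdot (z) = \left( \tfrac{z\sqrt{\varepsilon}}{y} \right) = \left( \tfrac{\sqrt{\varepsilon}}{y} \right) \cdot (z) = [(id_\A \otimes w_\A) \circ d_\A]_\U,
\]
the identical computation on both sides. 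The only point demanding any care, and hence the nearest thing to an obstacle, is the branching $\A \otimes \A = \U + \A$: one must confirm, from the explicit tensor-product rule, that the scalar $z$ is distributed uniformly over both summands regardless of whether $w_\A$ occupies the first or the second factor. Equivalently this can be read off the diagrams, where both $w_\A \otimes id_\A$ and $id_\A \otimes w_\A$ are the diagram of $id_{\U+\A}$ with every arrow weighted by $z$, and precomposing the diagram of $d_\A$ with either produces the same single weighted arrow.
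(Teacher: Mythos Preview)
Your proof is correct and takes essentially the same approach as the paper, which simply declares the statement ``obvious.'' You have filled in the details that make it obvious: since $w_X$ is a scalar multiple of $id_X$, both $w_X\otimes id_X$ and $id_X\otimes w_X$ equal the same scalar multiple of $id_{X\otimes X}$, so composing with $d_X$ gives identical results.
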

\begin{proof}
The lemma statement is obvious.
\end{proof}

For any admissible triplet of simple objects $(X, Y, Z)$, define the pairing $(\cdot, \cdot)^{XYZ}\colon H(X, Y, Z)\times H(X, Y, Z)\to\mathbb{C}$ as follows. For any two elements $a, b\in H(X, Y, Z)$, the value $(a, b)^{XYZ}$ is equal to the morphism shown in the figure \ref{Figure:PairingDiagram}. This morphism is a morphism from $\U$ to $\U$, so it is defined by a complex number.

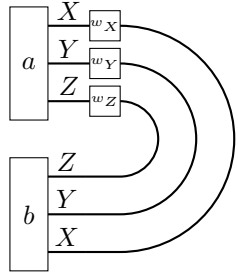
\begin{figure}[h]
\begin{center}
\begin{tikzpicture}[scale=0.5]
	\draw (0, -3) rectangle ++(1, 3) node[pos=0.5] {$a$};
	\draw (0, -7) rectangle ++(1, 3) node[pos=0.5] {$b$};
	
	\draw[knot_diagram] (1, -0.5) -- (2.1, -0.5) node[above=-1.5pt, midway] {$X$};
	\draw[knot_diagram] (1, -1.5) -- (2.1, -1.5) node[above=-1.5pt, midway] {$Y$};
	\draw[knot_diagram] (1, -2.5) -- (2.1, -2.5) node[above=-1.5pt, midway] {$Z$};
	
	\draw[knot_diagram] (1, -4.5) -- (2, -4.5) node[above=-1.5pt, midway] {$Z$};
	\draw[knot_diagram] (1, -5.5) -- (2, -5.5) node[above=-1.5pt, midway] {$Y$};
	\draw[knot_diagram] (1, -6.5) -- (2, -6.5) node[above=-1.5pt, midway] {$X$};
	
	\draw (2.1, -0.9) rectangle ++(0.8, 0.8) node[pos=0.5] {{\tiny $w_X$}};
	\draw (2.1, -1.9) rectangle ++(0.8, 0.8) node[pos=0.5] {{\tiny $w_Y$}};
	\draw (2.1, -2.9) rectangle ++(0.8, 0.8) node[pos=0.5] {{\tiny $w_Z$}};
	
	\draw[knot_diagram] (2.0, -4.5) -- (2.9, -4.5) arc (-90:90:1);
	\draw[knot_diagram] (2.0, -5.5) -- (2.9, -5.5) arc (-90:90:2);
	\draw[knot_diagram] (2.0, -6.5) -- (2.9, -6.5) arc (-90:90:3);

\end{tikzpicture}
\end{center}
\caption{\label{Figure:PairingDiagram}Knotted diagram of the morphism for the pairing $(a, b)^{XYZ}$}
\end{figure}

\begin{theorem}
\label{Theorem:Pairings}
$(a, b)^{\U\U\U} = ab$, $(a, b)^{\U\A\A} = ab y^2 z^2$ and $(a, b)^{\A\A\A} = ab xy^3 z^3$.
\end{theorem}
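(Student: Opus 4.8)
The plan is to use bilinearity to reduce each of the three identities to a single scalar computation. Since every admissible module $H(X,Y,Z)$ is one-dimensional over $\mathbb{C}$, an element $a$ is completely recorded by the value of the corresponding morphism $\U\to (X\otimes Y)\otimes Z$ on the unique $\U$-summand; writing $a,b$ for these complex numbers, the pairing $(\cdot,\cdot)^{XYZ}$ is bilinear in them, so it suffices to evaluate the diagram of Figure \ref{Figure:PairingDiagram} once and read off the proportionality constant multiplying $ab$. By Proposition \ref{Proposition:ModulesIsomorphisms} the identifications $\mathcal{I}^{XYZ}_{12}$ and $\mathcal{I}^{XYZ}_{23}$ are identities, so I am free to take $a$ from $H^{XYZ}$ and $b$ from $H^{ZYX}$ exactly as the figure is drawn, and the resulting constant will not depend on this choice.

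The computational engine is the matrix rule of Remark \ref{Remark:LMorphismComposition}: I would read the knotted diagram of Figure \ref{Figure:PairingDiagram} from left to right as a composition $\U\to\U$, namely $a\otimes b$, then $(w_X\otimes w_Y\otimes w_Z)\otimes \mathrm{id}$, then the nested duality arcs closing $a$'s legs against $b$'s legs. These closures are $d_{\U}=\mathrm{id}$ on the $\U$-legs and $d_{\A}$ on the $\A$-legs (Example \ref{Example:Dualities}), but to bring each pair of legs into an adjacent $\A\otimes\A$ one must insert the associativity isomorphisms relating the two bracketings of $(X\otimes Y)\otimes Z$, exactly the morphisms governed by the straightening identities of Lemma \ref{Lemma:Duality}. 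Replacing each elementary piece by its $[\cdot]_{\U}$ matrix and multiplying in the reversed order of Remark \ref{Remark:LMorphismComposition} turns the whole pairing into a finite product of small explicit matrices, in which the copies of $w_{\A}$ supply the powers of $z$, the duality morphisms $b_{\A},d_{\A}$ supply the powers of $y$, and the associators supply the factors of $\varepsilon$ and $x$.

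I would then dispatch the three cases. For $X=Y=Z=\U$ every factor is an identity ($w_{\U}=\mathrm{id}$, $d_{\U}=\mathrm{id}$, $\alpha_{\U,\U,\U}=\mathrm{id}$), so the constant is $1$ and $(a,b)^{\U\U\U}=ab$. For $X=\U$, $Y=Z=\A$ the $X$-leg is trivial and, because one of the three objects equals $\U$, all associativity isomorphisms entering the closure are identities; the surviving data are the two copies of $w_{\A}$ (accounting for $z^2$) together with the duality arcs, whose matrices carry the $y$-dependence and produce the clean monomial $y^2$, giving $(a,b)^{\U\A\A}=ab\,y^2z^2$ with the $\sqrt{\varepsilon}$-factors cancelling. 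For $X=Y=Z=\A$ the closure genuinely reassociates $((\A\otimes\A)\otimes\A)\otimes((\A\otimes\A)\otimes\A)$, so the matrix $[\alpha_{\A,\A,\A}]_{\A}=\bigl(\begin{smallmatrix}\frac{1}{\varepsilon}&\frac{x}{\sqrt{\varepsilon}}\\[2pt]\frac{1}{x\sqrt{\varepsilon}}&-\frac{1}{\varepsilon}\end{smallmatrix}\bigr)$ necessarily appears; it is the unique source of $x$, and combined with the three copies of $w_{\A}$ and the three $\A$-arcs it yields $(a,b)^{\A\A\A}=ab\,xy^3z^3$.

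The main obstacle is precisely the bookkeeping of this last case. One must place the associators in the correct positions and bracketings so that the off-diagonal entries $\frac{x}{\sqrt{\varepsilon}}$ and $\frac{1}{x\sqrt{\varepsilon}}$ of $[\alpha_{\A,\A,\A}]_{\A}$ survive into the $\U$-component of the final scalar as a single factor $x$ rather than cancelling, while keeping the powers of $y$ coming from $b_{\A}$ and $d_{\A}$ mutually consistent and confirming that all spurious $\varepsilon$-factors from the associators combine with those from the arcs to leave exactly $xy^3z^3$. Here Lemma \ref{lemma:WMOrphims} is used to slide $w_{\A}$ across an arc without changing the value, and Lemma \ref{Lemma:Duality} guarantees that the $\varepsilon$-contributions of the associators and of the caps match. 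Once the $\A\A\A$ closure is laid out layer by layer in the style of Example \ref{Example:Trefoil}, the remaining work is a routine product of $1\times 1$ and $2\times 2$ matrices from which the three stated values follow.
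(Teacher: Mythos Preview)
Your approach is essentially the paper's: decompose the pairing morphism from Figure~\ref{Figure:PairingDiagram} into an explicit chain of elementary morphisms (the paper writes it as $\mu_1\circ\cdots\circ\mu_9$, listing each $\mu_i$ precisely) and then evaluate the three admissible cases by multiplying the $[\cdot]_{\U}$-matrices. Two small corrections to your sketch that the actual computation will force you to fix: only right half-circles occur in the figure, so no $b_{\A}$ ever enters and the $y$-dependence comes exclusively from the $d_X$'s; and in the $(\U,\A,\A)$ case not every associator in the chain is literally the identity, since entries such as $\mu_4=\alpha^{-1}_{(X\otimes Y)\otimes Z,\,Z,\,Y}\otimes id_X$ carry the compound object $(\U\otimes\A)\otimes\A=\A\otimes\A$ in the first slot and hence pick up the nontrivial block of $\alpha_{\A,\A,\A}$---so the case is not quite as immediate as you claim, though the layer-by-layer matrix product still yields the stated monomial.
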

\begin{proof}
For any triplet of simple objects $(X, Y, Z)$ and for any $a, b\in H(X, Y, Z)$, the morphism $(a, b)^{XYZ}$ is a composition $\mu_1\circ\ldots \circ \mu_9$ of nine morphisms, where:
\begin{center}
\begin{longtable}{l}
$\mu_1 = a\otimes b$, \\
$\mu_2 = ((w_X\otimes w_Y)\otimes w_Z)\otimes ((id_Z\otimes id_Y)\otimes id_X)$, \\
$\mu_3 = \alpha^{-1}_{(X\otimes Y)\otimes Z, Z\otimes Y, X}$, \\
$\mu_4 = \alpha^{-1}_{(X\otimes Y)\otimes Z,Z, Y}\otimes id_X$, \\
$\mu_5 = (\alpha_{X\otimes Y, Z, Z}\otimes id_Y)\otimes id_X$, \\
$\mu_6 = (((id_X\otimes id_Y)\otimes d_Z)\otimes id_Y)\otimes  id_X$, \\
$\mu_7 = \alpha_{X, Y, Y}\otimes id_X$, \\
$\mu_8 = (id_X\otimes d_Y)\otimes id_X$, \\
$\mu_9 = d_X$.
\end{longtable}
\end{center}

The theorem follows from the careful computation of these morphisms for three different cases where $(X, Y, Z)$ coincides with $(\U, \U, \U)$, $(\U, \A, \A)$ or $(\A, \A, \A)$.
\end{proof}

\subsubsection{6j-symbols}

For any collection of simple objects $X_1, Y_1, Z_1, X_2, Y_2, Z_2\in I$, define 6j-symbol $$\left|\begin{array}{lll}
X_1 & Y_1 & Z_1 \\
X_2 & Y_2 & Z_2
\end{array}\right|\colon H(X_1, Y_1, Z_1)\otimes H(X_1, Y_2, Z_2)\otimes H(Y_1, Z_2, X_2)\otimes H(Z_1, X_2, Y_2)\to\mathbb{C}$$ as follows. If at least one of the triplets $(X_1, Y_1, Z_1)$, $(X_1, Y_2, Z_2)$, $(Y_1, Z_2, X_2)$ or $(Z_1, X_2, Y_2)$ is not admissible, then the corresponding module is trivial and the 6j-symbol is also a trivial map. If all triplets $(X_1, Y_1, Z_1)$, $(X_1, Y_2, Z_2)$, $(Y_1, Z_2, X_2)$ and $(Z_1, X_2, Y_2)$ are admissible, then all four modules $H(X_1, Y_1, Z_1)$, $H(X_1, Y_2, Z_2)$, $H(Y_1, Z_2, X_2)$ and $H(Z_1, X_2, Y_2)$ are isomorphic to $\mathbb{C}$. In this case the 6j-symbol maps $a_1\otimes a_2\otimes a_3\otimes a_4$, for any $a_1\in H(X_1, Y_1, Z_1)$, $a_2\in H(X_1, Y_1, Z_2)$, $a_3\in H(Y_1, Z_2, X_2)$, $a_4\in H(Z_1, X_2, Y_2)$, to the value of the morphism shown in the figure \ref{Figure:6JSYmbolDiagram}. This morphism is a morphism from $\U$ to $\U$, so it is defined by a complex value. This complex value defines the image $$\left|\begin{array}{lll}
X_1 & Y_1 & Z_1 \\
X_2 & Y_2 & Z_2
\end{array}\right|(a_1\otimes a_2\otimes a_3\otimes a_4).$$

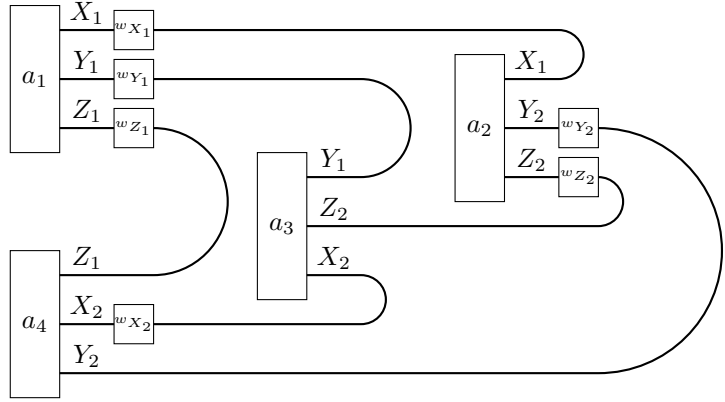
\begin{figure}[h]
\begin{center}
\begin{tikzpicture}[scale=0.65]
	\draw (0, 0) rectangle ++(1, 3) node[pos=0.5] {$a_4$};
	\draw (0, 5) rectangle ++(1, 3) node[pos=0.5] {$a_1$};
	\draw (5, 2) rectangle ++(1, 3) node[pos=0.5] {$a_3$};
	\draw (9, 4) rectangle ++(1, 3) node[pos=0.5] {$a_2$};
	
	\draw (2.1, 1.1) rectangle ++(0.8, 0.8) node[pos=0.5] {{\tiny $w_{X_2}$}};
	\draw (2.1, 5.1) rectangle ++(0.8, 0.8) node[pos=0.5] {{\tiny $w_{Z_1}$}};
	\draw (2.1, 6.1) rectangle ++(0.8, 0.8) node[pos=0.5] {{\tiny $w_{Y_1}$}};
	\draw (2.1, 7.1) rectangle ++(0.8, 0.8) node[pos=0.5] {{\tiny $w_{X_1}$}};
	\draw (11.1, 4.1) rectangle ++(0.8, 0.8) node[pos=0.5] {{\tiny $w_{Z_2}$}};
	\draw (11.1, 5.1) rectangle ++(0.8, 0.8) node[pos=0.5] {{\tiny $w_{Y_2}$}};
	
	\draw[knot_diagram] (1, 0.5) -- (2.1, 0.5) node[above=-1.5pt, midway] {$Y_2$} -- (11.9, 0.5) arc (-90:90:2.5);
	\draw[knot_diagram] (1, 1.5) -- (2.1, 1.5) node[above=-1.5pt, midway] {$X_2$};
	\draw[knot_diagram] (2.9, 1.5) -- (7.1, 1.5) arc (-90:90:0.5) -- (6, 2.5) node[above=-1.5pt, midway] {$X_2$};
	\draw[knot_diagram] (1, 2.5) -- (2.1, 2.5) node[above=-1.5pt, midway] {$Z_1$} -- (2.9, 2.5) arc (-90:90:1.5);
	\draw[knot_diagram] (1, 5.5) -- (2.1, 5.5) node[above=-1.5pt, midway] {$Z_1$};
	\draw[knot_diagram] (1, 6.5) -- (2.1, 6.5) node[above=-1.5pt, midway] {$Y_1$};
	\draw[knot_diagram] (1, 7.5) -- (2.1, 7.5) node[above=-1.5pt, midway] {$X_1$};
	\draw[knot_diagram] (6, 4.5) -- (7.1, 4.5) node[above=-1.5pt, midway] {$Y_1$} arc (-90:90:1) -- (2.9, 6.5);
	\draw[knot_diagram] (6, 3.5) -- (7.1, 3.5) node[above=-1.5pt, midway] {$Z_2$} -- (11.9, 3.5) arc (-90:90:0.5);
	\draw[knot_diagram] (10, 4.5) -- (11.1, 4.5) node[above=-1.5pt, midway] {$Z_2$};
	\draw[knot_diagram] (10, 5.5) -- (11.1, 5.5) node[above=-1.5pt, midway] {$Y_2$};
	\draw[knot_diagram] (10, 6.5) -- (11.1, 6.5) node[above=-1.5pt, midway] {$X_1$} arc (-90:90:0.5) -- (2.9, 7.5);

\end{tikzpicture}
\end{center}
\caption{\label{Figure:6JSYmbolDiagram}Knotted diagram of the 6j-symbol}
\end{figure}

\begin{remark}
\label{Remark:6JSymbolsSymmetries}
All 6j-symbols are symmetrical in the following sense: for any simple objects $X_1, Y_1, Z_1, X_2, Y_2, Z_2\in I$ and $a_1\in H(X_1, Y_1, Z_1)$, $a_2\in H(X_1, Y_2, Z_2)$, $a_3\in H(Y_1, Z_2, X_2)$, $a_4\in H(Z_1, X_2, Y_2)$ we have $$\left|\begin{array}{lll}
X_1 & Y_1 & Z_1 \\
X_2 & Y_2 & Z_2
\end{array}\right|(a_1\otimes a_2\otimes a_3\otimes a_4) = \left|\begin{array}{lll}
X_1 & Z_2 & Y_1 \\
X_2 & Z_1 & Y_1
\end{array}\right|(a_2\otimes a_1\otimes a_3\otimes a_4) = \left|\begin{array}{lll}
Z_1 & Y_2 & X_2 \\
Z_2 & Y_1 & X_1
\end{array}\right|(a_4\otimes a_1\otimes a_2\otimes a_3).$$
\end{remark}

\begin{theorem}
\label{Theorem:6JSymbols}
Non-trivial 6j-symbols are as follows:
\begin{enumerate}
\item[] $\left|\begin{array}{lll}
\U & \U & \U \\
\U & \U & \U
\end{array}\right|(a_1\otimes a_2\otimes a_3\otimes a_4) = a_1 a_2 a_3 a_4$,

\item[] $\left|\begin{array}{lll}
\U & \U & \U \\
\A & \A & \A
\end{array}\right|(a_1\otimes a_2\otimes a_3\otimes a_4) = a_1 a_2 a_3 a_4 \frac{y^3 z^3}{\sqrt{\varepsilon}}$,

\item[] $\left|\begin{array}{lll}
\U & \A & \A \\
\U & \A & \A
\end{array}\right|(a_1\otimes a_2\otimes a_3\otimes a_4) = a_1 a_2 a_3 a_4 \frac{y^4 z^4}{\varepsilon}$,

\item[] $\left|\begin{array}{lll}
\U & \A & \A \\
\A & \A & \A
\end{array}\right|(a_1\otimes a_2\otimes a_3\otimes a_4) = a_1 a_2 a_3 a_4 \frac{x y^5 z^5}{\varepsilon}$,

\item[] $\left|\begin{array}{lll}
\A & \A & \A \\
\A & \A & \A
\end{array}\right|(a_1\otimes a_2\otimes a_3\otimes a_4) = a_1 a_2 a_3 a_4 \frac{x^2 y^6 z^6}{-\varepsilon^2}$.
\end{enumerate}
\end{theorem}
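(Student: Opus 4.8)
The plan is to exploit that each admissible multiplicity module is one-dimensional over $\mathbb{C}$, which turns the whole statement into the evaluation of a single scalar in each case. An element $a\in H^{XYZ}=Hom(\U,(X\otimes Y)\otimes Z)$ has source $\U$, which contains no cell $\A$, so $a$ is determined entirely by its matrix $[a]_{\U}$, a column vector; and for every admissible triplet the object $(X\otimes Y)\otimes Z$ contains exactly one summand $\U$ (e.g. $\A\otimes\A\otimes\A$ decomposes as $\A+\U+\A$), so $[a]_{\U}$ has a single free entry, which I identify with the complex number $a$. By proposition \ref{Proposition:ModulesIsomorphisms} the isomorphisms $\mathcal{I}^{XYZ}_{12}$ and $\mathcal{I}^{XYZ}_{23}$ are identities, so this identification is consistent across all six orderings glued to form $H(X,Y,Z)$, and the distinguished generator ``$1$'' is well defined.

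Next I would observe that the knotted diagram in figure \ref{Figure:6JSYmbolDiagram} defines a morphism $\U\to\U$ that is $\mathbb{C}$-linear in each of the four boxes $a_1,a_2,a_3,a_4$ separately, since each $a_i$ enters the layered composition exactly once. Hence
$$\left|\begin{array}{lll} X_1 & Y_1 & Z_1 \\ X_2 & Y_2 & Z_2 \end{array}\right|(a_1\otimes a_2\otimes a_3\otimes a_4) = a_1 a_2 a_3 a_4 \cdot \Lambda,$$
where $\Lambda$ is the scalar obtained by setting every $a_i$ equal to its canonical generator. This reduces the theorem to computing the single number $\Lambda$ in each of the five cases.

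To compute $\Lambda$, I would split the diagram of figure \ref{Figure:6JSYmbolDiagram} into horizontal layers in the same manner as in the proof of theorem \ref{Theorem:Pairings}, reading each layer as a morphism assembled from the generators $a_i$, the scalar morphisms $w_X$ (contributing a factor $z$ on each $\A$-strand), and the caps and cups $b_X,d_X$ (contributing the factors $y\sqrt{\varepsilon}$ and $\frac{\sqrt{\varepsilon}}{y}$ on $\A$-strands), together with the associativity isomorphisms needed to reorder summands between consecutive layers. By remark \ref{Remark:LMorphismComposition} the value $\Lambda$ is then the product of the $[\cdot]_{\U}$ matrices of these layers taken in reverse order. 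The symmetries recorded in remark \ref{Remark:6JSymbolsSymmetries} cut the a priori large number of object assignments down to the five representative cases listed, and the relations $\varepsilon^2=\varepsilon+1$ and $\beta_{\varepsilon}^5=-1$ (remark \ref{Remark:BraidingConstant}) collapse the resulting expressions into the stated monomials.

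The main obstacle will be the case $X_1=Y_1=Z_1=X_2=Y_2=Z_2=\A$. There the intermediate objects carry several $\A$-summands, so the associators $\alpha_{\A,\A,\A}$ appear as genuine nontrivial blocks rather than identities, and the single distinguished $\U$-entry of the composite must be extracted only after composing these blocks with the duality morphisms $b_{\A},d_{\A}$. Keeping track of which summand of each tensor product is the ``$(x_i\otimes x_i)_1=\U$'' strand threaded by the caps and cups, and verifying that the parameters $x$, $y$, $z$ assemble precisely into $\frac{x^2y^6z^6}{-\varepsilon^2}$, is the delicate bookkeeping step; the other four cases are strictly simpler, involving at most one nontrivial associator block, and follow the same template.
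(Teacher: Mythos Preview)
Your approach is essentially the paper's: it too decomposes the knotted diagram of figure \ref{Figure:6JSYmbolDiagram} into an explicit chain of elementary morphisms (twenty-one of them, built from the $a_i$, the $w_X$, associators $\alpha^{\pm1}$, and duality morphisms $d_X$) and then declares that the stated values follow from direct computation in each of the five cases. One small correction: that diagram contains no crossings or twists, so the braiding constant $\beta_{\varepsilon}$ never enters the calculation and only the relation $\varepsilon^2=\varepsilon+1$ is needed for simplification.
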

\begin{proof}
For any six simple objects $X_1, Y_1, Z_1, X_2, Y_2, Z_2$ which define non-trivial 6j-symbols, and for any $a_1\in H(X_1, Y_1, Z_1)$, $a_2\in H(X_1, Y_2, Z_2)$, $a_3\in H(Y_1, Z_2, X_2)$, $a_4\in H(Z_1, X_2, Y_2)$ the morphism $$\left|\begin{array}{lll}
X_1 & Y_1 & Z_1 \\
X_2 & Y_2 & Z_2
\end{array}\right|(a_1\otimes a_2\otimes a_3\otimes a_4)
$$ is a composition $\mu_1\circ\ldots\circ \mu_{21}$, where
\begin{center}
\begin{longtable}{l}
$\mu_1 = a_1\otimes a_4$, \\
$\mu_2 = ((w_{X_1}\otimes w_{Y_1})\otimes w_{Z_1})\otimes ((id_{Z_1}\otimes w_{X_2})\otimes id_{Y_2})$, \\
$\mu_3 = \alpha^{-1}_{(X_1\otimes Y_1)\otimes Z_1, Z_1\otimes X_2, Y_2}$, \\
$\mu_4 = \alpha^{-1}_{(X_1\otimes Y_1)\otimes Z_1, Z_1, X_2}\otimes id_{Y_2}$, \\
$\mu_5 = (\alpha_{X_1\otimes Y_1, Z_1, Z_1}\otimes id_{X_2})\otimes id_{Y_2}$, \\
$\mu_6 = (((id_{X_1}\otimes id_{Y_1})\otimes d_{Z_1})\otimes id_{X_2})\otimes id_{Y_2}$, \\
$\mu_7 = (((id_{X_1}\otimes id_{Y_1})\otimes a_3)\otimes id_{X_2})\otimes id_{Y_2}$, \\
$\mu_8 = (\alpha^{-1}_{X_1\otimes Y_1, Y_1\otimes Z_2, X_2}\otimes id_{X_2})\otimes id_{Y_2}$, \\
$\mu_9 = ((\alpha^{-1}_{X_1\otimes Y_1, Y_1, Z_2}\otimes id_{X_2})\otimes id_{X_2})\otimes id_{Y_2}$, \\
$\mu_{10} = (((\alpha_{X_1, Y_1, Y_1}\otimes id_{Z_2})\otimes id_{X_2})\otimes id_{X_2})\otimes id_{Y_2}$, \\
$\mu_{11} = ((((id_{X_1}\otimes d_{Y_1})\otimes id_{Z_2})\otimes id_{X_2})\otimes id_{X_2})\otimes id_{Y_2}$, \\
$\mu_{12} = \alpha_{X_1\otimes Z_2, X_2, X_2}\otimes id_{Y_2}$, \\
$\mu_{13} = ((id_{X_1}\otimes id_{Z_2})\otimes d_{X_2})\otimes id_{Y_2}$, \\
$\mu_{14} = ((id_{X_1}\otimes a_2)\otimes id_{Z_2})\otimes id_{Y_2}$, \\
$\mu_{15} = ((id_{X_1}\otimes ((id_{X_1}\otimes w_{Y_2})\otimes w_{Z_2}))\otimes id_{Z_2})\otimes id_{Y_2}$, \\
$\mu_{16} = \alpha_{X_1, (X_1\otimes Y_2)\otimes Z_2, Z_2}\otimes id_{Y_2}$, \\
$\mu_{17} = (id_{X_1}\otimes \alpha_{X_1\otimes Y_2, Z_2, Z_2})\otimes id_{Y_2}$, \\
$\mu_{18} = (id_{X_1}\otimes ((id_{X_1}\otimes id_{Y_2})\otimes d_{Z_2}))\otimes id_{Y_2}$, \\
$\mu_{19} = \alpha^{-1}_{X_1, X_1, Y_2}\otimes id_{Y_2}$, \\
$\mu_{20} = (d_{X_1}\otimes id_{Y_2})\otimes id_{Y_2}$, \\
$\mu_{21} = d_{Y_2}$.
\end{longtable}
\end{center}

The theorem follows from the careful computation of these morphisms for all necessary combinations of objects $X_1, Y_1, Z_1, X_2, Y_2, Z_2\in I$.
\end{proof}

\subsubsection{Special spines and definition of $tv_{\varepsilon}$}

In the book \cite{T} the construction of the Turaev -- Viro type invariant, derived from modular category, is constructed by using triangulations of 3-manifolds. We will use the dual approach and describe the invariant $tv_{\varepsilon}$ using special spines of 3-manifolds.

A two-dimensional polyhedron $P$ is called special, if it satisfies to the following conditions:
\begin{enumerate}
\item The link of each point $x\in P$ is homeomorphic either to the circle (figure \ref{Figure:SpecialNeigh} on the left, these points are called regular points), or to the circle with diameter (figure \ref{Figure:SpecialNeigh} in the centre, these points are called triple points), or to the circle with two diameters (figure \ref{Figure:SpecialNeigh} on the right, these points are called true vertices);
\item The union of all triple points is a disjoint set of intervals called triple lines;
\item The union of all regular points is a disjoint union of open discs called 2-components.
\end{enumerate}

\begin{figure}[h]
\begin{center}
\ \hfill
\begin{tikzpicture}[scale=0.5, baseline={([yshift=-0.7ex]current bounding box.center)}]
	\draw[knot_diagram] (0, 0) -- (2, 2) -- (6, 2) -- (4, 0) -- (0, 0);
	\filldraw (3, 1) circle (0.05) node[below] {$x$};
\end{tikzpicture}
\hfill
\begin{tikzpicture}[scale=0.5, baseline={([yshift=-2.0ex]current bounding box.center)}]
\begin{knot}[
	clip width=5,
	ignore endpoint intersections=false,
]
	\strand (5, 2.75) -- (5, 1);
	\strand (2, 2) -- (6, 2);
\end{knot}
	\draw[knot_diagram] (2, 2) -- (0, 0) -- (4, 0) -- (6, 2);
	\draw[knot_diagram] (5, 1) -- (1, 1) -- (1, 2.75) -- (5, 2.75);
	\filldraw (3, 1) circle (0.05) node[below] {$x$};
\end{tikzpicture}
\hfill
\begin{tikzpicture}[scale=0.5, baseline={([yshift=0.7ex]current bounding box.center)}]
\begin{knot}[
	clip width=5,
	ignore endpoint intersections=false,
	flip crossing/.list={3}
]
	\strand (5, 2.75) -- (5, 1) -- (1, 1);
	\strand (2, 2) -- (6, 2);
	\strand (4, 1.75) -- (4, 0.25) -- (2, -1.75);
	\strand (0, 0) -- (4, 0);
\end{knot}
	\draw[knot_diagram] (0, 0) -- (2, 2);
	\draw[knot_diagram] (1, 1) -- (1, 2.75) -- (5, 2.75);
	\draw[knot_diagram] (4, 0) -- (6, 2);
	\draw[knot_diagram] (2, -1.75) -- (2, 0) -- (4, 2) -- (4, 1.75);
	\filldraw (3, 1) circle (0.05) node[below right] {$x$};
\end{tikzpicture}
\hfill \ \ 
\end{center}
\caption{\label{Figure:SpecialNeigh}Regular point (on the left), triple point (in the center), true vertex (on the right)}
\end{figure}
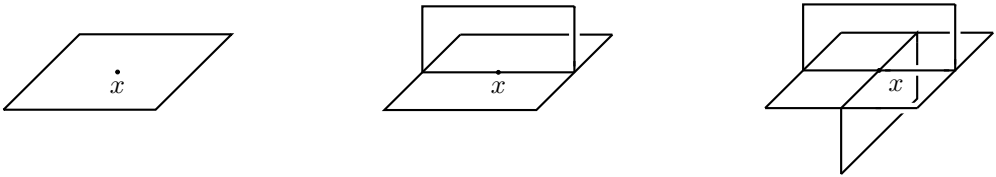

The special polyhedron is called a special spine of the closed 3-manifold $M$, if its complement $M\setminus P$ is homomorphic to an open 3-ball. We will restrict ourselves to closed 3-manifolds, so we don't need a general definition of the spine.

Now we are ready to define the invariant $tv_{\varepsilon}$. Let $P$ be a special spine of the closed 3-manifold $M$. Let $\mathcal{V}(P)$ be the set of all true vertices, $\mathcal{E}(P)$ the set of all triple lines and $\mathcal{C}(P)$ the set of all 2-components of the spine $P$. The colouring of $P$ is a map $\zeta\colon \mathcal{C}(P)\to \{\U, \A\}$. Define the weight of the coloured spine $\{P\}_{\zeta}$ as follows. For each true vertex $v\in \mathcal{V}(P)$, associate the 6j-symbol $\left|\begin{array}{lll}
X_1 & Y_1 & Z_1 \\
X_2 & Y_2 & Z_2
\end{array}\right|$, where $X_1, X_2, X_3$ are colours of three 2-components incident to a triple line in the neighbourhood of $v$, and $X_2, Y_2, Z_2$ are colours of opposite 2-components (figure \ref{Figure:ColoredVertex} on the left). Use the 6j-symbols from the theorem \ref{Theorem:6JSymbols}. Then the weight $\{P\}_{\zeta}$ is a contraction of all tensors associated with all true vertices. The contruction is done by pairings from the theorem \ref{Theorem:Pairings}.

From an equivalent point of view we can define $\{P\}_{\zeta}$ as follows. Assign to each triple line $e\in \mathcal{E}(P)$, incident to 2-components with colours $X, Y, Z$, the value opposite to $(1, 1)^{XYZ}$ (figure \ref{Figure:ColoredVertex} on the right). Then the weight $\{P\}_{\zeta}$ is equal to the product of these assigned values and the values of the 6j-symbols associated to the true vertices, calculated with the argument $1\otimes 1\otimes 1\otimes 1$.

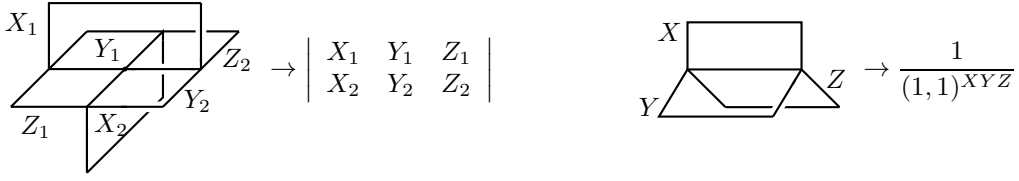
\begin{figure}[h]
\begin{center}
\ \hfill
\begin{tikzpicture}[scale=0.5, baseline={([yshift=0.9ex]current bounding box.center)}]
\begin{knot}[
	clip width=5,
	ignore endpoint intersections=false,
	flip crossing/.list={3}
]
	\strand (5, 2.75) -- (5, 1) -- (1, 1);
	\strand (2, 2) -- (6, 2);
	\strand (4, 1.75) -- (4, 0.25) -- (2, -1.75);
	\strand (0, 0) node[below right] {$Z_1$} -- (4, 0);
\end{knot}
	\draw[knot_diagram] (0, 0) -- (2, 2) node[below right=-1.0pt] {$Y_1$};
	\draw[knot_diagram] (1, 1) -- (1, 2.75) node[below left] {$X_1$} -- (5, 2.75);
	\draw[knot_diagram] (4, 0) -- (6, 2);
	\draw[knot_diagram] (2, -1.75) -- (2, 0) node[below right=-1.0pt] {$X_2$} -- (4, 2) -- (4, 1.75);
	\draw (4.3, 0.15) node[right] {$Y_2$};
	\draw (5.3, 1.25) node[right] {$Z_2$};
	\filldraw (3, 1) circle (0.05);
\end{tikzpicture} $\to \left|\begin{array}{lll}
X_1 & Y_1 & Z_1 \\
X_2 & Y_2 & Z_2
\end{array}
\right|$
\hfill
\begin{tikzpicture}[scale=0.5, baseline={([yshift=0.0ex]current bounding box.center)}]
\begin{knot}[
	clip width=5,
	ignore endpoint intersections=false,
]
	\strand (3, 0) -- (2.25, -1.25);
	\strand (1, -1) -- (4, -1);
\end{knot}
	\draw[knot_diagram] (2.25, -1.25) -- (-0.75, -1.25) -- (0, 0) -- (3, 0);
	\draw[knot_diagram] (1, -1) -- (0, 0) -- (0, 1.25) -- (3, 1.25) -- (3, 0) -- (4, -1);
	
	\draw (-1, -1) node {$Y$};
	\draw (0.1, 1) node[left] {$X$};
	\draw (3.9, -0.25) node {$Z$};
\end{tikzpicture} $\to \dfrac{1}{(1, 1)^{XYZ}}$
\hfill \ \ 
\end{center}
\caption{\label{Figure:ColoredVertex}6j-symbol corresponds to a true vertex (on the left), opposite value to pairing corresponds to a triple line (on the right)}
\end{figure}

Let $\zeta$ be a colouring. Denote by $|\zeta|_{\A}$ the number of 2-components coloured by $\A$. Then $$tv_{\varepsilon}(P) = \sum\limits_{\zeta}\varepsilon^{|\zeta|_{\A}}\cdot \{P\}_{\zeta},$$ where the sum is taken over all colourings of the spine $P$.

There is only one difference in our definition of the Turaev - Viro type invariant with respect to the original ones from \cite[Section VII.1.3]{T}. We do not use the coefficient $\frac{1}{D^{2V}}$, where $V$ is a number of vertices in the triangulation of the manifold. In our case, the complement of the spine is always an open 3-ball. So we can ignore this coefficient.

\begin{theorem}
\label{Theorem:TVInvariant}
If $P_1$ and $P_2$ are special spines of closed 3-manifold $M$ then $tv_{\varepsilon}(P_1) = tv_{\varepsilon}(P_2)$.
\end{theorem}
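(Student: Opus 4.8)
The plan is to reduce the statement to invariance under a single local move on special spines, and then to identify that local invariance with an algebraic identity on the $6j$-symbols that ultimately comes from the pentagon relation.

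First I would invoke the Matveev--Piergallini theorem (see \cite{MCT}): any two special spines of the same closed $3$-manifold $M$ with at least two true vertices are related by a finite sequence of the $2\leftrightarrow 3$ Matveev move (the move dual to the $2$--$3$ Pachner move on triangulations) together with its inverse. Since the finitely many special spines with fewer than two true vertices can always be enlarged by an inverse move into this range, it suffices to show that $tv_{\varepsilon}$ is unchanged under a single $2\leftrightarrow 3$ move. I would also note that this move changes neither the number of complementary balls (it stays one) nor, in the dual triangulation, the number of vertices, so the omitted global normalisation $D^{-2V}$ plays no role and all bookkeeping is genuinely local.

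Second, I would spell out the effect of the move on the state sum. Fixing the colours of all $2$-components outside the affected region, the two-vertex side contributes a contraction of two $6j$-symbols (theorem \ref{Theorem:6JSymbols}) along the pairings (theorem \ref{Theorem:Pairings}) attached to the shared triple lines, while the three-vertex side contributes a contraction of three $6j$-symbols, summed over the colour $\zeta(c_0)\in\{\U,\A\}$ of the single new triangular $2$-component $c_0$ and weighted by the extra factor $\varepsilon^{|\zeta(c_0)|_{\A}}$ dictated by the definition of $tv_{\varepsilon}$. Equality of these two contributions, with the triple-line normalisations $1/(1,1)^{XYZ}$ already built in, is precisely the Biedenharn--Elliott (pentagon) identity for the present $6j$-symbols.

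Third, I would verify that identity. The cleanest route is to observe that, up to the gauge factors $x,y,z$ fixed by theorem \ref{Theorem:Pairings}, the $6j$-symbols of theorem \ref{Theorem:6JSymbols} are the matrix entries of the associator $\alpha_{\A,\A,\A}$ read between the multiplicity modules $H(X,Y,Z)$ under the identifications $\mathcal{I}^{XYZ}_{12}$ and $\mathcal{I}^{XYZ}_{23}$ (proposition \ref{Proposition:ModulesIsomorphisms}). The pentagon relation for $\alpha$ (lemma \ref{Lemma:Associators}), transported through these identifications and through the duality identities (lemma \ref{Lemma:Duality}), yields exactly the Biedenharn--Elliott identity; the orthogonality data needed to account for the summed interior colour and its weight $\varepsilon^{|\zeta(c_0)|_{\A}}$ comes from the values $b_{\A}\circ d_{\A}=\varepsilon$ and $S_{\A,\A}=-1$ recorded in the proof of theorem \ref{Theorem:Modular}. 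The main obstacle is this last step: one must track the normalisation bookkeeping so that the gauge parameters $x,y,z$ cancel and the factor $\varepsilon^{|\zeta(c_0)|_{\A}}$ exactly absorbs the discrepancy between a product of two and a product of three $6j$-symbols. The non-strictness of $\E$ enters only through the associators, which is precisely the information encoded by the $6j$-symbols, so once their identification with associator entries is made explicit it introduces no further difficulty.
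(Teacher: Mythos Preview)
Your outline is correct and standard, but it is worth noting that the paper's own proof is a single-line citation: ``The theorem statement follows from \cite[Theorem VII.1.4]{T}.'' So you are not taking a different route so much as unpacking, in the spine language the paper adopts, the argument that sits behind that reference. That is a reasonable thing to do, and your three steps (reduce to the $T$-move via Matveev--Piergallini, interpret the move as an identity between contractions of $6j$-symbols, identify that identity with Biedenharn--Elliott coming from the pentagon axiom) are exactly the standard skeleton.

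Two small corrections to your step 3. First, the invocation of $S_{\A,\A}=-1$ is misplaced: the $S$-matrix plays no role in the Biedenharn--Elliott identity or in invariance under the $T$-move. What absorbs the extra sum over the colour of the new triangular $2$-component is simply the quantum-dimension weight $\dim(c_0)\in\{1,\varepsilon\}$, i.e.\ the factor $\varepsilon^{|\zeta(c_0)|_{\A}}$ itself, together with the pairing normalisations $1/(1,1)^{XYZ}$ on the new triple lines. No orthogonality relation (and hence no $S$-matrix data) is required, because Matveev--Piergallini tells you the $T$-move alone suffices for special spines; the orthogonality/lune move never enters. Second, the cancellation of the gauge parameters $x,y,z$ under the $T$-move follows from the same local count that the paper carries out globally in the proof of theorem~\ref{Theorem:TVEqualT}; you could simply cite that count rather than redo it.
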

\begin{proof}
The theorem statement follows from \cite[Theorem VII.1.4]{T}.
\end{proof}

So we can correctly define $tv_{\varepsilon}(M)$ as equal to $tv_{\varepsilon}(P)$ for any special spine $P$ of the closed 3-manifold $M$.

\begin{theorem}
\label{Theorem:TRAndTV}
For any closed 3-manifold $M$: $$|tr_{\varepsilon}(M)|^2 = \frac{tv_{\varepsilon}(M)}{\varepsilon + 2}.$$
\end{theorem}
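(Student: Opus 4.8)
The plan is to deduce the identity from the general relationship, established in \cite[Chapter VII]{T}, between the Reshetikhin--Turaev and Turaev--Viro invariants of a modular category, and then to track the normalization discrepancies between those invariants and the concrete invariants $tr_{\varepsilon}$, $tv_{\varepsilon}$ of this paper. First I would observe that $tr_{\varepsilon}$ is exactly the Reshetikhin--Turaev invariant $\tau_{\E}$ of $\E$: the surgery formula defining $tr_{\varepsilon}$ reproduces the formula of \cite[Theorem II.2.2.2]{T} once one notes that $\varepsilon^{|\xi|_{\A}} = \prod_i \dim(\xi(l_i))$, since $\dim\U = 1$ and $\dim\A = \varepsilon$ (the latter being $S_{\U,\A}$ from the proof of Theorem \ref{Theorem:Modular}). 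By \cite[Theorem VII.4.1.1]{T} one then has $|M|_{\E} = \tau_{\E}(M)\cdot\tau_{\E}(-M) = tr_{\varepsilon}(M)\cdot tr_{\varepsilon}(-M)$, where $|M|_{\E}$ denotes the Turaev--Viro invariant in the normalization of \cite[Section VII.1.3]{T}.

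The second ingredient is the reality property $tr_{\varepsilon}(-M) = \overline{tr_{\varepsilon}(M)}$, which I would prove directly from the surgery formula rather than through an abstract Hermitian structure. A surgery presentation of $-M$ is given by the mirror image $\bar L$ of $L$ with all framings negated; mirroring turns positive crossings into negative ones and kinks of one sign into the other, so in computing $\{\bar L\}_{\xi}$ every occurrence of $c_{\A,\A}$ (resp. $\theta_{\A}$) in $\{L\}_{\xi}$ is replaced by $c_{\A,\A}^{-1}$ (resp. $\theta_{\A}^{-1}$). Since $\beta_{\varepsilon}$ is a $10$th root of unity, $\overline{\beta_{\varepsilon}} = \beta_{\varepsilon}^{-1}$, so the nonzero entries $\beta_{\varepsilon}^2,\beta_{\varepsilon}$ of $c_{\A,\A}$ and $\frac{1}{\beta_{\varepsilon}^2}$ of $\theta_{\A}$ pass under complex conjugation exactly to the entries of the inverse morphisms. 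All entries of $\alpha_{\A,\A,\A}$, $b_{\A}$ and $d_{\A}$ are real once $x$ and $y$ are chosen real, and $\{L\}_{\xi}$ does not depend on these choices (on $y$ by Proposition \ref{Proposition:NoY}; the factors $x$ cancel in any closed diagram, as in Example \ref{Example:Trefoil}). Hence $\{\bar L\}_{\xi} = \overline{\{L\}_{\xi}}$ for every colouring $\xi$, and because $\varepsilon$ is real the coloured sum is conjugated term by term.

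It remains to match the normalizing prefactors, and here the decisive arithmetic identity is $|\Delta|^2 = 2 + \varepsilon = D^2$. From Remark \ref{Remark:BraidingConstant} we have $\varepsilon\beta_{\varepsilon}^2 + \beta_{\varepsilon} + \varepsilon = 0$, whence dividing by $\beta_{\varepsilon}$ gives $\beta_{\varepsilon} + \beta_{\varepsilon}^{-1} = -\tfrac{1}{\varepsilon}$ and therefore $\beta_{\varepsilon}^2 + \beta_{\varepsilon}^{-2} = \tfrac{1}{\varepsilon^2} - 2$. With $\Delta = 1 + \varepsilon^2\beta_{\varepsilon}^2$, $\overline{\Delta} = 1 + \varepsilon^2\beta_{\varepsilon}^{-2}$, and $\varepsilon^2 = \varepsilon + 1$ (so $\varepsilon^4 = 3\varepsilon + 2$), a short computation yields $|\Delta|^2 = 1 + \varepsilon^4 + \varepsilon^2(\beta_{\varepsilon}^2 + \beta_{\varepsilon}^{-2}) = 2 + \varepsilon$. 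Since $\sigma(\bar L) = -\sigma(L)$ and $D,\varepsilon$ are real, this identity forces $\Delta^{-\sigma}D^{\sigma} = \overline{\Delta}^{\sigma}D^{-\sigma}$ (equivalently $|\Delta|^{2\sigma} = D^{2\sigma}$), and comparing the two surgery formulas for $tr_{\varepsilon}(-M)$ and $\overline{tr_{\varepsilon}(M)}$ gives $tr_{\varepsilon}(-M) = \overline{tr_{\varepsilon}(M)}$. Consequently $|M|_{\E} = |tr_{\varepsilon}(M)|^2$.

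Finally I would account for the only remaining discrepancy, between $tv_{\varepsilon}$ and $|M|_{\E}$. As remarked after the definition of $tv_{\varepsilon}$, the paper omits the factor $\frac{1}{D^{2V}}$ of \cite[Section VII.1.3]{T}, where $V$ is the number of vertices of the dual triangulation, equivalently the number of complementary balls of the spine. A special spine has a single complementary ball, so $V = 1$ and $tv_{\varepsilon}(M) = D^{2}\,|M|_{\E} = (\varepsilon+2)\,|M|_{\E}$. Combining this with $|M|_{\E} = |tr_{\varepsilon}(M)|^2$ gives $tv_{\varepsilon}(M) = (\varepsilon+2)\,|tr_{\varepsilon}(M)|^2$, which is the claimed equality. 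The main obstacle is the reality property together with the bookkeeping of the three normalizations (the powers of $\Delta$ and $D$ in $tr_{\varepsilon}$ and the $D^{-2V}$ factor in $tv_{\varepsilon}$); the substantive geometric input, Turaev's identity $|M|_{\E} = \tau_{\E}(M)\,\tau_{\E}(-M)$, is imported wholesale from \cite{T}.
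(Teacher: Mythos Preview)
Your proof is correct and follows the same route as the paper: both deduce the identity from \cite[Theorem VII.4.1.1]{T}, and in fact the paper's proof consists of nothing more than that single citation. You supply considerably more detail than the paper does --- the explicit reality property $tr_{\varepsilon}(-M)=\overline{tr_{\varepsilon}(M)}$, the arithmetic identity $|\Delta|^2=D^2=\varepsilon+2$, and the normalization $tv_{\varepsilon}=D^{2}\,|M|_{\E}$ coming from the omitted $D^{-2V}$ factor --- all of which the paper leaves implicit in its one-sentence proof.
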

\begin{proof}
The theorem statement is a corollary of \cite[Theorem VII.4.1.1]{T}.
\end{proof}

\subsection{$\varepsilon$-invariant}

There are several equivalent ways to define the $\varepsilon$-invariant for 3-manifolds (see \cite[Chapter 8]{M}). We will use the approach that is very close to the definition of the $tv_{\varepsilon}$ invariant.

Let $P$ be a special spine of the closed 3-manifold $M$, and let $\eta\colon \mathcal{C}(P)\to \{0, 1\}$ be a colouring of $P$ by two colours: 0 and 1. Define the colour weights as follows: $\omega_0 = 1$ and $\omega_{1} = \varepsilon$, where, as before, $\varepsilon^2 = \varepsilon + 1$.

Let $v\in\mathcal{V}(P)$ be a true vertex of the spine $P$. The neighbourhood of this vertex contains six 2-components. Let $i, j, k\in \{0, 1\}$ be the colours of the components incident to a triple line in the neighbourhood of the vertex $v$, and let $l, m, n$ be the colours of the opposite 2-components. Define the weight $\omega_v$ of the vertex $v$ as follows: $$\omega_v = \left|\begin{array}{lll}
i & j & k \\
l & m & n
\end{array}\right|',$$ where 
\begin{center}
$\left|\begin{array}{lll}
0 & 0 & 0 \\
0 & 0 & 0
\end{array}\right|' = 1$, $\left|\begin{array}{lll}
0 & 0 & 0 \\
1 & 1 & 1
\end{array}\right|' = \frac{1}{\sqrt{\varepsilon}}$, $\left|\begin{array}{lll}
0 & 1 & 1 \\
0 & 1 & 1
\end{array}\right|' = \frac{1}{\varepsilon}$, $\left|\begin{array}{lll}
0 & 1 & 1 \\
1 & 1 & 1
\end{array}\right|' = \frac{1}{\varepsilon}$, $\left|\begin{array}{lll}
1 & 1 & 1 \\
1 & 1 & 1
\end{array}\right|' = -\frac{1}{\varepsilon^2}$,
\end{center}
and $\omega_v = 0$ in all other cases.

Here we use 6j-symbols with prime to distinguish them from the 6j-symbols used before. The value of the $\varepsilon$-invariant (denoted by $t$) for the manifold $M$ is defined as follows: $$t(M) = \sum\limits_{\eta}\left(\prod_{c\in \mathcal{C}(p)}\omega_{\eta(c)} \cdot \prod_{v\in \mathcal{V}(P)}\omega_v\right),$$ where the sum is taken over all colourings of $P$.

\begin{theorem}
\label{Theorem:TVEqualT}
For any closed 3-manifold $M$: $$tv_{\varepsilon}(M) = t(M).$$
\end{theorem}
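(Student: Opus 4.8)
The plan is to fix a single special spine $P$ of $M$ and compare the two state sums term by term. Since $tv_{\varepsilon}(M) = tv_{\varepsilon}(P)$ by Theorem \ref{Theorem:TVInvariant}, and $t(M)$ is by definition computed from the same spine $P$, it will suffice to exhibit a colouring-preserving bijection under which the summands agree. The natural bijection identifies a $\{\U, \A\}$-colouring $\zeta$ of $P$ with a $\{0, 1\}$-colouring $\eta$ via $\U \leftrightarrow 0$ and $\A \leftrightarrow 1$. Under this identification the colour weights match at once: $\prod_{c \in \mathcal{C}(P)} \omega_{\eta(c)} = \varepsilon^{|\zeta|_{\A}}$, because $\omega_0 = 1$ and $\omega_1 = \varepsilon$. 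Hence the theorem reduces to the purely combinatorial identity $\{P\}_{\zeta} = \prod_{v \in \mathcal{V}(P)} \omega_v$ for every colouring $\zeta$.

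First I would rewrite $\{P\}_{\zeta}$. By its definition it is the product over true vertices of the $6j$-symbols evaluated at $1 \otimes 1 \otimes 1 \otimes 1$, times the product over triple lines $e$ of the factors $\frac{1}{(1,1)^{X_e Y_e Z_e}}$. Write $J_v$ for the value of the $6j$-symbol at $v$ (from Theorem \ref{Theorem:6JSymbols}) and $\theta_e = \frac{1}{(1,1)^{X_e Y_e Z_e}}$ (from Theorem \ref{Theorem:Pairings}). The key structural observation is that in a special spine every triple line is an arc whose two endpoints are true vertices, so each $\theta_e$ may be split as $\theta_e = \lambda_e \cdot \lambda_e$ for a chosen square root $\lambda_e$, one factor being assigned to each endpoint. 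Since each of the four triple lines meeting a true vertex $v$ corresponds to exactly one of the four arguments of the $6j$-symbol at $v$, this regrouping yields
$$\{P\}_{\zeta} = \prod_{v}\Bigl(J_v \cdot \!\!\prod_{e \ni v} \lambda_e\Bigr).$$

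Then I would verify the local identity $J_v \cdot \prod_{e \ni v} \lambda_e = \omega_v$ separately for each of the five admissible vertex types listed in Theorem \ref{Theorem:6JSymbols}. This is where the redundant parameters $x, y, z$ disappear: in each case the monomial in $x, y, z$ carried by $J_v$ is cancelled exactly by the product of the $\lambda_e$ over the incident triple lines, leaving the parameter-free value ($1$, $\frac{1}{\sqrt{\varepsilon}}$, $\frac{1}{\varepsilon}$, $\frac{1}{\varepsilon}$, $-\frac{1}{\varepsilon^2}$) that is precisely the corresponding primed $6j$-symbol $\omega_v$. A small point to settle is branch-independence of the $\lambda_e$: half-integer powers of $x, y, z$ arise only from $\A\A\A$-coloured triple lines, and a direct count in the five cases shows that each vertex meets an even number of these, so every $\tilde J_v := J_v \prod_{e \ni v} \lambda_e$ is single-valued and the global recombination $\prod_v \prod_{e \ni v} \lambda_e = \prod_e \theta_e$ is unambiguous. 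All remaining (non-standardly ordered) admissible labelings follow from the tetrahedral symmetry of the $6j$-symbols recorded in Remark \ref{Remark:6JSymbolsSymmetries}, together with the permutation-invariance of the pairings $(1,1)^{XYZ}$ built into the definition of $H(X, Y, Z)$. Assembling these local equalities gives $\{P\}_{\zeta} = \prod_v \omega_v$, and summing over all colourings yields $tv_{\varepsilon}(M) = t(M)$.

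The hardest part will be the bookkeeping of the face-factor distribution: matching each of the four arguments of the $6j$-symbol at a vertex with the correct incident triple line and its colour triplet, and confirming that the symmetric square-root splitting recombines globally across the spine. The five per-vertex cancellations themselves are routine once Theorems \ref{Theorem:6JSymbols} and \ref{Theorem:Pairings} are in hand; the conceptual content is that the Turaev--Viro face weights, when absorbed evenly into the tetrahedral weights, convert the parameter-dependent $6j$-symbols of $\E$ into the parameter-free primed symbols defining the $\varepsilon$-invariant.
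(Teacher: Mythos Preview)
Your proof is correct, but it is organized differently from the paper's. The paper argues \emph{globally}: it classifies the true vertices of a coloured spine into five types $k_0,\dots,k_4$ according to how many adjacent $2$-components carry the colour $\A$, counts the number of triple lines of each colour type in terms of the $k_i$, and then checks directly that the total exponent of each of $x,y,z$ in the product $\{P\}_\zeta$ vanishes. Once this is known, one simply sets $x=y=z=1$, and the two state sums coincide term by term. No square roots ever enter.

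Your argument is \emph{local}: you split each edge factor $\theta_e=(1,1)^{-1}$ as $\lambda_e^2$ and push one $\lambda_e$ to each endpoint, verifying the five per-vertex identities $J_v\prod_{e\ni v}\lambda_e=\omega_v$. This is a legitimate and somewhat more structural proof: it explains \emph{why} the parameters cancel, namely because the $6j$-symbol at each vertex is precisely the product of the primed symbol $\omega_v$ with the half-pairings on its four incident germs. The price is the bookkeeping with half-integer powers, which you handle correctly by observing that each vertex meets an even number of $\A\A\A$-coloured germs (indeed $0,0,0,2,4$ in the five cases). The paper's route avoids that technicality entirely at the cost of being a slightly more opaque numerical coincidence. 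Both arguments ultimately rest on the same incidence combinatorics of a special spine; yours packages it vertex by vertex, the paper's in one global tally.
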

\begin{proof}
The main step of the proof is to show that the value $tv_{\varepsilon}(M)$ does not depend on the parameters $x, y$ and $z$ used in the construction.

Let $P$ be a special spine of the manifold $M$, and let $\zeta\colon\mathcal{C}(P)\to \{\U, \A\}$ be a non-trivial colouring (i.e. $\{P\}_{\zeta} \neq 0$). Divide all true vertices of the coloured spine $P$ into five classes with respect to the number of 2-components coloured by $\A$ in the neighbourhood of the vertex. Let $k_0$ be the number of vertices where all incident 2-components are coloured by $\U$, $k_1$ the number of vertices with three incident 2-components coloured by $\A$, $k_2$ the number of vertices with four incident 2-components coloured by $\A$, $k_3$ the number of vertices with five incident 2-components coloured by $\A$, and finally $k_4$ the number of vertices with all incident 2-components coloured by $\A$. Then $$\{P\}_{\zeta} = \left(\frac{y^3z^3}{\sqrt{\varepsilon}}\right)^{k_1}\cdot \left(\frac{y^4 z^4}{\varepsilon}\right)^{k_2}\cdot \left(\frac{xy^5 z^5}{\varepsilon}\right)^{k_3}\cdot \left(- \frac{x^2 y^6 z^6}{\varepsilon^2}\right)^{k_4}\cdot \left(\frac{1}{xy^3 z^3}\right)^{k_3 + 2k_4}\cdot \left(\frac{1}{z^2 y^2}\right)^{\frac{3}{2}k_1 + 2 k_2 + k_3}.$$ The last two multipliers come from triple lines of $P$. The number of triple lines with only one incident 2-component coloured by $\U$ is equal to $\frac{3}{2}k_1 + 2k_2 + k_3$, and the number of triple lines with all three incident 2-components coloured by $\A$ is equal to $k_3 + 2k_4$.

In the expression $\{P\}_{\zeta}$, the total power of the parameter $x$ is $k_3 + 2k_4 - k_3 - 2k_4 = 0$, and the total powers of the parameters $y$ and $z$ are equal and equal to $3k_1 + 4k_2 + 5k_3 + 6k_4 - 3k_3 - 6k_4 - 3k_1 - 4k_2 - 2k_3 = 0$.

The value $\{P\}_{\zeta}$ does not depend on the parameters $x, y, z$. So we can choose $x = y = z = 1$, and then the formula for $tv_{\varepsilon}(M)$ is exactly the same as the formula for $t(M)$ (after changing $\U$ to $0$ and $\A$ to $1$).
\end{proof}

\begin{example}
\label{Example:TRStronger}
The $tr_{\varepsilon}$ invariant is stronger than $tv_{\varepsilon}$. It's known that there are only four different values of the $\varepsilon$-invariant for lens spaces (\cite{MOS}, \cite[Theorem 8.1.28]{M}).

It's easy to calculate $tv_{\varepsilon}(L_{1, 1}) = tv_{\varepsilon}(L_{4, 1}) = 1$, but $tr_{\varepsilon}(L_{1, 1}) = \frac{1}{\sqrt{\varepsilon + 2}}\in \mathbb{R}$ and $tr_{\varepsilon}(L_{4, 1}) = \frac{(\varepsilon + 1)\cdot (1 + \beta_{\varepsilon})^2}{(\varepsilon + 2)^{\frac{3}{2}}}\in\mathbb{C}\setminus\mathbb{R}$. So, $tr_{\varepsilon}(L_{1, 1})\neq tr_{\varepsilon}(L_{4, 1})$.
\end{example}

\bigskip
\noindent

\end{document}